\newtheorem{theorem}{Theorem}[section]
\numberwithin{equation}{section}
\newtheorem{remark}[theorem]{Remark}
\newtheorem{prop}[theorem]{Proposition}
\newtheorem{corollary}[theorem]{Corollary}
\newtheorem{lemma}[theorem]{Lemma}
\newcommand\sometext
\newcommand{\mucr}{\mu_{\mathrm{cr}}}
\newcommand{\Fcr}{F_{\mathrm{cr}}}
\newcommand{\loc}{\mathrm{loc}}
\newcommand{\ts}{\textsuperscript}
\newcommand{\jump}[1]{\left\llbracket{#1}\right\rrbracket}
\newcommand{\norm}[1]{\left\lVert#1\right\rVert}
\DeclareMathOperator{\sech}{sech}
\title[Solitary waves in two-layer stratified water] {Large-amplitude solitary waves in two-layer density stratified water}
\author[D. Sinambela]{Daniel Sinambela}
\address{Department of Mathematics, University of Missouri, Columbia, MO 65211} 
\email{dsf25@mail.missouri.edu}
\begin{document}
\maketitle
\begin{abstract}
    We present a large-amplitude existence theory for two-dimensional solitary  waves propagating through a two layer body of water. The domain of the fluid is bounded below by an impermeable flat ocean floor and above by a free boundary at constant pressure. For any piecewise smooth upstream density distribution and laminar background current, we construct a global curve of solutions. This curve bifurcates from the background current and, following along the curve, we find waves that are arbitrarily close to having horizontal stagnation points.
    
    The small-amplitude waves are constructed using a center manifold reduction technique. The large-amplitude theory is obtained through analytical global bifurcation together with refined qualitative properties of the waves.
   
\end{abstract}

\setcounter{tocdepth}{1}
\tableofcontents
\section{Introduction}

Internal waves are a common sight in the vicinity of complex coastline structures and narrow passages such as straits or fjords \cite{helfrich2006review}. They are a product of the density heterogeneity brought on by variations in temperature and salinity.  Together, these effects result in the water being \textit{stratified} into superposed layers, and it is the interfaces between these layers along which internal waves move. While comparatively slow, they can be large in amplitude and carry enormous amounts of energy. In the Lombok Strait, for instance, internal waves have been observed with amplitude exceeding 100 meters and average speed of approximately 1.96 m/s \cite{susanto2005ocean}.  They play a pivotal role in general ocean dynamics by transporting and mixing biogenic and non-biogenic components in the water bulk.

The present work concerns the existence of two-dimensional internal solitary waves in a stratified body of water. Solitary here means that the waves take the form of a spatially localized disturbance moving over a background current. These have been the subject of extensive research, beginning in the 19\ts{th} century with the famous observations of Russel \cite{Russell1884}. Exact existence results for steady water waves took nearly a century more to prove, but this theory has progressed considerably in recent decades due to advancements in nonlinear functional analysis, harmonic analysis, and PDE theory. In particular, local and global bifurcation techniques have been used to construct small- and large-amplitude traveling water waves in a number of physical settings. Most of these works study the irrotational and homogeneous density case; see, for example, surveys in \cite{constantin2011nonlinear,Groves2016lectures}. A major challenge when considering internal waves is that stratification generically creates vorticity, and thus it is necessary to work in the rotational regime.

The first rigorous existence theory in the heterogeneous setting was obtained by Dubreil-Jacotin \cite{Jacotin1934Surla} who constructed small-amplitude periodic waves. Ter-Krikorov \cite{ter1960existence} later  showed the existence of infinitesimal solitary water waves as the limit of periodic waves taking period to infinity. For large-amplitude stratified solitary water waves, the first result can be found in the work of Amick \cite{amick1984semilinear} and Amick--Turner \cite{amick1986global}. They considered heterogeneous fluid bounded above and below by  infinitely long rigid walls with uniform background current. 

Our primary contribution in this paper is to establish the existence of large-amplitude solitary waves allowing an \textit{arbitrary} piecewise smooth background current and density distribution. This is done via a global bifurcation theoretic argument that furnishes a locally analytic curve of solutions.  We further prove that, following this family to its extreme, one finds waves that coming arbitrarily close to horizontal stagnation.  This is consistent with the limiting behavior of homogeneous density irrotational solitary waves \cite{amick1981solitary}, which are known to terminate at a wave of greatest height with a stagnation point at its crest.  A similar result was obtained by Chen, Walsh, and Wheeler \cite{chen2018existence} for continuously stratified solitary waves.  The two-layered stratified case is considerably more complicated, however, and a definitive proof of the stagnation limit requires substantial new analysis.  

\subsection{Governing equation}

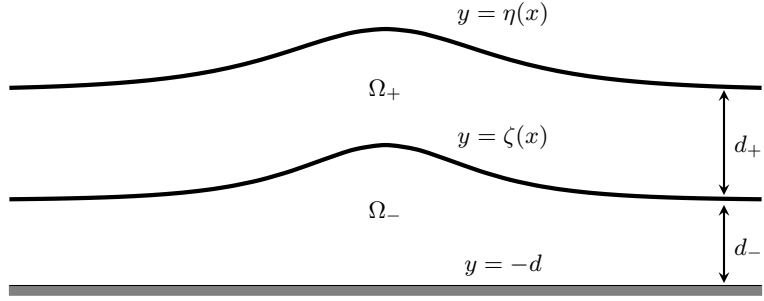
\begin{figure}
\centering
\begin{tikzpicture}[xscale=1.25,yscale=0.65]
\draw [fill=gray,thin,gray] (-4,-1.7) rectangle (4,-1.5);				
\draw [thin,-] (-4, -1.5) -- (4, -1.5);

\draw [ultra thick,domain=-4:4] plot[smooth] (\x, {0.25+2.25/(exp(1.25*\x)+exp(-1.25*\x))});		
\draw [domain=-4:4,ultra thick] plot[smooth] (\x, {2.5+2.5/(exp(\x)+exp(-\x))});		

\node [below  ] at (1.25,2) {\footnotesize $y = \zeta(x)$};
\node [below  ] at (1.25,4.5) {\footnotesize $y =\eta(x)$};
\node [below  ] at (1.25,-0.65) {\footnotesize $y = -d$};

\node [right] at (3.6,1.35) {\footnotesize $d_+$};
\draw [thick,stealth-stealth]  (3.6,0.35) -- (3.6, 2.5) ; 

\node [right] at (3.6,-0.75) {\footnotesize $d_-$};
\draw [thick,stealth-stealth]  (3.6,0.15) -- (3.6, -1.45) ; 

\node  at (0,2.5) {\footnotesize $\Omega_+$};
\node  at (0,0) {\footnotesize $\Omega_-$};

\end{tikzpicture}
    \caption{Configuration of the fluid domain}
    \label{fluid domain figure}
\end{figure}

Let us now formulate the problem mathematically. We consider waves propagating through a two-dimensional body of water. They are traveling in the sense that they evolve by translating to the right with a fixed wave speed $c>0$. By adopting a moving reference frame, all time dependence in the system can therefore be eliminated.  Suppose the water is organized into two continuously density stratified layers that are separated by a free boundary. The lower layer is bounded from below by an impermeable bed at $\{y=-d\}$ for a fixed $d>0.$ The upper layer lies below a free boundary above which is vacuum at constant pressure. We, therefore, write the fluid domain as $\Omega= \Omega_+ \cup \Omega_-$, where $\Omega_+$ is the upper layer
\[\Omega_+=\{(x,y)\in \mathbb{R}^2: \zeta(x)<y<\eta(x)\},\] and
$\Omega_-$ is the lower layer
\[\Omega_-=\{(x,y)\in \mathbb{R}^2: -d<y<\zeta(x)\}.\] 
This assumes that the internal and upper interfaces are the graphs of the a priori unknown functions $\zeta$ and $\eta$, respectively. For solitary waves, we must have that $\zeta$ and $\eta$ limit to some far-field heights as $x \to \pm\infty$.  We let $d_\pm$ denotes the asymptotic thickness of the layer $\Omega_\pm$, so that $d = d_+ + d_-$; see Figure~\ref{fluid domain figure}.

Denote by $(u,v) : \overline{\Omega_+} \cup \overline{\Omega_-} \to \mathbb{R}^2$ the fluid velocity, $P : \overline{\Omega} \to \mathbb{R}$ the pressure, and let $\varrho : \overline{\Omega_+} \cup \overline{\Omega_-} \to \mathbb{R}$ be the density. For physical reasons, we require that $\varrho$ be strictly positive and that $y \mapsto \varrho(\cdot,y)$ is non-increasing.  It is important to note here that the velocity and density will in general not be continuous over the internal interface.

In the moving frame, traveling water waves are governed by the incompressible steady Euler system:
\begin{equation}\label{IncomEuler}
 \left\{ \begin{aligned}
    u_x+v_y&=0 \\
    \varrho(u-c)u_x+\varrho v u_y&=-P_x \\\varrho(u-c)v_x+\varrho v v_y&=-P_y-g\varrho\\
\end{aligned} \right. \qquad \textup{in} \; \Omega,
\end{equation} 
where $g>0$ is the gravitational constant. We also assume mass conservation along the flow which is formulated in the following form
\begin{equation}\label{massconservation}
    (u-c)\varrho_x +v \varrho_y=0 \quad \textup{in} \; \Omega.
\end{equation} On the boundaries, we impose the standard kinematic and dynamic conditions,
\begin{equation}\label{boundarycond}
 \begin{cases}
    v=0 &\quad \textup{on}\;y=-d,\\
   v=(u-c)\eta_x &\quad \textup{on}\;y=\eta(x),\\
    v=(u-c)\zeta_x &\quad \textup{on}\;y=\zeta(x),
    \\P=P_{\textup{atm}}&\quad \textup{on}\;y=\eta(x),\\\jump{P}=0 &\quad \textup{on}\;y=\zeta(x).\\
\end{cases}  
\end{equation}
Here $P_{\textrm{atm}}$ is the (constant) atmospheric pressure.  Note also that the third equation in \eqref{boundarycond} implies that  ${v}/(u-c)$ is continuous over the internal interface.

We also require that there is no horizontal stagnation:
\begin{equation}\label{nohorzstag}
    u-c<0 \quad \textup{in}\; \overline{\Omega}.
\end{equation}This assumption will be crucial for our reformulation of the problem later. Recall that the \textit{streamlines} are the integral curves of the relative velocity field $(u-c,v)$.  The first three equations in \eqref{boundarycond} ensure that the bed, internal interface, and upper boundary are streamlines.  As a consequence of \eqref{nohorzstag}, every streamlines extend from $-\infty$ to $\infty$ and is given by the graph of a single-valued function of $x$. 

To study solitary waves, we must also specify the background current.  This takes the form of the asymptotic conditions
\begin{equation}\label{asymptotic}
    (u,v)\rightarrow{(\mathring{u},0)},\quad \varrho \rightarrow{\mathring{\varrho}},\quad \eta \rightarrow{0}, \quad \zeta \rightarrow{-d_+} \qquad \textup{as} \;|x| \rightarrow{\infty},
\end{equation}
where the convergence is uniform in $y$.  Here, $\mathring{u}:=\mathring{u}(y)$ is the far field horizontal velocity profile and $\mathring{\varrho}=\mathring{\varrho}(y)$ is the far field density profile. It is convenient to replace $\mathring{u}$ by the scaled asymptotic relative horizontal velocity $u^*$ given by
\begin{equation} \label{ asymptotic of horizontal velocity}
    \mathring{u} =c-Fu^*.
\end{equation} 
The parameter $F>0$ is referred to as the \textit{Froude number} and can be thought of as the dimensionless wave speed. From the literature of traveling waves, it is expected that there exists a critical Froude number $\Fcr$ that separates the regimes where periodic waves and solitary waves exist.  In particular, (nontrivial) solitary waves must be \textit{supercritical} in that $F > \Fcr$.  This fact has recently been proved for the case of homogeneous density by Kozlov, Lokharu, and Wheeler \cite{kozlov2020nonexistence}.  A rigorous definition of $\Fcr$ for the present system is given in Section~\ref{Linearized operators}.

Lastly, we recall some terminology describing the qualitative properties of water waves. A \textit{laminar flow} is a wave whose streamlines are all parallel to the bed; these form the class of trivial solutions of the problem. A \textit{solitary wave of elevation} is a wave where each streamline lies above its limiting height upstream and downstream. In particular, given our coordinates system, this implies $\eta$ is strictly positive. A traveling wave is called \textit{symmetric} provided that $u$ and $\eta$ are even in $x$ while $v$ is odd. Finally, a symmetric waves is \textit{monotone} if the slope of the streamlines, $v/(u-c)$, is negative to the left of the crest at $x=0$ and above the bed.

\subsection{Statement of results}
Our first contribution is a systematic existence theory for large-amplitude stratified solitary waves with arbitrary piecewise smooth density distribution and horizontal velocity profile at infinity. 
\begin{theorem}[Large-amplitude solitary waves]\label{thm:main}
Fix H\"older exponent $\alpha \in (0,1)$, wave speed $c>0$, far-field depths $d_+, d_- > 0$, gravitational constant $g>0$.  For any (strictly positive) asymptotic relative velocity and density profiles
\begin{equation} \label{regularity ustar}
u^*, \, \mathring{\varrho} \in C^{8+\alpha}([-d,-d_+],\mathbb{R}_+) \cap C^{8+\alpha}([-d_+,0],\mathbb{R}_+),
\end{equation}
there exists a continuous global curve
\begin{equation}\label{globalcurve}
    \mathscr{C}=\{(u(s),v(s),\eta(s),\zeta(s), F(s)):s\in (0,\infty)\}
\end{equation}
of solitary wave solutions to \eqref{IncomEuler}--\eqref{asymptotic}, exhibiting the regularity
\begin{equation}\label{regularityofuv}
    u(s), \, v(s) \in C^{8+\alpha}(\overline{\Omega_+(s)}) \cap C^{8+\alpha}(\overline{\Omega_-(s)}), \qquad \eta(s), \, \zeta(s) \in C^{9+\alpha}(\mathbb{R})
\end{equation} 
where $\Omega(s):=\Omega_+(s) \cup \Omega_-(s)$ is the corresponding fluid domain. The global solution curve $\mathscr{C}$ enjoys the following properties:
\begin{enumerate}[label=\normalfont{(\alph*)}]
    \item\label{stagnation}\textup{(Stagnation limit)} Following $\mathscr{C}$, we encounter waves that are arbitrarily close to having horizontal stagnation:
    \begin{equation*}
        \lim_{s\rightarrow{\infty}} \inf_{\Omega(s)}|c-u(s)|=0.
    \end{equation*}
    
    \item\label{critical laminar} \textup{(Critical laminar flow)} The curve begins at the critical laminar flow:
    \begin{equation*}
        \lim_{s\rightarrow{0}}(u(s),v(s),\eta(s),\zeta(s),F(s))=(c-F_{\mathrm{cr}}u^*,0,0,-d_+,F_{\textup{cr}}).
    \end{equation*}
    
    \item\label{symmetry and monotonicity} \textup{(Symmetry and monotonicity)} All solutions on $\mathscr{C}$ are symmetric waves of elevation, monotone, and supercritical.
\end{enumerate}
\end{theorem}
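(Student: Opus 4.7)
The plan is to reformulate the free-boundary problem as a quasilinear transmission problem on a fixed strip via a semi-hodograph transformation, construct a local curve of small-amplitude solitary waves by a center manifold reduction at the critical laminar flow, extend the curve globally using analytic bifurcation theory, and then identify horizontal stagnation as the only possible degeneration by combining nodal properties with uniform a priori estimates. To begin, I would apply the Dubreil-Jacotin transform. Because $c-u>0$ throughout $\overline{\Omega}$ by \eqref{nohorzstag}, every streamline is the graph of a function of $x$, so the pseudo-stream function $\psi$ defined by $\psi_y=\sqrt{\varrho}(c-u)$ and $\psi_x=-\sqrt{\varrho}v$, together with a density-weighted Yih scaling, furnishes a change of variables $(x,y)\mapsto(x,\psi)$ mapping $\Omega$ onto a fixed strip. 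The internal interface, bed, and upper surface all become level sets of $\psi$, and \eqref{IncomEuler}--\eqref{massconservation} reduce to a single quasilinear Yih-Long equation for the height function $h=h(x,\psi)$ with a Bernoulli-type boundary condition at the top encoding $P=P_{\textup{atm}}$ and a pressure-jump transmission condition across the internal streamline $\{\psi=\psi_*\}$. I would set up this system in Hölder spaces that track the piecewise smoothness required by \eqref{regularityofuv}.

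Linearizing at the laminar flow $h=H(\psi;F)$ corresponding to $(u,v)=(c-Fu^*,0)$ and analyzing the associated Sturm-Liouville transmission problem in $\psi$ (cf.\ Section~\ref{Linearized operators}) identifies $\Fcr$ as the value of $F$ at which a zero eigenvalue first emerges. A spatial-dynamics reformulation $\partial_x W=\mathcal{L}(F)W+\mathcal{N}(W)$ combined with a Kirchgässner-Mielke center manifold reduction then yields a two-dimensional reduced ODE admitting a homoclinic orbit for $F$ slightly exceeding $\Fcr$; this produces the small-amplitude end of $\mathscr{C}$ and establishes part \ref{critical laminar}. To continue globally I would invoke the Buffoni-Toland real-analytic global bifurcation theorem. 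Fredholm index zero of the linearization follows from the exponential decay of solitary profiles and invertibility of the far-field operator off a discrete set of $F$; the resulting continuous, locally analytic curve satisfies one of the standard alternatives, with the loop alternative excluded by analyticity together with the transcritical structure at $\Fcr$. The qualitative properties in \ref{symmetry and monotonicity} are then propagated along $\mathscr{C}$ by continuity: elevation and symmetry follow from an Alexandrov moving-plane argument adapted to the two-layer setting, monotonicity from the maximum principle applied to $h_x$ together with the Serrin corner lemma at the crest and trough, and supercriticality from transversality at $\Fcr$ combined with a nonexistence result for subcritical solitary waves.

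The hard part will be \ref{stagnation}. The strategy is to establish uniform bounds on $(u,v,\eta,\zeta)$ in the norms of \eqref{regularityofuv} along any sequence on which $\inf_{\Omega}(c-u)$ remains uniformly positive, thereby ruling out every blowup alternative except horizontal stagnation. The essential ingredients are an amplitude bound from flow-force conservation, Schauder-type estimates for the transmission problem with piecewise Hölder coefficients that respect the jump across $\{\psi=\psi_*\}$, and exponential decay at spatial infinity derived from supercriticality by an Agmon-type energy method. The principal technical obstacle lies in the second ingredient: the internal interface is itself unknown, its regularity is coupled to that of $\zeta$, and the coefficients of the Yih-Long equation have genuine jumps across it. Handling this requires transmission-problem Schauder theory together with a careful analysis of the compatibility between $\zeta$ and the pressure-jump boundary condition, which constitutes the core new analysis alluded to in the introduction.
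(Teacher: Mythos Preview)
Your overall architecture matches the paper closely, but there are two genuine gaps in the global/large-amplitude part.

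First, you invoke Buffoni--Toland directly and dismiss only a loop alternative. On an unbounded domain the zero set $\mathcal{F}^{-1}(0)$ is not a priori locally pre-compact, so classical analytic global bifurcation does not apply without modification. The paper uses the variant of Chen--Walsh--Wheeler in which \emph{loss of compactness} appears as an explicit alternative alongside blowup; this alternative must then be ruled out by a separate argument. Concretely, if a bounded sequence on $\mathscr{C}$ failed to converge, one could translate and extract a limit that is a nontrivial monotone front, which is then excluded by a conjugate-flow/flow-force argument (Theorem~\ref{non existence of monotone fronts}). Your proposal contains no mechanism for this step, and exponential decay alone does not supply it: the danger is precisely a broadening scenario in which decay is lost along the sequence.

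Second, you correctly aim to show that uniform positivity of $c-u$ forces uniform bounds, but you misidentify where the difficulty lies. After the Dubreil--Jacotin transform the internal interface is a \emph{fixed} line $\{p=\hat{p}\}$, and transmission-type Schauder estimates there (via Meyers' $W^{1,r}$ theory, as in Section~\ref{uniform regularity section}) are relatively standard once uniform ellipticity is known. The genuinely new obstacle is one level lower: an $L^\infty$ bound on the relative velocity $|\nabla\psi|$ itself. In the continuously stratified case this follows from a pressure lower bound via the maximum principle and Bernoulli's law, but that argument breaks down here because the velocity is discontinuous across the interface and the transmission condition does not cooperate with the maximum principle. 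The paper's resolution (Lemma~\ref{interface velocity bounds lemma}) combines a local $L^2$ velocity estimate with the Caffarelli--Jerison--Kenig \emph{almost monotonicity formula} applied to the truncated stream functions on either side of $\mathscr{I}$; this controls $|\nabla\psi_+|\,|\nabla\psi_-|$ pointwise on the interface, and the Bernoulli jump condition \eqref{jump in grad psi} then bounds each factor separately. Without this ingredient your Schauder bootstrap cannot start, since you have no control on $h_q/h_p$ or $1/h_p$ to guarantee uniform ellipticity.
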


\begin{remark} Let us make a few remarks.
\begin{enumerate}[label=\normalfont{(\roman*)}]
\item This result assumes a single discontinuity in the far-field density profile. In fact, the theory easily extends to finitely many discontinuities with the only cost being more cumbersome notation.  The resulting waves would then be organized into many layers.

\item The $C^{8+\alpha}$ regularity asked for here is almost certainly much more than necessary. We impose it in order to satisfy the hypothesis of the center manifold reduction method from \cite{chen2019center}, which in turn only needs it due to the technical lemma \cite[Lemma 2.1]{Amick1994Center}.  We conjecture that the regularity of $u^*$ and $\mathring{\varrho}$ in each layer can be relaxed to $C^{2+\alpha}$, which will then give solutions with 
\begin{equation}\label{conjectured regularityofuv}
    u, \, v \in C^{2+\alpha}(\overline{\Omega_+}) \cap C^{2+\alpha}(\overline{\Omega_-}), \qquad \eta, \, \zeta \in C^{3+\alpha}(\mathbb{R}).
\end{equation}
A proof of this fact would require a lengthy digression into the details of those two papers, and so we do not pursue it here.  Following the approach in \cite{akers2019solitary}, moreover, one expects that it should be possible to take $u^*$ to be merely Lipschitz continuous in each layer.
\end{enumerate}
\end{remark}

We also establish a number of qualitative properties of stratified solitary waves.  These are of independent interest but also crucially important to the proof of Theorem~\ref{thm:main}.  We list here the two most significant, but others can be found in Section~\ref{Qualitative}.

The first result states that supercritical solitary waves of elevation are necessarily symmetric and monotone.  This is achieved through a moving planes argument in the spirit of Li \cite{Li1991monotonicity} and Maia \cite{Maia1997symmetryofinternalwaves}.  

\begin{theorem}[Symmetry]\label{symmetry} Let $(u, v,\eta,\zeta ,F)$ be a supercritical wave of elevation that solves \eqref{IncomEuler}--\eqref{nohorzstag} and enjoys the regularity \eqref{conjectured regularityofuv} with 
\[\norm{ u }_{C^2(\Omega_+) \cap C^2(\Omega_-)}, \norm{ v }_{C^2(\Omega_+) \cap C^2(\Omega_-)}, \norm{ \eta }_{C^3(\mathbb{R})},\norm{ \zeta }_{C^3(\mathbb{R})}<\infty.\]
Suppose that \[(u,v)\rightarrow{(\mathring{u},0)}\] uniformly as $x \rightarrow{+\infty}$ or as $(x\rightarrow{-\infty}),$ then after an appropriate translation, the wave is a monotone and symmetric solitary wave. 
\end{theorem}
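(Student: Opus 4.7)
The plan is to implement the moving planes method in the semi-Lagrangian (Dubreil--Jacotin) reformulation of the problem, adapted to accommodate both the free surface and the internal interface. Using the no-stagnation condition \eqref{nohorzstag} together with incompressibility and mass conservation \eqref{massconservation}, introduce the pseudo-stream function $\psi$ defined by $\psi_y = \sqrt{\varrho}\,(u-c)$ and $\psi_x = -\sqrt{\varrho}\,v$, which is strictly decreasing in $y$ on each layer. The density becomes a function of $\psi$ alone, and after switching to the variables $(q,p)=(x,-\psi)$, the fluid domain is mapped to a fixed strip $\{p_0 < p < 0\}$ with a single interior interface $\{p = p_1\}$. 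In these coordinates, the Euler system reduces to a quasilinear elliptic equation (Yih's equation) for the height $h=h(q,p)$, together with a Bernoulli jump condition at $p=p_1$ and a Bernoulli condition at $p=0$. The asymptotic profile $\mathring{u}$ corresponds to a $q$-independent solution $H=H(p)$, and the elevation hypothesis translates to $h(q,p) > H(p)$.

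Assuming (without loss of generality) that the asymptotic limit holds as $x \to +\infty$, apply the moving planes method to $w^\lambda(q,p) := h(2\lambda - q, p) - h(q,p)$ on the reflected half-strip $\Sigma^\lambda := \{q > \lambda\}$. Linearizing Yih's equation at $h$ shows that $w^\lambda$ solves a uniformly elliptic linear PDE in each layer, together with a linear transmission condition along $\{p = p_1\}$ obtained by linearizing the Bernoulli jump; supercriticality $F > \Fcr$ provides the crucial spectral control on the zeroth-order coefficient that renders the linearized operator amenable to the maximum principle. For $\lambda$ sufficiently large, $h(q,\cdot) - H$ is uniformly small on $\Sigma^\lambda$ by the asymptotic hypothesis, and combining this with the strict inequality $h > H$ furnishes $w^\lambda \geq 0$ throughout $\Sigma^\lambda$. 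Decrease $\lambda$ to the critical value $\lambda_0 := \inf\{ \lambda : w^\mu \geq 0 \text{ in } \Sigma^\mu \text{ for all } \mu > \lambda \}$. At $\lambda_0$, apply a strong maximum principle and Hopf boundary point lemma, both upgraded to a transmission version across $\{p = p_1\}$, to conclude that either $w^{\lambda_0} \equiv 0$ — giving the desired symmetry about $q = \lambda_0$ — or $w^{\lambda_0} > 0$ strictly in the interior with strict Hopf inequalities on the bed and free surface. In the latter case, a standard sliding argument combined with the decay at $q = +\infty$ provides a contradiction with the definition of $\lambda_0$. Monotonicity of the symmetric wave then follows because $\partial_q h$ satisfies the same linear homogeneous transmission problem and inherits a definite sign on each side of $q = \lambda_0$ from the Hopf lemma; in particular, the slope $v/(u-c)$ of each streamline is strictly negative to the left of the crest. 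The one-sided asymptotic hypothesis is enough to identify $\lambda_0$ as finite, and symmetry then upgrades the asymptotic at $-\infty$ for free.

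The principal obstacle is the internal interface: the classical moving planes toolbox — strong maximum principle, Hopf boundary point lemma, Serrin corner lemma — must be promoted to transmission versions that respect the Bernoulli jump at $p = p_1$. The Dubreil--Jacotin reformulation is decisive here, as it places the interface on a flat horizontal line $\{p = p_1\}$, so that the transmission condition becomes an algebraic relation between the one-sided normal derivatives of $w^{\lambda_0}$ with coefficients controlled by the Bernoulli constants and the gravity-density jump. Combined with supercriticality, which ensures the spectral bound needed for the maximum principle on each layer, this allows a non-trivial zero of $w^{\lambda_0}$ on the interface to propagate into both layers, delivering the strong maximum principle conclusion. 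A secondary difficulty is the unbounded horizontal extent: the barrier argument for $\lambda$ large must be built carefully from the $C^3$ estimate on $\eta,\zeta$ and the uniform convergence $(u,v) \to (\mathring{u},0)$, perhaps together with a Phragmén--Lindelöf type argument to rule out loss of positivity at infinity during the sliding.
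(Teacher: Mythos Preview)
Your approach is essentially the same as the paper's: moving planes in the Dubreil--Jacotin variables, with the reflected difference satisfying a linear elliptic transmission problem, supercriticality furnishing a positive supersolution that absorbs the wrong-sign zeroth-order term, and Hopf/Serrin corner arguments across the interface to propagate strict positivity.

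There is, however, one genuine gap. You assert that ``the one-sided asymptotic hypothesis is enough to identify $\lambda_0$ as finite.'' It is not. If $\lambda_0=-\infty$ then the sliding procedure yields $h_q\le 0$ on the entire strip, so $h$ is a monotone front limiting to $H$ at $q\to+\infty$ and to some (possibly different) $q$-independent state $H_-\ge H$ at $q\to-\infty$. Nothing in the moving-planes machinery or the one-sided decay rules this out; the paper handles this as a separate case (its Case~2) and invokes an independent result (Theorem~\ref{non existence of monotone fronts}) that such nontrivial monotone fronts do not exist. That nonexistence is proved via flow-force/conjugate-flow identities rather than maximum-principle arguments, so it is not a detail you can fold into the sliding step. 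Without it, your argument does not terminate.

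A smaller point: your phrase ``spectral control on the zeroth-order coefficient'' hides a concrete mechanism. The paper constructs an explicit comparison function $\tilde\Psi(p)$ (a perturbation of the critical Sturm--Liouville eigenfunction, available precisely because $F>\Fcr$) and writes $w^\lambda=\tilde\Psi u^\lambda$ so that the equation for $u^\lambda$ has the correct sign structure, including at the transmission interface. You should make clear that this substitution is what makes both the interior maximum principle and the Hopf-type transmission inequality go through; a Phragm\'en--Lindel\"of argument is not what is used.
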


The second theorem gives a uniform upper bound on the velocity for stratified solitary waves in terms of a lower bound on the Froude number and a bound away from horizontal stagnation.  

\begin{theorem}[Velocity bound] \label{thm:velocity}
Let $(u, v,\eta,\zeta ,F)$ be a solution to  \eqref{IncomEuler}--\eqref{ asymptotic of horizontal velocity} that enjoys the regularity \eqref{conjectured regularityofuv} and satisfies 
\[ F \geq F_0 > 0, \qquad \sup_{\Omega} (u-c) \leq -\delta < 0.  \]
Then,
\[ \sup_{\Omega} \left( (u-c)^2 + v^2\right) < C,\]
for a constant $C = C(F_0, \delta,u^*,\mathring{\varrho}) > 0$.
\end{theorem}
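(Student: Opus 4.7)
My plan is to combine a Bernoulli-type conservation law along streamlines with a maximum principle argument for the pressure.

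\textbf{Step 1 (Stream function and Bernoulli).} Thanks to the no-horizontal-stagnation assumption \eqref{nohorzstag}, in each layer the stream function $\psi$ determined by $\psi_y = c - u$ and $\psi_x = v$ is well defined and strictly monotone in $y$. Mass conservation \eqref{massconservation} forces $\varrho$ to be constant on each streamline, so $\varrho = \varrho(\psi)$ in each layer. A direct computation from \eqref{IncomEuler} shows that the total head
\[
E := \tfrac12\bigl((u-c)^2 + v^2\bigr) + gy + \frac{P}{\varrho}
\]
satisfies $(u-c) E_x + v E_y = 0$, so $E = E(\psi)$.

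\textbf{Step 2 (Asymptotic evaluation).} Every streamline asymptotes as $|x| \to \infty$ to a unique height $y^{\ast} \in [-d, 0]$ on which $u \to c - F u^{\ast}(y^{\ast})$, $v \to 0$, and, by vertical integration of the hydrostatic momentum equation at infinity, $P \to P^{\ast}(y^{\ast}) := P_{\mathrm{atm}} + g \int_{y^{\ast}}^{0} \mathring{\varrho}(s)\,ds$. Passing to this limit in the Bernoulli identity gives the explicit formula
\[
E(\psi) = \tfrac12 F^{2} u^{\ast}(y^{\ast})^{2} + g y^{\ast} + \frac{P^{\ast}(y^{\ast})}{\mathring{\varrho}(y^{\ast})},
\]
which is a priori bounded in terms of $F$, $g$, $d$, and the fixed profiles $u^{\ast}, \mathring{\varrho}$. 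Rearranging yields
\[
\tfrac12\bigl((u-c)^{2} + v^{2}\bigr) = E(\psi) - gy - \frac{P}{\varrho},
\]
so because $y \geq -d$ and $\varrho \geq \inf \mathring{\varrho} > 0$, an upper bound on the speed reduces to a lower bound on $P$.

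\textbf{Step 3 (Pressure lower bound).} The crux is to prove $P \geq P_{\mathrm{atm}}$ throughout $\Omega$. I would do this via a maximum principle: in each layer the pressure satisfies a strictly elliptic second-order PDE, derived from the momentum equations by eliminating $u, v$ using the Bernoulli formula and the stream-function representation, with ellipticity constants controlled by $\delta$ through the uniform bound $u - c \leq -\delta < 0$. The relevant boundary data are $P = P_{\mathrm{atm}}$ on $y = \eta$, the jump condition $\jump{P} = 0$ across $y = \zeta$, and decay to the hydrostatic profile $P^{\ast}(y) \geq P_{\mathrm{atm}}$ as $|x| \to \infty$. A two-layer maximum principle argument exploiting $\jump{P} = 0$ then yields $P \geq P_{\mathrm{atm}}$ throughout $\Omega$. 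Feeding this back into Step~2 gives
\[
\sup_{\Omega}\bigl((u-c)^{2} + v^{2}\bigr) \leq 2 \sup_{\psi} E(\psi) + 2gd + \frac{2 P_{\mathrm{atm}}}{\inf \mathring{\varrho}},
\]
a finite constant depending only on the stated data.

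The principal obstacle is Step~3. In the homogeneous irrotational setting, $P$ is subharmonic by Bernoulli plus harmonicity of the velocity potential, and the maximum principle is immediate. Here, however, the vorticity generated by stratification and the discontinuity of $\varrho$ across the internal interface both fight a naive argument. Securing the pressure lower bound will require either a careful choice of auxiliary function that behaves well across $y = \zeta$, or a rewriting of the pressure equation in Dubreil-Jacotin semi-hodograph coordinates so that the two layers become fixed strips and the jump condition $\jump{P} = 0$ plus the coercivity supplied by $u - c \leq -\delta$ can be exploited uniformly.
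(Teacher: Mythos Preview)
Your outline through Step~2 is sound, and you have correctly isolated the crux of the argument: the lower bound on the pressure. However, Step~3 as written is a genuine gap, not merely a technicality. The paper explicitly notes that the maximum-principle route used for one-layer stratified waves (Chen--Walsh--Wheeler, following Varvaruca) \emph{fails} here precisely because of the discontinuity across the internal interface. The issue is that although $\jump{P}=0$ on $y=\zeta$, the gradient of $P$ (and hence the conormal derivative appearing in a Hopf-type argument) is discontinuous there, and there is no good sign on the jump that would let you glue a two-sided maximum principle. Your closing suggestion of ``a careful choice of auxiliary function that behaves well across $y=\zeta$'' is the right instinct, but no such function is identified, and the paper does not find one either.

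What the paper actually does is decouple the interface from the bulk. First it proves a local $L^2$ bound on $\nabla\psi$ over unit-width vertical strips (an energy estimate in Dubreil--Jacotin variables using $\sup h_p<1/\delta$). This yields a uniform lower bound on the separation $\eta-\zeta$. Then --- and this is the key new idea --- it applies the Caffarelli--Jerison--Kenig \emph{almost monotonicity formula} to the rescaled positive and negative parts of $\psi+\hat p$ near a point of $\mathscr{I}$, obtaining an a~priori $L^\infty$ bound on $|\nabla\psi_\pm|$ along the interface. With the velocity already controlled on $\mathscr{I}$, Bernoulli gives a pressure bound there, and \emph{only then} does the paper run a maximum-principle argument, now applied separately in each layer to the auxiliary function $f=P+M\psi$ with $M$ chosen large. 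So the missing ingredient in your proposal is a mechanism --- independent of any maximum principle --- for bounding the velocity on the internal interface; the almost monotonicity formula supplies exactly that.
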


Note that through Bernoulli's law, the above theorem can also be used to control the pressure. A result of this type is proved by Chen, Walsh, Wheeler \cite{chen2018existence} for one-layer stratified waves based on  Varvaruca's \cite{Varvaruca2009extremewaves} treatment of the constant density case.  That method, however, is not sufficient for the present setting, as the maximum principle argument it relies on struggles with the discontinuity of the velocity across the layers.  Our approach combines pressure bounds in the bulk with the ``almost monotonicity formula'' of Caffarelli--Kenig--Jerison \cite{Caffarelli2002monotonicity} to control the velocity near the internal interface.  

\subsection{Plan of the paper}\label{planofthepaper}
Let us now outline the general structure of the paper while explaining the main mathematical difficulties and how we will approach them.
 
We begin in Section~\ref{Formulation section} by non-dimensionalizing the governing equations. Applying the Dubreil-Jacotin transformation sends the fluid domain $\Omega$ to a slitted rectangular strip. In these variables the incompressible steady Euler system becomes a quasilinear elliptic PDE coupled with nonlinear transmission boundary conditions. Written as an abstract operator equation, it takes the form \[\mathcal{F}(w,F)=0,\] where $w$ is a new unknown measuring the deviation of the streamlines relative to the background current. To lay the ground work for the small-amplitude theory, in Section~\ref{Linearized operators}, we investigate the linearized operator at $w=0$. Restricting its domain to laminar flows, we arrive at a Sturm--Liouville type problem with a transmission condition. It is shown that there exists a critical value of the Froude number, $F=F_{\textup{cr}}$, for which $0$ is the principal eigenvalue.

As further preparation for the existence theory, in Section~\ref{Qualitative} we prove the qualitative results mentioned above. We also present a result on asymptotic monotonicity and nodal pattern of the solutions. The main tools used here are the maximum principle as well as integral identities.

Section~\ref{Small Amplitude} is where the small-amplitude existence theory is established. These solutions lie on a local curve, denoted by $\mathscr{C}_{\textup{loc}}$, that bifurcates from $(w,F)=(0,F_{\textup{cr}})$. For periodic waves, small solutions are usually found via the classical Lyapunov--Schmidt reduction. However, this method can not be applied directly here because $\mathcal{F}_w(0,\Fcr)$ is \textit{not Fredholm} as a consequence of the unboundedness of the domain and the definition of $\Fcr$.  This analytical challenge is intrinsic to the study of (small-amplitude) solitary waves.  For constant density rotational waves, Hur \cite{hur2008exact} constructed solutions using a Nash--Moser technique that generalized Beale's \cite{beale1977existence} treatment of the  irrotational case.  Considering the same problem, Groves and Wahl\'en in \cite{Wahlen2008small-amplitude} used a Hamiltonian spatial dynamics approach. This argument was adapted by Chen, Walsh, and Wheeler \cite{chen2018existence} to the one-layer continuously stratified regime, and by Wang \cite{wang2017solitary} for two-phase flows with constant density in each layer.  In the present paper, however, we employ a center manifold reduction ``without a phase space'' based the recent paper \cite{chen2019center}.
 
 In Section~\ref{largeamplitude}, we continue $\mathscr{C}_{\textup{loc}}$  globally to obtain the curve $\mathscr{C}$ that extends into the large-amplitude regime. Again, the unboundedness of the domain presents a significant obstruction to standard bifurcation theoretic techniques. For example, it is not a priori clear that $\mathcal{F}^{-1}(0)$ is locally pre-compact or that $\mathcal{F}$ is locally proper. This is not just a technical concern. Indeed, it is well-known that in other stratified regimes, solitary waves may broaden into an infinitely long ``table top''; see, for example, \cite{turner1988broadening}.
 Because these waves remain bounded in any H\"older space but do not converge to a localized solution, this scenario implies a lack of compactness for the zero set of $\mathcal{F}$.
 
 The classical strategy for constructing large-amplitude solitary waves is to view them as the limit of periodic waves as the period tends to infinity.  This is done, for example, by Amick and Toland  \cite{amicktoland1981periodic} in their study of the constant density irrotational wave case.  They first construct global families of periodic waves, then take the period to infinity using a uniform estimates and an application of the Whyburn lemma. This results in a global connected set of solutions.
 
 Our approach is based on the analytic global bifurcation theory introduced by Chen, Walsh, and Wheeler \cite{chen2018existence} which is a variant of the classical work of Dancer \cite{dancer1973bifurcation,dancer1973global} and Buffoni--Toland \cite{buffoni2003analytic}. Essentially, we treat the loss of compactness as an alternative and show that it must manifest as the broadening phenomena mentioned above. Using the qualitative theory, we can then rule out this possibility leaving only the stagnation limit.  
 
 Lastly, for the convenience of the reader, Appendix~\ref{quoted results appendix} contains some results from the literature that are drawn upon throughout the paper.
 
\section{Formulation}\label{Formulation section}

In this section, we introduce several reformulations of the problem that will make it more amenable to analysis.  We also record a number of notational conventions used throughout the paper.

\subsection{Non-dimensionalization}
Let us denote the density along the free surface as follows:
\begin{equation}\label{denistyonthesurface}
\varrho_0:=\mathring{\varrho}(0).
\end{equation} Next, we normalize the $u^*$ to satisfy 

\begin{equation}\label{normalizedu*}
    \int_{-d}^{0} \sqrt{\mathring{\varrho}(y)}u^*(y)\;\,dy =\sqrt{g\varrho_0d^3}.
\end{equation} In addition, we consider the \textit{(relative) pseudo-volumetric mass} $m>0$:
\begin{equation}\label{volumetric}
    m:=\int_{-d}^{\eta(x)}\sqrt{\varrho(x,y)}(c-u(x,y))\,dy.
\end{equation} One can check that $m$ is independent of $x$. Letting $|x| \rightarrow{\infty},$ we obtain
\[
    m=\int_{-d}^{0}\sqrt{\mathring{\varrho}(y)}(c-\mathring{u}(y))\,dy=F\int_{-d}^{0}\sqrt{\mathring{\varrho}(y)}u^*(y)\,dy.
\]Using the above equation and  \eqref{normalizedu*}, we see that
\begin{equation}\label{F,m}
    \dfrac{g \varrho_0 d^3}{m^2}=\dfrac{1}{F^2}.
\end{equation}
\begin{subequations}

We non-dimensionalize the coordinates using the asymptotic depth $d$ as the characteristic length scale, which gives us
\begin{equation*}
   (\tilde{x},\tilde{y}):=\dfrac{1}{d}(x,y), \quad \tilde{\eta}(\tilde{x}):=\dfrac{1}{d}\eta(x), \quad \tilde{\zeta}(\tilde{x}):=\dfrac{1}{d}\zeta(x).
\end{equation*} Likewise, the density is rescaled using $\varrho_0$ in \eqref{denistyonthesurface}
\begin{equation*}
    \tilde{\varrho}(\tilde{x},\tilde{y})=\dfrac{1}{\varrho_0}\varrho(x,y), \quad \tilde{\mathring{\varrho}}(\tilde{y}):=\dfrac{1}{\varrho_0}\mathring{\varrho}(y),
\end{equation*} 
and the velocity is non-dimensionalized via the Froude number
\begin{equation*}
\begin{split}
    \tilde{u}(\tilde{x},\tilde{y}):&=\dfrac{\sqrt{\varrho_0}d}{m}u(x,y), \quad \tilde{v}(\tilde{x},\tilde{y}):=\dfrac{\sqrt{\varrho_0}d}{m}v(x,y), \\& \tilde{c}:=\dfrac{\sqrt{\varrho_0}d}{m}c, \quad \tilde{\mathring{u}}(\tilde{y}):=\dfrac{\sqrt{\varrho_0}d}{m}\mathring{u}(y).
\end{split}
\end{equation*}
Finally, the pressure is rescaled by taking
\begin{equation}\label{rescaledeulerincom}
    \tilde{P}(\tilde{x},\tilde{y}):=\dfrac{d^2}{m^2}\left(P(x,y)-P_{\textup{atm}}\right).
\end{equation}
\end{subequations}

Rewriting \eqref{IncomEuler} and \eqref{massconservation}, we finally obtain the non-dimensionalized system
\begin{equation*}
  \left\{\begin{aligned}
    \tilde{u}_{\tilde{x}}+\tilde{v}_{\tilde{y}}&=0\\
    \tilde{\varrho} (\tilde{u}-\tilde{c})\tilde{u}_{\tilde{x}}+\tilde{\varrho} \tilde{v} \tilde{u}_{\tilde{y}}&=-\tilde{P}_{\tilde{x}} \\\tilde{\varrho} (\tilde{u}-\tilde{c})\tilde{u}_{\tilde{x}}+\tilde{\varrho} \tilde{v} \tilde{u}_{\tilde{y}}&=-\tilde{P}_{\tilde{y}}-\dfrac{1}{F^2}\tilde{\varrho}\\
    (\tilde{u}-\tilde{c})\tilde{\varrho}_{\tilde{x}} +\tilde{v} \varrho_y&=0
\end{aligned}\right. \qquad \textup{in} \; \tilde{\Omega},
\end{equation*} 
where $\tilde{\Omega}$ is the rescaled domain:
\begin{equation*}
    \tilde{\Omega}:=\{(\tilde{x},\tilde{y}) \in \mathbb{R}^2:-1<\tilde{y}<\tilde{\zeta}(\tilde{x})\cup \tilde{\zeta}(\tilde{x})<y<\tilde{\eta}(\tilde{x})\}.
\end{equation*} 

The boundary conditions after rescaling read
\begin{equation}\label{Dimensionless Kinematic and Dynamic BC}
\left \{\begin{aligned}
    \tilde{v}&=0 &\quad \textup{on}\;\tilde{y}&=-1,\\
    \tilde{v}&=(\tilde{u}-\tilde{c})\tilde{\eta}_{\tilde{x}} \quad& \textup{on}\;\tilde{y}&=\tilde{\eta}(\tilde{x}),\\
    \tilde{v}&=(\tilde{u}-\tilde{c})\tilde{\zeta}_{\tilde{x}} \quad& \textup{on}\;\tilde{y}&=\tilde{\zeta}(\tilde{x}),
  \\P&=P_{\textup{atm}}\quad& \textup{on}\;\tilde{y}&=\tilde{\eta}(\tilde{x}),\\\jump{ P } &=0\quad& \textup{on}\;\tilde{y}&=\tilde{\zeta}(\tilde{x}).\\
\end{aligned}  \right.
\end{equation} 
Moreover, the asymptotic condition in \eqref{asymptotic} become
\begin{equation}\label{rescaledasym}
    (\tilde{u},\tilde{v})\rightarrow{(\tilde{\mathring{u}},0)},\quad \tilde{\varrho} \rightarrow{\tilde{\mathring{\varrho}}},\quad \tilde{\eta} \rightarrow{0}, \quad \tilde{\zeta} \rightarrow{-\dfrac{d_+}{d}} \qquad \textup{as} \;|\tilde{x}| \rightarrow{\infty}.
\end{equation} Combining \eqref{F,m} and \eqref{asymptotic} gives us

\begin{equation}\label{horizontalvelocityatinfinity}
    \tilde{\mathring{u}}(\tilde{y})-\tilde{c}=-\dfrac{1}{\sqrt{gd}}\tilde{u}^*(\tilde{y}).
\end{equation} 
Note that this means that the asymptotic state in \eqref{rescaledasym} is independent of $F$.

For the sake of cleaner notation, in what follows we will use the dimensionless variables but drop the tildes.  

\subsection{Stream function formulation}
Let us introduce the following \textit{relative pseudo stream function} $\psi$:
\begin{equation*}
    \psi_x=-\sqrt{\varrho}v, \quad \psi_y=\sqrt{\varrho}(u-c).
\end{equation*} 
The existence of $\psi$ is guaranteed by the incompressibility of the flow and the fact that density is constant along the streamlines. Indeed, from this definition we see that the streamlines are precisely the level sets of $\psi$. In particular, the kinematic boundary condition tells us that $\psi$ is constant on the surface, internal interface, and bed.  Without loss of generality, we may set $\psi=0$ on $\{y=\eta(x)\}.$ Thanks to equation \eqref{volumetric} together with the rescaling of coordinate, density and velocities, we then have $\psi=1$ on the floor $\{y=-1\}$. Let us denote its value on $\{y=\zeta(x)\}$ by $-\hat{p}$; the reason for this will become clear in the next subsection.  Observe that the assumption of no horizontal stagnation \eqref{nohorzstag} becomes:
\begin{equation}\label{nostagrnationstreamfuction}
    \psi_y<0\quad \textup{in}\; \overline{\Omega}.
\end{equation} 

Via the mass conservation in \eqref{massconservation}, we know that density is transported, hence constant, along each stream line. That allows us to rewrite the density in terms of $\psi$, otherwise known as \textit{streamline density function}:
\begin{equation*}
    \varrho(x,y)=\rho(-\psi(x,y)).
\end{equation*} 
Naturally, $\rho$ is determined by the limiting density profile $\mathring{\varrho}$.  It is easily verified that the regularity assumption \eqref{regularity ustar} implies $\rho \in C^{8+\alpha}([-1,\hat{p}]) \cap C^{8+\alpha}([\hat{p},0])$.  As the stratification here is assumed to be stable, moreover, we have that $\rho' \leq 0$ in upper and lower domain.

By Bernoulli's law, we know that the quantity
\begin{equation}\label{bernoulliequation}
    E=\dfrac{\varrho}{2}\big((u-c)^2+v^2\big)+P+\dfrac{1}{F^2}\varrho y
\end{equation} 
is constant along each streamline. This fact together with the no horizontal stagnation implies that there exists a so-called \textit{Bernoulli function} $\beta$ such that
\begin{equation}\label{bernoullifunction}
    \dfrac{dE}{d\psi}=-\beta(\psi)\quad \textup{in} \; \Omega.
\end{equation}
Since it is constant on streamlines, all of which extend fully upstream and downstream only on the stream function, $\beta$ can be reconstructed from the background current and density profile; see Remark~\ref{beta remark} below.  In particular, for the regularity assumed in \eqref{regularity ustar}, we find that $\beta \in C^{7+\alpha}([0,-\hat{p}]) \cap C^{7+\alpha}([-\hat{p},1])$.  Note that the somewhat odd looking choice to view $\rho$ as a function of $-\psi$ while $\beta$ is a function of $\psi$ is done here to be in accordance with previous results in the literature. 

Following \cite[Lemma A.2]{ChenWalsh2016Continuous}, the governing equations in \eqref{IncomEuler} with the absence stagnation can be reformulated as Yih's equation:
\begin{equation}\label{YIH}
    \Delta \psi-\dfrac{1}{F^2}y\rho'(-\psi)+\beta(\psi)=0 \quad \textup{in}\; \Omega.
\end{equation}

Likewise, the boundary conditions become
\begin{equation}\label{boundaryconditioninstreamfunction}
\left \{\begin{aligned}
    \psi&=0 &\quad \textup{on}\;y&=\eta(x),\\
    |\nabla \psi|^2 + \dfrac{2}{F^2}\varrho(y+1)&=Q^{\eta}&\quad \textup{on}\;y&=\eta(x),\\
    \jump{|\nabla \psi|^2} + \dfrac{2}{F^2}\jump{\varrho} (y+1)&=Q^{\zeta}&\quad \textup{on}\;y&=\zeta(x),
    \\\psi&=1 &\quad \textup{on}\;y&=-1,\\
\end{aligned}  \right.
\end{equation} 
where 
\begin{equation*}
    Q^{\eta}:=\left.2\left(E+\dfrac{1}{F^2}\varrho\right)\right|_{y=\eta(x)}
    \textrm{and }
    Q^{\zeta}:=\left.2\left(\jump{ E} +\dfrac{1}{F^2}\jump{\varrho}\right)\right|_{y=\zeta(x)}
\end{equation*}
are constants.  Lastly, the asymptotic conditions in \eqref{asymptotic} now read
\begin{equation}\label{asymptoticconditioninpsi}
\nabla \psi \rightarrow{\left(0,\sqrt{\mathring{\varrho}}(\mathring{u}-c)\right)}, \quad \eta \rightarrow{0}, \quad \zeta \rightarrow{-d_+}, \quad \varrho \rightarrow{\mathring{\varrho}}  \quad \textup{as} \; |x|\rightarrow{\infty}.
\end{equation} 
For later use, we introduce the convention that $\psi_\pm$ denotes the restriction of $\psi$ to $\Omega_\pm$. Via the continuity of the pressure and Bernoulli's law on the internal interface, we have
\begin{equation}\label{jump in grad psi}
    \dfrac{1}{2}\left(|\nabla {\psi}_+|^2-|\nabla {\psi}_-|^2\right)=\dfrac{\jump{\varrho }}{F^2}y-\jump{ E }.
\end{equation}
\begin{remark} \label{beta remark}
The Bernoulli function $\beta$ can be expressed in terms of $\mathring{\varrho}$ and $\mathring{u}$ as follows.  Letting $\mathring{y}(p)$ be the asymptotic $y$-coordinate of the streamline $\{\psi=-p\}$, and defining $\mathring{U}(p):=\mathring{u}(\mathring{y}(p))$, by \eqref{asymptoticconditioninpsi} we  have: 
\begin{equation}\label{asymptoticofy}
    \mathring{y}(p)=\mathlarger{\mathlarger{\int_{-1}^{p}}} \dfrac{1}{\sqrt{\rho(s)}\left(c-\mathring{U}(s)\right)}\, ds-1.
\end{equation}
Solving for $\beta$ in equation \eqref{YIH}, sending $x\to\pm\infty$ and applying \eqref{asymptoticconditioninpsi}, we obtain
\[\beta(-p)=\left(\dfrac{1}{F^2}\mathring{y}-\dfrac{1}{2}\left(\mathring{U}-c\right)^2\right)\rho_p+\rho \left(\mathring{U}-c\right)\mathring{U}_p.\]
\end{remark}
\subsection{Height function formulation}

\begin{figure}
\centering
\begin{tikzpicture}[xscale=1.25,yscale=0.75]

\draw [fill=gray,thin,gray] (-4,-1.7) rectangle (4,-1.5);
\draw [thin,-] (-4, -1.5) -- (4, -1.5);

\draw [dashed,domain=-4:4] plot[smooth] (\x, {-0.55+2.75/(exp(1.75*\x)+exp(-1.75*\x))}) node [above left] {\footnotesize $\psi = -p$};		
\draw [domain=-4:4,thick] plot[smooth] (\x, {0.25+2.5/(exp(1.2*\x)+exp(-1.2*\x))});		
\draw [domain=-4:4,thick] plot[smooth] (\x, {2.5+2.5/(exp(1.2*\x)+exp(-1.2*\x))});		

\draw [thick, stealth-stealth] (0.75,-1.5)--(0.75,0.1) node [midway,xshift=15pt] {\footnotesize $h(q,p)$};

\draw [thick,stealth-stealth] (3.5,-1.5)--(3.5,-.6) node [midway,xshift=12pt] {\footnotesize $H(p)$};

\draw[dashed] (0.75,0.2)--(0.75,4) node [below right] {\footnotesize $x = q$};

\node [below  ] at (-3.25,-0.9) {\footnotesize $\psi = 1$};
\node [below  ] at (-3.25,0.95) {\footnotesize $\psi = -\hat{p}$};
\node [below  ] at (-3.25,3.15) {\footnotesize $\psi = 0$};
\end{tikzpicture}

    \caption{The fluid domain with (non-dimensionalized) streamline values labeled.  The thick lines represent the upper and internal free boundaries.  Also depicted are the height function $h$ and asymptotic height $H$.}
    \label{dj figure}
\end{figure}
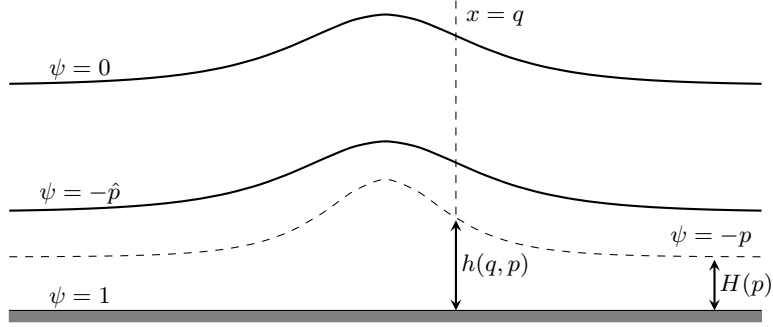

The fact that the Yih's equation is scalar is already a considerably simplification of the system in \eqref{IncomEuler}. However, due to the free boundary, the domain $\Omega$ remains a priori unknown which presents a serious difficulty for existence theory. To get around this, we employ the Dubreil-Jacotin transformation to send the domain $\Omega$ into a fixed slitted rectangular strip $R$:
\begin{equation}\label{semilagrangian}
    (x,y) \mapsto (x,-\psi)=:(q,p),
\end{equation}where
\begin{equation*}
    R:=R^{+} \cup R^{-}=\{(q,p)\in \mathbb{R}^2:p\in(-1,\hat{p})\} \cup \{(q,p)\in \mathbb{R}^2:p\in(\hat{p},0)\}.
   \end{equation*} The free boundary $\{y=\eta(x)\}$, floor $\{y=0\}$, and internal interface  $\{y=\zeta(x)\}$ are mapped to $T:=\{p=0\}$, $B:=\{p=-1\}$ and  $I:=\{p=\hat{p}\}$, respectively.
The new coordinates $(q,p)$ are often referred to as \textit{semi-Lagrangian variables}. 

Define the \textit{height function} which measures the height above the flat ocean floor,
\begin{equation*}
    h(q,p):=y+1 \geq 0 \quad \textup{in}\; \overline{R}.
\end{equation*} 
See Figure~\ref{dj figure} for an illustration.  
Via elementary computations, we obtain
\begin{equation*}
    h_q=\dfrac{v}{u-c}, \quad h_p=\dfrac{1}{\sqrt{\varrho}(c-u)},
\end{equation*} where the left-hand side is evaluated at $(q,p)$ and the right-hand side is evaluated at $(x,y)$. As a consequence, the absence of horizontal stagnation now translates to
\begin{equation*}
    h_p>0.
\end{equation*} 
Furthermore, the asymptotic conditions in \eqref{asymptotic} become
\begin{equation*}
    h(q,p) \rightarrow{H(p)}, \quad h_q(q,p)\rightarrow{0}, \quad h_p(q,p)\rightarrow{H_p(p)} \quad \textrm{as } |q| \rightarrow{\infty}.
\end{equation*} 
From equations \eqref{horizontalvelocityatinfinity}, \eqref{asymptoticconditioninpsi}, and \eqref{asymptoticofy}, we can view the asymptotic height function $H$ (downstream and upstream) as the solution to the following boundary value problem
\begin{equation*}
    \left \{\begin{aligned}
    H_p(p)&=\left.\dfrac{1}{\sqrt{\mathring{\rho}}(c-\mathring{u})}\right|_{y=H(p)-1} &\quad \textup{in}\; \left[-1,\hat{p}\right) \; \text{and} \;  \left[\hat{p},0\right),
    \\H(-1)&=0,\qquad H(\hat{p})=\dfrac{-d_+}{d}+1, \qquad H(0)=1.
\end{aligned}  \right.
\end{equation*} 
Yih's equation in \eqref{YIH} and the boundary conditions in\eqref{boundaryconditioninstreamfunction} can be written as the following quasi-linear PDE with transmission boundary condition:
\begin{equation}\label{quasi}
  \begin{cases}
    \left(-\dfrac{1+h_q^2}{2h_p^2}+\dfrac{1}{2H_p^2}\right)_p+\left(\dfrac{h_q}{h_p}\right)_q-\dfrac{1}{F^2}\rho_p(h-H)&=0 \qquad  \text{in}\; R,\\
\dfrac{1+h_q^2}{2h_p^2}-\dfrac{1}{2H_p^2}+\dfrac{1}{F^2}\rho (h-1)&=0 \qquad  \text{on}\; T,\\
\jump{ \dfrac{1+h_q^2}{2h_p^2} }-\jump{ \dfrac{1}{2H_p^2}} +\dfrac{1}{F^2}\jump{ \rho } (h-H)&=0 \qquad  \text{on}\; I,\\
h&=0 \qquad  \text{on}\; B.\\
\end{cases}  
\end{equation} 
The PDE in \eqref{quasi} is elliptic as long as $\inf_{R}h_p>0$.  The boundary condition on $I$ is of transmission type, while that on $T$ is oblique. Observe that for stably stratified flow, $-\rho_p \geq 0$, and hence the maximum principle cannot be applied directly.  This is a well-known feature of the problem that we will have to contend with at several stages of the analysis.

\subsection{Flow force} \label{flow force section} The \textit{flow force} is defined to be the 
\begin{equation}\label{flow force}
    \mathscr{S}(x)=\int_{-1}^{\eta(x)} \left( P+\varrho(u-c)^2 \right) \,dy.
\end{equation} 
One can check that this quantity is independent of $x$ if evaluated at a solution of the Euler equation.  Rewritten in semi-Lagrangian variables, it takes the form
\begin{equation} \label{flow force functional}
    \mathscr{S}(h)=\int_{-1}^{0}\left(\dfrac{1}{2h_p^2}+\dfrac{1}{2H_p^2}-\dfrac{1}{F^2}\rho(h-H)-\dfrac{1}{F^2}\int_{0}^{p} \rho H_p \,dp'\right)h_p\,dp,
\end{equation} 
where now we are viewing it as a functional acting on $h$ with $H$.  We will make use of the flow force in many ways.  For instance, in Section \ref{Small Amplitude} of small-amplitude theory, it gives rise to a conserved quantity on the center manifold that is essential to the construction.  More generally, the flow force is one of the three conserved quantities that determine the set of \textit{conjugate flows} for the system; see \cite{benjamin1971unified}.

\subsection{Function spaces and the operator equation}
In this subsection, we introduce the function spaces that we shall be working in. For a generic $D \subset \mathbb{R}^2$, non-negative integer $k$, and $\alpha \in [0,1)$, we define
\begin{equation*}
\begin{aligned}
C^{k+\alpha}(D)&:=\left\{f\in C^k(D): \norm{\phi f}_{C^{k+\alpha}}<\infty \textrm{ for all } \phi \in C^{\infty}_{0}\right\},\\
C^{k+\alpha}_{\textup{b}}(\overline{D})&:=\left\{f\in C^k(D): \norm{ f}_{C^{k+\alpha}}<\infty\right\},\\
C^{k+\alpha}_0(\overline{D})&:=\left\{f\in C^{k+\alpha}_{\textup{b}}(\overline{D}): \lim_{r \rightarrow{\infty}}\sup_{|x|=r}|\partial^j f(x)|=0 \textrm{ for all } 0\leq j\leq k\right\}.
\end{aligned}
\end{equation*} 
In particular, we emphasize that $C^{k+\alpha}$ refers to \textit{locally} H\"older continuous functions.  

The center manifold reduction carried out in 
Section~\ref{Small Amplitude} requires us to work with exponentially weighted H\"older space. For $\nu \in \mathbb{R}$, define
\begin{equation}\label{weightedholderspace}
    C^{k+\alpha}_{\nu}(\overline{D}):=\left\{f\in C^{k+\alpha}(\overline{D}):\norm{f}_{C^{k+\alpha}_{\nu}(D)}< \infty \right\},
\end{equation}
where the norm
\begin{equation}\label{normofweightedspace}
    \norm{f}_{C^{k+\alpha}_{\nu}(D)}:=\sum_{|\beta|\leq k}\norm{\sech(\nu q)\partial^{\beta}f}_{C^0(D)}+\sum_{|\beta|=k}\norm{\sech(\nu q)|\partial^{\beta}f|_{\alpha}}_{C^0(D)},
\end{equation}
and $|\cdot|_{\alpha}$ is the usual local H\"older seminorm. 

Finally, let $w = w(q,p)$ be
\begin{equation*}
 w(q,p):=h(q,p)-H(p),
\end{equation*}
which measures the deviation of the height function $h$ in the near-field from its limiting height $H$ at $q=\pm \infty$.  Note that  the decay of $w$ at infinity implies that the asymptotic conditions are satisfied.  One can see that the height equation \eqref{quasi} can be formulated in terms of $w$ as follows:
\begin{equation}\label{quasiw}
  \left \{\begin{aligned}
    \left(-\dfrac{1+w_q^2}{2(H_p+w_p)^2}+\dfrac{1}{2H_p^2}\right)_p+\left(\dfrac{w_q}{H_p+w_p}\right)_q-\dfrac{1}{F^2}\rho_pw&=0 \qquad  \text{in}\; R,\\
\dfrac{1+w_q^2}{2(H_p+w_p)^2}-\dfrac{1}{2H_p^2}+\dfrac{1}{F^2}\rho w&=0 \qquad  \text{on}\; T,\\
\jump{ \dfrac{1+w_q^2}{2(H_p + w_p)^2} }-\jump{ \dfrac{1}{2H_p^2}} +\dfrac{1}{F^2}\jump{ \rho } w &=0 \qquad  \text{on}\; I,\\
w&=0 \qquad  \text{on}\; B.\\
\end{aligned} \right.
\end{equation}

Define the following Banach Spaces,
\begin{equation}\label{BanachSpaces}
\begin{aligned}
    &X:= \{w \in C^{9+\alpha}_{\textup{b,e}}(\overline{R^{+}}) \cap C^{9+\alpha}_{\textup{b,e}}(\overline{R^{-}}) \cap C^{0}_0(\overline{R}) \cap C^{8}_{0}(\overline{R^{+}})\cap C^{8}_{0}(\overline{R^{-}}): w=0\; \text{on}\; B \},\\
    &Y_{1}:=C^{7+\alpha}_{\textup{b,e}}(\overline{R^{+}})\cap C^{7+\alpha}_{\textup{b,e}}(\overline{R^{-}}) \cap C^{6}_{0}(\overline{R^{+}}) \cap C^{6}_{0}(\overline{R^{-}}),\\ &Y_{2}:=C^{8+\alpha}_{\textup{b,e}}(T) \cap C^{7}_{0}(\overline{R^{+}})\cap C^{7}_{0}(\overline{R^{-}}),
\end{aligned}
\end{equation}
and set $Y:= Y_{1} \times Y_{2}$.
Throughout the paper, the subscript ``e" is used to indicates that the functions are even in $q$. We write \eqref{quasi} as an operator equation acting on functions in the Banach spaces above as
\[
    \mathcal{F}(w,F)=0,
\]
for the mapping
\begin{equation*}
    \mathcal{F}=(\mathcal{F}_1,\mathcal{F}_2,\mathcal{F}_3): U \subset X \times \mathbb{R} \rightarrow{Y}
\end{equation*} given by
\begin{equation}\label{F}
\begin{split}
    &\mathcal{F}_1(w,F):=\left(-\dfrac{1+w_q^2}{2(H_p+w_p)^2}+\dfrac{1}{2H_p^2}\right)_p+\left(\dfrac{w_q}{H_p+w_p}\right)_q-\dfrac{1}{F^2}\rho_p w,\\&
    \mathcal{F}_2(w,F):=\dfrac{1+w_q^2}{2(H_p+w_p)^2}-\dfrac{1}{2H_p^2}+\dfrac{1}{F^2}\rho w,\\&
    \mathcal{F}_3(w,F):=\jump{ \dfrac{1+w_q^2}{2(H_p + w_p)^2} }-\jump{ \dfrac{1}{2H_p^2}} +\dfrac{1}{F^2}\jump{ \rho } w.
\end{split}
\end{equation}
We are looking for solutions that belong in the open subset 
\begin{equation}\label{solutions set}
\mathscr{U}:=\left\{(w,F)\in X \times \mathbb{R}:\inf_R(w_p+H_p)>0, F> F_{\textup{cr}}\right\}\subset X \times \mathbb{R}.
\end{equation} 
Here $\Fcr$ is the critical Froude number which will be defined later in Section~\ref{Strum--Liouville type}. Since $\mathcal{F}$ is a rational function of $w$ and its derivatives, then it is a real-analytic mapping from $\mathscr{U}$ to $Y$. 

\section{Linearized operators}\label{Linearized operators}
This section is devoted to investigating the linearized operator $\mathcal{F}_w(w,F)$. The results presented in Section~\ref{Strum--Liouville type} concern a Sturm--Liouille-type problem related to the case $w=0$; this will be used to define the critical Froude number $\Fcr$. Section~\ref{Fredholm property} analyzes the linearized operator at an arbitrary $(w,F)$, which plays a crucial role in proving the local and global existence theory in Sections~\ref{Small Amplitude} and \ref{largeamplitude}.
\subsection{Sturm--Liouville type problems} \label{Strum--Liouville type}
Let us first consider the spectrum of the \textit{transversal linearized operator} at the laminar flow $(w,F) = (0,F)$, by which we mean the restriction of $\mathcal{F}_w(0,F)$ to functions that are independent of $q$. Thus, we obtain the following Sturm--Liouville-type problem

\begin{equation} \label{sturm}
  \left \{\begin{aligned}
    \left(\dfrac{\dot{w}_p}{H_p^{3}}\right)_p-\mu \rho_{p}\dot{w}&=-\nu \dfrac{\dot{w}}{H_p} &\qquad  &\text{in}\;\left(-1,\hat{p}\right) \; \text{and} \;  \left(\hat{p},0\right),\\
-\dfrac{\dot{w}_p}{H_p^3}+\mu \rho \dot{w}&=0 &\qquad  &\text{on}\; p=0,\\
-\jump{ \dfrac{\dot{w}_p}{H_p^3} }+\mu \jump{ \rho} \dot{w} &=0 &\qquad  &\text{on}\; p=\hat{p},\\
\dot{w}&=0 &\qquad  &\text{on}\; p=-1,\\
\end{aligned}  \right.
\end{equation}
where $\mu := 1/F^2$ and $\nu$ is the eigenvalue.

Heuristically, we expect all spatially localized gravity waves to be supercritical in that their wave speed is faster than any linear periodic wave.  This idea underlies our approach to constructing small-amplitude solitary waves in Section~\ref{Small Amplitude}.  By separating variables, we see that the linearized problem at $w =0$ admits periodic solutions provided \eqref{sturm} has a positive eigenvalue.  Thus we wish to identify a critical Froude number (which recall is the non-dimensionalized wave speed) at which the transversal linearized problem at the background laminar flow has a $0$ as its principal eigenvalue. 

With that in mind, take $\nu=0$ in \eqref{sturm} above and look for the largest value of $\mu$ such that
\begin{equation} \label{eval0}
 \left \{ \begin{aligned}
    \left(\dfrac{\dot{w}_p}{H_p^{3}}\right)_p-\mu \rho_{p}\dot{w}&=0\qquad  \text{in}\;\left[-1,\hat{p}\right) \; \text{and} \;  \left[\hat{p},0\right),\\
-\dfrac{\dot{w}_p}{H_p^3}+\mu \rho \dot{w}&=0 \qquad  \text{on}\; p=0,\\
-\jump{ \dfrac{\dot{w}_p}{H_p^3} }+\mu \jump{ \rho} \dot{w} &=0 \qquad  \text{on}\; p=\hat{p},\\
\dot{w}&=0 \qquad  \text{on}\; p=-1,\\
\end{aligned} \right.
\end{equation}
has a nontrivial solution. To achieve this, we will consider the solution $\Phi (p; \mu)$ to the initial value problem
\begin{subequations}\label{IVP}
\begin{equation}\label{IVP1}
  \left \{ \begin{aligned}
    \left(\dfrac{\Phi_p}{H_p^{3}}\right)_p&=\mu \rho_{p}\Phi& \qquad & \textrm{in } \left(-1,\hat{p}\right), \\
    \Phi&=0 & \qquad & \textrm{on } p=-1,\\
    \Phi_p&=1 & \qquad & \textrm{on } p=-1.\\
\end{aligned} \right.
\end{equation}
Using the jump condition on $\{p = \hat{p}\}$ in \eqref{eval0}, we continue this solution into the upper layer corresponding to $p\in \left(\hat{p},0\right)$.  We denote this extended function by $\Upsilon$, which is thus determined by 
\begin{equation}\label{IVP2}
  \left \{ \begin{aligned}
  \left(\dfrac{\Upsilon_p}{H_p^{3}}\right)_p&=\mu \rho_{p}\Upsilon & \qquad & \textrm{in } \left(\hat{p},0\right), \\
  \Upsilon&=\Phi & \qquad & \textrm{on } p=\hat{p}, \\
  \Upsilon_p&=H_p^3(\hat{p}^+)\dfrac{\Phi_p(\hat{p}^-)}{H_p^3(\hat{p}^-)}+\mu \jump{ \rho } H_p^3(\hat{p}^+)\Phi  & \qquad & \textrm{on } p=\hat{p}.\\
\end{aligned} \right.
\end{equation}
\end{subequations}
Finally, to satisfy the Bernoulli condition on $\{p = 0\}$, we introduce the function 
\begin{equation}\label{A}
    A(\mu)=\dfrac{-\Upsilon(0;\mu)}{H_p^3(0)}+\mu \rho(0) \Upsilon(0;\mu).
\end{equation}
The idea here is that if $A(\mu)=0$, then
\begin{equation}\label{newfunction}
\Psi(p;\mu):=\begin{cases}
    \Upsilon(p;\mu)\; \text{for}\; \hat{p}\leq p \leq 0\\
    \Phi(p;\mu)\; \text{for}\; -1 \leq p \leq \hat{p}\\
\end{cases}.
\end{equation}
solves the IVP \eqref{IVP}, and hence is an eigenfunction for \eqref{sturm} corresponding to the eigenvalue $\nu=0$.
\begin{lemma} \label{zerotheigenfunction}
There exists a unique $\mu_{\textup{cr}} > 0$ such that all of the following hold.
\begin{enumerate}[label=\normalfont{(\alph*)}]
    \item\label{a} For $\mu = \mucr$, the problem \eqref{eval0} has a nontrivial solution $\dot{w} = \Psi{(p;\mu_{\textup{cr}})}$.
     \item\label{b} For $0 \leq \mu \leq \mu_{\textup{cr}}$, $\Psi (p;\mu)>0$ for $-1<p\leq0$ and $\Psi_p(p,\mu)>0$\; for\; $-1 \leq p < \hat{p}$ and $\hat{p}< p \leq 0$.
     \item\label{c} For $0 \leq \mu < \mucr$, $A(\mu)<0$. 
\end{enumerate}
\end{lemma}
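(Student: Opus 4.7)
The plan is to define $\mucr$ as the smallest positive zero of the shooting function $A(\mu)$ from \eqref{A}. Since $\Phi(p;\mu)$ and $\Upsilon(p;\mu)$ depend real-analytically on $\mu$ by standard ODE theory, $A$ is continuous, and $A(\mu) = 0$ is equivalent to $\Psi(\cdot;\mu)$ solving the full eigenvalue problem \eqref{eval0}: every solution of the initial-value/transmission problem \eqref{IVP1}--\eqref{IVP2} is a scalar multiple of $\Psi$, so the free-surface condition at the top is the only remaining constraint, which is exactly what $A$ encodes. Once $\mucr$ is identified this way, \ref{a} follows with eigenfunction $\Psi(\cdot;\mucr)$ and \ref{c} from the minimality.

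A direct computation yields $A(0)$. At $\mu = 0$, \eqref{IVP1} reduces to $(\Phi_p/H_p^3)_p = 0$, and $\Phi(-1) = 0$, $\Phi_p(-1) = 1$ give $\Phi_p/H_p^3 \equiv 1/H_p^3(-1)$ on $(-1,\hat p)$. The jump term in \eqref{IVP2} vanishes at $\mu = 0$, so the same identity persists for $\Upsilon_p/H_p^3$ on $(\hat p, 0)$; in particular $\Upsilon_p(0;0)/H_p^3(0) = 1/H_p^3(-1) > 0$ and $\Upsilon(0;0) > 0$, so $A(0) < 0$.

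To force $A$ to change sign, I will appeal to the variational characterization of the principal eigenvalue $\nu_1(\mu)$ of \eqref{sturm}. Multiplying by $\dot w$, integrating by parts on $(-1,\hat p) \cup (\hat p, 0)$, and applying both the Robin condition at $p = 0$ and the transmission condition at $p = \hat p$ produces the Rayleigh quotient
\[
\nu_1(\mu) \;=\; \inf_{v \in V,\, v \neq 0}\; \frac{\displaystyle\int_{-1}^0 \frac{v_p^2}{H_p^3}\,dp \;+\; \mu \int_{-1}^0 \rho_p v^2\,dp \;+\; \mu \jump{\rho}\, v(\hat p)^2 \;-\; \mu \rho(0)\, v(0)^2}{\displaystyle\int_{-1}^0 v^2/H_p\,dp},
\]
where $V = \{v \in H^1(-1,0) : v(-1) = 0\}$. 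This is strictly positive at $\mu = 0$. For large $\mu$, a test function $v$ supported near $p = 0$ with $v(0) = 1$, $v(-1) = 0$, $v(\hat p) = 0$ makes the $-\mu \rho(0)\,v(0)^2$ term dominate, forcing $\nu_1(\mu) \to -\infty$. Since $\nu_1$ is continuous in $\mu$, there is a smallest $\mucr > 0$ at which $\nu_1(\mucr) = 0$, equivalently $A(\mucr) = 0$.

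For \ref{b}, track the zero set of $\Psi(\cdot;\mu)$ as $\mu$ increases from $0$. At $\mu = 0$, the explicit formula gives $\Psi > 0$ on $(-1,0]$ and $\Psi_p > 0$ in each layer. Let $\mu_0 \in (0, \mucr]$ be the first value at which $\Psi$ acquires a zero $p_0 \in (-1,0]$; by continuity $\Psi(\cdot;\mu_0) \geq 0$. An interior zero forces $\Psi(p_0) = \Psi_p(p_0) = 0$ and hence $\Psi \equiv 0$ in that layer by linear ODE uniqueness, contradicting $\Psi_p(-1) = 1$ or the shooting data at $\hat p$. If $p_0 = \hat p$, the jump relation in \eqref{IVP2} combined with $\Phi(\hat p) = 0$ and $\Phi_p(\hat p^-) \leq 0$ gives either $\Phi \equiv 0$ (uniqueness, when $\Phi_p(\hat p^-) = 0$) or $\Upsilon_p(\hat p^+) < 0$ with $\Upsilon(\hat p) = 0$, pushing $\Upsilon$ immediately negative. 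If $p_0 = 0$ with $\mu_0 < \mucr$, then $\Upsilon_p(0) \leq 0$ so $A(\mu_0) \geq 0$, violating the minimality of $\mucr$; while $p_0 = 0$ with $\mu_0 = \mucr$ forces $\Upsilon(0) = \Upsilon_p(0) = 0$, hence $\Upsilon \equiv 0$ and then $\Phi \equiv 0$ by uniqueness, a contradiction. Monotonicity $\Psi_p > 0$ then follows from $(\Psi_p/H_p^3)_p = \mu \rho_p \Psi \leq 0$ (so $\Psi_p/H_p^3$ is non-increasing in each layer), the boundary condition $\Psi_p(0)/H_p^3(0) = \mu \rho(0) \Psi(0) > 0$, and the jump analysis at $\hat p$. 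The main technical obstacle is the sign change of $A$: because $\rho_p$ is only assumed non-positive and may vanish in a layer, the oscillation producing the sign change can come entirely from the interface, which is why the variational route is the cleanest.
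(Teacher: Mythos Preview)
Your argument is essentially correct, but it follows a genuinely different route from the paper's.  The paper never invokes the variational principle for \eqref{sturm}.  Instead it introduces the quotient $B(\mu) := \Upsilon_p(0;\mu)/\Upsilon(0;\mu)$, checks explicitly that $B(0)>0$, and then proves $B_\mu<0$ by a Wronskian/Green's identity computation (differentiating the shooting problem in $\mu$ and testing against $\Psi$).  Since $B(\mu)$ is strictly decreasing while $\mu\rho(0)H_p^3(0)$ is strictly increasing, the two graphs cross exactly once, which pins down $\mucr$ and simultaneously gives \ref{c}.  Your variational argument is more conceptual---the Rayleigh quotient is affine in $\mu$, so $\nu_1(\mu)$ is concave and passes from positive to $-\infty$, forcing a single zero---and it links the lemma directly to the spectral picture used later in Lemma~\ref{SpectrumofEvalues}.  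The paper's ODE approach, on the other hand, is entirely self-contained and furnishes the monotonicity machinery that it reuses (with $\nu$ in place of $\mu$) in the proof of that later lemma.

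For part~\ref{b} the two arguments are closer in spirit: the paper phrases the positivity of $\Psi$ as an open--closed connectedness argument on a set $\mathcal{E}\subset[0,\mucr]$, while you track the first $\mu_0$ at which a zero appears.  Both then deduce $\Psi_p>0$ from the monotonicity of $g(p)=\Psi_p/H_p^3$ in each layer together with the jump relation.  One small slip in your write-up: the identity ``$\Psi_p(0)/H_p^3(0) = \mu\rho(0)\Psi(0)$'' is the top boundary condition and holds \emph{only} at $\mu=\mucr$; for $0\le\mu<\mucr$ what you have is $A(\mu)<0$, which gives the inequality $\Psi_p(0)/H_p^3(0) > \mu\rho(0)\Psi(0)\ge 0$.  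That is exactly what you need for $g(0)>0$, so the conclusion stands once the phrasing is corrected.
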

\begin{proof}
Note that \eqref{eval0} has a non-trivial solution provided $B(\mu)=\mu \rho(0) H_p^3(0)$ where we define \begin{equation}\label{Bmu0}
    B(\mu):=\dfrac{\Upsilon_p(0;\mu)}{\Upsilon(0;\mu)}.
\end{equation}
Setting $\mu=0$ and integrating the first equation in \eqref{IVP1} and \eqref{IVP2}, we obtain
\begin{equation}\label{Psi_pPsi}
    \Psi_p(p;0)= \left\{\begin{aligned}
    &\dfrac{H_p^3(p)}{H_p^3(-1)}&\qquad &\textrm{for } p \in [-1, \hat{p}), \\&\dfrac{\Upsilon_p(\hat{p}^+)}{H_p^3(\hat{p}^+)}H_p^3(p)&\qquad &\textrm{for } p \in(\hat{p},0].
    \end{aligned} \right.
\end{equation}
Anti-differentiating \eqref{Psi_pPsi} once more then gives
\begin{equation}\label{PsiPsi}
    \Psi(p;0)=\left \{\begin{aligned}
    &\dfrac{1}{H_p^3(-1)} \int_{-1}^{p}H_p^3(s)\, ds&\qquad& \textrm{for } p \in[-1,\hat{p}),\\&
    \Upsilon(\hat{p}^+)+\dfrac{\Upsilon_p(\hat{p}^+)}{H_p^3(\hat{p}^+)} \int_{\hat{p}}^{p} H_p^3(s)\, ds &\qquad&\textrm{for } p \in(\hat{p},0].
    \end{aligned} \right.
\end{equation} 
Inserting this into \eqref{Bmu0} yields the formula
\begin{equation}\label{B(0)}
    B(0)=\dfrac{\Upsilon_p(\hat{p}^+)H_p^3(0)}{H_p^3(\hat{p}^+)\Upsilon(\hat{p}^+)+\Upsilon_p(\hat{p}^+)\int_{\hat{p}}^{0} H_p^3(p)\, dp}.
\end{equation} 

We claim that 
\begin{equation}\label{B(0)positive}
    B(0)>0.
\end{equation}
Observe that substituting $p=\hat{p}^-$ into the first equation in \eqref{Psi_pPsi} and \eqref{PsiPsi} leads to 
\begin{subequations}
\begin{equation}\label{PHIp}
    \Phi_p(\hat{p}^-;0)=\dfrac{H_p^3(\hat{p}^-)}{H_p^3(-1)}>0,
\end{equation}
and
\begin{equation}\label{Phi}
    \Phi(\hat{p}^-;0)=\dfrac{1}{H_p^3(-1)}\int_{-1}^{\hat{p}} H_p^3(p) \, dp>0.
\end{equation}
\end{subequations}
Because $\Phi(\hat{p})=\Upsilon(\hat{p})$, we have from the inequality in \eqref{Phi} that $\Upsilon(\hat{p}^+;0)>0$. Using equation \eqref{PHIp} in concert with the last equation in \eqref{IVP2} leads to $\Upsilon_p(\hat{p}^+;0)>0$. The desired inequality \eqref{B(0)positive} now follows.

Next, we claim that 
\begin{equation}\label{Bmu<0}
B_{\mu}<0.
\end{equation}
Differentiating $B$ with respect to $\mu$ gives
\begin{equation}\label{Bmu}
    B_{\mu}=\left.\dfrac{\Upsilon_{\mu p} \Upsilon - \Upsilon_p \Upsilon_{\mu}}{\Upsilon^2}\right|_{p=0}.
\end{equation}
Note, differentiating the last two equations of \eqref{IVP2} with respect to $\mu$ yields 
\begin{equation}\label{mu}
     \left \{ \begin{aligned}
    \Upsilon_{\mu}&=\Phi_{\mu}\\
    \Upsilon_{\mu p}&=H_p^3(\hat{p}^+)\dfrac{\Phi_{\mu p}(\hat{p}^-)}{H_p^3(\hat{p}^-)}+ \jump{ \rho } H_p^3(\hat{p}^+)\Phi+\mu \jump{ \rho } H_p^3(\hat{p}^+)\Phi_{\mu}\\
\end{aligned}\qquad \textrm{on } p=\hat{p}. \right.  
\end{equation}
Furthermore, by differentiating \eqref{IVP1} with respect to $\mu$, we obtain the following problem
\begin{equation}\label{IVP1mu}
  \left \{ \begin{aligned}
    \left(\dfrac{\Phi_{\mu p}}{H_p^{3}}\right)_p&=\rho_p \Phi +\mu \rho_{p}\Phi_{\mu} \qquad & \textrm{in } \left(-1,\hat{p}\right),\\
    \Phi_{\mu}=0&=\Phi_{\mu p} \qquad & \textrm{on } p= -1.
\end{aligned} \right. 
\end{equation}
A computation using integration by parts gives us, 
\begin{equation}\label{IBP}
   \left.\left( \dfrac{\Upsilon_{\mu p}\Upsilon-\Upsilon_p \Upsilon_{\mu}}{H_p^3}\right)\right|_{p=0}-\left.\left(\dfrac{\Upsilon_{\mu p}\Upsilon-\Upsilon_p \Upsilon_{\mu}}{H_p^3}\right)\right|_{p=\hat{p}}=\int_{\hat{p}}^{0} \rho_p \Upsilon^2(p;\mu)\;dp.
\end{equation}
By rearranging terms we have that
\begin{equation*}
\begin{aligned}
 B_{\mu}&=\left.\left(\dfrac{\Upsilon_{\mu p}\Upsilon-\Upsilon_p \Upsilon_{\mu}}{\Upsilon^2}\right)\right|_{p=0}\\&=\dfrac{H_p^3(0)}{\Upsilon^2(0)}\left[\left.\left(\dfrac{\Upsilon_{\mu p}\Upsilon-\Upsilon_p \Upsilon_{\mu}}{H_p^3}\right)\right|_{p=\hat{p}}+\int_{\hat{p}}^{0} \rho_p \Upsilon^2(p;\mu)\; dp\right].
\end{aligned}
\end{equation*}
Therefore, to verify our claim in \eqref{Bmu}, it suffices to show
\begin{equation*}
    \left.\left( \Upsilon_{\mu p}\Upsilon-\Upsilon_p \Upsilon_{\mu}\right)\right|_{p=\hat{p}}<0.
\end{equation*}
Differentiating the first equation in \eqref{IVP1} with respect to $\mu$ and testing against $\Phi$ yields
\begin{equation*}
   \left.\left(\dfrac{\Phi_{\mu p}\Phi-\Phi_p \Phi_{\mu}}{H_p^3}\right)\right|_{p=-1}^{p=\hat{p}}=\int_{-1}^{\hat{p}} \rho_p \Phi^2(p;\mu)\;dp <0.
\end{equation*} Recall that $\Phi_{\mu p}(-1)=0=\Phi_{\mu}(-1)$. Hence, the above inequality simplifies into
\begin{equation}\label{claim}
    \dfrac{\Phi_{\mu p}(\hat{p}^-)}{\Phi_p(\hat{p}^-)}<\dfrac{\Phi_{\mu}(\hat{p}^-)}{\Phi(\hat{p}^-)}=\dfrac{\Upsilon_{\mu}(\hat{p}^+)}{\Upsilon(\hat{p}^+)},
\end{equation}
which is equivalent to 
\begin{equation}\label{differencePhiUpsilon}
    \Phi_{\mu p}\Upsilon-\Phi_{p}\Upsilon_{\mu}<0 \qquad \textrm{on } p=\hat{p}.
\end{equation}
Multiplying the second equation in \eqref{mu} by $\Upsilon_{\mu}$ reveals that
\begin{equation}\label{1}
    \Upsilon_{\mu p}\Upsilon=H_p^3(\hat{p}^+)\dfrac{\Phi_{\mu p}(\hat{p}^-)\Upsilon}{H_p^3(\hat{p}^-)}+ \jump{ \rho } H_p^3(\hat{p}^+)\Phi \Upsilon+\mu \jump{ \rho } H_p^3(\hat{p}^+)\Phi_{\mu}\Upsilon  \qquad  \textrm{on } p=\hat{p}.
\end{equation}
On the other hand, multiplying the third equation in \eqref{IVP2} by $\Upsilon_{\mu}$ and evaluating it at $p=\hat{p}$ gives
\begin{equation}\label{2}
    \Upsilon_p\Upsilon_{\mu} =H_p^3(\hat{p}^+)\dfrac{\Phi_p(\hat{p}^-)\Upsilon_{\mu}}{H_p^3(\hat{p}^-)}+\mu \jump{ \rho } H_p^3(\hat{p}^+)\Phi \Upsilon_{\mu}  \qquad  \textrm{on } p=\hat{p}.
\end{equation}
Subtracting \eqref{1} from \eqref{2} and using the fact in \eqref{differencePhiUpsilon}, we know that $\left(\Upsilon_{\mu p}\Upsilon-\Upsilon_p \Upsilon_{\mu}\right)<0$ on $p=\hat{p}$. This proves the claim in \eqref{Bmu<0} provided that $\Upsilon(0) \neq 0$.

Now, combining our earlier observations that $B_{\mu}<0$ and $B(0)>0,$ we can infer that there exists a unique smallest $\mu_{\textup{cr}}$ such that $B(\mu)=\mu \rho(0) H_p^3(0).$ This proves part~\ref{a} of the lemma. Observe, by the uniqueness of solution to initial value problem, the numerator and denominator in \eqref{Bmu0} cannot vanish altogether. Thus collectively these facts show that $B(\mu),\Upsilon_p(0,\mu),\Upsilon(0,\mu)$ are all strictly positive quantities for $0\leq \mu \leq \mu_{\textup{cr}}$. Part~\ref{c} of the lemma is then a direct consequence of the fact that $B(\mu)>\mu \rho(0) H_p^3(0)$ for all $0\leq \mu < \mu_{\textup{cr}}$. 

It remains only to prove part~\ref{b}.  We first consider the sign of $\Psi$ through a continuity argument. Define the set 
\begin{equation*}
        \mathcal{E}:=\left\{\mu \in [0,\mu_{\textup{cr}}]:\Psi(p;\mu)>0\; \text{for } p \in (-1,0] \right\}. 
\end{equation*}
Observe that $0\in \mathcal{E}$ due to \eqref{Psi_pPsi}. We claim that $\mathcal{E}$ is closed.  Seeking a contradiction, suppose that $\mathcal{E}$ has a limit point $\tilde{\mu}$ and there exists $\tilde{p}\in [-1,0]$ so that $\Psi(\tilde{p},\tilde{\mu})=0.$ By continuity, we can infer that $\Psi(p,\tilde{\mu})\geq 0$ for all $p \in [-1,0]$, and hence $\Psi(\cdot;\tilde{\mu})$ attains its minimum at $p=\tilde{p}.$ In particular, this implies $\Psi_p(\tilde{p},\tilde{\mu})=0$, where notice this would be true even in the case $\tilde p = \hat{p}$. But then $\Psi_p(\tilde{p},\tilde{\mu})=\Psi(\tilde{p},\tilde{\mu})=0$,  and so $\Psi$ vanishes identically by uniqueness. Thus we have arrived at a contradiction meaning $\mathcal{E}$ is closed.  On the other hand, $\mathcal{E}$ is clearly open because $\Psi$ is continuous in $\mu$ and we have already shown that $\Psi_p > 0$ at $p=0,-1$.  It follows then that $\mathcal{E} = [0,\mucr]$.

Finally, we establish the sign of $\Psi_p$ claimed in part~\ref{b}. Fix $\mu\in[0,\mu_{\textup{cr}}]$ and consider the function
\[g(p):=\dfrac{\Psi_p(p;\mu)}{H_p^3(p)}.\] 
Clearly, $g(-1)>0$ and since $\Psi_p(0;\mu) > 0$, we have $g(0)>0.$  Moreover, the equation satisfied by $\Psi$ gives the identity $g_p=\mu \rho_p \Psi.$ From this it is easily seen that $g(\hat{p}^+)>0.$ This shows that $\Psi_p(p;\mu)>0$ for $\hat{p}< p \leq 0.$ Furthermore, from \eqref{IVP2}, we have
\[\dfrac{\Upsilon_p(\hat{p}^+;\mu)}{H_p^3(\hat{p}^+)}-\mu \jump{ \rho } \Phi(\hat{p};\mu)= \dfrac{\Phi_p (\hat{p}^-;\mu)}{H_p^3(\hat{p}^-)}.\] Notice that the left hand side of the above equation is strictly positive. We then conclude that $g(\hat{p}^-)>0$. Hence, we can conclude that $\Psi_p(p;\mu)>0$ for all $-1 \leq p < \hat{p}.$ This gives the desired inequality in part \ref{b} of the lemma. 
\end{proof}

\begin{lemma}[Spectrum] \label{SpectrumofEvalues}
Let $\Sigma$ denote the set of eigenvalues for the problem in \eqref{sturm} at $\mu=\mucr$. 
\begin{enumerate}[label=\normalfont{(\alph*)}]
    \item\label{SigmaA} $\Sigma =\{\nu_j\}_{j=0}^{\infty}$ such that $\nu_j \rightarrow{-\infty}$ as $j\rightarrow{\infty}$ and $\{\nu_j\}_{j=0}^{j=\infty}$ is a strictly decreasing sequence,
     \item\label{SigmaB} $\nu_{0}=0$, and
     \item\label{SigmaC} each  eigenvalue has algebraic and geometric multiplicity $1$.
\end{enumerate}
\end{lemma}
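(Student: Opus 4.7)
The plan is to recast \eqref{sturm} as a generalized eigenvalue problem in a Hilbert space, apply the spectral theorem for compact self-adjoint operators for part \ref{SigmaA}, exploit the ODE structure for part \ref{SigmaC}, and use a positivity argument for part \ref{SigmaB}.

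I would work in the Hilbert space $V := \{u \in H^1(-1, 0) : u(-1) = 0\}$, equipped with the inner product $(u, v)_V := \int_{-1}^0 u_p v_p\,dp$, which is equivalent to the $H^1$-norm by Poincar\'e; elements of $V$ are automatically continuous across $\hat{p}$. Multiplying \eqref{sturm} by a test function $v \in V$, splitting the integral at $\hat{p}$, and integrating by parts on each subinterval converts the oblique condition at $p=0$ and the transmission condition at $p=\hat{p}$ into boundary and jump terms, producing the weak formulation $a(u, v) = \nu b(u, v)$ with
\begin{align*}
a(u, v) &:= \int_{-1}^0 \frac{u_p v_p}{H_p^3}\,dp + \mucr \int_{-1}^0 \rho_p u v\,dp - \mucr \rho(0) u(0) v(0) + \mucr \jump{\rho} u(\hat{p}) v(\hat{p}), \\
b(u, v) &:= \int_{-1}^0 \frac{u v}{H_p}\,dp.
\end{align*}
Both forms are symmetric and continuous, and $b$ is positive definite. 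A trace inequality of the form $u(0)^2 \leq \varepsilon\|u_p\|_{L^2}^2 + C_\varepsilon \|u\|_{L^2}^2$, together with its analogue at $\hat{p}$, shows that $a_K := a + K b$ is coercive on $V$ for $K$ sufficiently large.

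For \ref{SigmaA}, Riesz representation with respect to the $a_K$ inner product produces a solution operator $T : V \to V$ defined by $a_K(T f, v) = b(f, v)$. Because $b$ factors through the compact embedding $V \hookrightarrow L^2(-1, 0)$, the operator $T$ is compact; it is also self-adjoint and positive in the $a_K$ inner product. The spectral theorem then produces a sequence of eigenvalues of $T$ accumulating only at $0$, pulling back to a strictly monotone sequence of eigenvalues of \eqref{sturm} diverging to infinity. For \ref{SigmaC}, I would argue via the ODE structure: for each fixed $\nu$, solutions on $(-1, \hat{p})$ with $u(-1) = 0$ form a one-dimensional family; the transmission condition extends each such solution uniquely to a one-dimensional family on $(\hat{p}, 0)$; and the Robin-type condition at $p = 0$ is a single scalar equation on the remaining parameter. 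Hence the geometric multiplicity of each eigenvalue is at most one, and algebraic simplicity follows from the self-adjointness of $T$. For \ref{SigmaB}, Lemma~\ref{zerotheigenfunction} gives $\Psi(\,\cdot\,;\mucr) \in V$ as an eigenfunction with $\nu = 0$ that is strictly positive on $(-1, 0]$; a Perron--Frobenius-type argument---equivalently, the $b$-orthogonality of eigenfunctions attached to distinct eigenvalues precludes two linearly independent sign-definite eigenfunctions---then forces $0$ to be the principal (extremal) eigenvalue, giving $\nu_0 = 0$.

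The main subtle point I anticipate is the careful incorporation of the transmission condition into both the variational framework and the nodal argument for \ref{SigmaB}; fortunately, since $H^1$-functions are automatically continuous across $\hat{p}$, the jump structure enters only through the derivative terms that arise naturally from integration by parts, so this is a technical rather than a conceptual obstacle.
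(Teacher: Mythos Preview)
Your approach is correct and takes a genuinely different route from the paper's. The paper argues via a shooting/Weyl--Titchmarsh function: it defines $N(p;\nu)$ as the solution of the ODE with unit initial data at $p=-1$ (propagated through the transmission condition), sets $B(\nu):=N_p(0;\nu)/N(0;\nu)$, and shows that $B$ is strictly monotone on the complement of the Dirichlet spectrum, with singularities exactly at the Dirichlet eigenvalues. A separate maximum-principle argument (dividing by $\Psi_{\mathrm{cr}}+\delta$) shows the Dirichlet eigenvalues all lie strictly to one side of $0$, and interlacing then pins down the entire spectrum, with $0$ forced to be the principal eigenvalue by Lemma~\ref{zerotheigenfunction}\ref{a}. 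Simplicity comes from ODE uniqueness, as in your argument.

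Your functional-analytic route is cleaner and more portable: the coercive shift $a_K$, compactness of the solution operator, and self-adjointness deliver \ref{SigmaA} and \ref{SigmaC} in one stroke, without any auxiliary Dirichlet problem or maximum-principle step. The paper's shooting argument, on the other hand, yields extra structural information (interlacing with the Dirichlet spectrum) and stays within the ODE framework already set up in Lemma~\ref{zerotheigenfunction}. One point to tighten: you write that the eigenvalues ``diverge to infinity'' without specifying the sign, and your coercivity argument for $a_K$ actually forces $\nu_j\to+\infty$ rather than $-\infty$; this is consistent with the weak form $a(u,v)=\nu\,b(u,v)$ you derived (the gradient term dominates), so be explicit about the direction and reconcile it with the sign convention in \eqref{sturm}. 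Your part \ref{SigmaB} is also slightly compressed: $b$-orthogonality by itself only shows that at most one eigenvalue carries a sign-definite eigenfunction; to conclude that the extremal eigenvalue does, invoke the variational characterization together with $a(|u|,|u|)=a(u,u)$, which holds here since every term in $a$ depends on $u$ only through $u_p^2$, $u^2$, $u(0)^2$, and $u(\hat p)^2$.
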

\begin{proof}
Fix $\mu=\mucr$. Similar in spirit to the proof of Lemma \ref{zerotheigenfunction}, we begin by introducing the function $N(p;\nu)$ which solves the following initial value problem
\begin{equation}\label{eigenvalueproblem1}
  \left \{ \begin{aligned}
    \left(\dfrac{N_p}{H_p^{3}}\right)_p-\mu_{\textup{cr}} \rho_{p}N&=-\nu \dfrac{N}{H_p} & \qquad & \textrm{in } \left(-1,\hat{p}\right) \textrm{ and } (\hat{p}, 0), \\
  \jump{\dfrac{N_p}{H_p^3}}&=\mucr \jump{ \rho }N  & \qquad & \textrm{on } p=\hat{p},\\
    N&=0 & \qquad & \textrm{on } p=-1,\\
    N_p&=1 & \qquad & \textrm{on } p=-1,\\
\end{aligned} \right.
\end{equation} 
and the associated function
\begin{equation}\label{Bnu}
    B(\nu):=\dfrac{N_p(0;\nu)}{N(0;\nu)}.
\end{equation}
Observe that $\dot{w}:=N(p;\nu)$ solves \eqref{sturm} provided that $B(\nu)=\mu_{\textup{cr}}\rho(0) H_p^3(0)$. By construction, $B$ has singularity at each eigenvalue $\nu_{\textrm{D}}$ of the Dirichlet problem
\begin{equation} \label{dirichletproblem}
  \left \{\begin{aligned}
   \left(\dfrac{\dot{w}_p}{H_p^{3}}\right)_p-\mu_{\textup{cr}} \rho_{p}\dot{w}&=\nu_{\mathrm{D}} \dfrac{\dot{w}}{H_p}, &\qquad  &\text{in}\;\left(-1,\hat{p}\right) \; \text{and} \;  \left(\hat{p},0\right),\\
\dot{w}&=0 &\qquad  &\text{on}\; p=0,\\
\jump{ \dfrac{\dot{w}_p}{H_p^3} }-\mu_{\textup{cr}}  \jump{ \rho} \dot{w} &=0 &\qquad  &\text{on}\; p=\hat{p},\\
\dot{w}&=0 &\qquad  &\text{on}\; p=-1.
\end{aligned}  \right.
\end{equation}

It is well-known that the set of Dirichlet eigenvalues takes the form $\Sigma_{\textrm{D}}=\{\nu_{\textrm{D}}^{(j)}\}_{j=1}^{\infty}$, where each $\nu_{\mathrm{D}}^{(j)}$ is simple, $\nu_{\textrm{D}}^{(j)} \geq \nu_{\textrm{D}}^{(j+1)}$ for all $j\in \mathbb{Z}^+$, and $\nu_{\textrm{D}}^{(j)} \rightarrow{-\infty}$ as $j\rightarrow{\infty}$. We claim, moreover, that each Dirichlet eigenvalue is negative. For the sake of contradiction, suppose that there exists a $\nu_{\textrm{D}} \geq 0$ in $\Sigma_{\mathrm{D}}$ with eigenfunction $\dot{w}$. For $0 < \delta \ll 1$, define
\begin{equation*}
    \dot{v}^\delta := \dfrac{\dot{w}}{  -(\Psi_{\textup{cr}}+\delta)}.
\end{equation*} Using equations \eqref{IVP1}
 and \eqref{IVP2} for $\Psi$ together with the Dirichlet problem \eqref{dirichletproblem}, one can show that 
 \begin{equation}\label{modifiedirichlet}
     -\left(\dfrac{\Psi_{\textup{cr}}+\delta}{H_p^3}\dot{v}_p^{\delta}\right)_p-\dfrac{\left(\Psi_{\textup{cr}}\right)_p}{H_p^3}\dot{v}_p^{\delta}+\left(\mu_{\textup{cr}} \rho_p \delta+\nu_{\mathrm{D}}\dfrac{\Psi_{\textup{cr}}+\delta}{H_p}\right)\dot{v}^{\delta}=0.
 \end{equation} If $\nu_D>0$, we can choose small enough $\delta$ such that the coefficient of the zeroth order term in \eqref{modifiedirichlet} is positive. For $\nu_D=0$, we can send  $\delta \to 0$ such that the coefficient of the zeroth order term in \eqref{modifiedirichlet} to vanish. Thus, in both cases, taking $0 < \delta \ll 1$, we can apply the maximum principle to conclude $\dot{v}^{\delta}\equiv0$ and thus $\dot{w} \equiv 0$. Having arrived at a contradiction, we therefore infer that all the elements of $\Sigma_{\mathrm{D}}$ are strictly negative.
 
 Now, differentiating \eqref{eigenvalueproblem1} with respect to $\nu$ yields
 \begin{equation*}
  \left \{ \begin{aligned}
    \left(\dfrac{N_{\nu p}}{H_p^{3}}\right)_p-\mu_{\textup{cr}} \rho_{p}N_\nu&=-\dfrac{N}{H_p}-\dfrac{\nu N_\nu}{H_p} & \qquad & \textrm{in } \left(\hat{p},0\right), \\
    N_\nu(-1;\nu)&=0=N_{\nu p}(-1;\nu).
\end{aligned} \right.
\end{equation*} 
Testing the above equation against $N$ and comparing it to  \eqref{eigenvalueproblem1} tested against $N_\nu$ yields the Green's identity
\begin{equation*}
   \left.\left(\dfrac{N_p N_\nu}{H_p^3}-\dfrac{N_{\nu p}N}{H_p^3}\right)\right|_{p=\hat{p}}^{0}=\int_{\hat{p}}^{0} \dfrac{N^2(p;\nu)}{H_p}\;dp.
\end{equation*}Hence we have
\begin{equation*}
    B'(\nu)=\dfrac{-H_p^3(0)}{N^2(0)}\int_{\hat{p}}^{0} \dfrac{N^2(p;\nu)}{H_p}\;dp<0,
\end{equation*} as long as $N(0;\nu)\neq 0$. We then conclude that $B$ is strictly decreasing on $\Sigma_{\mathrm{D}}^c$.  Thus, we must have that $B(\nu) \rightarrow{\pm \infty}$ as $\nu \rightarrow{\nu_D^{{(j)}\pm}}$ for each $j\in \mathbb{Z}^+$. In particular, on each connected component of $\Sigma_{\mathrm{D}}^c$, there exists a unique $\nu \in (\nu_D^{(j)},\nu_D^{(j+1)})$ such that $B(\nu)=\mu_{\textup{cr}}\rho(0) H_p^3(0)$.  Likewise, on  $(\nu_D^{(1)},\infty)$ there can be at most one such value of $\nu$.  

This analysis shows that the eigenvalues of \eqref{sturm} are intertwined with those of the Dirichlet problem \eqref{dirichletproblem}.  We have already proved in Lemma~\ref{zerotheigenfunction}\ref{a} that $0 \in \Sigma$, and hence it is the unique element of $\Sigma$ in the interval $(\nu_{\mathrm{D}}^{(1)}, \infty)$. This implies further implies that $\nu_{\mathrm{D}}^{(2)} < 0$, so parts~\ref{SigmaA} and \ref{SigmaB} now follow.  Part~\ref{SigmaC} is easily verified from classical Sturm--Liouville theory.
\end{proof}

Finally, let us conclude the subsection by recalling that $\mu=1/F^2$, hence the critical Froude number is defined by 
\begin{equation} \label{def Fcr}
\Fcr^2 := \frac{1}{\mucr},
\end{equation}
with $\mucr$ given by Lemma~\ref{zerotheigenfunction}.

\subsection{Fredholm property}\label{Fredholm property} Now we focus our attention on the full linearized operator at $(w,F)\in \mathscr{U}$. Consider first the  problem $\mathcal{F}_w(0,F)\dot{w}=(f_1,f_2,f_3)$, which reads
\begin{equation*}
 \begin{aligned} \label{F_w(0,F)}
    \left(\dfrac{\dot{w}_p}{H_p^{3}}\right)_p+ \left(\dfrac{\dot{w}_q}{H_p}\right)_q-\dfrac{1}{F^2} \rho_{p}\dot{w}&=f_1\qquad  \text{in } R,\\
-\dfrac{\dot{w}_p}{H_p^3}+\dfrac{1}{F^2} \rho \dot{w}&=f_2 \qquad  \text{on } T,\\
-\jump{ \dfrac{\dot{w}_p}{H_p^3} }+\dfrac{1}{F^2}\jump{ \rho} \dot{w} &=f_3 \qquad  \text{on }I,\\
\dot{w}&=0 \qquad \; \; \text{on }B.\\
\end{aligned} 
\end{equation*}

Although we know that $\mathcal{F}_w(0,F)$ is a map from $X$ to $Y$,  here we shall view it as a map between the larger spaces $X_{\textup{b}}$ to $Y_{\textup{b}}$, where
\begin{align*}
X_{\textup{b}} & :=\left\{w \in C^0_0(\overline{R}) \cap  C^{9+\alpha}_{\textup{b}}(\overline{R^+})\cap C^{9+\alpha}_{\textup{b}}(\overline{R^-}): w|_{B}=0\right\}, \\ Y_{\textup{b}} &:= \left(C^{7+\alpha}_{\textup{b}}(\overline{R^+}) \cap C^{7+\alpha}_{\textup{b}}(\overline{R^-})\right) \times C^{8+\alpha}_{\textup{b}}(T)\times C^{8+\alpha}_{\textup{b}}(I).
\end{align*}
That is, the requirement that the solution decays at infinity has temporarily been lifted.

Observe that the zeroth-order term in the interior equation in \eqref{F_w(0,F)} has the ``bad" sign in that it does not satisfy the  the assumptions of the maximum principle Theorem~\ref{Maximum Principle}. For supercritical waves, we can fix this by introducing a function $\tilde{\Psi}$ that is a variant of the function $\Psi$ found in Lemma~\ref{zerotheigenfunction}. It is defined as the solution to the same ODE \eqref{IVP} but with initial conditions $\tilde \Psi(-1) = \epsilon$ and $\tilde \Psi_p(-1) = 1$, for some $0 < \epsilon \ll 1$ that depends only on $F$.
\begin{lemma}\label{Psitilde}
Suppose $F>\Fcr$, then for $\epsilon>0$ sufficiently small, there exists  $\tilde \Psi$ satisfying
\[ \left( \frac{\tilde \Psi_p}{H_p^3}\right)_p - \frac{1}{F^2} \rho_p \tilde \Psi = 0 \qquad \textrm{in } (-1,\hat{p}) \cup (\hat{p}, 0),\]
with 
\begin{subequations}
\begin{equation}\label{Psi tilde Psi_p Tilde}
    \tilde{\Psi} >0 \textrm{ for } -1< p\leq 0,\quad \tilde{\Psi}_p>0  \textrm{ for } [-1, \hat{p})\; and \; (\hat{p},0],
\end{equation}
\begin{equation}\label{Inequality A(Psi tilde)}
-\dfrac{\tilde{\Psi}_p}{H^{3}_p}+\dfrac{1}{F^2}\rho \tilde{\Psi}<0\quad \text{ on} \quad p=0,
\end{equation} and
\begin{equation}\label{Jump Psi tilde}
\jump{\dfrac{\tilde{\Psi}_p}{H^{3}_p}}-\dfrac{1}{F^2} \jump{ \rho } \tilde{\Psi}<0\quad  \text{on} \quad p=\hat{p}.
\end{equation}
\end{subequations}
\end{lemma}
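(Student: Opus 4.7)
The proof is a perturbation argument anchored at Lemma~\ref{zerotheigenfunction}, taking advantage of the strict inequality $\mu := 1/F^2 < \mucr$, which is equivalent to $F > \Fcr$. On the lower piece $(-1, \hat{p})$, the plan is to define $\tilde{\Psi}$ as the solution of the linear ODE $(\tilde{\Psi}_p/H_p^3)_p - \mu \rho_p \tilde{\Psi} = 0$ with the initial data $\tilde{\Psi}(-1) = \epsilon$, $\tilde{\Psi}_p(-1) = 1$. By linearity, $\tilde{\Psi} = \Psi + \epsilon Z$ on this interval, where $\Psi$ is the function built in Lemma~\ref{zerotheigenfunction} at the value $\mu$ and $Z$ is the linearly independent solution with $Z(-1) = 1$, $Z_p(-1) = 0$. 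I would then extend $\tilde{\Psi}$ across the interface by imposing continuity together with the perturbed matching
\[ \frac{\tilde{\Psi}_p(\hat{p}^+)}{H_p^3(\hat{p}^+)} - \frac{\tilde{\Psi}_p(\hat{p}^-)}{H_p^3(\hat{p}^-)} = \mu \jump{\rho}\, \tilde{\Psi}(\hat{p}) - \epsilon \kappa \]
for a fixed constant $\kappa > 0$, and solve the ODE on $(\hat{p}, 0]$. With this choice,
\[ \jump{\tilde{\Psi}_p / H_p^3} - \mu \jump{\rho}\, \tilde{\Psi} = -\epsilon \kappa < 0 \quad \textrm{on } p = \hat{p}, \]
which is precisely \eqref{Jump Psi tilde}.

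For the positivity \eqref{Psi tilde Psi_p Tilde} and the top-boundary strict inequality \eqref{Inequality A(Psi tilde)}, I would invoke continuous dependence on the initial data: $\tilde{\Psi} \to \Psi$ in $C^1$ on each layer as $\epsilon \to 0$, since the data at $p = -1$ and at $p = \hat{p}^+$ differ from those of $\Psi$ by $O(\epsilon)$. Lemma~\ref{zerotheigenfunction}\ref{b} gives $\Psi > 0$ on $(-1, 0]$ and $\Psi_p > 0$ on $[-1, \hat{p}) \cup (\hat{p}, 0]$, and Lemma~\ref{zerotheigenfunction}\ref{c} gives $A(\mu) = -\Psi_p(0)/H_p^3(0) + \mu \rho(0) \Psi(0) < 0$, strictly because $\mu < \mucr$. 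All of these inequalities are open, so they persist for $\tilde{\Psi}$ once $\epsilon$ is chosen sufficiently small in terms of $F$.

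The main technical obstacle is \eqref{Jump Psi tilde}: the matching used to define $\Psi$ in \eqref{IVP2} enforces $\jump{\Psi_p/H_p^3} = \mu \jump{\rho}\, \Psi$ as an equality, so naively perturbing $\Psi$ would not yield any strict sign at the interface. The device above — adding $-\epsilon \kappa$ to the right-hand side of the standard matching — manufactures precisely the strict slack required, while keeping $\tilde{\Psi}$ close enough to $\Psi$ in $C^1$ on each layer that the other open conditions from Lemma~\ref{zerotheigenfunction} are preserved. Balancing the size of this deliberate mismatch against the smallness needed for continuity is what fixes the dependence of $\epsilon$ on $F$.
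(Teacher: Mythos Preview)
Your argument is correct. The perturbation from $\Psi(\cdot;\mu)$ via continuous dependence, together with the strict inequalities in Lemma~\ref{zerotheigenfunction}\ref{b}--\ref{c} at $\mu=1/F^{2}<\mucr$, cleanly delivers \eqref{Psi tilde Psi_p Tilde} and \eqref{Inequality A(Psi tilde)}; and your explicit $-\epsilon\kappa$ offset in the matching condition at $p=\hat p$ manufactures \eqref{Jump Psi tilde} by construction.

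The paper takes essentially the same route for \eqref{Psi tilde Psi_p Tilde} and \eqref{Inequality A(Psi tilde)}: it perturbs the initial data to $\tilde\Psi(-1)=\epsilon$, $\tilde\Psi_p(-1)=1$ and appeals to the proof of Lemma~\ref{zerotheigenfunction}. The difference is at the interface. The paper defines $\tilde\Psi$ as the solution of the \emph{same} system \eqref{IVP}, which carries the transmission condition of \eqref{IVP2} unchanged; taken literally this yields $\jump{\tilde\Psi_p/H_p^{3}}-\mu\jump{\rho}\tilde\Psi=0$ rather than the strict negativity asserted in \eqref{Jump Psi tilde}. The paper's one-line justification (``the transmission equation from \eqref{IVP2} together with $\tilde\Psi(-1,\mu)=\epsilon$'') does not, on its face, supply any slack there, and indeed in the subsequent application \eqref{modified F_w(0,F)} only the equality is actually used. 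Your deliberate $-\epsilon\kappa$ mismatch is a genuinely different device: it makes the strict inequality at $\hat p$ transparent and costs nothing, since an $O(\epsilon)$ change in the data at $\hat p^{+}$ is absorbed by the same $C^{1}$ continuous-dependence argument you already invoke. In short, both proofs share the perturbation-from-$\Psi$ skeleton, but your treatment of the interface is more explicit and actually closes the gap in \eqref{Jump Psi tilde}.
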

\begin{proof}
By definition, $\tilde{\Psi}=\Psi$ when $\epsilon=0$. Adapting the proof of Lemma~\ref{zerotheigenfunction}, it is easy to see that equations \eqref{Inequality A(Psi tilde)} and the second inequality in \eqref{Psi tilde Psi_p Tilde} hold for $0<\epsilon\ll1$. Further, the first inequality in \eqref{Psi tilde Psi_p Tilde} can be obtained by integrating the second one where $\epsilon>0$ is chosen to be sufficiently small. Lastly, to arrive at \eqref{Jump Psi tilde}, we use the transmission equation from the ODE \eqref{IVP2} together with the boundary condition $\tilde{\Psi}(-1,\mu)=\epsilon$.
\end{proof}

Now, letting $\dot{w}=:\tilde{\Psi}v$, we see that \eqref{F_w(0,F)} is equivalent to the following more amenable problem for $v$:
\begin{equation}\label{modified F_w(0,F)}
 \left \{ \begin{aligned} 
    \left(\dfrac{v_p}{H_p^{3}}\right)_p+ \left(\dfrac{v_q}{H_p}\right)_q&=\dfrac{f_1}{\tilde{\Psi}}\qquad  \text{in } R,\\
-\dfrac{v_p}{H_p^3}+\dfrac{1}{\tilde{\Psi}} \left(-\dfrac{\tilde{\Psi}_p}{H_p^3}+\dfrac{1}{F^2}\rho \tilde{\Psi}\right)v &=\dfrac{f_2}{\tilde{\Psi}}\qquad  \text{on } T,\\
-\jump{ \dfrac{v_p}{H_p^3} }&=\dfrac{f_3}{\tilde{\Psi}} \qquad  \text{on }I,\\
v&=0 \qquad \;\; \text{on }B.\\
\end{aligned} \right.
\end{equation}
\begin{lemma}[Invertibility] \label{Invertibility of $F_w(0,F)$}
For $F > \Fcr$, $\mathcal{F}_w(0,F)$ is an invertible map from $X_{\textup{b}} \textrm{ to } Y_{\textup{b}}$ and from $X\textrm{ to } Y$.
\end{lemma}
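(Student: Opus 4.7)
The plan is to use the substitution $\dot{w}=\tilde{\Psi} v$ from Lemma~\ref{Psitilde} to pass from $\mathcal{F}_w(0,F)\dot{w}=(f_1,f_2,f_3)$ to the equivalent problem \eqref{modified F_w(0,F)}. In the new unknown $v$, the interior equation is in divergence form and has \emph{no} zeroth-order term, the boundary condition on $T$ is Robin with coefficient $\alpha:=\tilde{\Psi}^{-1}(-\tilde{\Psi}_p/H_p^3+\rho\tilde{\Psi}/F^2)$ that is strictly negative by \eqref{Inequality A(Psi tilde)}, and the condition on $I$ is the natural flux-continuity transmission for the divergence operator with coefficient $1/H_p^3$. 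Because $\tilde{\Psi}$ is bounded away from $0$ on $\overline{R}$, the substitution is a topological isomorphism of both the pairs $(X,Y)$ and $(X_{\textup{b}},Y_{\textup{b}})$, so it suffices to establish invertibility for the modified operator.

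\textbf{Uniqueness.} Set $(f_1,f_2,f_3)=0$ and suppose $v\in X_{\textup{b}}$ solves the homogeneous modified problem. Since the interior equation is uniformly elliptic in divergence form with no zeroth-order term, and the transmission condition on $I$ is the conservation condition naturally attached to this operator, $v$ is a weak solution of a uniformly elliptic equation on the strip $\{-1<p<0\}\times\mathbb{R}$ with Dirichlet data on $B$ and Robin data on $T$ whose coefficient has the sign favorable for the maximum principle (at a positive interior maximum on $T$ the Hopf lemma would force $v_p>0$, while the boundary condition forces $v_p=\alpha H_p^3 v<0$, a contradiction, and symmetrically for minima). For $v\in X$ the supremum and infimum are attained because of the decay at $|q|\to\infty$, and the argument above immediately gives $v\equiv 0$. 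For $v\in X_{\textup{b}}$ I would upgrade this via a Phragm\'en--Lindel\"of barrier: since $F>\Fcr$, Lemma~\ref{SpectrumofEvalues} and \eqref{def Fcr} guarantee that the transversal Sturm--Liouville problem has all eigenvalues strictly negative, so for any small $\varepsilon>0$ one can build a positive separated comparison function of the form $\cosh(\varepsilon q)\Psi_*(p)$ whose ratio with $v$ is still dominated at $|q|\to\infty$, forcing $v\equiv 0$ on letting $\varepsilon\downarrow0$.

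\textbf{Existence.} I would construct $v$ by truncation. For each integer $N\geq 1$, solve the modified problem on $R_N:=R\cap\{|q|<N\}$ augmented with homogeneous Dirichlet data at $q=\pm N$. On the bounded domain $R_N$ this is a standard elliptic transmission problem, and the Fredholm alternative applies: since the uniqueness argument above also works on $R_N$, there is a unique solution $v^N$, and classical Schauder estimates for elliptic transmission problems (applied on either side of $I$ and patched through the natural jump condition) give $v^N$ in $C^{9+\alpha}$ on each $\overline{R^\pm_N}$. The key a priori bound is a uniform $L^\infty$ estimate $\|v^N\|_{L^\infty(R_N)}\leq C\|(f_1,f_2,f_3)\|_{Y_{\textup{b}}}$, independent of $N$, obtained by comparing $v^N$ with a barrier of the same exponential-decay form used for uniqueness; for data in $Y$ (i.e.\ decaying at infinity) one obtains in addition that $|v^N|$ is controlled by a function tending to zero as $|q|\to\infty$, uniformly in $N$. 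Combining with the Schauder estimates yields uniform bounds on compact sets, so a subsequence of $v^N$ converges in the topology of $C^{8+\alpha}_{\mathrm{loc}}$ to a limit $v$, which inherits the uniform bound and, in the $Y$-case, the decay, placing it in $X_{\textup{b}}$ (respectively $X$).

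\textbf{Main obstacle.} The essential difficulty is the passage from bounded solutions on $R_N$ to a solution on the unbounded strip with the correct decay when the data lies in $X$: the domain is unbounded in $q$, the interior operator has no zeroth-order term after reduction, and the transmission interface prevents a direct application of classical one-sided maximum-principle arguments. This is precisely the step where the supercriticality assumption $F>\Fcr$ enters in a quantitative way via Lemma~\ref{SpectrumofEvalues}: the strict negativity of the full spectrum of the transversal problem provides the exponential-decay barrier needed to make both the uniqueness upgrade and the uniform truncation estimate work. Once these estimates are in hand, the bootstrap from $X_{\textup{b}}$ regularity to full $X$ regularity is a routine application of Schauder theory for transmission problems combined with the structure of the spaces in \eqref{BanachSpaces}.
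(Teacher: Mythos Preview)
Your approach is correct and essentially the same as the paper's: both reduce to the modified problem \eqref{modified F_w(0,F)} via the $\tilde{\Psi}$ substitution and then exploit the resulting maximum-principle structure, with the paper simply delegating the unbounded-domain injectivity/surjectivity arguments to \cite[Lemma~A.5]{wheeler2013large} and \cite[Lemmas~A.1, A.3, Corollary~A.11]{chen2018existence} while you sketch them out via Phragm\'en--Lindel\"of barriers and truncation. One small point: your invocation of Lemma~\ref{SpectrumofEvalues} for the strict negativity of the transversal spectrum at supercritical $F$ is slightly off, since that lemma is stated only at $\mu=\mucr$; what actually drives the barrier construction (and is all that either proof needs) is Lemma~\ref{Psitilde}, which already encodes the supercriticality through the sign conditions \eqref{Inequality A(Psi tilde)}--\eqref{Jump Psi tilde}.
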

\begin{proof}
Because the maximum principle can be applied to \eqref{modified F_w(0,F)}, one can show following \cite[Lemma A.5]{wheeler2013large} and \cite[Lemma A.1]{chen2018existence} that  $\mathcal{F}_w(0,F)$ is an injective map between $X_{\textup{b}}$ and $Y_{\textup{b}}$. Surjectivity between these spaces, moreover, follows from the same argument as in \cite[Lemma A.3]{chen2018existence}. Finally, using \cite[Corollary A.11]{wheeler2013large}, we obtain the invertibility of $\mathcal{F}_w(0,F)$ between the spaces $X$ and $Y$.
\end{proof}

\begin{lemma}\label{Fedholm of index zero of the linearized opt}
For all $(w,F)\in \mathscr{U}$, $\mathcal{F}_w(w,F)$ is Fredholm index $0$ as a map $X \to Y$.
\end{lemma}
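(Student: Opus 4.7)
The plan is to view $\mathcal{F}_w(w,F)$ as a compact perturbation of $\mathcal{F}_w(0,F) : X \to Y$, which is invertible, hence Fredholm of index $0$, by Lemma~\ref{Invertibility of $F_w(0,F)$}.  Since the Fredholm index is stable under compact perturbations, it suffices to show that the difference
\[
\mathcal{K} := \mathcal{F}_w(w,F) - \mathcal{F}_w(0,F) : X \longrightarrow Y
\]
is compact.  Using the explicit formulas \eqref{F} and the real-analyticity of $\mathcal{F}$ on $\mathscr{U}$, direct computation shows that $\mathcal{K}$ is a second-order linear differential operator in the interior (together with lower-order oblique and transmission operators on $T$ and $I$) whose coefficients are smooth functions of $(w, w_q, w_p)$ vanishing identically when $w \equiv 0$.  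Because membership in $X$ forces $w$ together with its derivatives up to order $8$ to decay uniformly in each layer as $|q| \to \infty$, the same is true of these coefficients and sufficiently many of their derivatives.

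To establish the compactness of $\mathcal{K}$, fix $R>0$ and choose an even cutoff $\chi_R \in C^\infty_0(\mathbb{R})$ with $\chi_R \equiv 1$ on $\{|q| \le R\}$ and $\chi_R \equiv 0$ on $\{|q| \ge 2R\}$, and split
\[
\mathcal{K} = \chi_R\,\mathcal{K} + (1-\chi_R)\,\mathcal{K}.
\]
The far-field piece $(1-\chi_R)\,\mathcal{K}$ has coefficients whose relevant H\"older norms on $\{|q| \ge R\}$ can be made arbitrarily small by choosing $R$ large, so its operator norm from $X$ to $Y$ tends to $0$ as $R \to \infty$.  The near-field piece $\chi_R\,\mathcal{K}$ produces output supported in $\{|q| \le 2R\}$; applied to a bounded subset of $X$, the image is bounded in $C^{7+\alpha}$ on each layer of this compact strip, and, because the coefficients inherit the $C^{8+\alpha}$ regularity of $\nabla w$, one gains a derivative of margin over what $Y$ requires.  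A standard Arzel\`a--Ascoli argument then yields precompactness in $Y$, so $\chi_R\,\mathcal{K}$ is compact for each $R$.  Hence $\mathcal{K}$ is a norm-limit of compact operators and itself compact, and $\mathcal{F}_w(w,F) = \mathcal{F}_w(0,F) + \mathcal{K}$ is Fredholm of index $0$.

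The main technical obstacle is ensuring that the Arzel\`a--Ascoli step delivers precompactness in the full $C^{7+\alpha}$ topology of $Y_1$ and the $C^{8+\alpha}$ topology of $Y_2$, rather than only in a strictly weaker H\"older exponent.  This is precisely the role of the generous $C^{9+\alpha}$ regularity built into $X$ in each layer: the coefficients of $\mathcal{K}$ then carry one classical derivative beyond what appears in the target space, supplying the margin needed to close the compactness argument.  Care is also needed to carry out the same analysis for the boundary operators on $T$ and $I$, where the transmission condition must be handled analogously but with the $C^{8+\alpha}$ target regularity.
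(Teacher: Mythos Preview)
Your strategy of showing that $\mathcal{K}=\mathcal{F}_w(w,F)-\mathcal{F}_w(0,F)$ is compact $X\to Y$ does not go through as written, and the difficulty is exactly the one you flag in your last paragraph: there is \emph{no} H\"older margin.  The interior part of $\mathcal{K}$ is a genuine second-order operator in $\dot w$, so for $\dot w\in C^{9+\alpha}$ the highest-order contribution is (a $C^{8+\alpha}$ coefficient) $\times\,\dot w_{pp}\in C^{7+\alpha}$, which lands in $C^{7+\alpha}$ exactly.  A set that is merely bounded in $C^{7+\alpha}$ on a compact strip is \emph{not} precompact in $C^{7+\alpha}$; Arzel\`a--Ascoli only gives precompactness in $C^{7+\alpha'}$ for $\alpha'<\alpha$.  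The same count applies to the boundary pieces ($8+\alpha$ traces versus the $C^{8+\alpha}$ target $Y_2$).  Thus $\chi_R\mathcal{K}$ is not compact into $Y$, and $\mathcal{K}$ is not a norm-limit of compact operators in the spaces you need.  The ``derivative of margin'' you invoke is illusory: $9+\alpha-2=7+\alpha$ on the nose.

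The paper sidesteps this by \emph{not} asserting that $\mathcal{K}$ is compact.  Instead it cites the general machinery of \cite[Lemmas~A.12--A.13]{wheeler2015solitary}, whose content is roughly: from uniform Schauder estimates one gets a semi-Fredholm inequality of the form $\|\dot w\|_X\le C(\|\mathcal{F}_w(w,F)\dot w\|_Y+\|\dot w\|_{C^0(|q|\le R)})$ once the limiting operator $\mathcal{F}_w(0,F)$ is invertible, and the embedding $X\hookrightarrow C^0(\{|q|\le R\})$ \emph{is} compact because one is dropping both derivatives and the H\"older exponent.  This gives finite-dimensional kernel and closed range; a continuous homotopy $t\mapsto \mathcal{F}_w(tw,F)$ (whose limiting operator at $|q|\to\infty$ is $\mathcal{F}_w(0,F)$ for every $t$) then transports the index $0$ from $t=0$.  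If you want to retain your perturbative viewpoint, the fix is to build a parametrix rather than claim $\mathcal{K}$ compact: show $\mathcal{F}_w(w,F)\circ\mathcal{F}_w(0,F)^{-1}=I+\mathcal{R}$ with $\mathcal{R}$ factoring through a \emph{strictly weaker} H\"older space on a bounded strip, which is where the compactness actually lives.
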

\begin{proof}
Fix $(w,F)\in \mathscr{U}$. Since $w \in X$ then the coefficients of the operator $\mathcal{F}_w(w,F)$ go to the coefficients of $\mathcal{F}_w(0,F)$ as $|q|\to \infty$. By Lemma~\ref{Invertibility of $F_w(0,F)$}, we know that $\mathcal{F}_w(0,F)$ is invertible as a map $X$ to $Y$. The proof then follows from an application of \cite[Lemma A.12 and A.13]{wheeler2015solitary}.
\end{proof}

\section{{Qualitative properties}}\label{Qualitative}
\subsection{Bounds on the Froude number} In this section, we derive both upper and lower bounds on the Froude number in our heterogenous regime. The analysis follows closely the arguments from \cite[Section 4]{chen2018existence}. These bounds play a crucial role in the global theory discussed in Section \ref{largeamplitude}. In particular, they allow us to infer that blowup in norm implies stagnation. 

There has been a number of significant applied works devoted to estimating the Froude number of irrotational solitary waves. In this regime, Starr \cite{starr1947momentum} gave a formal proof of Froude number bounds written in terms of an integral of the free surface profile. Numerically, Longuet-Higgins--Fenton \cite{Long1974massmomentum} obtained the upper and lower bounds on the Froude number, $1<F<1.286$. However, for rotational solitary waves, much less is known. In addition to the results in \cite{chen2018existence}, we also mention the work of Wheeler \cite{wheeler2015froude} where a number of bounds on Froude number in the rotational (but constant density) case are obtained.

\subsubsection*{Lower bound} We begin by showing that every wave with critical Froude number must be trivial. In the global continuation argument, this allows us to conclude that all waves on the solution curve are supercritical.  The first step is to establish an integral identity.

\begin{lemma}\label{convergence}
Let 
\begin{equation} \label{qual w regularity}
(w,F)\in C_{\textup{b},\textup{e}}^2(\overline{R^+})\cap C_{\textup{b},\textup{e}}^2(\overline{R^-})  \cap C_0^1(\overline{R^+})\cap C_{0}^1(\overline{R^-}) \cap C^0_0(\overline{R}) \times \mathbb{R}
\end{equation}
solve equation \eqref{quasiw}. For $M \geq 0$, define
\begin{equation*}
    \textbf{I}(M):=\int_{-M}^{M}\int_{-1}^{0}\dfrac{H_p^3w_q^2+(H_p+2h_p)w_p^2}{2h_p^2H_p^3}\Psi_p \,dp \, dq +A\left(\dfrac{1}{F^2}\right)\int_{-M}^{M} \eta \,dx,
\end{equation*}

where $A$ is given in \eqref{A} and $\Psi = \Psi(p;1/F^2)$ is from \eqref{newfunction}.
Then $\textbf{I}\rightarrow{0}$ as $M \rightarrow{\infty}.$
\end{lemma}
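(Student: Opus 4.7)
The strategy is a weighted Green-type identity obtained by testing the interior equation $\mathcal{F}_1(w,F)=0$ against the vertical mode $\Psi=\Psi(p;1/F^2)$ from \eqref{newfunction}, integrated over $[-M,M]\times\left([-1,\hat p)\cup(\hat p,0]\right)$. Rewrite the PDE in the divergence form
\[G_p + \left(\frac{w_q}{h_p}\right)_q - \frac{1}{F^2}\rho_p w = 0, \qquad G := \frac{1}{2H_p^2} - \frac{1+w_q^2}{2h_p^2},\]
so that the boundary and transmission equations of \eqref{quasiw} become $G|_{p=0}=\rho(0)w/F^2$ and $\jump{G}|_{p=\hat p}=\jump{\rho}w/F^2$. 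The plan is to multiply by $\Psi$ and evaluate each of the three resulting integrals using integration by parts together with the ODE and jump relations \eqref{IVP1}--\eqref{IVP2} satisfied by $\Psi$.

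For $\iint G_p\Psi$, integration by parts in $p$ (splitting at $\hat p$) combined with $\Psi(-1)=0$ and the above boundary/jump data yields
\[\frac{\rho(0)\Psi(0)}{F^2}\int w(q,0)\,dq - \frac{\jump{\rho}\Psi(\hat p)}{F^2}\int w(q,\hat p)\,dq - \iint G\,\Psi_p\,dp\,dq.\]
For the zeroth-order term $-F^{-2}\iint\rho_p w\Psi$, the ODE $(\Psi_p/H_p^3)_p=\rho_p\Psi/F^2$ rewrites it as $-\iint w(\Psi_p/H_p^3)_p$; a second integration by parts in $p$, now using $w|_{p=-1}=0$ and the jump $\jump{\Psi_p/H_p^3}|_{\hat p}=\jump{\rho}\Psi(\hat p)/F^2$ from \eqref{IVP2}, produces
\[-\frac{\Psi_p(0)}{H_p^3(0)}\int w(q,0)\,dq + \frac{\jump{\rho}\Psi(\hat p)}{F^2}\int w(q,\hat p)\,dq + \iint \frac{\Psi_p w_p}{H_p^3}\,dp\,dq.\]
The two $p=\hat p$ contributions cancel exactly, and the two $p=0$ contributions combine into $A(1/F^2)\int \eta(q)\,dq$ via \eqref{A} (using $H(0)=1$, so $w(\cdot,0)=\eta$). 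What remains of the volume integral is $\iint\Psi_p\bigl[w_p/H_p^3 - G\bigr]dp\,dq$, and a short algebraic reduction over the common denominator $2H_p^3 h_p^2$ simplifies the bracket to $\bigl[H_p^3 w_q^2 + w_p^2(3H_p+2w_p)\bigr]/(2H_p^3 h_p^2)$; writing $3H_p+2w_p=H_p+2h_p$ recovers exactly the integrand of $\mathbf{I}(M)$.

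Meanwhile, the middle piece $\iint(w_q/h_p)_q\Psi$, integrated in $q$, leaves only $q$-boundary data; evenness of $w$ in $q$ (hence oddness of $w_q$) together with evenness of $h_p$ collapses these to $\int_{-1}^0 \Psi(p)\,(2w_q(M,p)/h_p(M,p))\,dp$. Collecting everything yields the exact identity
\[\mathbf{I}(M) = -\int_{-1}^0 \Psi(p)\,\frac{2w_q(M,p)}{h_p(M,p)}\,dp.\]
The conclusion is then immediate from the regularity in \eqref{qual w regularity}: $w\in C_0^1(\overline{R^\pm})$ forces $w_q(M,\cdot)\to 0$ uniformly as $M\to\infty$, while $\Psi$ is bounded on $[-1,0]$ and $h_p=H_p+w_p$ stays uniformly bounded away from zero (in fact $h_p(M,\cdot)\to H_p$).

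The main delicate point is sign-tracking so that the two interface integrals at $p=\hat p$ cancel rather than reinforce, which hinges on recognizing that the jump $\jump{\Psi_p/H_p^3}|_{\hat p}$ in \eqref{IVP2} was designed to match $\jump{G}$ from the Bernoulli transmission equation in \eqref{quasiw}. The remaining algebraic simplification of the bulk integrand is routine but must be carried out to see the precise form in the statement.
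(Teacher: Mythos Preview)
Your proof is correct and follows essentially the same approach as the paper: test the interior equation against $\Psi$, integrate by parts in $p$ using the boundary/transmission conditions satisfied by both $w$ and $\Psi$ to obtain the exact identity, and conclude from the decay $w_q(M,\cdot)\to 0$. The only cosmetic difference is that the paper does not invoke evenness to combine the two $q$-boundary terms into $2w_q(M,\cdot)/h_p(M,\cdot)$; it simply notes that $h_q/h_p\big|_{q=-M}^{q=M}\to 0$ as $M\to\infty$, which of course gives the same conclusion.
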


\begin{proof}
Multiplying the height equation \eqref{quasi} by $\Psi$ then integrating by parts over the finite rectangle $|q| < M$, we obtain 

\begin{equation*}
    \begin{split}
    0=&\int_{-M}^{M}\int_{-1}^{0} \left[\left(\dfrac{1+h_q^2}{2h_p^2}-\dfrac{1}{2H_p^2}\right)\Psi_p+\dfrac{\Psi_p}{H_p^3}(h_p-H_p)\right]\, \,dp \, dq\\&-\int_{-M}^{M}\left.\left[\jump{ -\dfrac{1+h_q^2}{2h_p^2}+\dfrac{1}{2H_p^2}} \Psi -\jump{ \dfrac{\Psi_p}{H_p^3}} (h-H)\right]\right|_{I}\,dq\\&+\int_{-M}^{M}\left.\left[\left(-\dfrac{1+h_q^2}{2h_p^2}+\dfrac{1}{2H_p^2}\right)\Psi-\dfrac{\Psi_p}{H_p^3}(h-H)\right]\right|_{T}\,dq+\int_{-1}^{0}\left.\dfrac{h_q}{h_p}\Psi\right|_{q=-M}^{q=M} \,dp.
\end{split}
\end{equation*}
Here we have used the equation satisfied by $\Psi$ \eqref{IVP} to eliminate several terms. Notice that we can re-write the first integrand above as follows
\[\left(\dfrac{1+h_q^2}{2h_p^2}-\dfrac{1}{2H_p^2}\right)\Psi_p+\dfrac{\Psi_p}{H_p^3}(h_p-H_p)=\dfrac{H_p^3w_q^2+(H_p+2h_p)w_p^2}{2h_p^2H_p^3}\Psi_p.
\]
Combining this fact with the condition on $T$ and $I$ from \eqref{quasi} and IVP \eqref{IVP} yields
\begin{equation}\label{simplified}
\begin{split}
      \int_{-M}^{M}\int_{-1}^{0} \dfrac{H_p^3w_q^2+(H_p+2h_p)w_p^2}{2h_p^2H_p^3}&\Psi_p \, \,dp \, dq+\int_{-M}^{M}\jump{\dfrac{1+h_q^2}{2h_p^2}-\dfrac{1}{2H_p^2} } \Psi +\jump{\dfrac{\Psi_p}{H_p^3}}(h-H) \, dq\\& + A\left(\dfrac{1}{F^2}\right) \int_{M}^{M} \eta \,dx=-\int_{-1}^{0}\left.\dfrac{h_q}{h_p}\Psi\right|_{q=-M}^{M} \,dp.
\end{split}
\end{equation}
Observe that from \eqref{IVP},
the second integral on the left hand side of \eqref{simplified} vanishes. Now, sending $M \to \infty$ results in $h_q|_{-M}^M \rightarrow{0}$,  which forces the right hand side of \eqref{simplified} to vanish. Hence, the proof is complete.
\end{proof}\qedhere

\begin{theorem}[Critical waves are laminar] \label{thm:trivialsol}
 Let $(w,F)$ be in the function space defined in \eqref{qual w regularity} and solve \eqref{quasiw}.
 
 \begin{enumerate}[label=\normalfont{(\alph*)}]
    \item\label{A<0} Suppose $w>0$ on $T$, and $F$ is chosen such that $\Psi_p \geq 0$ for $[-1, \hat{p})$ and  $(\hat{p},0]$, then $A(1/F^2)<0$.
    
    \item\label{nonontrivialsolution} If $F=F_{\textup{cr}},$ then $w\equiv0$.
\end{enumerate}
\end{theorem}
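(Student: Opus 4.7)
The strategy is to exploit the integral identity from Lemma~\ref{convergence} together with a simple monotonicity argument on its two terms. Denote the double integral in the definition of $\mathbf{I}(M)$ by $I_1(M)$. Under the hypothesis $\Psi_p \geq 0$ on $[-1,\hat p) \cup (\hat p, 0]$, the integrand of $I_1$ is non-negative, since the quadratic form $H_p^3 w_q^2 + (H_p + 2h_p) w_p^2$ is non-negative and $h_p, H_p > 0$; hence $M \mapsto I_1(M)$ is non-decreasing. Writing $E(M) := \int_{-M}^{M}\eta\,dx$, the assumption $w > 0$ on $T$ translates via the identity $w|_{T} = \eta$ (which follows from $h = y+1$ and $H(0) = 1$) into $\eta > 0$, so $E(M)$ is strictly increasing from $E(0) = 0$. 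Lemma~\ref{convergence} then reads
\[
I_1(M) + A\!\left(\tfrac{1}{F^2}\right) E(M) \longrightarrow 0 \quad \text{as } M \to \infty,
\]
with $I_1, E \geq 0$ monotone in $M$.

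For part~\ref{A<0}, the plan is a case analysis on $A := A(1/F^2)$. If $A > 0$, then $I_1 + A E$ is a sum of non-decreasing non-negative quantities with $A E(M) > 0$ for $M > 0$, so it cannot tend to $0$, contradicting the displayed limit. If $A = 0$, then $I_1(M) \to 0$; monotonicity forces $I_1 \equiv 0$, and since $\Psi_p > 0$ on $(-1,\hat p) \cup (\hat p,0)$ by Lemma~\ref{zerotheigenfunction}\ref{b}, continuity of $w_p, w_q$ in the closed strips yields $w_p \equiv w_q \equiv 0$ throughout $R^\pm$. Combined with $w|_B = 0$ and continuity of $w$ across $I$, this forces $w \equiv 0$, contradicting $\eta > 0$. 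Hence $A < 0$.

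For part~\ref{nonontrivialsolution}, the definition of $F_{\textup{cr}}$ in \eqref{def Fcr} together with Lemma~\ref{zerotheigenfunction}\ref{a} and the formula \eqref{A} gives $A(1/F_{\textup{cr}}^2) = 0$, while Lemma~\ref{zerotheigenfunction}\ref{b} supplies the sign $\Psi_p \geq 0$. The integral identity then degenerates to $I_1(M) \to 0$, and I reuse the vanishing argument from the $A = 0$ subcase of part~\ref{A<0} to obtain $w \equiv 0$ directly.

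The main technical point to watch is that neither $I_1(M)$ nor $E(M)$ need be bounded as $M \to \infty$: the decay $w \to 0$ at infinity built into the space $C_0^0(\overline{R})$ is only qualitative and in particular does not imply integrability of $\eta$. It is precisely the monotonicity of each individual summand, together with the non-negativity of $I_1$ and the fixed sign of $A$, that lets the sign-based argument go through in the limit without invoking any decay rate.
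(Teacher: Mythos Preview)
Your proof is correct and follows essentially the same approach as the paper's: both use Lemma~\ref{convergence} together with the non-negativity and monotonicity of the double integral $I_1(M)$ and the strict positivity of $\int_{-M}^{M}\eta\,dx$ to force $A(1/F^2)<0$ in part~\ref{A<0}, and then specialize to $A(1/F_{\textup{cr}}^2)=0$ with $(\Psi_{\textup{cr}})_p>0$ (Lemma~\ref{zerotheigenfunction}) to conclude $\nabla w\equiv 0$, hence $w\equiv 0$, in part~\ref{nonontrivialsolution}. Your case analysis on the sign of $A$ in part~\ref{A<0} is slightly more explicit than the paper's one-line argument, but the content is the same; one small caveat is that your appeal to Lemma~\ref{zerotheigenfunction}\ref{b} for the \emph{strict} positivity of $\Psi_p$ in the $A=0$ subcase technically only covers $\mu\le\mu_{\textup{cr}}$, though the paper's proof tacitly makes the same assumption.
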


\begin{proof}
The assumptions on the strict positivity of $w$ and $\Psi_p$ force the first integrand in the definition of $\textit{\textbf{I}}$ to be positive. Sending $M \rightarrow{\infty}$ and applying Lemma~\ref{convergence}, we therefore prove part \ref{A<0}. 

It remains to prove part \ref{nonontrivialsolution}. Thanks to Lemma~\ref{zerotheigenfunction}\ref{A<0}, we know that $A(1/F_{\textup{cr}}^2)=0$. For $F=F_{\textup{cr}}$, $\textit{\textbf{I}}$ takes the following form
\begin{equation}\label{intergralasfunctionofM}
 \int_{-M}^{M}\left[\int_{-1}^{0}\dfrac{H_p^3w_q^2+(H_p+2h_p)w_p^2}{2h_p^2H_p^3}(\Psi_{\textup{cr}})_p \,dp\right]\,dq, 
\end{equation} 
which still goes to zero as $M\rightarrow{\infty}$. As a result of Lemma~\ref{zerotheigenfunction}\ref{nonontrivialsolution}, it is clear that $(\Psi_{\textup{cr}})_p>0 \textrm{ for } -1 \leq p \leq \hat{p}$ and $\hat{p} \leq p \leq 0.$ Observe, the integral in \eqref{intergralasfunctionofM} is non-negative and non-decreasing as a function of $M$. Hence, the integrand must vanish for all $M$. This implies that $w_p \textrm{ and } w_q$ should be equal to zero everywhere. In other words, $w \equiv 0.$
\end{proof}

\subsubsection*{Upper bound} Now, we shall derive the upper bound of the Froude number. The argument here is based on \cite{chen2018existence} which is strongly inspired by the work of Pritchard--Keady  \cite{pritchard1974bounds} and Starr \cite{starr1947momentum}. Both earlier results show that for homogeneous irrotational fluid, $F<\sqrt{2}$. However, due to stratification and vorticity, our estimate here is presented in terms of several quantities associated to the underlying current and a bound away from stagnation along the cresline.  The integral identity \eqref{integralidentityforfroude} can, in fact, be applied to the homogeneous irrotational regime where it recovers the bound in \cite{starr1947momentum} and \cite{pritchard1974bounds}.

\begin{lemma}
Let $(w,F)$ belong to the space defined in \eqref{qual w regularity} and solve \eqref{quasiw}, then
\begin{equation}\label{integralidentityforfroude}
\begin{split}
    \dfrac{1}{F^2}&\left[\int_{-1}^{\hat{p}}|\rho_p|w(0,p)^2 \,dp+\int_{\hat{p}}^{0}|\rho_p|w(0,p)^2 \,dp +\rho(0)\eta(0)^2-\jump{\rho} w(\hat{p})^2\right]\\&=\int_{-1}^{0}\dfrac{w_p^2}{H_p^2h_p}(0,p)\,dp.
\end{split}
\end{equation}
\end{lemma}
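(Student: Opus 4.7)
I would exploit the conservation of the flow force functional $\mathscr{S}(h)$ from \eqref{flow force functional}. Since $\mathscr{S}$ is $q$-independent on solutions and $h(q,\cdot)\to H$ as $|q|\to\infty$, one has $\mathscr{S}(h)(q)=\mathscr{S}(H)$ for every $q$; I would write this out explicitly at $q=0$ with $h=H+w$ and simplify.

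\textbf{Key simplifications.} The ``kinetic'' portion of $\mathscr{S}(h)-\mathscr{S}(H)$ collapses algebraically:
\[
\left(\frac{1}{2h_p^{2}}+\frac{1}{2H_p^{2}}\right)h_p-\frac{1}{H_p}\;=\;\frac{(h_p-H_p)^{2}}{2H_p^{2}h_p}\;=\;\frac{w_p^{2}}{2H_p^{2}h_p},
\]
which already supplies the integrand on the right-hand side of the target identity. For the ``potential'' piece I would integrate by parts in the term $-\frac{1}{F^{2}}\int_{-1}^{0}w_p\!\int_{0}^{p}\rho H_p\,dp'\,dp$: since $w(-1)=0$ and $\int_{0}^{p}\rho H_p\,dp'$ vanishes at $p=0$ (and both $w$ and this antiderivative are continuous across $I$, so no jump appears here), the integration by parts produces $+\frac{1}{F^{2}}\int_{-1}^{0}\rho H_p w\,dp$, which exactly cancels one half of $-\frac{1}{F^{2}}\int_{-1}^{0}\rho w h_p\,dp=-\frac{1}{F^{2}}\int_{-1}^{0}\rho w H_p\,dp-\frac{1}{F^{2}}\int_{-1}^{0}\rho w w_p\,dp$. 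The flow-force identity therefore reduces at $q=0$ to
\[
0\;=\;\int_{-1}^{0}\frac{w_p^{2}}{2H_p^{2}h_p}(0,p)\,dp\;-\;\frac{1}{F^{2}}\int_{-1}^{0}\rho\,w w_p(0,p)\,dp.
\]

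\textbf{Handling the jump.} The last step is to rewrite $\rho w w_p=\tfrac12\rho(w^{2})_p$ and integrate by parts in $p$, splitting the integral at the internal interface $p=\hat p$ to account for the discontinuity of $\rho$. Using the convention $\jump{f}=f(\hat p^{+})-f(\hat p^{-})$ and the continuity of $w$ across $I$, the boundary contributions are: $\tfrac12\rho(0)\eta(0)^{2}$ at $p=0$ (since $w(0,0)=\eta(0)$), $0$ at $p=-1$, and $-\tfrac12\jump{\rho}\,w(\hat p)^{2}$ from the interface. The interior term becomes $-\tfrac12\int_{-1}^{0}\rho_p w^{2}\,dp=\tfrac12\int_{-1}^{0}|\rho_p|w^{2}\,dp$ since $\rho_p\leq 0$ under stable stratification. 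Multiplying the resulting equation by $2$ gives exactly \eqref{integralidentityforfroude}, after splitting the $|\rho_p|$ integral over the two layers as in the statement.

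\textbf{Expected obstacle.} The only delicate point is the bookkeeping of the boundary terms across $I$; once one tracks the sign conventions for $\jump{\cdot}$ and exploits the continuity of $w$ (so that $w(\hat p^{-})=w(\hat p^{+})$), the jump drops out cleanly as $-\jump{\rho}w(\hat p)^{2}$. No maximum principle, no Sturm--Liouville input, and no specific sign information on $w$ is required for the identity itself; this is essentially an integration-by-parts computation driven by flow-force conservation.
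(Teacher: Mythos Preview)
Your proposal is correct and follows exactly the paper's approach: use conservation of the flow force $\mathscr{S}(h)=\mathscr{S}(H)$ at $q=0$, simplify to obtain the intermediate identity $0=\int_{-1}^{0}\frac{w_p^2}{2H_p^2h_p}\,dp-\frac{1}{F^2}\int_{-1}^{0}\rho w w_p\,dp$, and then integrate $\rho w w_p=\tfrac12\rho(w^2)_p$ by parts across the two layers to generate the boundary terms at $p=0$ and $p=\hat{p}$. In fact your write-up supplies more of the algebraic detail (the kinetic identity and the integration-by-parts of the potential term) than the paper's own proof, which simply asserts the intermediate identity after ``gathering like terms.''
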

\begin{proof}
 Recall from the discussion in Section~\ref{flow force section} that the flow force $\mathscr{S}$ is independent of $q$ and in semi-Lagrangian variables can be viewed as the functional \eqref{flow force functional} acting on $h$. Hence, by evaluating it at $q=0$ and $q=\pm\infty$, we obtain
\[
\begin{split}
  \mathscr{S}(h)=&\int_{-1}^{0}\left(\dfrac{1}{2h_p^2}+\dfrac{1}{2H_p^2}-\dfrac{1}{F^2}\rho(h-H)-\dfrac{1}{F^2}\int_{0}^{p} \rho H_p \,dp'\right)h_p\,dp\\&=\int_{-1}^{0}\left(\dfrac{1}{2H_p^2}+\dfrac{1}{2H_p^2}-\dfrac{1}{F^2}\int_{0}^{p} \rho H_p \,dp'\right)H_p\,dp=\mathscr{S}(H).
\end{split} 
\]
Gathering like terms, integrating by parts and simplifying terms lead to
\begin{equation}\label{IBPflowforce}
\begin{split}
 0=&\int_{-1}^{0} \left.\dfrac{w_p
^2}{2H_p^2h_p}\right|_{q=0}\,dp-\int_{-1}^{0}\dfrac{1}{F^2}\rho \left.w w_p\right|_{q=0}\,dp.
\end{split}
\end{equation}

Moreover, integrating by parts, we obtain
\begin{equation*}
    \begin{aligned}
        \int_{-1}^{0}\dfrac{1}{F^2}\rho \left.w w_p\right|_{q=0}\,dp=&\int_{-1}^{\hat{p}}\dfrac{1}{2F^2}|\rho_p| w(0,p)^2\,dp+\int_{\hat{p}}^{0}\dfrac{1}{2F^2}|\rho_p| w(0,p)^2\,dp\\&-\dfrac{1}{2F^2}\jump{\rho} w(0,p)^2+\left.\dfrac{1}{2F^2}\rho w^2\right|_{p=0}.
    \end{aligned}
\end{equation*}

Combining the previous two integrals with the one in \eqref{IBPflowforce}, we arrive at equation \eqref{integralidentityforfroude}. \qedhere
\end{proof}

\begin{theorem}[Upper bound of $F$]\label{thm:upperboundonF}
Let $(w,F)$ belong to the space defined in \eqref{qual w regularity} and solve \eqref{quasiw}. Then the Froude number F is bounded above:
\begin{equation}\label{froudeupperbound}
   F^2 \leq 2\norm{\rho }_{L^{\infty}} \norm{ H_p}_{L^{\infty}}^2 \norm{ h_p(0,\cdot)}_{L^{\infty}}.
\end{equation}
\end{theorem}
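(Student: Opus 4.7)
The plan is to extract the upper bound directly from the integral identity \eqref{integralidentityforfroude}, rewriting its left-hand side via integration by parts and then applying a one-dimensional Poincar\'e/Cauchy--Schwarz estimate along the cresline $\{q = 0\}$.

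First, I would rewrite the bracketed expression on the left of \eqref{integralidentityforfroude}. Using that $\rho_p \leq 0$ in each layer, that $w(0,-1) = 0$ and $w(0,0) = \eta(0)$ from the boundary conditions, and that $w$ is continuous across $\{p = \hat{p}\}$ even though $\rho$ is not, integration by parts on $[-1,\hat{p}]$ and on $[\hat{p},0]$ separately yields
\begin{equation*}
\int_{-1}^{\hat{p}} |\rho_p| w^2\,dp + \int_{\hat{p}}^{0} |\rho_p| w^2\,dp + \rho(0)\eta(0)^2 - \jump{\rho} w(\hat{p})^2 \;=\; 2\int_{-1}^{0} \rho\, w\, w_p\,dp,
\end{equation*}
where all quantities are evaluated at $q=0$. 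Substituting into \eqref{integralidentityforfroude} gives the compact identity
\begin{equation*}
\frac{2}{F^2} \int_{-1}^{0} \rho\, w\, w_p\,dp \;=\; \int_{-1}^{0} \frac{w_p^2}{H_p^2\, h_p}(0,p)\,dp \;=:\; I.
\end{equation*}

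Second, I would exploit $w(0,-1) = 0$ to control $w(0,\cdot)$ in $L^\infty$ by $w_p$. Cauchy--Schwarz on $[-1,p]$ gives $|w(0,p)|^2 \leq (p+1)\int_{-1}^{p} w_p^2\,ds \leq \int_{-1}^{0} w_p^2\,dp$, and similarly $\int_{-1}^{0} |w_p|\,dp \leq \bigl(\int_{-1}^{0} w_p^2\,dp\bigr)^{1/2}$. Combining,
\begin{equation*}
\int_{-1}^{0} |w\, w_p|\,dp \;\leq\; \|w(0,\cdot)\|_{L^\infty} \int_{-1}^{0} |w_p|\,dp \;\leq\; \int_{-1}^{0} w_p^2\,dp.
\end{equation*}

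Third, I would pull the weight $1/(H_p^2 h_p)$ out of $I$:
\begin{equation*}
\int_{-1}^{0} w_p^2\,dp \;\leq\; \|H_p\|_{L^\infty}^{2}\,\|h_p(0,\cdot)\|_{L^\infty}\cdot I.
\end{equation*}
Chaining the three estimates,
\begin{equation*}
F^2\, I \;=\; 2\int_{-1}^{0} \rho\, w\, w_p\,dp \;\leq\; 2\|\rho\|_{L^\infty}\!\int_{-1}^{0} w_p^2\,dp \;\leq\; 2\|\rho\|_{L^\infty}\|H_p\|_{L^\infty}^{2}\|h_p(0,\cdot)\|_{L^\infty}\cdot I.
\end{equation*}
If $I > 0$ I divide to conclude; if $I = 0$ then both sides of \eqref{integralidentityforfroude} vanish and the bound holds vacuously.

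The only delicate step is the integration by parts in the first paragraph: the jump term $-\jump{\rho}w(\hat{p})^2$ must be produced correctly from the discontinuity of $\rho$ (while $w$ remains continuous) across the internal interface, and one must verify the sign conventions so that all three non-$w^2 w_p$ contributions assemble into the left-hand side of \eqref{integralidentityforfroude}. Everything after that is just one-dimensional Poincar\'e-type bookkeeping.
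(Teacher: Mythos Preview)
Your argument is correct and essentially identical to the paper's proof. The only cosmetic difference is that the paper references the intermediate identity \eqref{IBPflowforce} directly (i.e., $\int_{-1}^{0}\frac{w_p^2}{2H_p^2 h_p}\,dp = \frac{1}{F^2}\int_{-1}^{0}\rho w w_p\,dp$, established en route to \eqref{integralidentityforfroude}), whereas you start from \eqref{integralidentityforfroude} and undo the integration by parts to recover that same compact identity; after that, your Poincar\'e/Cauchy--Schwarz step and weight extraction match the paper's line by line.
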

 \begin{proof}
From the Poincar\'e inequality, it is easy to see that
\[
\int_{-1}^{0}\dfrac{2}{F^2}\rho w \left.w_p \right|_{q=0}\,dp \leq \dfrac{2}{F^2}\norm{\rho }_{L^{\infty}} \norm{ w_p(0,\cdot)}_{L^2}^2.
\]
Moreover,
\[
\int_{-1}^{0} \left.\dfrac{w_p
^2}{H_p^2h_p}\right|_{q=0} \,dp \geq \left(\min_{p}(H_p^{-1})^2\right)\left(\min_{p}(h_p^{-1})(0,p)\right)\norm{ w_p(0,\cdot)}_{L^2}^2.
\]
Combining both estimates with the identity \eqref{IBPflowforce} and canceling some terms yields
\[
    \dfrac{2}{F^2}\norm{ \rho }_{L^{\infty}}\geq\left(\min_{p}(H_p^{-1})^2 \right)\left(\min_{p}(h_p^{-1})(0,p)\right),
\] which is equivalent to \eqref{froudeupperbound}.
\end{proof}
\subsection{Symmetry}
In this section, we will prove Theorem~\ref{symmetry} on the symmetry of supercritical solitary waves of elevation. The main machinery used here is the method of moving planes, first introduced by Alexandrov \cite{Alexandrov1962characteristic} in his study of spheres. Variations of this argument have been used by many authors, for instance, Serrin \cite{Serrin1971symmetry} in dealing with a symmetry problem in potential theory (see also \cite{nirenberg1988monotonicity, nirenberg1991movingplane}). Due to the full nonlinearity of the problem and the unboundedness of the domain, we adopt the version used in \cite{Li1991monotonicity}. The proof is patterned on that of \cite[Theorem 4.13] {chen2018existence}, which in turn is based partially on the work of Maia in \cite{Maia1997symmetryofinternalwaves}.

The main result, stated now in semi-Lagrangian variables for convenience, is as follows.

\begin{theorem}[Symmetry]\label{thm:symmetry} 
Let 
\[ (w,F)\in  C^{3}_{\textup{b}} (\overline{R^+}) \cap  C^{3}_{\textup{b}} (\overline{R^-}) \cap C^{0}(\overline{R}) \times \mathbb{R}\]
be a solution of the height equation \eqref{quasiw} that is a wave elevation 
\begin{equation}\label{elevation}
 w >0 \quad \textrm{in} \;R\cup I \cup T, 
\end{equation}
supercritical, and satisfies the upstream (or downstream) condition
\begin{equation}\label{decay}
 w, Dw \rightarrow{0}\quad \textrm{uniformly } as \quad q \rightarrow{ - \infty}\; \textrm{(or\; $\infty$)}. 
\end{equation}
Then, possibly after translation in $q$, $w$ is a symmetric and monotone solitary wave: there exists $q_*\in \mathbb{R}$ such that $q \mapsto w(q,\cdot)$ is even about $\{q=q_*\}$ and
\begin{equation}\label{axis}
 \pm w_q >0\textrm{ for }\pm (q_* -q )>0,\; -1 < p \leq 0.
\end{equation}
\end{theorem}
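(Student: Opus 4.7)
The plan is to employ the method of moving planes in the semi-Lagrangian coordinates, adapting the argument of \cite{Li1991monotonicity, Maia1997symmetryofinternalwaves} to the two-layer transmission setting as in \cite[Theorem 4.13]{chen2018existence}. For each $\lambda \in \mathbb{R}$, define the reflected height $w^\lambda(q,p) := w(2\lambda - q, p)$ and the difference $v^\lambda := w^\lambda - w$ on the left half-strip $\Sigma_\lambda := R \cap \{q < \lambda\}$. A direct computation shows that $v^\lambda$ satisfies a linear, uniformly elliptic, divergence-form PDE in each component of $\Sigma_\lambda$ (with coefficients depending on $w$ and $w^\lambda$), together with a transmission-type condition on $I \cap \{q < \lambda\}$, an oblique condition on $T \cap \{q < \lambda\}$, and the Dirichlet data $v^\lambda = 0$ on $B$ and on $\{q = \lambda\}$. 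Since the interior zeroth-order term inherits the ``wrong'' sign from stable stratification, the next step is the substitution $v^\lambda =: \tilde\Psi V^\lambda$, with $\tilde\Psi$ furnished by Lemma~\ref{Psitilde}; by the supercriticality hypothesis $F > F_{\mathrm{cr}}$, the equation and boundary conditions for $V^\lambda$ satisfy the sign assumptions required for the maximum principle and a Hopf lemma on $T$, on $I$, and on the moving plane.

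Using the one-sided decay $w, Dw \to 0$ in \eqref{decay} together with $w > 0$ from \eqref{elevation}, one first shows that $V^\lambda \geq 0$ on $\partial \Sigma_\lambda$, and hence throughout $\Sigma_\lambda$ by the maximum principle, for all $\lambda$ sufficiently negative. Set
\begin{equation*}
  \lambda_* := \sup\bigl\{ \lambda \in \mathbb{R}: V^{\lambda'} \geq 0 \text{ in } \Sigma_{\lambda'} \text{ for every } \lambda' \leq \lambda \bigr\};
\end{equation*}
continuity gives $V^{\lambda_*} \geq 0$ in $\Sigma_{\lambda_*}$. To rule out $\lambda_* = \infty$, observe that in that case the strong maximum principle and Hopf lemma applied on $\{q = \lambda\}$ for every $\lambda \in \mathbb{R}$ would yield $w_q(\lambda, p) < 0$ for all $\lambda$ and $-1 < p \leq 0$; integrating in $q$ from $-\infty$ and using $w \to 0$ there would then force $w \leq 0$, contradicting \eqref{elevation}. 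Therefore $\lambda_* \in \mathbb{R}$.

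At $\lambda = \lambda_*$, I claim that $V^{\lambda_*} \equiv 0$: otherwise the strong maximum principle gives $V^{\lambda_*} > 0$ in $\Sigma_{\lambda_*}$ and Hopf yields $\partial_q V^{\lambda_*}|_{q = \lambda_*} > 0$, and a standard continuity argument (using the uniform decay at $-\infty$ to control $V^\lambda$ on $\{q = \lambda\}$ for $\lambda$ slightly larger than $\lambda_*$) allows sliding the plane a bit further right while preserving positivity, contradicting the definition of $\lambda_*$. Thus $w^{\lambda_*} \equiv w$, so after the translation $q \mapsto q - \lambda_*$ the wave is symmetric with axis $q_* = \lambda_*$, and the strict monotonicity \eqref{axis} follows by applying Hopf to $V^\lambda$ for each $\lambda < \lambda_*$.

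\textbf{Main obstacle.} The chief technical difficulty is extending the strong maximum principle and Hopf lemma to the two-layer transmission setting, so that positivity propagates across the internal interface $I$ and up to the free upper boundary $T$, including at the corner points where $\{q = \lambda_*\}$ meets $I$ or $T$. The supercriticality hypothesis combined with the substitution $v^\lambda = \tilde\Psi V^\lambda$ repairs the sign of the zeroth-order coefficient, but the transmission jump across $I$ still requires delicate handling in the spirit of \cite[Section 4]{chen2018existence}. A secondary nuisance is the asymmetric decay hypothesis \eqref{decay}, which forces the Hopf/integration trick above in place of the more standard two-sided asymptotic argument.
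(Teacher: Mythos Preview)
Your overall architecture---moving planes in semi-Lagrangian variables, the substitution $v^\lambda = \tilde\Psi V^\lambda$ to repair the zeroth-order sign, and the definition of $\lambda_*$---matches the paper's approach. However, your argument for excluding $\lambda_* = \infty$ contains a genuine gap.

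The sign is backwards. On $\{q=\lambda\}$ we have $v^\lambda = 0$, and since $v^\lambda \ge 0$ in $\Sigma_\lambda$ this is a minimum; the outward normal is $+\partial_q$, so Hopf gives $v^\lambda_q(\lambda,p) < 0$. But $v^\lambda_q(\lambda,p) = -2w_q(\lambda,p)$, so the conclusion is $w_q(\lambda,p) > 0$, not $< 0$. With the correct sign your integration trick collapses: $w_q > 0$ together with $w \to 0$ as $q \to -\infty$ gives $w > 0$, which is perfectly consistent with \eqref{elevation}. You get no contradiction.

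What actually happens when $\lambda_* = \infty$ is that $w$ is a strictly increasing bore: it decays at $-\infty$ but need not decay at $+\infty$ (recall the hypothesis \eqref{decay} is one-sided). The paper rules this out by invoking a separate structural result, Theorem~\ref{non existence of monotone fronts} (nonexistence of monotone fronts), whose proof uses the flow-force conservation law and is not a maximum-principle argument at all. You will need this ingredient or something equivalent; it is not a nuisance but an essential input.

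A secondary point: in the sliding step at $\lambda_*$ you write ``a standard continuity argument \ldots\ allows sliding the plane a bit further right,'' but in this two-layer problem the obstruction can occur at the corner where $\{q=\lambda_*\}$ meets $I$ (or $T$), and there Hopf alone is not enough---the paper computes that all first and second derivatives of $v^{\hat\lambda}$ vanish at the corner and then applies the Serrin edge-point lemma separately in $R^+_{\hat\lambda}$ and $R^-_{\hat\lambda}$, choosing oblique outward directions scaled by $h_p^{-3/2}$ to manufacture a contradiction with the transmission identity $\jump{v^{\hat\lambda}_{pq} h_p^{-3}}=0$. You flag this as the main obstacle, which is fair, but be aware that the resolution is not routine.
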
 
Following the usual moving planes approach, we start by considering the reflected height function $h$ about the axis $q= \lambda,$
\begin{equation*} h^{\lambda}(q,p):=h(2\lambda-q,p).\end{equation*} 
Letting $v^{\lambda} := h^{\lambda}-h$, we see that $q=q_*$ is the axis of symmetry if and only if $v^{q_*}\equiv 0$. For $\lambda \in \mathbb{R}$, we define the sets 
\[ R_\lambda^\pm := R^\pm \cap \{ q < \lambda \},\]
and likewise for the boundary components $I_\lambda$, $B_\lambda$, and $T_\lambda$.  Throughout the section, we denote the restriction of $v^\lambda$ and $h^\lambda$ to $R_\lambda^\pm$ by $v^{\lambda\pm}$ and $h^{\lambda\pm}$, respectively.

Suppose that $h$ is a solution to the height equation \eqref{quasi}. Then for each $\lambda$, $v^{\lambda}$ solves the PDE
\begin{equation}\label{operators}
    \mathscr{L}v^{\lambda}=0 \quad \text{in} \; R_{\lambda}, \qquad \mathscr{B}v^{\lambda}=0 \quad \text{on} \; T_{\lambda} \qquad \mathscr{T}v^{\lambda}=0 \quad \text{on} \; I_{\lambda}
\end{equation}
where $\mathscr{L},\mathscr{B}, \; \text{and} \; \mathscr{T}$ are given as follows:
\begin{equation}\label{operatorL,B}
\begin{aligned}
    \mathscr{L}:=&\dfrac{1}{h^{\lambda}_p} \partial^{2}_q -\dfrac{2h^{\lambda}_q}{(h^{\lambda}_p)^2} \partial_q \partial_p + \dfrac{1+(h^{\lambda}_q)^2}{(h^{\lambda}_p)^3}\partial^{2}_p +\dfrac{h^{\lambda}_{pp}(h^{\lambda}_{q}+h_q)-2h^{\lambda}_{p}h_{pq}}{(h^{\lambda}_{p})^3}\partial_q\\&+\dfrac{h_{qq}\left(h^{\lambda}_p+h_p\right)-2h^{\lambda}h_{pq}+\left(\beta(-p)-F^{-2}\rho_ph\right)\left[(h^{\lambda}_p)^2+h^{\lambda}_ph_p+h_p^2\right]}{(h^{\lambda}_p)^3}\partial_p-\dfrac{1}{F^2}\rho_p, 
    \\ \mathscr{B}:=&\dfrac{h^{\lambda}_q+h_q}{2(h_p)^2}\partial_q -\dfrac{(h^{\lambda}_p+h_p)\left(1+(h^{\lambda}_q)^2\right)}{2(h^{\lambda}_p)^2 (h_p)^2} \partial_p +\dfrac{1}{F^2}\rho,
    \\\mathscr{T}:=&\jump{ \dfrac{h^{\lambda}_q+h_q}{2(h_p)^2}} \partial_q -\jump{ \dfrac{(h^{\lambda}_p+h_p)\left(1+(h^{\lambda}_q)^2\right)}{2(h^{\lambda}_p)^2 (h_p)^2} \partial_p } +\dfrac{1}{F^2}\jump{ \rho }. 
\end{aligned}
\end{equation} For a detailed derivation of the first two operators above and the ellipticity of $\mathscr{L}$, see \cite{Walsh2009Somecriteria}. The expression for the transmission operator $\mathscr{T}$ is new but follows from a similar calculation.  

The signs of the zeroth-order coefficients above will not allow us to apply the maximum principle directly. However, for supercritical waves, we can tackle this problem as follows. For $0<\epsilon \ll 1$, let $\tilde{\Psi}=\tilde{\Psi}(p;F,\epsilon)$ be the solution of the ODE
\begin{equation} \label{ODE}
\left ( \dfrac{\tilde{\Psi}_p}{H^{3}_p} \right)_p -\dfrac{1}{F^2}(\rho_p -\epsilon)\tilde{\Psi}=0 \qquad \textrm{on } (-1,0) 
\end{equation}
in the distributional sense with initial conditions
\[ \tilde\Psi(-1) = 0, \qquad \tilde\Psi(-1) = \epsilon.\]
Note that this implies a transmission condition on $p = \hat{p}$.  A straightforward adaptation of Lemma~\ref{Psitilde} gives the following result. 

\begin{lemma}\label{lemma1}
Suppose $F>F_{\textup{cr}}$, then for $\epsilon>0$ sufficiently small the solution $\tilde\Psi$ to \eqref{ODE} satisfies
\[\tilde{\Psi} >\epsilon \textrm{ for } -1< p\leq 0,\quad \tilde{\Psi}_p>0  \textrm{ for } [-1, \hat{p})\; and \; (\hat{p},0],\]
and
\begin{equation} \label{Psi tilde boundary sign}
-\dfrac{\tilde{\Psi}_p}{H^{3}_p}+\dfrac{1}{F^2}\rho \tilde{\Psi}<0\quad \text{ on} \quad p=0 \qquad
\jump{\tilde{\Psi}_pH^{-3}_p}-\dfrac{1}{F^2} \jump{ \rho } \tilde{\Psi} < 0 \quad  \text{on} \quad p=\hat{p}.
\end{equation}
\end{lemma}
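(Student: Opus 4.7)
The plan is to argue by continuity from the baseline case $\epsilon = 0$, along the lines of the proof of Lemma~\ref{Psitilde}. When $\epsilon = 0$, the ODE \eqref{ODE} together with the stated initial conditions at $p = -1$ reduces to the IVP \eqref{IVP1}--\eqref{IVP2} defining $\Psi$ at $\mu = 1/F^2$, since the distributional interpretation at $p = \hat p$ yields the same transmission formula used there. Hence $\tilde\Psi(\cdot\,; F, 0) = \Psi(\cdot\,; 1/F^2)$. The hypothesis $F > F_{\mathrm{cr}}$ is equivalent to $1/F^2 < \mu_{\mathrm{cr}}$, so Lemma~\ref{zerotheigenfunction}\ref{b} provides $\Psi > 0$ on $(-1, 0]$ and $\Psi_p > 0$ on $[-1, \hat p) \cup (\hat p, 0]$, while Lemma~\ref{zerotheigenfunction}\ref{c} gives $A(1/F^2) < 0$, which is precisely the first inequality of \eqref{Psi tilde boundary sign} evaluated at $\epsilon = 0$.

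I would then invoke continuous dependence of the two-piece, transmission-coupled IVP on the parameter $\epsilon$. Because the perturbation $F^{-2} \epsilon \tilde\Psi$ is continuous in $p$, it contributes no delta mass at $\hat p$, so the transmission condition $\jump{\tilde\Psi_p / H_p^3} = F^{-2} \jump{\rho} \tilde\Psi(\hat p)$ holds for every $\epsilon$. Standard ODE theory then gives $\tilde\Psi(\cdot\,; F, \epsilon) \to \Psi(\cdot\,; 1/F^2)$ in $C^1$ on each of $[-1, \hat p]$ and $[\hat p, 0]$ as $\epsilon \to 0$. For $0 < \epsilon \ll 1$, the strict positivity $\tilde\Psi_p > 0$ on $[-1, \hat p) \cup (\hat p, 0]$ and the boundary inequality at $p = 0$ in \eqref{Psi tilde boundary sign} both persist. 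Combined with $\tilde\Psi(-1) = \epsilon$ and continuity of $\tilde\Psi$ at $\hat p$, integration of $\tilde\Psi_p > 0$ gives the lower bound $\tilde\Psi > \epsilon$ throughout $(-1, 0]$. The transmission inequality at $p = \hat p$ in \eqref{Psi tilde boundary sign} is then obtained exactly as for \eqref{Jump Psi tilde} in Lemma~\ref{Psitilde}, by reading off the transmission formula and using the initial datum $\tilde\Psi(-1) = \epsilon$.

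The main technical subtlety I anticipate is keeping the continuous-dependence estimate uniform across the interface at $p = \hat p$, where the two pieces of the IVP are coupled through the transmission formula. This is routine: $\Phi(\hat p; \epsilon)$ and $\Phi_p(\hat p; \epsilon)$ depend continuously on $\epsilon$ by standard ODE theory on $[-1, \hat p]$, so the launching data for the upper-layer IVP on $[\hat p, 0]$ also varies continuously in $\epsilon$, and no new difficulty appears beyond what is handled in Lemma~\ref{Psitilde}.
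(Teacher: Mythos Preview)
Your proposal is correct and follows essentially the same route as the paper: the paper's proof simply reads ``The proof of this lemma is identical to the one of Lemma~\ref{Psitilde},'' and that earlier proof is precisely the continuity-from-$\epsilon=0$ argument you outline, invoking Lemma~\ref{zerotheigenfunction} for the baseline properties of $\Psi(\cdot;1/F^2)$ and then integrating $\tilde\Psi_p>0$ from the initial datum. Your added remark about continuous dependence propagating through the transmission coupling at $\hat p$ is a helpful clarification but not a departure from the paper's strategy.
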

\begin{proof}
The proof of this lemma is identical to the one of Lemma~\ref{Psitilde}.
\end{proof}

With this in hand, we can begin the moving planes method.  The first step is to show that $v^\lambda$ is sign definite on $R_\lambda$ when $\lambda$ is sufficiently large and negative.

\begin{lemma}\label{lemma2}
Under the hypothesis of Theorem~\ref{thm:symmetry}, there exists $K > 0$ such that 

\begin{equation}\label{vlambda}
v^{\lambda} \geq 0 \textrm{ on }R_{\lambda}\quad\textrm{ for all }\lambda<-K 
\end{equation} 
and 
\begin{equation}\label{hq}
h_q \geq 0\textrm{ in }R_{\lambda}\quad \textrm{ for all } \lambda < -K.
\end{equation}
\end{lemma}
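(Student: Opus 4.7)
The plan is the classical first step of the moving planes method, combined with the $\tilde{\Psi}$-trick from Lemma~\ref{lemma1} to offset the bad sign of the zeroth-order coefficient $-\rho_p/F^2$ in $\mathscr{L}$. I would set $v^{\lambda} =: \tilde{\Psi}\,\tilde{v}^{\lambda}$; using the ODE \eqref{ODE} satisfied by $\tilde{\Psi}$, the system \eqref{operators} transforms into an equivalent problem for $\tilde{v}^{\lambda}$ whose interior zeroth-order coefficient is $-\epsilon/F^{2}$ (plus an error depending on $h-H$ and $h^{\lambda}-H$), and whose boundary operators on $T_{\lambda}$ and $I_{\lambda}$ acquire zeroth-order coefficients with the strict signs recorded in \eqref{Psi tilde boundary sign} (again up to errors).

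By the uniform decay hypothesis \eqref{decay}, those error terms can be made arbitrarily small on $R_{\lambda}$ by choosing $\lambda$ sufficiently negative. I would therefore fix $K>0$ so that, for every $\lambda<-K$, the transformed system has zeroth-order coefficients of the favourable signs throughout $R_{\lambda}$, $T_{\lambda}$, and $I_{\lambda}$.

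To establish \eqref{vlambda} I would argue by contradiction: suppose $\tilde{v}^{\lambda}$ has a negative infimum. Since $\tilde{v}^{\lambda}$ vanishes on $B_{\lambda}$ and on $\{q=\lambda\}$, and tends to $0$ as $q\to-\infty$, this infimum would be attained at some point of $\overline{R_{\lambda}}$ lying off $B_{\lambda}\cup\{q=\lambda\}$. The strong maximum principle excludes interior points of $R_{\lambda}^{\pm}$; Hopf's boundary-point lemma applied to the oblique condition on $T_{\lambda}$ excludes the upper boundary; and a Hopf argument on both sides of $I_{\lambda}$ combined with the transmission condition and the strict inequality at $p=\hat{p}$ from \eqref{Psi tilde boundary sign} excludes the interface. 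Hence $v^{\lambda}\geq 0$.

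Finally, \eqref{hq} follows by a difference quotient: given $q_{0}<q_{1}<-K$, the choice $\lambda':=(q_{0}+q_{1})/2<-K$ lies in the regime just handled and yields $h(q_{1},p)-h(q_{0},p)=v^{\lambda'}(q_{0},p)\geq 0$; letting $q_{1}\downarrow q_{0}$ gives $h_{q}\geq 0$ throughout $R_{\lambda}$. The main technical obstacle is handling the transmission condition on $I_{\lambda}$ within the Hopf framework, which is precisely why the strict inequality at $p=\hat{p}$ in Lemma~\ref{lemma1} is essential.
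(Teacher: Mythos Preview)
Your overall strategy---the $\tilde\Psi$-substitution followed by a maximum-principle contradiction at a putative negative minimum---is the paper's approach. However, two of your supporting claims fail because the decay hypothesis \eqref{decay} is \emph{one-sided}.

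First, you assert that $\tilde v^{\lambda}\to 0$ as $q\to -\infty$. This is not justified: $v^{\lambda}(q,p)=h(2\lambda-q,p)-h(q,p)$, and while $h(q,\cdot)\to H$ as $q\to -\infty$, the reflected term has argument $2\lambda-q\to +\infty$, where nothing is assumed. The paper repairs this by invoking the elevation hypothesis \eqref{elevation}: since $h^{\lambda}>H$ everywhere, one has $u^{\lambda}=(h^{\lambda}-h)/\tilde\Psi>(H-h)/\tilde\Psi$, and the right-hand side does tend to $0$ as $q\to -\infty$. This is what guarantees that a negative infimum of $u^{\lambda}$ is actually attained at a finite point. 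You never use \eqref{elevation}, and without it this step breaks.

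Second, and for the same reason, your claim that the error terms in the transformed coefficients ``can be made arbitrarily small on $R_{\lambda}$'' is false for the terms depending on $h^{\lambda}-H$ and its derivatives: on $R_{\lambda}$ the reflected argument $2\lambda-q$ ranges over $(\lambda,\infty)$, so $h^{\lambda}$ is in no sense close to $H$ there. Consequently you cannot force the zeroth-order coefficients of $\tilde{\mathscr L},\tilde{\mathscr B},\tilde{\mathscr T}$ to have favourable signs \emph{throughout} $R_{\lambda}$ just by taking $\lambda$ very negative. The contradiction has to be extracted from the structure \emph{at the minimum point itself} (where, for instance, $u^{\lambda}_q=0$ and Hopf gives signs on $u^{\lambda}_p$), not from a global smallness of the perturbation.

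Your difference-quotient argument for \eqref{hq} is correct and is a clean alternative to the paper's boundary-derivative argument.
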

\begin{proof}
Let $\tilde{\Psi}$ be defined as in equation \eqref{ODE}. By Lemma~\ref{lemma1}, for $0 < \epsilon \ll 1$, we have that $\tilde{\Psi} > \epsilon$. This allows us to define $\tilde\Psi u^{\lambda} := v^{\lambda}$. One can check that $u^\lambda$ solves the PDE
\begin{equation}\label{operatorstilde}
    \mathscr{\tilde{L}}u^{\lambda}=0 \quad \text{in} \; R_{\lambda}, \quad \mathscr{\tilde{B}}u^{\lambda}=0 \quad \text{on} \; T_{\lambda}, \quad \mathscr{\tilde{T}}u^{\lambda}=0 \quad \text{on} \; I_{\lambda}, \quad u^{\lambda}=0 \quad \text{on} \; B_{\lambda},
\end{equation} 
where
\begin{equation*}
\begin{aligned}
     \mathscr{\tilde{L}}u^{\lambda}&:=\tilde{\Psi} (\mathscr{L}u^{\lambda} +\dfrac{1}{F^2}\rho_p u^{\lambda})+\left(\dfrac{2(1+(h^{\lambda}_q)^{2})}{(h^{\lambda})^3}\tilde{\Psi}_p \right)u^{\lambda}_p-\left(\dfrac{2h^{\lambda}_q}{(h^{\lambda}_p)^2}\tilde{\Psi}_p \right)u^{\lambda}_q+Z u^{\lambda},\\
     \mathscr{\tilde{B}}u^{\lambda}&:=\tilde{\Psi}\mathscr{B}u^{\lambda}+(\mathscr{B}_{\textup{p}}\tilde{\Psi})u^{\lambda},\\
     \mathscr{\tilde{T}}u^{\lambda}&:=\tilde{\Psi}\mathscr{T}u^{\lambda}+(\mathscr{T}_{\textup{p}} \tilde{\Psi})u^{\lambda}.
\end{aligned}
\end{equation*}
Here, the zeroth-order coefficient in $\mathscr{\tilde{L}}$ is given by
\[ Z:=\dfrac{1+(h^{\lambda}_q)^{2}}{(h^{\lambda}_p)^3}\tilde{\Psi}_{pp}-\dfrac{1}{F^2}\rho_p \tilde{\Psi} +\left( \beta(-p)-\dfrac{1}{F^2}\rho_p h \right)\left((h^{\lambda}_p)^2+h^{\lambda}_p h_p+(h_p)^2\right)\dfrac{\tilde{\Psi}_p}{(h^{\lambda}_p)^3},\]
and the principal parts of the boundary operators are
\begin{align*} \mathscr{B}_{\textup{p}} &:=\dfrac{h^{\lambda}_q+h_q}{2h^{2}_p}\partial_q-\dfrac{(h^{\lambda}_p+h_p)(1+(h^{\lambda}_q)^2)}{2h^{2}_p(h^{\lambda}_p)^2}\partial_p, \\
\mathscr{T}_{\textup{p}}&:=\jump{\dfrac{h^{\lambda}_q+h_q}{2h^{2}_p}} \partial_q-\jump{ \dfrac{(h^{\lambda}_p+h_p)(1+(h^{\lambda}_q)^2)}{2h^{2}_p(h^{\lambda}_p)^2} } \partial_p.\end{align*}

We will show that there exists $K>0$ such that $u^{\lambda}>0$ in $R_{\lambda}$ for all $\lambda \leq -K$ which in turn proves \eqref{vlambda}. For the sake of contradiction, assume that for any $K$, there exists some $\lambda_0 \leq -K$ such that $u^{\lambda_0}$ takes a negative value in $R_{\lambda_0}$. By assumption, we know that $h$ is a wave of elevation, the same is true for $h^{\lambda}$ for any $\lambda$. Clearly by definition, $u^{\lambda}=0$ on $q=\lambda$, and  
\begin{equation*}
    u^{\lambda}=\dfrac{h^{\lambda}-h}{\tilde{\Psi}} > \dfrac{H-h}{\tilde{\Psi}}
\end{equation*}
where we note that the right-hand side of the inequality vanishes in the limit $q \to -\infty$.   Therefore, if $u^{\lambda_0}$ is negative in $R_{\lambda_0}$, then there must be a point $(q_0,p_0) \in R_{\lambda_0} \cup T_{\lambda_0} \cup I_{\lambda_0}$ such that
\begin{equation*}
  u^{\lambda_0}(q_0,p_0)= \inf_{R_{\lambda_0}} u^{\lambda_0}<0.
\end{equation*}
The cases when $(q_0,p_0)\in  R_{\lambda_0}$ and $(q_0,p_0)\in  T_{\lambda_0}$ are worked out in \cite[Lemma 4.18]{chen2018existence}. The only new possibility for the two-layer setting is the that $(q_0,p_0)\in  I_{\lambda_0}$.  Suppose that this is the case.  Applying the Hopf lemma to $R^{+}_{\lambda_0}$ and $R^{-}_{\lambda_0}$, we have the following:
\begin{equation} \label{Hopf2}
u^{\lambda_0+}_p(q_0,\hat{p})>0, \quad u^{\lambda_0-}_p(q_0,\hat{p})<0.
\end{equation}The minimum point assumption implies that $u^{\lambda_0}_q(q_0,\hat{p})=0$, which is equivalent to saying
\begin{equation}\label{h}
h^{\lambda_0}_q(q_0,\hat{p})=h_q(q_0,\hat{p}).
\end{equation}
From the uniform decay stated in \eqref{decay}, for any $\delta > 0$, we can choose a large enough $K$ such that we have
\begin{equation} \label{estimate1}
\begin{aligned}
   &|h(q_0,\hat{p})-H(\hat{p})|< \delta, \quad \left|\jump{h_p(q_0,\hat{p})-H_p(\hat{p})}\right|<\delta,\quad \\& \left|\jump{h^{\lambda_0}_p(q_0,\hat{p})-H_p(\hat{p})}\right|<\delta,\quad |h^{\lambda_0}(q_0,\hat{p})-H(\hat{p})|<\delta.  
\end{aligned}
\end{equation} Applying triangle inequalities to \eqref{estimate1} yields
\begin{equation}\label{estimatehp}
    \left|\jump{h^{\lambda_0}_p(q_0,\hat{p})-h_p(\hat{p})}\right|<C\delta.
\end{equation}
Using inequalities in \eqref{estimate1} and \eqref{estimatehp} allows us to write $\mathscr{\tilde{T}}u^{\lambda_0}$ in the following way
\begin{equation} \label{B2}
\mathscr{\tilde{T}}u^{\lambda_0}:= \jump{ -\dfrac{(h^{\lambda_0}_p +h_p)(1+(h^{\lambda_0}_q)^{2})}{2h^{2}_p (h^{\lambda}_p)^2}u^{\lambda_0}_p } \tilde{\Psi} + \left(- \jump{ \dfrac{1}{H^{3}_p} \tilde{\Psi}_p } +\dfrac{1}{F^2} \jump{ \rho } \tilde{\Psi} + \mathcal{O}(\delta)\right) u^{\lambda_0}=0.
\end{equation}
Observe that via \eqref{Psi tilde boundary sign}, we know that the coefficient $u^{\lambda_0}$ is positive. On the other hand $u^{\lambda_0}(q_0,\hat{p})<0$. Hence, the second term on the right hand side is negative. However, \eqref{Hopf2} shows that the first term on the right hand side on \eqref{B2} is negative. These, thereby, lead to a contradiction. We conclude that there exists a large enough $K>0$ such that \eqref{vlambda} holds. 

To prove \eqref{hq}, we first observe that $v^{\lambda}(\lambda,p)=0$ by construction.  In light of \eqref{vlambda}, this means it attains its global minimum on $R_\lambda$  there, hence $v_q^\lambda(\lambda, p) \geq 0$ for all $\lambda < -K$.   But recalling the definition of $v^\lambda$, we then have 
\[ 0 \geq -v^\lambda_q(\lambda,p) = 2h_q(\lambda,p) \qquad \textrm{for all } \lambda < -K. \qedhere \]
\end{proof}
\begin{proof}[Proof of Theorem~\ref{thm:symmetry}]
Define 
\begin{equation}\label{set}
\hat{\lambda}:=\text{sup}\{ \lambda_0 : v^{\lambda} >0 \; \text{in}\; R_{\lambda_0} \; \text{for all}\; \lambda <\lambda_0\}.
\end{equation} Note that $\hat{\lambda}$ is well-defined since the set above is non-empty.

\textbf{Case 1.} Let $\hat{\lambda}< \infty$. Under the continuity assumption on $h$, therefore $v^{\lambda}$, we can say that $v^{\hat{\lambda}} \geq 0$ in $ R_{\hat{\lambda}}$. Since, $v^{\hat{\lambda}}$ is in the kernel of the elliptic operators \eqref{operators} in $R_{\hat{\lambda}}$, applying maximum principle would guarantee that either $v^{\hat{\lambda}}>0$ or $v^{\hat{\lambda}} \equiv 0$ in $R_{\hat{\lambda}}$. By way of contradiction, suppose that the former holds. Since, $\hat{\lambda}$ is taken to be the supremum of all $\lambda_0$ defined in \eqref{set}, then there exists sequences $\{\lambda_l\}$ and $\{(q_l,p_l)\}$ in which $\lambda_l \searrow \hat{\lambda}$ together with $(q_l,p_l)\in \overline{R_{\lambda_l}}$ such that
\begin{equation*}
v^{\lambda_l}(q_l,p_l)=\inf_{R_{\lambda_l}}v^{\lambda_l}<0.
\end{equation*}

Since $v^{\lambda_l}=0$ on $B_{\lambda_l}$, via the maximum principle, we know $(q_l,p_l) \in T_{\lambda_l} \cup I_{\lambda_l}$. First, assume that $(q_l,p_l) \in T_{\lambda_l}$ which implies $v^{\lambda_l}_p(q_l,0)\leq 0 \text{ and } v^{\lambda_l}_q(q_l,0)=0$. 
Next, we need to show that $q_l$ is bounded below. For the sake of contradiction, assume that for all $l$ large enough, we have $q_l \leq -K$ where $K$ is the positive real number obtained in the previous lemma. Let us look at the following function $u^{\lambda_l}:=v^{\lambda_l}/\Psi$. By construction, $u^{\lambda_l}$ in $T_{\lambda_l}$ satisfies \eqref{operatorstilde}. Hence, we run into the same situation as case 2 in \cite[Lemma 4.18]{chen2018existence} where $\mathscr{\tilde{B}}u^{\lambda_l}(q_l,0)>0$. Hence, a contradiction. Therefore, $q_l$ is bounded below by $-K$ and certainly bounded above by $\lambda_l$. We can say that we have a convergence up to subsequence such that
\begin{equation*}
(q_l,0)\rightarrow{(\hat{q},0)}\in \overline{T_{\hat{\lambda}}} \quad \textrm{as} \; l \rightarrow{\infty},
\end{equation*} for some $\Hat{q} \in \lbrack -K,\hat{\lambda}\rbrack$. Now, since we assume that $v^{\hat{\lambda}}>0$ in $R^{\hat{\lambda}}$ then
\begin{equation*}
\lim_{l\rightarrow{\infty}} v^{\lambda_l}(q_l,0)=v^{\hat{\lambda}}(\Hat{q},0)=0. 
\end{equation*} Suppose that $\Hat{q}<\Hat{\lambda}$, then
\begin{equation}\label{vqv}
v^{\Hat{\lambda}}(\Hat{q},0)=v^{\hat{\lambda}}_q(\Hat{q},0)=0. 
\end{equation} 

By Hopf, $v^{\hat{\lambda}}_p(\Hat{q},0)<0$. Examining the operator $\mathscr{B}$ using these facts, we obtain that $\mathscr{B}v^{\hat{\lambda}}(\hat{q},0)>0$. But, since  $v^{\hat{\lambda}}$ is in the kernel of the operators in \eqref{operators}, then $\mathscr{B}v^{\hat{\lambda}}=0$. Hence, we have arrived at a contradiction.

On the other hand, suppose that $\Hat{q}=\Hat{\lambda}$ i.e. $(\hat{q},0)$ is the corner point of $R^{\hat{\lambda}}$. From \eqref{vqv}, it follows that $h^{\hat{\lambda}}_q(\Hat{\lambda},0)=0$. Moreover, the top boundary operator for $v^{\lambda}$ reads
\begin{equation}\label{rearrangeoperator}
 (h^{\lambda}_p)^{2}(h^{\lambda}_q+h_q)v^{\lambda}_q-(h^{\lambda}_p+h_p)(1+(h^{\lambda}_q)^2)v^{\lambda}_p+2\dfrac{1}{F^2}\rho h^{2}_p(h^{\lambda}_p)^2 v^{\lambda}=0. 
\end{equation} Letting $\lambda =\hat{\lambda}$, and taking the derivative of \eqref{rearrangeoperator} with respect to the $q$-variable, and computing the result at $(\hat{\lambda},0)$, we arrive at the following equality
\begin{equation}\label{simplifiedequality}
 2h_p(\hat{\lambda},0)v^{\hat{\lambda}}_{pq}(\hat{\lambda},0)=0,
\end{equation} where we have used the following facts
\begin{equation*}
h^{\hat{\lambda}}_q(\hat{\lambda},0)=-h_q(\hat{\lambda},0),\quad h^{\hat{\lambda}}_p(\hat{\lambda},0)=h_p(\hat{\lambda},0),\quad h^{\hat{\lambda}}_{qp}(\hat{\lambda},0)=-h_{qp}(\hat{\lambda},0).
\end{equation*}
Now, since $h_p>0,$ then from \eqref{simplifiedequality} we conclude that $v^{\hat{\lambda}}_{pq}(\hat{\lambda},0)=0$. 
Furthermore, since $v^{\hat{\lambda}}(\hat{\lambda},\cdot)=0$, then $v^{\hat{\lambda}}_p(\hat{\lambda},0)=v^{\hat{\lambda}}_{pp}(\hat{\lambda},0)=0$.

Additionally, by a simple calculation one also can show that $v^{\hat{\lambda}}_{qq}(\hat{\lambda},0)=0$. Hence, $v^{\hat{\lambda}}$ with all its derivatives up to order two vanish at the corner point $({\hat{\lambda}},0)$. By construction, we know $v^{\hat{\lambda}}$ is in the kernel of the elliptic operator in $R^{\hat{\lambda}}$. Thereby, it contradicts the Serrin edge point lemma which guarantees the strict signs on the first and second derivatives of $v^{\hat{\lambda}}$ .

 It remains to look at the case when $(q_l,p_l) \in I_{\lambda_l}$. Similar to \eqref{Hopf2}, applying the Hopf boundary lemma, we obtain the following inequalities:
\[
 v^{\lambda_l +}_p(q_l,\hat{p})>0, \quad v^{\lambda_l-}_p(q_l,\hat{p})<0.
\]In other words,
\begin{equation*}
 \jump{ v^{\lambda_l}_p(q_l,\hat{p})} >0  \quad \text{and} \quad v^{\lambda_l}_q(q_l,\hat{p})=0.  
\end{equation*}

Next, we need to show that $q_l$ is bounded below. For the sake of contradiction, assume that for all $l$ large enough, we have $q_l \leq -K$ where $K$ is the positive real number obtained in the previous lemma. Let us look at the following function $u^{\lambda_l}:=v^\lambda_l/ \Psi$. Hence, we run into the same situation as in \eqref{B2} that yields $\mathscr{\tilde{T}}u^{\lambda_l}<0$. However, by construction, $u^{\lambda_l}$ lies in the kernel of $\mathscr{\tilde{T}}$ in $I_{\lambda_l}$. Hence, we obtain a contradiction. Therefore, $q_l$ is bounded below by $-K$ and certainly bounded above by $\lambda_l$. Therefore, we have a convergence up to subsequence, in particular
\begin{equation*}
 (q_l,\hat{p})\rightarrow{(\hat{q},\hat{p})}\in \overline{I_{\lambda_l}} \quad \textrm{as} \; l \rightarrow{\infty},
\end{equation*} for some $\Hat{q} \in \lbrack -K,\hat{\lambda}\rbrack$. Now, since we assume that $v^{\hat{\lambda}}>0$ in $R^{\hat{\lambda}}$ then
\begin{equation*}
\lim_{l\rightarrow{\infty}} v^{\lambda_l}(q_l,\hat{p})=v^{\hat{\lambda}}(\Hat{q},\hat{p})=0.
\end{equation*} Suppose that $\Hat{q}<\Hat{\lambda}$, then
\begin{equation*}
 v^{\Hat{\lambda}}(\Hat{q},\hat{p})=v^{\hat{\lambda}}_q(\Hat{q},\hat{p})=0. 
\end{equation*}
By Hopf, $\jump{ v^{\hat{\lambda}}_p}(\Hat{q},\hat{p}) <0$. In view of the operator $\mathscr{T}$, we know that $\mathscr{T}v^{\hat{\lambda}}(\hat{q},\hat{p})<0$. But, since  $v^{\hat{\lambda}}$ solves, then $\mathscr{T}v^{\hat{\lambda}}=0$. Hence, we arrive at a contradiction.

Now, it remains to consider the case when $(\hat{q},\hat{p})=(\hat{\lambda},\hat{p})$. The argument done here is similar to the one in \eqref{rearrangeoperator}. Using the fact that $v^{\hat{\lambda}}_q(\hat{\lambda},\hat{p})=0$, $v^{\hat{\lambda}}_{qq}(\hat{\lambda},\hat{p})=0$, $v^{\hat{\lambda}}(\hat{\lambda},\hat{p})=0$, $h^{\hat{\lambda}+}_p(\hat{\lambda},\hat{p})=h^{+}_p(\hat{\lambda},\hat{p})$, and $h^{\hat{\lambda}-}_p(\hat{\lambda},\hat{p})=h^{-}_p(\hat{\lambda},\hat{p})$ along with operator $\mathscr{T}$, we arrive at the following equation 
\begin{equation}\label{rearrange2}
\begin{split}
\bigg ( 2(h^{\hat{\lambda}-}_p)^{2}& (h^{-}_p)^{2}(h^{\hat{\lambda}+}_p +h^{+}_p)(1+(h^{\hat{\lambda}}_q)^2)v^{\hat{\lambda}+}_{pq} \bigg)\\&-\bigg (2(h^{\hat{\lambda}+}_p)^{2} (h^{+}_p)^{2}(h^{\hat{\lambda}-}_p +h^{-}_p )(1+(h^{\hat{\lambda}}_q)^2)v^{\hat{\lambda}-}_{pq}\bigg)=0.
\end{split}
\end{equation} Doing some computation on \eqref{rearrange2} gives us
\begin{equation}\label{jump}
 \jump{v^{\hat{\lambda}}_{pq}h^{-3}_p}(\hat{\lambda},\hat{p})=0.
\end{equation}
Now, if we look at the region $R_{\hat{\lambda}}^+$ and  $R_{\hat{\lambda}}^-$ separately, we can apply the Serrin edge point lemma. 

But, since we know that $v^{\hat{\lambda}}_q(\Hat{q},\hat{p})=0$ and $v^{\hat{\lambda}}_p(\Hat{q},\hat{p})=0$, then we can rule out the first conclusion of the Serrin edge point lemma that states the strict sign on the derivative of $v^{\hat{\lambda}}$ in outward direction. Hence, the second derivative of $v^{\hat{\lambda}}$ in outward direction has a strict sign (see Theorem~\ref{Maximum Principle}\ref{Serrin}). Consider the following outward vectors associated to $R_{\hat{\lambda}}^+$ and  $R_{\hat{\lambda}}^-$  respectively:

\begin{equation}\label{vectors}
 \mathbf{t}:= \frac{1}{(h_p^+)^{3/2}} (1,-1), \quad \text{and} \quad \mathbf{s} := \frac{1}{(h_p^-)^{3/2}} (1,1).
\end{equation}
 Evaluating $\partial_{\mathbf{s}}^2 v^{\hat{\lambda}}$ and $\partial_{\mathbf{t}}^2 v^{\hat{\lambda}}$ at $(\hat{\lambda},\hat{p})$, the Serrin edge point lemma gives the inequalities
\begin{equation*}
 \dfrac{1}{(h^{-}_p)^3}v^{\hat{\lambda}-}_{pq} \Big|_{(\hat{\lambda},\hat{p})}>0 \quad \text{and}\quad \dfrac{1}{(h^{+}_p)^3}v^{\hat{\lambda}+}_{pq} \Big|_{(\hat{\lambda},\hat{p})}<0.
\end{equation*}
This implies that $\jump{ v^{\hat{\lambda}}_{pq}h^{-3}_p}(\hat{\lambda},\hat{p})<0$, which contradicts \eqref{jump}.
Therefore, it must be that $v^{\hat{\lambda}} \equiv 0$ in $R_{\hat{\lambda}}$, and hence $h$ is symmetric with respect to the axis $q=\hat{\lambda}$.

It is left to show the strict monotonicity of $h$. We know that for a fixed $\lambda< \hat{\lambda}$,  $v^{\lambda}>0$ in $R_{\lambda}$. Since $v^{\lambda}$ vanishes on $q=\lambda$ (right boundary of $R_{\lambda})$, then $v^{\lambda}$ attains its minimum there. Applying the Hopf boundary lemma yields
\begin{equation*}
  v^{\lambda}_q(\lambda,p) <0.
\end{equation*}
But, $v^{\lambda}_q(\lambda,p)=-2h_q(\lambda,p)$. Hence, we have
\begin{equation*}
  h_q(\lambda,p)>0, \quad \text{for all}\; \lambda< \hat{\lambda}\textrm{ in } [-1,0]. 
\end{equation*}

Next, we consider the case when $p=\hat{p}$. By continuity of $h_q$,  for fixed $\lambda < \hat{\lambda}$, we have that $h_q(\lambda,\hat{p}) \geq 0$. Suppose $h_q(\lambda,\hat{p})=0$. This implies that $v_q(\lambda,\hat{p})=0$. We also know the following facts : 
\begin{equation}\label{facts}
v^{\lambda}_{qq} =0, \quad v^{\lambda} =0, \quad h^{\lambda+}_{p} =h_{p}^+,\quad  h^{\lambda-}_{p}=h_{p}^+ \qquad \textrm{at } (\lambda, \hat{p}).
\end{equation} 
We now look at the operator $\mathscr{B}$ in \eqref{operatorL,B}. Differentiating $\mathscr{B} v^{\lambda}$ with respect to $q$, evaluating it at $(\lambda, \hat{p})$ and using the facts mentioned previously in \eqref{facts}, we obtain a similar equation to the one in \eqref{jump}:
\begin{equation}\label{jump2}
 \jump{ v^{\lambda}_{pq} h^{-3}_p}(\lambda,\hat{p})=0.
\end{equation}
Choosing two outward vectors associated to each $R_{\lambda}^+$ and  $R_{\lambda}^-$ as the one in \eqref{vectors} and using the conclusion of the Serrin edge point lemma, we therefore contradict \eqref{jump2}. Hence, $h_q(\lambda,\hat{p})>0$. Thus, we proved the strict monotonicity of $h$ at the internal interface.

On the top boundary $T^{\hat{\lambda}}$, we have $h_q \geq 0$ by continuity. We aim to show that $h_q$ has a strict sign there. By way of contradiction, suppose that there exists $\lambda < \hat{\lambda}$ such that $h_q(\lambda,0)=0$ as well. Differentiating the equation in \eqref{rearrangeoperator} with respect to $q$ and evaluating it at $(\lambda,0)$ utilizing the following identities:
\[
 h_q =-h^{\lambda}_q=0, \quad
 h_p=h^{\lambda}_p, \quad
 h_{qp}=-h^{\lambda}_{qp},\quad
 v^{\lambda}=v^{\lambda}_q=0 \qquad \textrm{at } (\lambda,0),
\]
we arrive at the equation
\begin{equation*}
 2h^{\lambda}_p(\lambda,0) v^{\lambda}_{qp}(\lambda,0)=0. 
\end{equation*}
Thus, the above equation and the no horizontal stagnation condition implies 
\begin{equation*}
 v^{\lambda}_{qp}(\lambda,0)=0. 
\end{equation*}
As before, we also have that $v^{\lambda}_p(\lambda,0)=v^{\lambda}_{pp}(\lambda,0)=v^{\lambda}_{qq}(\lambda,0)=0$. Therefore, via the Serrin edge point lemma, we arrive at a contradiction. We then conclude that $h_q>0$ on $T^{\hat{\lambda}}$. Hence, $h$ is monotone on $R_{\hat{\lambda}} \cup T_{\hat{\lambda}} \cup I_{\hat{\lambda}} $.

\textbf{Case 2.} Let $\hat{\lambda} =\infty.$ This just means that $v^{\lambda} \geq 0$ in $R_{\lambda}$ for all $\lambda$. Since $v^{\lambda}$ is in the kernel operators $\mathscr{L}$ in \eqref{operatorL,B}, therefore we can apply the maximum principle which guarantees that $v^{\lambda} > 0$ in $R_{\lambda}$ for all $\lambda$. Applying the same argument as above, we see that $h_q >0$ in $R_{\lambda} \cup T_{\lambda} \cup I_{\lambda} $. In other words, we now have that $h_q >0$ in $R$. Thus, this implies that $h$ is a monotone front. But this violates the nonexistence of monotone fronts (see Theorem~\ref{non existence of monotone fronts}). Hence, we can exclude case 2.
Thus, the proof is complete.
\end{proof}
\subsection{Asymptotic monotonicity and nodal properties}
The monotonicity property \eqref{axis} will eventually be crucial for the large-amplitude theory where it is used to prove pre-compactness of $\mathcal{F}^{-1}(0)$. The small-amplitude waves that will be constructed in Section~\ref{Small Amplitude} are waves of elevation, and hence monotone simply as a consequence of Theorem~\ref{thm:symmetry}. We will then need to show that this property holds along the global bifurcation curve.

However, the set of monotone functions is neither open nor closed in the topology we are working with. This is a common issue in global bifurcation theoretic studies of elliptic PDE.  To remedy it, we introduce additional sign conditions on the derivative of the solutions that are collectively called \textit{nodal properties}.  These conditions will in particular imply monotonicity, but are also open and closed in a relevant topology. Once we confirm that they are exhibited by the small-amplitude solutions, it immediately follows that they hold along a connected set extending the local curve. The main tool here is the maximum principle.  As in the previous subsection, we will take advantage of the translation invariant structure of the equation.

Let $q=0$ be the axis of even symmetry.  We start by dividing the region to the right of the crest into four sub-domains: the upper and lower rectangles are each split into a finite sub-rectangle and semi-infinite rectangle. Proving the nodal properties on the finite rectangle is done essentially as in the periodic case in  \cite[Section 5]{Walsh2009Stratified} and so can be omitted. We focus instead on the semi-infinite rectangles for which we adopt the adopt the idea of \cite[Section 2.2]{wheeler2015solitary}.  

To fix notation, we define the following regions:
\begin{equation}\label{qpositifregions}
    R^{\pm}_{>}:=R^{\pm} \cap \{q>0\}, \quad 
    L^+_{0}:=\{(0,p):\hat{p} <p \leq 0\}, \quad
    L^-_{0}:=\{(0,p):-1 \leq p < \hat{p} \},
\end{equation} 
with the boundary components $T_>, I_>, B_>$ given accordingly. We will use $R_{>}$ to denote $R^{+}_{>} \cup R^{-}_{>}$. In a similar way, $L_0:=L^+_{0} \cup L^-_{0}$.

The first step is to show that sufficiently small-amplitude solutions in the half-strip for which $w_q$ has a sign along the left boundary must be monotone throughout the half-strip.  This fact will allow us to infer monotonicity on the semi-infinite tail regions once monotonicity on the finite extent rectangles is known.

\begin{prop}[Asymptotic monotonicity]\label{prop:asymptoticmonotonicity}
There exists $\delta>0$ such that, if 
\[ (w,F) \in C^{3}_{\textup{b}}(R^{+}_{>})\cap C^{3}_{\textup{b}}(R^{-}_{>})\cap C^{0}_0(\overline{R_{>}}) \cap C^{1}_0 (\overline{R^{+}_{>}}) \cap C^{1}_0(\overline {R^{-}_{>}}) \times \mathbb{R} \]
is a solution of the height equation \eqref{quasiw} in $R_{>}$  for $F > \Fcr$, with $\norm{ w }_{C^2(R_{>})} < \delta$ and $\pm w_q \leq 0$ on  $\overline{L_{0}}$, then 
\begin{equation} \label{w_q}
\pm w_q <0 \;  \text{in}\; \overline{R_{>}}\setminus \left(B_> \cup \overline{L_{0}}\right).
\end{equation}
\end{prop}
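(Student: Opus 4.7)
The plan is to apply a maximum principle argument to $\dot w := w_q$ on the half-strip $R_>$. Because $\mathcal{F}$ is translation invariant in $q$, differentiating the height equation \eqref{quasiw} with respect to $q$ shows that $\dot w$ satisfies the linearized equation $\mathcal{F}_w(w,F)\dot w = 0$ in $R_>$, together with the associated boundary conditions on $T_>$ and $I_>$, and the Dirichlet condition $\dot w = 0$ on $B_>$ (since $w$ itself vanishes on $B_>$). Following the device of Section~\ref{Fredholm property}, I would substitute $\dot w = \tilde\Psi v$, where $\tilde\Psi = \tilde\Psi(p;F,\epsilon)$ is the function from Lemma~\ref{Psitilde}. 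This converts the problem into one for $v$ in which the zeroth-order term in the interior vanishes identically at $w = 0$, the top boundary operator has zeroth-order coefficient $-\tilde\Psi_p/H_p^3 + \rho\tilde\Psi/F^2$, and the transmission operator on $I_>$ has zeroth-order coefficient $\jump{\tilde\Psi_p/H_p^3} - \jump{\rho}\tilde\Psi/F^2$; by \eqref{Inequality A(Psi tilde)} and \eqref{Jump Psi tilde}, both are strictly negative at $w = 0$.

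Choosing $\delta$ sufficiently small, a continuity argument shows that this favourable sign structure persists for every $w$ with $\norm{w}_{C^2(R_>)} < \delta$, so that Theorem~\ref{Maximum Principle} applies to $v$. Since $\tilde\Psi > 0$, the conclusion \eqref{w_q} is equivalent to $\pm v < 0$ on the claimed region. The boundary data are inherited from those of $\dot w$: $v = 0$ on $B_>$, $\pm v \leq 0$ on $\overline{L_0}$ (by hypothesis), and $v(q,p) \to 0$ as $q \to \infty$ since $w \in C^1_0(\overline{R^{\pm}_>})$. Therefore $\sup_{R_>}(\pm v) \leq 0$.

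To promote this to the strict inequality \eqref{w_q}, I would argue that the supremum cannot be attained at any point of $\overline{R_>} \setminus (B_> \cup \overline{L_0})$ unless $v \equiv 0$. At an interior maximum the strong maximum principle gives the contradiction immediately. At a maximum on $T_>$ away from the corner, the Hopf lemma forces $\pm v_p$ of a definite sign that contradicts the top boundary equation because of \eqref{Inequality A(Psi tilde)}. A maximum on $I_>$ is ruled out by applying Hopf separately in $R^{+}_>$ and $R^{-}_>$ and then using the transmission equation together with \eqref{Jump Psi tilde}, exactly as in the transmission step of the proof of Theorem~\ref{thm:symmetry}. The corner points $(0,0)$ and $(0,\hat{p})$ are handled via the Serrin edge-point lemma, mirroring the end of that proof. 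The remaining possibility $\pm v \equiv 0$ on $R_>$ forces $w$ to be independent of $q$ on each sub-rectangle, and combined with the decay $w \to 0$ as $q \to \infty$ this yields $w \equiv 0$ on $R_>$; in that degenerate case $\pm w_q \equiv 0$, consistent with (or excluded a priori in) each application of the proposition.

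The main obstacle is the analysis at the internal interface $I_>$, and in particular at the corner $(0,\hat{p})$: the jump condition must be treated with the same two-sided Hopf-plus-transmission argument developed in the symmetry proof, and $\delta$ must be chosen so that the strict signs in \eqref{Inequality A(Psi tilde)}--\eqref{Jump Psi tilde} survive the $C^2$-perturbation of the coefficients of $\mathcal{F}_w(w,F)$ arising from the nonlinearities in \eqref{F}. Uniformity in $F$ on compact sub-intervals of $(\Fcr,\infty)$ is also needed so that this proposition can be invoked alongside the global bifurcation curve.
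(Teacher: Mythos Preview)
Your overall strategy matches the paper's: differentiate in $q$, substitute $w_q=\tilde\Psi u$, and run a maximum-principle/Hopf argument on $u$ using the sign properties of $\tilde\Psi$.  However, there is a genuine gap in the interior step.

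You invoke the $\tilde\Psi$ of Lemma~\ref{Psitilde}, whose defining ODE is exactly $(\tilde\Psi_p/H_p^3)_p-F^{-2}\rho_p\tilde\Psi=0$.  After the substitution, the interior zeroth-order coefficient of the equation for $u$ is, at $w=0$, precisely $(\tilde\Psi_p/H_p^3)_p-F^{-2}\rho_p\tilde\Psi$, which therefore vanishes identically.  You then say the ``favourable sign structure persists'' for $\|w\|_{C^2}<\delta$, but vanishing is not a sign condition that survives perturbation: for $w\neq0$ the coefficient is $O(\delta)$ with no definite sign, and Theorem~\ref{Maximum Principle} (which needs $c\le0$ or $\sup u=0$) does not directly exclude a \emph{positive} interior maximum of $\pm u$.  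Without that, you cannot first establish $\sup_{R_>}(\pm u)\le0$, and the subsequent Hopf steps never get off the ground.  The paper avoids this by using the $\epsilon$-perturbed $\tilde\Psi$ of Lemma~\ref{lemma1}, which solves $(\tilde\Psi_p/H_p^3)_p-F^{-2}(\rho_p-\epsilon)\tilde\Psi=0$; the interior coefficient is then $-F^{-2}\epsilon\tilde\Psi+O(\|w\|_{C^2})$, strictly negative for $\delta$ small, and the classical strong maximum principle applies.  Your argument is easily repaired by switching to this $\tilde\Psi$.

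Two smaller points.  The corner points $(0,0)$ and $(0,\hat p)$ lie in $\overline{L_0}$ and are excluded from the region in \eqref{w_q}, so the Serrin edge-point discussion is unnecessary here (it belongs to Lemma~\ref{nodalproperty}, not this proposition).  And your concern about uniformity of $\delta$ in $F$ is legitimate but not addressed in the paper either; in the applications $\delta$ is allowed to depend on the particular solution being perturbed.
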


\begin{proof}
Due to the translation invariance of the height equation \eqref{quasiw}, we know that  $v:=h_q$ is in the kernel of the linearized operator:
\begin{equation}\label{quasidiff}
  \begin{cases}
    \left(-\dfrac{h_q v_q}{h_p^2}+\dfrac{(1+h^{2}_q)v_p}{h_p^3}\right)_p+\left(\dfrac{v_q}{h_p}-\dfrac{h_q v_p}{h^{2}_p}\right)_q-\dfrac{1}{F^2}\rho_p v&=0 \qquad  \text{in}\; R_{>},\\
\dfrac{h_q v_q}{h_p^2}-\dfrac{(1+h^{2}_q)v_p}{h_p^3}+\dfrac{1}{F^2}\rho v &=0 \qquad  \text{on}\; T_>,\\
\jump{ \dfrac{h_q v_q}{h_p^2} }-(1+h^{2}_q)\jump{ \dfrac{v_p}{h_p^3}} +\dfrac{1}{F^2}\jump{ \rho } v &=0 \qquad  \text{on}\; I_>,\\
v&=0 \qquad  \text{on}\; B_>.\\
\end{cases}  
\end{equation}
Set $v:= \tilde\Psi u$ for $\tilde\Psi$ as in Lemma~\ref{lemma1}. Thus, we can rewrite each equation in \eqref{quasidiff} in terms of $u$. In particular, the interior equation becomes
\[
\begin{split}
    \left(\left(\dfrac{\tilde\Psi}{h_p^3}+\dfrac{\tilde\Psi h^{2}_q}{h_p^3}\right)u_p -\dfrac{\tilde\Psi h_q}{h^{2}_p}u_q\right)_p+\left(\dfrac{\tilde\Psi}{H_p}u_q -\dfrac{\tilde\Psi h_q}{h^{2}_p}u_p\right)_q  + \dfrac{(1+h^{2}_q)\tilde\Psi}{h^{3}_p} u_p -\dfrac{h_q \tilde\Psi _p}{h^{2}_p}u_q \\ +\left(\left(\dfrac{(1+h^{2}_q)\tilde\Psi_p}{h^{3}_p}\right)_p-\left(\dfrac{h_q \tilde\Psi_p}{h^{2}_p}\right)_q-\dfrac{1}{F^2}\rho_p\tilde\Psi \right)u =0.
    \end{split}
\]
Taking $\norm{ w }_{C^2(R_{>})}=\norm{h-H}_{C^2(R_{>})}$ to be small enough ensures the equation above represents a uniformly elliptic operator acting on $u$ on  $R_{>}$. From the equation \eqref{ODE} satisfied by $\tilde\Psi$, we see that the coefficient of $u$ on the last line is strictly negative for $0 < \delta \ll 1$, and therefore the maximum principle can be applied to infer that $u$ attains no non-negative maximum in the interior. 

We can do the same thing to the equation on the top boundary:
\[
    -\dfrac{(1+h^{2}_q) \tilde\Psi u_p}{h^{3}_p} +\dfrac{h_q \tilde\Psi u_q}{h^{2}_p}+\left(-\dfrac{\tilde\Psi_p}{H^{3}_p}+\dfrac{1}{F^2}\rho \tilde\Psi-\dfrac{h^{2}_q \tilde\Psi_p}{h^{3}_p}-\left(\dfrac{1}{h^{3}_p}-\dfrac{1}{H^{3}_p}\right) \tilde\Psi_p \right)u=0.
\]
From \eqref{Psi tilde boundary sign} and the smallness of $w$ in $C^2$, we can conclude that the coefficient of the zeroth-order term above is negative. Suppose then that $u$ attains its non-negative maximum on $T_{>}$. At that point, $u_q = 0$, while by the Hopf lemma $u_p > 0$, giving a contradiction.

Now, consider the equation on the internal interface $I_>$. By the same type of computation, we arrive at:
\begin{equation} \label{internalv}
    \jump{ \dfrac{h_q }{h_p^2} } \tilde\Psi u_q -(1+h^{2}_q)\jump{ \dfrac{u_p}{h_p^3}} \tilde\Psi +\left(-(1+h^{2}_q)\jump{ \dfrac{\tilde\Psi_p }{h_p^3}} +\dfrac{1}{F^2}\jump{ \rho } \tilde\Psi  \right)u  =0.
\end{equation}
Suppose that $u$ attains its maximum on the $I_{>}$. Thus $u_q$ vanishes there and by Hopf we know that $u^{+}_p>0$ and $u^{-}_p<0$. This implies $\jump{u_p h_p^{-3}} > 0$.  On the other hand, in view of \eqref{Psi tilde boundary sign}, the coefficient of the zeroth-order term in \eqref{internalv} is also negative for $\delta$ sufficiently small. Thus we have again produced a contradiction.

In total, this reasoning shows that $u$, and hence $w_q$, is strictly negative in $\overline{R_>} \setminus (B_> \cup \overline{L_0}$.  The proof of the proposition is therefore complete.
\end{proof}

Consider the following nodal properties:
\begin{subequations} \label{Nodal}
\begin{align}
    w_q&<0 \; \text {in} \; R^{+}_{>} \cup R^{-}_{>} \cup I_> \cup T_> , \label{nodal:wq}
     \\w_{qq}& <0 \; \text {on} \; \overline{L_{0}} \setminus \{(0,-1)\}, \label{nodal:wqq}
    \\ w_{qp} &<0 \; \text {on} \; B_> ,\label{nodal:wqp}
    \\ w_{qqp} &<0 \; \text {at} \; (0,-1). \label{nodal:wqqp}
    \end{align}
\end{subequations}
Shortly, we will prove that these define open and closed sets in an appropriate topology.  First, however, we present the following result that show one can deduce the full set of nodal properties from just \eqref{nodal:wq}.
    \begin{lemma}\label{nodalproperty}
     Let $(w,F)$ be a solution of the height equation \eqref{quasiw} in $R_>$, where $w \in C^{3}_{\textup{b,e}}(\overline{R^{+}_{>}})\cap C^{3}_{\textup{b,e}}(\overline{R^{-}_{>}}) \cap C^{1}_0(\overline{R^{+}_{>}}) \cap C^{1}_0(\overline{R^{-}_{>}})\cap C^{0}_0(\overline{R_{>}})$ satisfies \eqref{nodal:wq}.  Then $w$ satisfies all of the nodal properties \eqref{Nodal}.
    \end{lemma}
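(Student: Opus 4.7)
The plan is to set $v := w_q$ and exploit translation invariance of the height equation \eqref{quasiw} to obtain that $v$ satisfies a linear elliptic problem in $R_>^+$ and $R_>^-$ (essentially \eqref{quasidiff}), an oblique boundary condition on $T_>$, a transmission condition on $I_>$, and the Dirichlet condition $v = 0$ on $B_>$. Two further vanishings are crucial: the even symmetry of $w$ in $q$ makes $v$ odd in $q$, so $v \equiv 0$ on $L_0$; and the hypothesis \eqref{nodal:wq} gives $v < 0$ strictly in $R_>^+ \cup R_>^- \cup T_> \cup I_>$.

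For \eqref{nodal:wqq} at an interior point $(0, p_0) \in L_0$ with $p_0 \notin \{-1, \hat p, 0\}$, I would apply Hopf's boundary point lemma to $v$ from within $R_>^+$ or $R_>^-$: since $v$ attains its vanishing maximum at $(0, p_0)$ and the boundary value is zero (so no sign hypothesis on the zeroth-order coefficient is required), Hopf with outward normal $(-1,0)$ yields $-v_q(0,p_0) > 0$, i.e., $w_{qq}(0, p_0) < 0$. To extend this to the endpoints $(0, 0)$ and $(0, \hat p)$, where $L_0$ meets $T_>$ or $I_>$ and standard Hopf fails for lack of an interior ball tangent to $L_0$, I would use the differentiated boundary condition: if, say, $w_{qq}(0,0) = v_q(0,0) = 0$, then combined with the vanishings $v = v_p = 0$ (from $v \equiv 0$ on $L_0$) and $v_{qq} = v_{pp} = 0$ (from oddness in $q$ and from twice-differentiating $v|_{L_0} = 0$ in $p$), differentiating the oblique BC once in $q$ at $(0,0)$ forces $v_{pq}(0,0) = 0$; the Serrin edge-point lemma (Theorem~\ref{Maximum Principle}\ref{Serrin}) applied in $R_>^+$ then contradicts this simultaneous vanishing, exactly in the spirit of Case~1 of the proof of Theorem~\ref{thm:symmetry}. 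The point $(0,\hat p)$ is handled analogously via the differentiated transmission condition.

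For \eqref{nodal:wqp}, Hopf applied to $v$ on $B_>$ (where $v = 0$ by the bed condition and $v < 0$ just above) yields $-v_p > 0$, hence $w_{qp} < 0$. For \eqref{nodal:wqqp} at $(0, -1)$, which is the right-angle corner where the two Dirichlet sides $L_0$ and $B_>$ meet, we have $v = v_p = v_q = 0$; moreover oddness of $v$ in $q$ gives $v_{qq}(0,-1) = 0$, and twice-differentiating $v|_{L_0} = 0$ in $p$ gives $v_{pp}(0,-1) = 0$. The Serrin edge-point lemma applied to $v$ in $R_>^-$ then produces a strict sign for the only remaining second derivative $v_{pq}(0,-1)$, and the sign is pinned down to be negative by the requirement that $v < 0$ in the adjacent quadrant, giving $w_{qqp}(0,-1) < 0$.

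The main obstacle I anticipate is the corner treatment at $(0,0)$ and $(0,\hat p)$ in \eqref{nodal:wqq}: the interior ball condition for standard Hopf fails, and the mixed Dirichlet/oblique (or Dirichlet/transmission) boundary data makes a naive Serrin edge-point argument inapplicable because $v$ does not vanish on the $T_>$ or $I_>$ side. The differentiated boundary condition together with the forced vanishings of $v$ and its low-order derivatives at the corner reduces the argument to a Serrin-type contradiction, but the bookkeeping at the transmission corner $(0,\hat p)$, where one must work separately in $R_>^+$ and $R_>^-$ and reconcile the jump in $v_p$, is the most delicate piece.
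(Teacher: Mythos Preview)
Your proposal is correct and follows essentially the same strategy as the paper: Hopf's lemma on $L_0$ and $B_>$ for \eqref{nodal:wqq} (interior) and \eqref{nodal:wqp}, and Serrin's edge-point lemma combined with the differentiated boundary/transmission conditions at the corners $(0,0)$, $(0,\hat p)$, and $(0,-1)$. The only point where the paper is more explicit than your outline is the transmission corner $(0,\hat p)$, where one differentiates the interface condition twice in $q$ to obtain $\jump{h_{qqp}h_p^{-3}}(0,\hat p)=0$ and then applies Serrin separately in $R_>^+$ and $R_>^-$ along the diagonal directions $(h_p^\pm)^{-3/2}(\mp 1,-1)$ to produce a contradictory strict sign for that jump.
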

    \begin{proof}
 Observe that the statement of the lemma does not require $w$ to be ``small" nor that $F$ is supercritical. We shall prove each of the sign conditions in \eqref{Nodal} consecutively. To begin, \eqref{nodal:wq} is true by hypothesis.  
 
 Consider next \eqref{nodal:wqq}.  By \eqref{nodal:wq}, $w_q$ is non-positive in $R_>$ and by symmetry it vanishes on $L_0$.  Applying the Hopf lemma, we conclude that $w_{qq} < 0$ on $L_0$.  It remains to show that $w_{qq}<0$ holds at the points $(0,0)$ and $(0,\hat{p})$. The argument for the first point is the same as in \cite[Lemma 4.20]{chen2018existence}, so we focus on the second one which is new.  Via continuity of $w_{qq}$, we know that $w_{qq} \leq 0$ at $(0,\hat{p})$. Seeking a contradiction, suppose that $w_{qq}=0$. By evenness, 
\begin{equation}\label{hqhqp}
    w_q=0 \; \text{and} \; w_{qp}\equiv 0 \; \text{on} \; L_0.
\end{equation}
 Consider the internal equation:
 
\begin{equation}\label{heighteq}
     \jump{ \dfrac{1+h_q^2}{2h_p^2} }-\jump{ \dfrac{1}{2H_p^2}} +\dfrac{1}{F^2}\jump{ \rho } (h-H) =0 \qquad  \text{on}\; I.
\end{equation}
Taking the derivative of \eqref{heighteq} with respect to the $q$-variable twice, computing it at the point $(0,\hat{p})$ and using the fact that $h_{qq}=0$ combined with \eqref{hqhqp},
we obtain,
\begin{equation*}
    \jump{ h_{qqp}h_{p}^{-3}}(0,\hat{p}) =0.
\end{equation*}
Now, if we look at the region $R^{+}_{>}$ and  $R^{-}_{>}$ separately, we can apply the Serrin edge point lemma. But, since we know that $h_{qq}(0,\hat{p})=0$ and $h_{qp}(0,\hat{p})=0$, then we rule out the first possibility of the conclusion of the Serrin edge point lemma for each of the two regions. Hence, the later conclusion of the Serrin edge point lemma should hold. Consider the following two outward vectors associated to each $R^{+}$ and  $R^{-}$ respectively:
\begin{equation}\label{vectorss}
 \mathbf{t} := \dfrac{1}{(h^{+}_p)^{3}} \left(-1,-1 \right )\quad \text{and} \quad \mathbf{s} :=\dfrac{1}{(h^{-}_p)^{3}} \left (-1,1 \right).
\end{equation}
 Evaluating $\partial_{\mathbf{t}}^2 h_q$ and $\partial_{\mathbf{s}}^2 h_q$ at $(0,\hat{p})$, the Serrin lemma gives the inequalities
\begin{equation*}
 \dfrac{1}{(h^{+}_p)^3}h^{+}_{qqp}>0 \quad \text{and}\quad \dfrac{1}{(h^{-}_p)^3}h^{-}_{qqp}<0,
\end{equation*}
whence
\begin{equation*}
    \jump{h_{qqp} h_{p}^{-3}}(0,\hat{p}) >0.
\end{equation*} 
Thus we have produced a contradiction. Therefore, \eqref{nodal:wqq} holds.

 Next, \eqref{nodal:wqp} follows from the Hopf boundary lemma since $w_q$ vanishes identically along $B_>$.  It remains only to prove the inequality in \eqref{nodal:wqqp}. Recall, on $L_{0}^+$, $L_{0}^-$, and $B_{>}$ we know that $w_q\equiv0$. Therefore,
\[w_q=w_{qp}=w_{qq}=w_{qpp}=w_{qqq}=0 \quad \textrm{at } (0,-1).\] By the Serrin Edge point lemma, we conclude that $w_{qqp}<0 \textrm{ at } (0,-1)$ which then proves \eqref{nodal:wqqp}. 
\end{proof}

     \begin{lemma}[Open property]\label{Open property}
     Let $(w,F)$ and $(\tilde{w},\tilde{F})$ be supercritical solutions of the height equation \eqref{quasiw} on $R_>$ with
     \[ w, \, \tilde{w} \in C^{3}_{\textup{b,e}}(R^{+}_{>})\cap C^{3}_{\textup{b,e}}(R^{-}_{>})\cap C^{0}_0(\overline{R_{>}}) \cap C^{1}_0 (\overline{R^{+}_{>}}) \cap C^{1}_0(\overline {R^{-}_{>}}).\]
     If $w$ satisfies the nodal properties \eqref{Nodal}, then there exists $\epsilon=\epsilon(w)>0$ such that 
     \[\norm{ w- \tilde{w} }_{C^3(R)}+ \left|F-\tilde{F}\right|< \epsilon 
     \]
      implies $\tilde{w}$ also satisfies the nodal properties \eqref{Nodal}.
    \end{lemma}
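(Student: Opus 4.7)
The plan is to leverage Lemma~\ref{nodalproperty} so that we only need to verify \eqref{nodal:wq} for $\tilde w$, and then to control $\tilde w_q$ on $R_{>}$ by splitting into a finite ``bulk'' rectangle and a ``tail'' half-strip, handling the bulk by continuity plus Taylor expansions near the degenerate boundary pieces $L_0$ and $B_{>}$, and handling the tail via Proposition~\ref{prop:asymptoticmonotonicity}.

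First, I would fix $M > 0$ large enough that $\norm{w}_{C^2([M,\infty)\times[-1,0])} < \delta/2$, where $\delta$ is from Proposition~\ref{prop:asymptoticmonotonicity}. This decay is not directly in the function-space hypothesis, but since $w \in C^1_0 \cap C^3_{\textup{b}}$, an interpolation inequality on unit balls translated along $q \to \infty$ gives $\norm{D^2 w}_{L^\infty} \to 0$ as $|q|\to\infty$. Then for $\norm{w - \tilde w}_{C^3(R)} < \epsilon$ with $\epsilon$ sufficiently small, also $\norm{\tilde w}_{C^2([M,\infty)\times[-1,0])} < \delta$ and $\tilde F > F_{\mathrm{cr}}$.

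Next, on the bulk $\Omega_M := [0,M] \times [-1,0]$, both $w$ and $\tilde w$ are even in $q$ and vanish on $B$, so $w_q = \tilde w_q = 0$ identically on $L_0 \cup B_{>,M}$. For any small $\sigma > 0$, the complement $\Omega_M \setminus (N_\sigma(L_0) \cup N_\sigma(B_{>,M}))$ is compact, $w_q$ is strictly negative on it by \eqref{nodal:wq}, hence bounded above by $-c_\sigma < 0$; by $C^0$ closeness, $\tilde w_q < 0$ there too. In the strip $N_\sigma(L_0) \cap \Omega_M$ restricted to $p \in [-1+\sigma,0]$, the Taylor expansion $\tilde w_q(q,p) = \tilde w_{qq}(0,p)\, q + O(q^2)$ combined with $\tilde w_{qq}(0,p) < 0$ (from \eqref{nodal:wqq} and $C^3$ closeness) gives $\tilde w_q < 0$; symmetric reasoning near $B_{>,M}$ away from the corner uses \eqref{nodal:wqp}. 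Finally, at the corner $(0,-1)$, since $\tilde w_q$ vanishes on both $\{q=0\}$ and $\{p=-1\}$, we may write $\tilde w_q(q,p) = q(p+1)\, G(q,p)$ with $G(0,-1) = \tilde w_{qqp}(0,-1) < 0$ by \eqref{nodal:wqqp} and $C^3$ closeness, so $\tilde w_q < 0$ in a corner neighborhood. Together, $\tilde w_q < 0$ on $\Omega_M \setminus (L_0 \cup B_{>,M})$.

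For the tail $[M,\infty)\times[-1,0]$, $\tilde w$ solves \eqref{quasiw} there, $\tilde F > F_{\mathrm{cr}}$, and the bulk analysis gives $\tilde w_q \leq 0$ on the cross-section $\{q=M\}$. Translating Proposition~\ref{prop:asymptoticmonotonicity} by $M$ in the $q$-variable yields $\tilde w_q < 0$ throughout the tail away from $B$. Combining with the bulk result, we obtain \eqref{nodal:wq} for $\tilde w$ on all of $R_{>}$, and Lemma~\ref{nodalproperty} then delivers the remaining conditions \eqref{nodal:wqq}--\eqref{nodal:wqqp}.

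The main obstacle I expect is the delicate matching between the bulk and tail analyses: the bulk estimates degenerate exactly on $L_0 \cup B_{>}$ (where $w_q \equiv 0$), while the tail estimate degenerates at infinity (where $w_q \to 0$); choosing the truncation $M$ and the boundary layer $\sigma$ so that these two regimes cover $R_{>}$ robustly under small $C^3$ perturbation is the heart of the argument. The corner point $(0,-1)$, where two degenerate directions meet, requires the use of the extra nodal condition \eqref{nodal:wqqp} and is the most intricate single step.
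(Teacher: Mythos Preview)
Your proposal is correct and follows essentially the same strategy as the paper: reduce to \eqref{nodal:wq} via Lemma~\ref{nodalproperty}, split $R_>$ into a finite bulk rectangle and a semi-infinite tail, handle the bulk by continuity plus Taylor expansions exploiting the nodal conditions \eqref{nodal:wqq}--\eqref{nodal:wqqp} near the degenerate boundary pieces, and handle the tail by Proposition~\ref{prop:asymptoticmonotonicity}. The paper is terser, citing the periodic-case argument of Constantin--Strauss for the bulk step and simply asserting $w \in C^2_0(R)$ for the tail; your version is slightly more careful in justifying the $C^2$ decay via interpolation between $C^1_0$ and $C^3_{\textup{b}}$, and in spelling out the corner analysis at $(0,-1)$.
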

    \begin{proof}
    Recall, to prove that $\tilde{w}$ satisfies \eqref{Nodal}, it is enough to show that it exhibits the monotonicity \eqref{nodal:wq}.     We start by dividing $R^{+}_{>}$ into two overlapping regions namely:
    \[
    R^{+}_{1,>}:=\{(q,p)\in R^{+}_{>}:q<2K\}\qquad R^{+}_{2,>}:=\{(q,p)\in R^{+}_{>}:q>K\}. 
    \]
    Likewise, the lower region $R^{-}_{>}$ is divided into $R^{-}_{1,>}$ and $R^{-}_{2,>}$. The top, bottom, internal and vertical boundaries of these rectangles are denoted $T_{i,>}, B_{i,>}, I_{i,>}, L^+_{i,0}, \textrm{ and } L^-_{i,0}$ for $i\in\{1,2\}$. 
    
    Let us first look at  $ R^{j}_{1,>} ,\textrm{ for } j\in \{+,-\}$. These two finite rectangles behave in the same way as in periodic case. Therefore, the proof could be done in the same way as in \cite[Lemma 5.1]{Constantin2004Exactsteady}. The basic idea is that, for any $K > 0$, there exists $\epsilon_K$ such that taking $0 < \epsilon < \epsilon_K$ ensures $\tilde w_q < 0$ in the interior.  One then uses a Taylor expansion of $w$ and the nodal properties to conclude the same holds up to the boundary.  
   
    On the other hand, since $w\in C^2_0(R)$, one can choose large enough $K$ such that 
\[\norm{w }_{R^{j}_{2,>}}<\delta/2,
\] where $\delta$ is given as in Proposition~\ref{prop:asymptoticmonotonicity}. Hence, setting $\epsilon:= \min\{\delta/2,\epsilon_K\}$, we have $\tilde{w}_q<0$ in $\overline{R_{1>}}\setminus \left(B_{1,>} \cup \overline{L_{1,0}}\right)$ which then implies $\tilde{w}_q \leq 0$ in $ L^j_{2,0}$. Again, applying Proposition~\ref{prop:asymptoticmonotonicity}, we infer that  $\tilde{w}_q < 0$ in $\overline{R_{2>}}\setminus \left(B_{2,>} \cup \overline{L_{2,0}}\right)$. Together with the previous paragraph, this shows $\tilde w$ satisfies \eqref{nodal:wq}.
    \end{proof}
  
     \begin{lemma}[Closed property]\label{Closed property}
     Let $\{(w_n,F_n)\} \subset \mathscr{U}$ be a sequence of solutions to the height equation \eqref{quasiw} on $R_>$. Suppose that there exists $(w,F) \in \mathscr{U} $ such that $(w_n,F_n) \rightarrow (w,F)$ in $C^{3}_{\textup{b}}(R^{+}_{>})\cap C^{3}_{\textup{b}}(R^{-}_{>})\cap C^{0}_0(\overline{R_{>}}) \cap C^{1}_0 (\overline{R^{+}_{>}}) \cap C^{1}_0(\overline {R^{-}_{>}}) \times \mathbb{R}$. If  each $w_n$ satisfies the nodal properties \eqref{Nodal}, then $w$ also satisfies the nodal properties \eqref{Nodal} unless $w \equiv 0$.
    \end{lemma}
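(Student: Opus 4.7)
The plan is to combine the strong maximum principle applied to the derivative $w_q$ with the reduction afforded by Lemma~\ref{nodalproperty}. By that lemma, it suffices to establish the strict inequality \eqref{nodal:wq} for the limit $w$; the remaining nodal properties in \eqref{Nodal} then follow automatically. The first step is routine: pass the hypothesized convergence $w_n \to w$ in $C^3_{\textup{b}}$ on each half-rectangle (together with the $C^1_0$ convergence of first derivatives globally) through the nodal inequalities for $w_n$ to obtain their weak versions for $w$. In particular, $w_q \leq 0$ in $R^+_> \cup R^-_> \cup I_> \cup T_>$, and by the evenness of $w$ about $q=0$ inherited in the limit, $w_q \equiv 0$ along $L_0$.

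Next, by translation invariance of the height equation \eqref{quasiw}, the function $v := w_q$ solves the linear elliptic transmission problem \eqref{quasidiff} homogeneously on $R_>$ with $v = 0$ on $B_>$. Its interior zeroth-order coefficient $-\rho_p/F^2$ has the wrong sign for the maximum principle, so I would substitute $v = \tilde{\Psi} u$ as in Proposition~\ref{prop:asymptoticmonotonicity}, with $\tilde{\Psi}$ from Lemma~\ref{Psitilde}, which is available because $F > \Fcr$ by virtue of $(w,F) \in \mathscr{U}$. The ODE satisfied by $\tilde{\Psi}$ cancels the problematic zeroth-order term in the interior, while \eqref{Inequality A(Psi tilde)} and \eqref{Jump Psi tilde} ensure that the resulting zeroth-order coefficients on $T_>$ and $I_>$ are strictly negative. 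The transformed function $u$ is then non-positive, attains the value zero on $L_0$, and solves a genuinely elliptic transmission problem to which the strong maximum principle and Hopf boundary lemma apply.

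The strong maximum principle then produces a dichotomy: either $u \equiv 0$ on $\overline{R_>}$, or $u < 0$ strictly throughout $R^+_> \cup R^-_>$. In the first case $w_q \equiv 0$, so $w$ depends only on $p$, and since $w \in C^0_0(\overline{R})$ tends to zero as $|q| \to \infty$ we conclude $w \equiv 0$, the exceptional alternative in the lemma. In the second case I would rule out $u$ attaining the value zero at an interior point of $T_>$ or $I_>$ by exactly the Hopf-type arguments used in the symmetry proof: at such a putative maximum, $u_q$ vanishes while $u_p$ (on $T_>$) or the jump $\jump{u_p/h_p^3}$ (on $I_>$) has a definite sign from the boundary point lemma applied from one side, which contradicts the transformed boundary or transmission equation once one remembers that the zeroth-order coefficient there is strictly negative. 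This yields \eqref{nodal:wq}, and Lemma~\ref{nodalproperty} delivers the remaining nodal properties.

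The main technical delicacy is the Hopf-type argument at the internal interface $I_>$, where the classical boundary point lemma does not apply directly; instead one must compare one-sided normal derivatives from $R^+_>$ and $R^-_>$ and combine them through the transmission condition. This is the same device that appears repeatedly in the symmetry proof (see the treatment of $(q_0,p_0) \in I_{\lambda_0}$ in Lemma~\ref{lemma2} and the Serrin edge point analysis in Theorem~\ref{thm:symmetry}), and it is precisely the strict sign \eqref{Jump Psi tilde} of the jump coefficient for $\tilde{\Psi}$ that makes the contradiction close at $I_>$.
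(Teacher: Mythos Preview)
Your overall strategy---pass to the limit to get $w_q\le 0$, then use the maximum principle and Hopf-type arguments on $v:=w_q$ to upgrade to strict inequality or conclude $v\equiv 0$, and finally invoke Lemma~\ref{nodalproperty}---matches the paper's. The issue is your justification for the intermediate step.

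You propose the substitution $v=\tilde\Psi u$ from Proposition~\ref{prop:asymptoticmonotonicity} and claim that ``the ODE satisfied by $\tilde\Psi$ cancels the problematic zeroth-order term in the interior.'' This is not correct for general $(w,F)\in\mathscr U$. The ODE for $\tilde\Psi$ is written in terms of $H_p$, not $h_p$; after the substitution the interior zeroth-order coefficient becomes
\[
\left(\frac{(1+h_q^2)\tilde\Psi_p}{h_p^3}\right)_p-\left(\frac{h_q\tilde\Psi_p}{h_p^2}\right)_q-\frac{1}{F^2}\rho_p\tilde\Psi,
\]
and the ODE only reduces this to something with a definite sign when $h$ is $C^2$-close to $H$. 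That closeness is precisely the hypothesis $\|w\|_{C^2(R_>)}<\delta$ of Proposition~\ref{prop:asymptoticmonotonicity}, which you do not have here. The same objection applies to your claim about the signs of the transformed zeroth-order coefficients on $T_>$ and $I_>$: inequalities \eqref{Inequality A(Psi tilde)} and \eqref{Jump Psi tilde} involve $H_p$, and the extra terms coming from $h_p-H_p$ and $h_q$ need not be small.

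The paper sidesteps the transformation entirely. Since $v\le 0$ everywhere and $v=0$ along $L_0\cup B_>$, one has $\sup_{R_>}v=0$, and Theorem~\ref{Maximum Principle} (both the strong maximum principle and the Hopf lemma) applies under the alternative hypothesis ``$\sup u=0$'' regardless of the sign of the zeroth-order coefficient. This gives the dichotomy directly for $v$ itself: either $v<0$ in $R_>^+\cup R_>^-$, or $v\equiv 0$. The boundary cases on $T_>$ and $I_>$ are then handled exactly as you describe, but with $v$ in place of $u$; at a putative zero of $v$ on $I_>$ the transmission condition forces $\jump{v_p/h_p^3}=0$, contradicting the one-sided Hopf inequalities $v_p^+<0$, $v_p^->0$. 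Drop the transformation and invoke the $\sup=0$ clause of Theorem~\ref{Maximum Principle}; the rest of your argument goes through unchanged.
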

    \begin{proof}
    Let $v:=w_q$. Because each $w_n$ satisfies the nodal properties, we may infer that $w_q \leq 0$ in $\overline{R_{>}}$. Moreover, $v$ solves of the uniformly elliptic PDE \eqref{quasidiff} in $R_{>}$. Furthermore, since $v=0$ in $\overline{L_{0}}$ and $B_>$, we can apply the maximum principle and conclude the following three possibilities; (i) $v<0$ in $\overline{R_{>}} \setminus \left (B_{>} \cup \overline{L_0} \right)$; or (ii) there exists some point $(q*,0)\in T_>$; or (iii) there exists some point $(q*,\hat{p})\in I_>$ such that $v(q*,0)=0$ or $v(q*,\hat{p})=0$. 
    
    The proof of the lemma if either the first and second possibilities occur follows from \cite[Lemma 4.22]{chen2018existence}. Therefore, we only consider the third possibility here. Assume that there exists $(q*,\hat{p})\in I_>$ such that $v(q*,\hat{p})=0$. By the transmission boundary condition in \eqref{quasidiff}, we obtain $\jump{v_p/h_p^3}(q*,\hat{p})=0$.
   But via Hopf boundary lemma, we know that $v_p(q*,\hat{p}^+)<0$ and $v_p(q*,\hat{p}^-)>0$. Hence, we arrive at a contradiction unless $v \equiv 0$
in $\overline{R_{>}}$ which is equivalent to saying $w\equiv 0$.
\end{proof}
    
\section{Small-amplitude existence theory}\label{Small Amplitude}
In this section, we will construct a curve of small-amplitude solutions to the height equation \eqref{quasiw} that bifurcates from the trivial solution $w = 0$ at the critical Froude number $\Fcr$ defined in \eqref{def Fcr}. With that in mind, we introduce the non-negative parameter $\epsilon^2:=\mu_{\textup{cr}}-\mu$, which will be positive for supercritical waves. The corresponding Froude number is thus $F^{\epsilon}:=\left(1/F_{\textup{cr}}^2-\epsilon \right)^{-1/2}$.  We will frequently abuse notation by writing $(w,\epsilon)$ rather than $(w,F)$.

\begin{theorem}[Small-amplitude waves] \label{smallamplitudethoery}
There exists $\epsilon_{*}>0$ and a continuous local curve 
\begin{equation}\label{localcurve}
        \mathscr{C}_{\textup{loc}}:=\{(w^{\epsilon},F^{\epsilon}):0 < \epsilon < \epsilon_{*} \} \mathscr{U} \subset \subset X \times \mathbb{R}
     \end{equation}
     of solutions to $\mathcal{F}(w,F)=0$ with the following properties
     \begin{enumerate}[label=\normalfont{(\alph*)}]
    \item \label{continuity}\textup{(Continuity)} The mapping $\epsilon \mapsto w^{\epsilon}$ is continuous from $(0,\epsilon_{*})$ to X, with $\norm{ w^{\epsilon} }_{X} \rightarrow{0}$ as $\epsilon \rightarrow{0}.$
    
    \item \label{Invertability} \textup{(Invertibility)} 
    The linearized operator $\mathcal{F}_w(w^\epsilon, F^\epsilon)$ is invertible $X \to Y$ for all $\epsilon \in (0,\epsilon_*)$
    
    \item \label{Uniqueness} \textup{(Uniqueness)}If $w\in X$ satisfies $w>0$ on T and $\norm{ w }_X$ is small enough, then for any $\epsilon \in (0,\epsilon_{*})$, $\mathcal{F}(w,F^{\epsilon})=0$ implies $w=w^{\epsilon}.$
    \item \label{Waves of Elevation} \textup{(Elevation)} $(w^{\epsilon},F^{\epsilon})$ is a wave of elevation: $w^{\epsilon}>0$ on $R \cup I \cup T.$
\end{enumerate}
    \end{theorem}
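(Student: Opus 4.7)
The plan is to employ the center manifold reduction ``without a phase space'' developed in \cite{chen2019center}. This technique is well-suited here because the linearized operator $\mathcal{F}_w(0, F_{\mathrm{cr}})$ fails to be Fredholm as a map on the unbounded strip $R$, so classical Lyapunov--Schmidt reduction is unavailable. Crucially, we have already done all the spectral analysis needed in Lemma~\ref{zerotheigenfunction} and Lemma~\ref{SpectrumofEvalues}: at $\mu = \mu_{\mathrm{cr}}$, the eigenvalue $\nu_0 = 0$ is simple with eigenfunction $\Psi_{\mathrm{cr}} = \Psi(\cdot; \mu_{\mathrm{cr}})$, and the remaining spectrum lies strictly to the left of $0$ and accumulates only at $-\infty$.

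First, I would recast $\mathcal{F}(w,F) = 0$ in a form suited to \cite{chen2019center}, working in the exponentially weighted spaces $C^{k+\alpha}_\nu$ introduced in \eqref{weightedholderspace} for $\nu > 0$ chosen inside a spectral gap guaranteed by Lemma~\ref{SpectrumofEvalues}. The hypotheses of the center manifold theorem then reduce to (i) smoothness of $\mathcal{F}$ as a mapping near $(0, F_{\mathrm{cr}})$, which is automatic by real-analyticity; (ii) the Fredholm alternative for the transversal linear operator, already supplied by Lemma~\ref{SpectrumofEvalues}; and (iii) resolvent estimates on strips off the imaginary axis, which follow from the standard transversal Sturm--Liouville theory combined with the transmission-adapted invertibility from Lemma~\ref{Invertibility of $F_w(0,F)$}. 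The $C^{8+\alpha}$ assumption on $u^*$ and $\mathring\varrho$ is precisely what is needed to meet the regularity hypothesis of \cite{chen2019center} and the technical \cite[Lemma 2.1]{Amick1994Center}.

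The output of the reduction is a smooth graph map assigning to each $(A,\epsilon)$ in a small neighborhood of $0 \in \mathbb{R}^2$ an even-in-$q$ function $W(A,\epsilon)(q,p) = A(q)\Psi_{\mathrm{cr}}(p) + O(|A|^2 + \epsilon |A|)$ solving $\mathcal{F}(W, F^\epsilon) = 0$ provided $A(q)$ satisfies a second order reduced ODE
\begin{equation*}
A''(q) = a(\epsilon) A + b\, A^2 + O\bigl(|A|^3 + \epsilon|A|^2 + \epsilon^2|A|\bigr),
\end{equation*}
with $a(0) = 0$, $a'(0) > 0$ (so that supercriticality $\epsilon^2 = \mu_{\mathrm{cr}} - \mu > 0$ yields a center-saddle at the origin in the $(A,A')$ plane). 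The reflection symmetry $q \mapsto -q$ and the conserved quantity inherited from the flow force $\mathscr{S}$ of Section~\ref{flow force section} turn the reduced equation into an integrable planar Hamiltonian system. For $\epsilon > 0$ small and $b \neq 0$, after the KdV rescaling, the only bounded even orbit of this system is the explicit sech-squared homoclinic $A(q) = \tfrac{3 a(\epsilon)}{b}\sech^2(\tfrac{1}{2}\sqrt{a(\epsilon)}\,q)$. I would define $w^\epsilon := W(A, \epsilon)$ for this distinguished $A$.

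The four properties now follow in order. Item \ref{continuity} is immediate from the smoothness of $W$ and the scaling $A = O(\epsilon^2)$, $A' = O(\epsilon^3)$. Item \ref{Invertability} holds because the one-dimensional kernel of the limiting operator corresponds to the $q$-translation mode, which is eliminated by working in the even-in-$q$ space $X$; the Fredholm index zero property from Lemma~\ref{Fedholm of index zero of the linearized opt}, together with non-degeneracy of the reduced homoclinic in the even class, gives invertibility. Item \ref{Uniqueness} follows from the fact that any sufficiently small solution in $X$ must, by the invariance property of the center manifold, lie in the range of $W(\cdot,\epsilon)$; the positivity assumption $w>0$ on $T$ then selects the positive homoclinic branch, giving $w = w^\epsilon$. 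Finally \ref{Waves of Elevation} follows because $b$ can be shown positive by an adjoint-based solvability computation (producing an integral against $\Psi_{\mathrm{cr}}$), so $A > 0$, and Lemma~\ref{zerotheigenfunction}\ref{b} gives $\Psi_{\mathrm{cr}} > 0$ on $R \cup I \cup T$, hence $w^\epsilon > 0$ there.

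The principal obstacle is the coefficient $b$ in the reduced equation, which must be explicitly computed and shown to have the correct sign. The computation is a projection of the quadratic Taylor terms of $\mathcal{F}$ against the adjoint kernel, and in the two-layer setting the jump terms on $I$ contribute nontrivial boundary pieces that need to be tracked carefully; this is where the explicit formulas \eqref{IVP1}--\eqref{IVP2} for $\Psi_{\mathrm{cr}}$ and the transmission structure of the problem enter. Once $b$ is shown to be of the right sign, the scheme above closes.
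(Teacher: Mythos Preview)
Your proposal is correct and follows essentially the same route as the paper: center manifold reduction via \cite{chen2019center}, computation of the quadratic coefficient in the reduced KdV-type ODE, identification of the $\sech^2$ homoclinic, and elimination of the translation mode by even symmetry for part~\ref{Invertability}. The one place where the paper is more careful than your sketch is part~\ref{Waves of Elevation}: rather than concluding directly from $A>0$ and $\Psi_{\mathrm{cr}}>0$ (which is delicate because the remainder in the reduction depends also on $v'=A'$ and because $\Psi_{\mathrm{cr}}$ vanishes at the bed), the paper first derives the phase-plane estimate $|v'(q)|\le C\epsilon\,|v(q)|$, uses it together with $(\Phi_0)_p>0$ to show $w^\epsilon_p>0$ throughout, and then integrates from $p=-1$.
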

    In proving Theorem~\ref{smallamplitudethoery}, we will use the center manifold reduction technique introduced in \cite{chen2019center}, which is a variation of the classical theory due to Kirchg\"assner \cite{Kirchgassner1982Wavesolutions} and Mielke \cite{Mielke1986Reduction,mielke1988reduction}. This newer version is well-suited to the present work as it is conducted entirely in spaces of H\"older class functions and the computation of the reduced equation on the center manifold is done through a power series expansion that is comparatively straightforward. Moreover, the resulting ODE directly governs the internal interface, which allows us to prove that $w^\epsilon$ is a wave of elevation rather easily.
    
    Recall that from Lemma~\ref{SpectrumofEvalues}, the spectrum of the transversal linearized operator at the trivial solution $(w,\epsilon) = (0,0)$ consists of a simple $0$ eigenvalue with the remainder being strictly negative. For convenience, in this section, we write the linearized operator around the trivial flow as $\mathcal{L}=(\mathcal{L}_1, \mathcal{L}_2, \mathcal{L}_3)$ with 
    \begin{equation} \label{linearized operator around 0}
  \begin{cases}
    \mathcal{L}_1w:=\left(\dfrac{w_p}{H_p^{3}}\right)_p+\left(\dfrac{w_q}{H_p}\right)_q-\mucr \rho_{p}w,\\
   \mathcal{L}_2w:=\left(-\dfrac{w_p}{H_p^3}+\mucr \rho w\right)\Big|_T,\\
   \mathcal{L}_3w:=-\jump{ \dfrac{w_p}{H_p^3} }+\mucr \jump{ \rho} w|_I.
   \end{cases}  
   \end{equation}
   Here we have reintroduced the shorthand $\mucr = 1/\Fcr^2$. 
   
    As always, the computation of the center manifold reduction requires (temporarily) expanding our function spaces to allow small exponential growth in $q$.  In particular, we will view $\mathcal{L}$ as mapping $X_\nu \to Y_\nu$ for some $0 < \nu \ll 1$, where $X_\nu$ and $Y_\nu$ correspond to $X$ and $Y$ with the standard H\"older norm replaced by the exponentially weighted version defined in \eqref{weightedholderspace}.  Then it is easily confirmed that the kernel of this operator is two dimensional and takes the form
    \begin{equation} \label{L kernel}
    \textrm{ker }\mathcal{L}:X_\nu \to Y_\nu = \left\{(A+Bq)\Phi_0(p)\in X_\nu: ~ (A,B)\in \mathbb{R}^2 \right\},
    \end{equation}
where $\Phi_0$ generates the null space of the transversal linearized operator and is normalized so that $\Phi_0(\hat{p}) = 1$.  It will be convenient to introduce a projection $\mathcal{Q} : X_\nu \to X_\nu$ onto this kernel given by
\[
\mathcal{Q} w =\left(w(0,\hat{p})+w_q(0,\hat{p})q\right)\Phi_0(p).
\]

Finally, note that the height equation \eqref{quasiw} can be written as a quasilinear transmission problem for the elliptic PDE operator
\[ \nabla \cdot \mathcal{A}(p,w,\nabla w) + \mathcal{B}(p,w,\epsilon),\]
where $\mathcal{A}$ and $\mathcal{B}$ are $C^8$ in their arguments due to the regularity assumptions in \eqref{regularity ustar}.  Of course, $\mathcal{A}$ and $\mathcal{B}$ are actually analytic with respect to $w$ and $\nabla w$, but they have finite smoothness in $p$ because the coefficients involve $\rho$ and $H_p$.

Together these facts ensure that the hypotheses of the center manifold reduction result \cite[Theorem 1.1]{chen2019center} are satisfied (specifically, we use the extension of that theorem to transmission problems given in \cite[Section 2.7]{chen2019center}). As a direct consequence, we obtain the following.
\begin{lemma}[Center Manifold]\label{CenterManifold}
There exists $0 < \nu \ll 1$, neighborhoods $\mathcal{U} \subset X \times \mathbb{R}$ and $\mathcal{V} \subset \mathbb{R}^3$, and a $C^5$ coordinate map $\Lambda=\Lambda(A,B,\epsilon) : \mathbb{R}^3 \to X_\nu$ satisfying 
\[
\Lambda(0,0,\epsilon)=\Lambda_{A}(0,0,\epsilon)=\Lambda_{B}(0,0,\epsilon)=0 \textrm{ for all } \epsilon,
\]
such that the following hold
\begin{enumerate}[label=\normalfont{(\alph*)}]
    \item Suppose that $(w,\epsilon) \in \mathcal{U} $ solves \eqref{quasiw}. Then $v(q):=w(q,\hat{p})$ solves the second-order ODE
    \begin{equation}\label{secondoderderODE}
        v''=f(v,v',\epsilon),
    \end{equation}
    where $f:\mathbb{R}^3\mapsto\mathbb{R}$ which is defined as follows
    \begin{equation}\label{f}
        f(A,B,\epsilon):=\dfrac{d^2}{dq^2}\bigg|_{q=0}\Lambda(A,B,\epsilon)(q,\hat{p}).
    \end{equation}
    \item\label{solution of ODE to PDE} Conversely, if $v:\mathbb{R}\rightarrow{\mathbb{R}}$ solves the ODE \eqref{secondoderderODE} and  $\left(v(q),v'(q),\epsilon\right) \in \mathcal{V}$ for all $q$, then $v:=w(\cdot,\hat{p})$ for solution $(w,\epsilon)\in \mathcal{U}$ of the PDE \eqref{quasiw}. Moreover, we write it as
    \[
    w(q+\tau,p)=v(q)\Phi_{0}(p)+v'(q)\tau\Phi_{0}(p)+\Lambda\left(v(q),v'(q),\epsilon\right)(\tau,p),
    \] for all $\tau \in \mathbb{R}$. 
\end{enumerate}
\end{lemma}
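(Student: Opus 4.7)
The proof is essentially a direct application of the abstract center manifold reduction \cite[Theorem 1.1]{chen2019center}, together with its extension to transmission problems in \cite[Section 2.7]{chen2019center}. The plan is therefore to (i) verify that the hypotheses of that theorem are satisfied in our setting and (ii) translate its conclusion into the explicit form stated in the lemma.

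First, I would check the functional-analytic setup. As noted in the excerpt, the height equation \eqref{quasiw} has the quasilinear divergence form $\nabla \cdot \mathcal{A}(p,w,\nabla w) + \mathcal{B}(p,w,\epsilon) = 0$ with analogous transmission and boundary conditions, where $\mathcal{A}$ and $\mathcal{B}$ are $C^8$ in $p$ and real-analytic in $(w,\nabla w)$ by the regularity assumption \eqref{regularity ustar}. The linearization at the trivial solution is the operator $\mathcal{L}$ in \eqref{linearized operator around 0}, viewed as a mapping between the exponentially weighted spaces $X_\nu \to Y_\nu$ defined in \eqref{weightedholderspace}. The critical spectral input is that on this weighted pair, $\ker \mathcal{L}$ is given by \eqref{L kernel}, i.e. it is two-dimensional and spanned by $\Phi_0(p)$ and $q\Phi_0(p)$. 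This Jordan-block structure at the spatial eigenvalue $0$ is a direct consequence of Lemma~\ref{zerotheigenfunction} and Lemma~\ref{SpectrumofEvalues}: at $\mu = \mucr$ the transversal problem has a simple eigenvalue $0$, and all other transversal eigenvalues $\nu_j$ are strictly negative, so the associated spatial eigenvalues $\pm\sqrt{-\nu_j}$ are real and bounded away from the imaginary axis. This gives the required spectral gap and allows us to fix some $0<\nu\ll1$ small enough that no non-central spatial spectrum lies in the strip $|\mathrm{Re}\,\lambda| \le \nu$.

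Second, I would apply the reduction theorem. It furnishes a $C^5$ coordinate map $\Lambda : \mathbb{R}^3 \to X_\nu$ (the loss of three derivatives from the $C^8$ regularity of the coefficients is the standard cost of the reduction) whose graph is a locally invariant center manifold tangent to $\ker \mathcal{L}$ at the origin; the tangency yields $\Lambda(0,0,\epsilon) = \Lambda_A(0,0,\epsilon) = \Lambda_B(0,0,\epsilon) = 0$. Every sufficiently small solution $(w,\epsilon) \in \mathcal{U}$ of \eqref{quasiw} lies on this manifold and thus admits a parametrization of the form
\[
w(q+\tau,p) = A(q)\Phi_0(p) + B(q)\tau\Phi_0(p) + \Lambda(A(q),B(q),\epsilon)(\tau,p),
\]
with $(A,B)$ satisfying a reduced ODE system obtained by projecting the PDE onto $\ker \mathcal{L}$. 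Because of the normalization $\Phi_0(\hat{p}) = 1$ (legitimate by Lemma~\ref{zerotheigenfunction}\ref{b}, which guarantees $\Phi_0(\hat{p})\neq 0$), evaluating the above identity at $\tau=0$, $p=\hat{p}$ and at $q=0$, $p=\hat{p}$ identifies $A(q) = w(q,\hat{p}) =: v(q)$ and $B(q) = v'(q)$. Differentiating twice in $q$ and using the definition \eqref{f} then produces the scalar second-order ODE $v'' = f(v,v',\epsilon)$.

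Third, part~\ref{solution of ODE to PDE} is the standard converse: any trajectory of the ODE staying inside the neighborhood $\mathcal{V}$ lifts through the formula above to a solution of the full PDE on the center manifold, with $w(\cdot,\hat{p}) = v$. The only genuinely hard step is the construction of $\Lambda$ and the verification that it intertwines the ODE and PDE dynamics; this is the content of the cited theorem and would be expected to be the main obstacle if one tried to prove the reduction from scratch, since it requires a delicate cutoff-and-contraction argument in exponentially weighted H\"older spaces. Our task here is purely to check the abstract hypotheses, which we have reduced to the Fredholm and spectral information assembled in Section~\ref{Linearized operators} and the regularity noted in \eqref{regularity ustar}.
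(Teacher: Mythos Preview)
Your proposal is correct and follows essentially the same approach as the paper: the lemma is obtained as a direct consequence of \cite[Theorem 1.1]{chen2019center} (via its transmission-problem extension in \cite[Section 2.7]{chen2019center}), once the spectral gap from Lemma~\ref{SpectrumofEvalues}, the two-dimensional kernel \eqref{L kernel}, and the $C^8$ regularity of $\mathcal{A},\mathcal{B}$ have been recorded. The paper states this in one sentence before the lemma, while you spell out the hypothesis-checking in more detail, but the content is the same.
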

\begin{remark} \label{local symmetry remark}
By inspection, it is easy to verify that the height equation \eqref{quasiw} is invariant under the reversal transformation $w \mapsto w(-\cdot,\cdot)$.  One can show that this gives rise to a symmetry for the coordinate map:
\begin{equation*}
\label{Lambda symmetry} 
\Lambda(A,B,\epsilon)(q,p) = \Lambda(A,-B,\epsilon)(-q,p),
\end{equation*}
and hence $f$ is even in $B$.
\end{remark}

The next step is to the derive the reduced ODE \eqref{secondoderderODE} on the center manifold. In \cite[Theorem 1.3]{chen2019center}, it is proved that the coordinate map $\Lambda$ admits the Taylor expansion 
\begin{equation} \label{coordinatemapexpansion}
    \Lambda(A,B,\epsilon) :=\sum_{\mathcal{J}} \Lambda_{ijk}A^iB^j \epsilon^k + \mathcal{O}\left( (|A|+|B|)(|A|+|B|+|\epsilon|)^{4}\right),
\end{equation}
where
\[
    \mathcal{J}=\{(i,j,k)\in \mathbb{N}^3 :2i +3j +k \leq 4, i+j+k \geq 2, i+j \geq 1 \}.
\] 
Each of the coefficient functions $\Lambda_{ijk} \in X_\nu$ lies in the kernel of $\mathcal{Q}$ and satisfies
\[\partial^{i}_A\partial^{j}_B\partial^{k}_\lambda\bigg|_{(A,B,\lambda)={(0,0,0)}}\mathcal{F}((A+Bq)\Phi_0(p)+\Lambda(A,B,\lambda),\lambda)=0 \qquad \textrm{ for all } 2i+3j+k \leq 4,
\]
with the above derivatives being of the formal G\^ateaux type. By \cite[Lemma 2.3]{chen2019center}, this determines the $\Lambda_{ijk}$ uniquely. Note that our need to expand to fourth order, and hence for $\Lambda$ to be $C^5$, is precisely the reason behind the regularity of the background flow assumed in \eqref{regularityofuv}. 

Explicitly, the index set $\mathcal{J}$ only contains the following 3-tuples:
\begin{equation*}
    \mathcal{J}=\{(2,0,0),(0,1,1),(1,0,1),(1,0,2)\}.
\end{equation*} 
Following the procedure outlined in \cite[Section 2.6]{chen2019center},  computing the coefficients in the expansion \eqref{coordinatemapexpansion} requires us to solve a hierarchy of equations taking the general form
\[\left\{
\begin{aligned}
     \mathcal{L}\Lambda_{ijk}&=R_{ijk},\\
     \mathcal{Q}\Lambda_{ijk}&=0, 
\end{aligned}
\right.
\]
where each $R_{ijk} \in Y_\nu$ depends on previously computed terms. This calculation is largely elementary but quite onerous. For that reason, we use a computer algebra package to verify the results. 

Computing the above G\^ateaux derivatives, we see that $\mathcal{L} \Lambda_{101} = 0$, and hence $\Lambda_{101} = 0$ by uniqueness.  The same type of calculation will also show that $\Lambda_{011} = 0$. 
The remaining coefficients, however, are nontrivial.  Indeed, we find that
\begin{equation} \label{Psi102 equation}
\mathcal{L} \Lambda_{102} = \begin{pmatrix} -\rho_p \Phi_0 \\ \rho(0) \Phi_0(0) \\ \jump{ \rho } \Phi_0(\hat{p}) \end{pmatrix} =: \begin{pmatrix} \mathcal{R}_1 \\ \mathcal{R}_2 \\ \mathcal{R}_3 \end{pmatrix}, \qquad \mathcal{Q} \Lambda_{102} = 0.
\end{equation}
 Since, $\mathcal{L}\Lambda_{102}$ is independent of $q$, we infer that $\mathcal{L}(\partial_q \Lambda_{102})=0$.
Therefore $\partial_q\Lambda_{102}$ is in the kernel of $\mathcal{L}$, and so by \eqref{L kernel} it must take the form 
\begin{equation}\label{psi102}
    \Lambda_{102}(q,p)=(A_1q+\frac{1}{2}B_1q^2)\Phi_0(p)+K_1(p),
\end{equation}
for some constants $A_1, B_1$, and function $K_1$ to be determined.
Applying the operator $\mathcal{L}$ to this ansatz and recalling \eqref{Psi102 equation}, we obtain 

\begin{equation}\label{eqL1}
    \mathcal{L}^\prime K_1 = \mathcal{R} - \begin{pmatrix} \frac{B_1 \Phi_0}{H_p} \\ 0 \\ 0 \end{pmatrix}.
    \end{equation}
Here, $\mathcal{L}^\prime$ is the transversal linearized operator found by restricting $\mathcal{L}$ to $q$-independent functions. Note that $\Phi_0$ generates the kernel of $\mathcal{L}^\prime$ by definition.  

Now, multiplying the first component of the equation in \eqref{eqL1} by $\Phi_0$ and integrating by parts, we find that
\[
    B_1 \int_{-1}^{0}\dfrac{\Phi_0^2}{H_p} \,dp +\jump{ \frac{(\Phi_0)_p K_1 - \Phi_0 (K_1)_p}{H_p^3} } + \left.\left( \frac{\Phi_0 (K_1)_p -(\Phi_0)_p K_1 }{H_p^3} \right) \right\vert_{p=0} =   \int_{-1}^{0} \mathcal{R}_1 \Phi_0 \,dp. 
\]
Using the fact that $\mathcal{L}^\prime \Phi_0 = 0$, the above identity simplifies to 
\[
    \displaystyle B_1 \int_{-1}^{0}\dfrac{\Phi_0^2}{H_p} \,dp +\mathcal{R}_3 \Phi_0(\hat{p})-\mathcal{R}_2 \Phi_0(0)= \int_{-1}^{0} \mathcal{R}_1 \Phi_0 \,dp.
\]
Hence, $B_1$ takes form
\begin{equation*}
    B_1 =\dfrac{ \displaystyle \int_{-1}^{0} \mathcal{R}_1 \Phi_0 \,dp -\mathcal{R}_3 \Phi_0(\hat{p})+\mathcal{R}_2 \Phi_0(0)}{\displaystyle \int_{-1}^{0}\dfrac{\Phi_0^2}{H_p} \,dp} > 0.
\end{equation*}
Notice that $A_1 q \Phi_0$ is in the kernel of $\mathcal{Q}$, and hence will not enter into the reduced equation.  Thus we have computed the relevant part of $\Lambda_{102}$.

Following the same strategy for $\Lambda_{200}$ yields
\begin{equation}\label{psi200}
    \Lambda_{200}=(A_{2} q+\frac{1}{2}B_{2}q^2)\Phi_0(p)+K_2(p).
\end{equation}
where the coefficient 
\begin{equation*}
    B_2 =-\dfrac{3}{2}\ \dfrac{\displaystyle  \int_{-1}^{0} \dfrac{(\Phi_0)^3_p}{H_p^4}\,dp}{\displaystyle \int_{-1}^{0}\dfrac{\Phi_0^2}{H_p} \,dp} < 0.
\end{equation*}

At this stage, we have all the information needed to find the reduced equation \eqref{f}.  First, we consider the truncated ODE where only the leading order terms of $\Lambda$ are retained:

\begin{equation}\label{TRUNCATEDreducedODE}
v^{0}_{qq}=B_1{\epsilon}^2v^{0}-B_2(v^{0})^2,
\end{equation} where $v^{0}(q):=w(q,\hat{p})$. Note that the functions $K_1$ and $K_2$ play no role as they are independent of $q$. One can verify directly that 
\begin{equation}\label{sech^2}
    v^{0}(q)=\dfrac{3B_1 \epsilon^2}{2B_2} \text{sech}^2 (\dfrac{\epsilon \sqrt{B_1}}{2}q).
\end{equation}
is an explicit solution to the truncated reduced equation that is homoclinic to $0$.  It is left to show that this orbit persists for the full reduced equation.
\begin{proof}[Proof of Theorem~\ref{smallamplitudethoery}]
Let us introduce the scaled variables
\begin{equation}\label{scalings}
 q=\epsilon^{-1} Q, \quad v(q)= \epsilon^2 V(Q).
\end{equation}
We can then rewrite the reduced ODE \eqref{TRUNCATEDreducedODE} as the following planar system:
\begin{equation}\label{fullODErescalled}
  \begin{cases}
    V_{Q}=W, & \\
    W_{Q}=B_1V-B_2V^2+R(V,W,\epsilon).&
  \end{cases}
\end{equation}
The error term $R(A,B,\epsilon)=\mathcal{O}(\epsilon^2|A|+|\epsilon||B|))$ by \eqref{coordinatemapexpansion} and is even in $B$ due to Remark~\ref{local symmetry remark}.  Taking $\epsilon=0$, we get back a rescaled version of the truncated reduced ODE in \eqref{TRUNCATEDreducedODE}. Moreover, the explicit solution $v^0$ \eqref{sech^2} in the rescaled variables becomes
\[
V^{0}(Q)=\dfrac{3B_1}{2B_2} \text{sech}^2 (\dfrac{ \sqrt{B_1}}{2}Q), \quad W^{0}(Q)=-\dfrac{3B_1^{3/2}}{2B_2} \tanh{(\dfrac{\sqrt{B_1}}{2}Q)} \text{sech}^2 (\dfrac{ \sqrt{B_1}}{2}Q).
\]
which is an explicit solution to \eqref{fullODErescalled} when $\epsilon=0$.  Moreover, this orbit is homoclinic to the origin and intersects the $V$-axis transversally. The symmetry property exhibited by $\Lambda$ \eqref{Lambda symmetry} implies that the ODE \eqref{fullODErescalled} is reversible in the sense that it is invariant with respect to   $\left(V(Q),W(Q)\right)\mapsto \left(V(-Q),-W(-Q)\right)$.  A standard planar systems argument then implies that the homoclinic orbit $(V^0,W^0)$ persists for sufficiently small $\epsilon$, giving a continuous one-parameter family of homoclinic solutions. Undoing the scaling, we obtain the local curve $\mathscr{C}_{\textup{loc}}$, proving \eqref{localcurve}.  Part~\ref{continuity} is a consequence of the continuity of the reduction function. 

Next, we will show that $\mathscr{C}_{\textup{loc}}$ consists of waves of elevation as claimed in \ref{Waves of Elevation}. It is easy to see from the equation \eqref{fullODErescalled} satisfied by $V^\epsilon := w^\epsilon(\cdot/\epsilon,\hat{p})/\epsilon^2$ that $V^\epsilon > 0$ and exponentially localized for $0<\epsilon \ll 1$. Moreover from the phase portrait as $Q\to\pm \infty$ we can infer that
\begin{equation*}
    \lim_{Q \to \pm \infty} \dfrac{W(Q)}{V(Q)}= \pm \sqrt{B_1}+\mathcal{O}\left(\epsilon\right). 
\end{equation*}
Taking $\epsilon$ small enough and undoing the scaling, this yields 
\begin{equation}\label{inequality of w and v}
    |v^\prime(q)|\leq C \epsilon |v(q)|,
\end{equation}
which holds for some $C > 0$ and all $0 < \epsilon \ll 1$.

On the other hand, combining the solution ansatz given by Lemma~\ref{CenterManifold}\ref{solution of ODE to PDE} with the expansion of the coordinate map $\Lambda$ in \eqref{coordinatemapexpansion}, we have
\begin{equation}\label{w epsilon}
   w^\epsilon(q,p)=v(q)\Phi_0(p)+\Lambda_{200}(0,p)v(q)^2+\Lambda_{102}(0,p)v(q)\epsilon^2+r\left(v(q),v^\prime(q),\epsilon\right)(0,p), 
\end{equation} where the remainder term satisfies 
\[ r(A,B,\epsilon)(0,\cdot) =\mathcal{O}\left( (|A|+|B|)(|A|+|B|+|\epsilon|)^{4}\right) \qquad \textrm{in } C^2([-1,\hat{p}]) \cap C^2([\hat{p},0]).\]

In concert with \eqref{w epsilon} and \eqref{inequality of w and v}, this shows that for $\epsilon$ sufficiently small we have $w_p > 0$ in $R^- \cup R^+$. Since $w =0$ on $p=-1$, this gives that $w > 0$ in $R$, proving part~\ref{Waves of Elevation}.

Lastly, we will show that the linearized operator $\mathcal{F}_{w}(w,F)$ on the $\mathscr{C}_{\textup{loc}}$ is invertible. Recall also that from Lemma~\ref{Fedholm of index zero of the linearized opt}, we know that $\mathcal{F}_{w}(w,F)$ is Fredholm of index 0. As a consequence, for all $(w,F) \in \mathscr{C}_{\textup{loc}}$, $\mathcal{F}_{w}(w,F)$ is invertible if and only if it has a trivial kernel. As we have seen many times, the translation invariance in $q$ means that $\mathcal{F}_{w}(w^\epsilon,F^\epsilon)w_q^\epsilon=0$. 

To identify other potential solutions of the linearized problem, we make use of \cite[ Theorem 1.6 ]{chen2019center}.  This result states that $\dot{w}$ satisfies $\mathcal{F}_w(w^\epsilon,F^\epsilon)\dot{w} = 0$ if and only if $\dot{v} := \dot{w}(\cdot, \hat{p})$ solves the linearized reduced ODE 
\[ \frac{d^2}{dq^2} \dot{v} = f_{(A,B)}(v^\epsilon,v_q^\epsilon,\epsilon) \cdot (\dot{v}, \dot{v}_q),\]
where $v^\epsilon = w^\epsilon(\cdot,\hat{p})$.  Performing the same rescaling as before, we see that the corresponding (linear) planar system is given by 
\[
\begin{pmatrix}
\dot{V}_{Q}\\
\dot{W}_{Q}
\end{pmatrix}
=
\begin{pmatrix}
0 & 1\\
f_{A} & f_{B}
\end{pmatrix}
\begin{pmatrix}
\dot{V}\\
\dot{W}
\end{pmatrix}
,\]
where $f_A$ and $f_B$ are evaluated at $(\epsilon^2 V_Q^\epsilon(Q), \epsilon V^\epsilon(Q), \epsilon)$.  Sending $|Q| \to \infty$, we therefore obtain 
\[
\lim_{Q\to \pm \infty} \begin{pmatrix}
0 & 1\\
f_{V} & f_{W}
\end{pmatrix}
=\begin{pmatrix}
0 & 1\\
B_1 + \mathcal{O}(\epsilon^2) & 0
\end{pmatrix}.\] 
Clearly, the eigenvalues of the above matrix are both real, with one strictly positive and the other strictly negative.  By standard dynamical system theory, there cannot exist two linearly independent bounded solutions to the reduced ODE. Hence, the only nontrivial solution of $\mathcal{F}_w(w^\epsilon,F^\epsilon)\dot w = 0$ is $\dot w = w_q^\epsilon$.  But we have already established that $w^\epsilon$ is even, hence $w_q^\epsilon$ is not in $X$.  Thus the kernel of $\mathcal{F}_w(w^\epsilon,F^\epsilon) : X \to Y$ is trivial, completing the proof.
\end{proof}

\section{Large-amplitude existence theory}\label{largeamplitude}
In this final section, we complete the argument for the existence of the curve $\mathscr{C}$ of large-amplitude solutions and show that it exhibits the properties asserted in Theorem~\ref{thm:main}.  The global curve is constructed through continuation of the local curve $\mathscr{C}_\loc$ obtained in Section \ref{Small Amplitude}.  Our strategy is in the spirit of Wheeler's \cite{wheeler2013large,wheeler2015froude} work on homogeneous rotational waves and that of Chen, Walsh, and Wheeler's \cite{chen2018existence} study of continuously stratified fluids.  In particular, the latter of these papers develops a general analytic global bifurcation theory that is adapted to monotone solutions on unbounded domains; see Appendix~\ref{quoted results appendix}.  Using that machinery allows us to prove the existence $\mathscr{C}$.  To verify the extreme wave limit requires the bounds on the velocity field given by Theorem~\ref{thm:velocity}, which we prove in the next subsection, and uniform regularity estimates that are tackled in Section~\ref{uniform regularity section}.

\subsection{Velocity bound}
Here, we derive some uniform $L^\infty$ bounds on the velocity for solitary stratified waves. Throughout the analysis, the far-field state (as described by $H$ and $\rho$ or equivalently $\mathring{u}$ and  $\varrho^*$) is fixed.  To simplify the presentation, we do not track how the constants depend on these quantities. Recall that the relative velocity in terms of the stream function is given by $\nabla^{\perp} \psi$.  Having uniform $L^\infty$ control on the velocity will ensures  the uniformly ellipticity of the height equation \eqref{quasi} along  $\mathscr{C}$. 

In earlier studies of rotational waves in constant density water \cite{Varvaruca2009extremewaves} and continuously stratified fluids \cite{chen2018existence}, velocity bounds were obtained by first establishing a lower bound of the pressure via the maximum principle.  Bernoulli's equation then allows one to uniformly control the magnitude of the relative velocity. However, in the present work, it is not obvious that we can apply the same strategy due to the transmission boundary condition. Adopting instead the approach of \cite{amick1986global}, we start by deriving a ``local" $L^2$ bound of $\nabla \psi$ which is recorded in the lemma below.

\begin{lemma}[Local velocity bound]\label{Local velocity bound}
There exists $C = C(F_0, K) > 0$ such that, for any solution $(\psi,\eta)$ to \eqref{YIH}--\eqref{boundaryconditioninstreamfunction} with $\psi_y < -1/K$, $F \geq F_0 > 0$, and any $m \in \mathbb{R}$, 
\begin{equation}\label{L^2 bound of grad Psi}
    \int_{m-1}^{m+1} \int_{-1}^{\eta(x)} | \nabla \psi|^2 \,\,dy \,dx < C.
\end{equation}
\end{lemma}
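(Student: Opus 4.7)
The plan is to establish this local $L^{2}$ bound through an energy estimate of Amick--Turner type, obtained by testing Yih's equation \eqref{YIH} against a cutoff times a carefully chosen reference function that absorbs the Dirichlet data on the top, bed, and internal interface.

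First I would record a few a priori geometric bounds. Sending $|x|\to\infty$ in the surface Bernoulli condition and the transmission condition in \eqref{boundaryconditioninstreamfunction} expresses the constants $Q^{\eta}$ and $Q^{\zeta}$ in terms of the far-field data, and so the assumption $F\geq F_{0}$ forces both to be uniformly bounded. Since $|\nabla\psi|^{2}=Q^{\eta}-2\varrho(\eta+1)/F^{2}\geq 0$ on $y=\eta$, this yields a uniform upper bound on $\eta$, and then $\psi_y^2(1+\eta_x^2)=|\nabla\psi|^2\leq Q^\eta$ combined with $|\psi_{y}|>1/K$ gives a uniform bound on $|\eta_{x}|$. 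The hypothesis $\psi_{y}<-1/K$ together with the fact that $\psi$ varies by a fixed amount between consecutive streamlines $\{y=-1\}$, $\{y=\zeta\}$, $\{y=\eta\}$ also bounds the two layer thicknesses $\zeta+1$ and $\eta-\zeta$ from above.

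Next, let $\chi=\chi(x)\in C_{c}^{\infty}(\mathbb{R})$ satisfy $\chi\equiv 1$ on $[m-1,m+1]$, $\operatorname{supp}\chi\subset[m-2,m+2]$ and $|\chi'|\leq 2$, and let $\bar\psi(x,y)$ be the continuous function that is linear in $y$ on each layer and matches the boundary values of $\psi$, namely $\bar\psi=0$ on $\{y=\eta\}$, $\bar\psi=-\hat p$ on $\{y=\zeta\}$, and $\bar\psi=1$ on $\{y=-1\}$. The core of the proof is to multiply Yih's equation by $\chi^{2}(\psi-\bar\psi)$ on each of $\Omega_{\pm}$ separately and sum. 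Because $\psi-\bar\psi$ vanishes on the top, bed, and on both sides of the interface (continuously, since $\psi$ and $\bar\psi$ are both continuous and agree there), \emph{every} boundary contribution drops out---crucially, including the interface term that would otherwise couple to the uncontrolled quantity $|\nabla\psi_\pm|$ on $I$. The resulting identity takes the schematic form
\[
\int_{\Omega}\chi^{2}|\nabla\psi|^{2}\,dx\,dy=\int_{\Omega}\chi^{2}\nabla\psi\cdot\nabla\bar\psi\,dx\,dy-\int_{\Omega}2\chi\chi'(\psi-\bar\psi)\psi_{x}\,dx\,dy-\int_{\Omega}\chi^{2}(\psi-\bar\psi)\Big[\tfrac{y\rho'(-\psi)}{F^{2}}-\beta(\psi)\Big]\,dx\,dy.
\]
Each term on the right is treated by Cauchy--Schwarz and Young: the first is bounded by $\tfrac14\int\chi^{2}|\nabla\psi|^{2}+C\int\chi^{2}|\nabla\bar\psi|^{2}$, the second by $\tfrac14\int\chi^{2}|\nabla\psi|^{2}+C\|\chi'\|_{\infty}^{2}\|\psi-\bar\psi\|_{L^{\infty}}^{2}|\operatorname{supp}\chi|$, and the third is directly bounded using $|\rho'|,|\beta|\in L^{\infty}$. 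Absorbing the gradient terms into the left gives $\tfrac12\int\chi^{2}|\nabla\psi|^{2}\leq C(F_{0},K)$, which is \eqref{L^2 bound of grad Psi}.

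The main obstacle is the construction and control of $\bar\psi$---specifically, bounding $\|\nabla\bar\psi\|_{L^{\infty}(\operatorname{supp}\chi)}$, which requires uniform \emph{lower} bounds on the layer thicknesses $\zeta+1$ and $\eta-\zeta$. The upper bounds follow easily from $|\psi_{y}|>1/K$, but the lower bounds are more delicate: they must be extracted by combining the hypothesis $\psi_{y}<-1/K$ with the Bernoulli identity on $\{y=\eta\}$ and the transmission condition \eqref{jump in grad psi}, which through the identity $|\nabla\psi_{+}|^{2}-|\nabla\psi_{-}|^{2}=Q^\zeta-(2/F^{2})\llbracket\varrho\rrbracket(\zeta+1)$ forces $|\nabla\psi_{\pm}|$ to be comparable on the interface. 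This is the step where the present two-layered setting departs most from the continuously stratified analysis of \cite{chen2018existence}, and where the analysis adapted from \cite{amick1986global} is used.
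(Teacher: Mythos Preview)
Your energy-estimate strategy is the right genre, but there is a genuine gap at exactly the place you flag as the ``main obstacle,'' and your proposed resolution does not close it.

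To bound $\|\nabla\bar\psi\|$ you need (i) a uniform \emph{lower} bound on the layer thicknesses $\eta-\zeta$ and $\zeta+1$, and (ii) a uniform bound on $|\zeta'|$. Neither follows from the hypotheses. The condition $\psi_y<-1/K$ gives only \emph{upper} bounds on the thicknesses; a lower bound on $\eta-\zeta$ would require an \emph{upper} bound on $|\psi_y|$ throughout $\Omega_+$, which is precisely what you do not have. The transmission identity $|\nabla\psi_+|^{2}-|\nabla\psi_-|^{2}=Q^{\zeta}-\tfrac{2}{F^{2}}\jump{\varrho}(\zeta+1)$ only controls the \emph{difference} of the two traces on $\mathscr I$; it says nothing about either trace being bounded, so it cannot be leveraged into a thickness bound. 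The same circularity afflicts your bound on $\zeta'$: since $\zeta_x=-\psi_x/\psi_y$ along $\mathscr I$, controlling it needs $|\nabla\psi|$ bounded on the interface, which is not given. In fact, in the paper's logical order the interface separation bound (Corollary~\ref{separation corollary}) and the interfacial velocity bound (Lemma~\ref{interface velocity bounds lemma}) are both \emph{consequences} of this lemma, so your argument, as written, is circular.

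The paper avoids the whole issue by passing to semi-Lagrangian variables, where the lemma becomes
\[
\int_{m-1}^{m+1}\!\!\int_{-1}^{0}\frac{1+h_q^{2}}{h_p}\,dp\,dq\leq C,
\]
and the domain is fixed independently of the solution. One tests the interior height equation against $\xi^{2}w$ with $w=h-H$ and a cutoff $\xi=\xi(q)$; the boundary contributions on $T$ and $I$ combine exactly with an integration by parts of $\int\rho_p w^{2}\xi^{2}$ and cancel. The hypothesis $\psi_y<-1/K$ translates to $h_p<K$, which immediately bounds $w=\int_{-1}^{p}w_p$ pointwise and lets the Young-inequality terms be absorbed. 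No geometric information about $\eta-\zeta$ or $\zeta'$ is ever needed. If you want to stay in Eulerian variables, you would have to replace your piecewise-linear $\bar\psi$ by a reference whose gradient is controlled without knowing the layer geometry; the natural candidate is the pull-back of $H$ under the Dubreil--Jacotin map, which is exactly what the paper's choice amounts to.
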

\begin{proof}
Working in semi-Lagrangian variables, this is equivalent to
\[
\int_{m-1}^{m+1} \int_{-1}^{0} \dfrac{1+h_q^{2}}{h_p} \,dp \, dq \leq C.
\]
Let $\xi= \xi(q)$ be a bump function supported in $[m-2,m+2]$, with $0\leq\xi\leq1$ and $\xi \equiv 1$ on $[m-1,m+1]$. If we multiply the interior height equation \eqref{quasi} by $\xi^2 w$, and integrate over the domain, we obtain
\begin{equation}\label{velbound}
  \begin{split}
  & \iint_{R}\left (\left(\dfrac{1+h_q^2}{2h_p^2}-\dfrac{1}{2H_p^2}\right) w_p -\left(\dfrac{h_q}{h_p}\right)w_q \right)\xi^2  \,dp \, dq  \\
  & \qquad = \iint_{R}\dfrac{2h_q}{h_p}w\xi \xi_q+\dfrac{1}{F^2}\rho_p w^2 \xi^2   \,dp \, dq + \int_{T}\left(\dfrac{1+h_q^2}{2h_p^2}-\dfrac{1}{2H_p^2}\right)w \xi^2 \, dq  \\ &\qquad\qquad + \int_{I} \jump{ - \dfrac{1+h_q^2}{2h_p^2}+ \dfrac{1}{2H_p^2}} w \xi^2 \, dq. 
  \end{split}
\end{equation}

Observe that the factor of $\xi^2$ in the integrand on the left hand side of the above equation can be rewritten as follows:
\[\left(\dfrac{1+h_q^2}{2h_p^2}-\dfrac{1}{2H_p^2}\right) w_p -\left(\dfrac{h_q}{h_p}\right)w_q=\dfrac{2H_p +w_p}{2h_p^2}\left(-\dfrac{w_p^2}{H_p^2}-w_q^2 \right).\]
Therefore, via \eqref{velbound} and the above equality along with Young's inequality, we have
\begin{equation}\label{velocityboundineq}
\begin{split}
  \iint_{R}\dfrac{2H_p +w_p}{2h_p^2}\left(\dfrac{w_p^2}{H_p^2}+w_q^2 \right) \xi^2  \,dp \, dq&=-\iint_{R} \dfrac{2h_q}{h_p}w\xi \xi_q  \,dp \, dq \\
  & \qquad +2\iint_{R}\dfrac{1}{F^2}\rho w_p w \xi^2  \,dp \, dq \\&\lesssim \epsilon_1 \iint_{R} \dfrac{h_q^2}{h_p^2}w^2\xi^2  \,dp \, dq +\dfrac{1}{\epsilon_1}\int_{\mathbb{R}}\xi_q^2 dq\\& \qquad +\epsilon_2\iint_{R}\rho^2 w_p^2 w^2 \xi^2   \,dp \, dq+\dfrac{1}{F^2\epsilon_2} \int_{\mathbb{R}}\xi^2 \, dq,
\end{split}
\end{equation}
where the constants $\epsilon_1$ and $\epsilon_2$ are defined to be
\[
        \epsilon_1 := \dfrac{\inf H_p}{2K^2}, \qquad
        \epsilon_2 :=\dfrac{1}{2\norm{ \rho }^2_{L^\infty}K^2\norm{ H_p }_{L^\infty}}.
\]
Combining this with \eqref{velocityboundineq} yields
\[\iint_{R}\dfrac{1}{h_p} \xi^2 \,dp \, dq +\iint_{R}\dfrac{ h_q^2}{h_p}\xi^2 \,dp \, dq \leq C(F_0, K).\] This then proves the estimate in \eqref{L^2 bound of grad Psi}.
\end{proof}
Next, we show that along the internal interface, the velocity is uniformly controlled in $L^\infty$. Our approach is based on that of Amick and Turner \cite{amick1986global} and \cite{chen2020globalmonotone}. In both those papers, however, the stream function is harmonic in each layer, which permits them to use the classical monotonicity formula of Alt--Caffarelli--Friedman \cite[Lemma 5.1]{alt1984variational}. Because we allow for general stratification, we must instead use the slightly weaker ``almost monotonicity formula'' due to Caffarelli--Jerison--Kenig \cite{Caffarelli2002monotonicity}. The precise application is presented in the next lemma. However, prior to using the formula, an intermediate step is done to make sure that the upper and internal layers are uniformly separated. This is the content of the next corollary which follows from the local estimate \eqref{L^2 bound of grad Psi} and the fact that the relative pseudo-volumetric mass flux is fixed.
\begin{corollary}[Interface separation bound]\label{separation corollary}
    Under the hypothesis of Lemma~\ref{Local velocity bound}, we have
\begin{equation}\label{separation}
    \inf_x \left( \eta(x)-\zeta(x) \right) > C|\hat{p}|^2,
\end{equation}
where the constant $C = C(F_0, K) > 0$.
\end{corollary}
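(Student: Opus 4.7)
The plan is to prove the corollary via a pointwise Cauchy--Schwarz estimate combined with the fixed mass flux. Because $\psi = 0$ on $T$ and $\psi = -\hat{p}$ on $I$, at every $x$ we have
\[
|\hat{p}| \;=\; \int_{\zeta(x)}^{\eta(x)} (-\psi_y)\, dy \;=\; \int_{\zeta(x)}^{\eta(x)} \sqrt{\varrho}\,(c-u)\, dy,
\]
so applying Cauchy--Schwarz in $y$ yields, pointwise in $x$,
\[
|\hat{p}|^{2} \;\leq\; (\eta - \zeta)(x) \int_{\zeta(x)}^{\eta(x)} \psi_y^{2}\, dy \;\leq\; (\eta - \zeta)(x) \int_{-1}^{\eta(x)} |\nabla \psi|^{2}\, dy.
\]
Rearranging,
\[
(\eta - \zeta)(x) \;\geq\; \frac{|\hat{p}|^{2}}{E(x)}, \qquad E(x) := \int_{-1}^{\eta(x)} |\nabla \psi(x,y)|^{2}\, dy,
\]
so the claimed separation bound reduces to a uniform $L^{\infty}$ bound on the vertical energy $E$.

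Lemma~\ref{Local velocity bound} only gives the averaged estimate $\int_{m-1}^{m+1} E(x)\, dx \leq C(F_0, K)$ for every $m \in \mathbb{R}$. To upgrade this to a pointwise bound, I would use that the hypothesis $\psi_y < -1/K$ forces the height equation \eqref{quasi} to be uniformly elliptic with coefficients controlled in terms of $F_0$, $K$, and the fixed background flow. Applying interior Schauder estimates separately in each layer, with the transmission condition on $I$ treated as oblique boundary data and the Bernoulli condition on $T$ similarly linearized around the available $L^{2}$-control of $\nabla\psi$, yields uniform $C^{1}_{\loc}$-control on $h$. Translating back to Eulerian variables produces a universal modulus of continuity for $x\mapsto E(x)$. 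A Chebyshev-type argument then picks, for each $x_0 \in \mathbb{R}$, some $x_\ast \in [x_0 - 1, x_0 + 1]$ with $E(x_\ast) \leq 2C$, and the modulus of continuity transfers that bound to $x_0$ at the cost of enlarging the constant. Feeding the resulting pointwise bound $E(x) \leq C'(F_0,K)$ into the Cauchy--Schwarz display above gives $(\eta - \zeta)(x) \geq C''|\hat{p}|^{2}$ with $C''$ depending only on $F_0$ and $K$.

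I expect the hardest step to be the passage from the averaged $L^{2}$-control of $\nabla\psi$ supplied by Lemma~\ref{Local velocity bound} to a pointwise-in-$x$ bound on $E$: although the uniform ellipticity guaranteed by $\psi_y < -1/K$ is precisely what makes standard regularity theory applicable, one must verify that the Schauder constants can be taken uniform in the admissible solution class, which requires care because of both the free surface and the nonlinear transmission condition on $I$. Once that technical point is dispatched, the proof collapses to the two-line Cauchy--Schwarz estimate displayed above.
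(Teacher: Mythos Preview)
Your proposed Schauder step is circular. The hypothesis $\psi_y < -1/K$ translates to $h_p < K$, i.e.\ only an \emph{upper} bound on $h_p$; it furnishes no lower bound on $h_p$ and no bound whatsoever on $h_q$. The ellipticity matrix of the height equation has entries $1/h_p$, $-h_q/h_p^2$, $(1+h_q^2)/h_p^3$, which are therefore not uniformly bounded above, so uniform Schauder constants are simply unavailable from the stated hypotheses. The missing ingredient---a pointwise upper bound on $|\nabla\psi|$---is precisely what the paper establishes only \emph{after} this corollary, in Lemma~\ref{interface velocity bounds lemma} and Theorem~\ref{thm:relative velocity bound}, with the present corollary serving as an essential input to those arguments. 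You therefore cannot invoke that bound here; the difficulty you flag in your last paragraph is not merely ``hard'' but actually blocked.

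The paper avoids the issue by applying Cauchy--Schwarz over the two-dimensional slab $\{m-1<x<m+1,\ \zeta(x)<y<\eta(x)\}$ rather than pointwise in $y$. Averaging the flux identity $|\hat p| = \int_{\zeta}^{\eta}\sqrt{\varrho}(c-u)\,dy$ over $x\in[m-1,m+1]$ and then applying Cauchy--Schwarz gives
\[
|\hat p| \leq \tfrac12\Bigl(\int_{m-1}^{m+1}(\eta-\zeta)\,dx\Bigr)^{1/2}\Bigl(\int_{m-1}^{m+1}\!\!\int_{\zeta}^{\eta}\varrho(c-u)^2\,dy\,dx\Bigr)^{1/2},
\]
and the second factor is controlled directly by Lemma~\ref{Local velocity bound}---no pointwise bound on $E(x)$ is ever needed. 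This is exactly the maneuver your outline is missing: by integrating in $x$ \emph{before} applying Cauchy--Schwarz, one works entirely at the level of the averaged $L^2$ bound and never has to upgrade it.
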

\begin{proof}
Via the definition of the relative pseudo-volumetric mass flux in the upper layer, we have for all $x$ and $m \in \mathbb{R}$
\begin{equation*}
\begin{aligned}
    |\hat{p}|&=\int_{\zeta(x)}^{\eta(x)}\sqrt{\varrho}(c-u)\, dy =\frac{1}{2} \int_{m-1}^{m+1}\int_{\zeta(x)}^{\eta(x)}\sqrt{\varrho}(c-u)\, dy \, dx\\&\leq \frac{1}{2}\left(\int_{m-1}^{m+1}\int_{\zeta(x)}^{\eta(x)} 1\,dy\, dx\right)^{1/2} \left( \int_{m-1}^{m+1}\int_{\zeta(x)}^{\eta(x)}\varrho (c-u)^2\,dy\, dx\right)^{1/2}\\&< C \sqrt{\eta(x)-\zeta(x)},
\end{aligned}
\end{equation*}
where we have used the fact that $\varrho (c-u)^2 \leq |\nabla \psi|^2$ and the local velocity bound \eqref{Local velocity bound}. This then leads to the inequality in \eqref{separation}.
\end{proof}

\begin{lemma}[Interfacial velocity bounds] \label{interface velocity bounds lemma}  
For any solution $(\psi,\eta,\zeta)$ to the water wave problem \eqref{YIH}--\eqref{boundaryconditioninstreamfunction} with $\sup \psi_y < -1/K$ and $F \geq F_0 > 0$ satisfies the bound
\begin{equation} \label{interface velocity bound}
    |\nabla \psi_+|^2 + |\nabla \psi_-|^2 < C \qquad \mathrm{on }~ \mathscr{I},
\end{equation}
where the constant $C = C(F_0, K) > 0$. 
\end{lemma}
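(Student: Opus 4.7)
The plan is to apply the Caffarelli--Jerison--Kenig almost monotonicity formula \cite{Caffarelli2002monotonicity} to two carefully chosen non-negative subsolutions built from $\psi+\hat{p}$, obtain the product bound $|\nabla\psi_+|\cdot|\nabla\psi_-|\leq C$ on $\mathscr{I}$, and then close the argument by combining this with the jump relation \eqref{jump in grad psi} to control the sum $|\nabla\psi_+|^2+|\nabla\psi_-|^2$. First I will observe that the no-stagnation hypothesis $\sup\psi_y<-1/K$ together with Yih's equation \eqref{YIH} gives $|\Delta\psi|\leq M$ in each layer for some $M=M(F_0,K)$. Next, Corollary~\ref{separation corollary} combined with the fact that the interface $\zeta$, being a streamline of a wave of elevation, is uniformly bounded away from the bed, yields a uniform lower bound $c_*>0$ on the thickness of each layer on either side of $\mathscr{I}$. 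Setting $r_0:=c_*/2$, every ball $B_{r_0}(x_0,\zeta(x_0))$ centred at an interfacial point meets neither $\{y=-1\}$ nor $\{y=\eta(x)\}$.

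On each such ball I will set $v:=\psi+\hat{p}$, which vanishes on $\mathscr{I}$, and define $u_+:=\max(v,0)$ and $u_-:=\max(-v,0)$. Then $u_\pm\geq 0$, $u_+u_-\equiv 0$, and $u_\pm$ vanish on $\mathscr{I}$. A distributional computation shows $\Delta u_\pm\geq -M$: in the respective layers this reduces to the pointwise bound $|\Delta v|\leq M$, while on the interface the only new contribution comes from the jump of the normal derivative of $u_\pm$ across $\mathscr{I}$, and this carries the favorable sign because $u_\pm$ drops from a non-negative bulk value down to $0$ upon crossing the interface. After rescaling $u_\pm\mapsto u_\pm/M$ and recording the uniform bound $\|u_\pm\|_{L^\infty(B_{r_0})}\leq C(F_0,K)$ (coming from Lemma~\ref{Local velocity bound} combined with standard interior elliptic regularity for Yih's equation on bounded subdomains), the CJK formula delivers
\begin{equation*}
\frac{1}{r^4}\int_{B_r(x_0,\zeta(x_0))}|\nabla u_+|^2\,dxdy\;\cdot\;\int_{B_r(x_0,\zeta(x_0))}|\nabla u_-|^2\,dxdy\;\leq\; C_1(F_0,K)
\end{equation*}
for all $0<r\leq r_0/2$. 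Passing $r\to 0$ and using the one-sided regularity of $\psi$ up to $\mathscr{I}$, the left-hand side converges to a positive multiple of $|\nabla\psi_+(x_0,\zeta(x_0))|^2\,|\nabla\psi_-(x_0,\zeta(x_0))|^2$, giving the product bound $|\nabla\psi_+|\cdot|\nabla\psi_-|\leq C_2(F_0,K)$ uniformly along $\mathscr{I}$.

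The jump relation \eqref{jump in grad psi} now reads
\begin{equation*}
|\nabla\psi_+|^2-|\nabla\psi_-|^2 \;=\; \frac{2\jump{\varrho}}{F^2}y \;-\; 2\jump{E},
\end{equation*}
and its right-hand side is uniformly bounded by a constant $C_3=C_3(F_0)$ because $\jump{E}$ is determined by the fixed asymptotic state while $y$ is uniformly bounded on $\mathscr{I}$ thanks to the separation estimate. An elementary algebraic manipulation (writing $a=|\nabla\psi_+|^2$, $b=|\nabla\psi_-|^2$, so that $\sqrt{ab}\leq C_2$ and $|a-b|\leq C_3$ force $a,b\leq C_2+C_3$) yields the desired bound on $|\nabla\psi_+|^2+|\nabla\psi_-|^2$.

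The principal technical obstacle will be the careful verification of the distributional subsolution inequalities $\Delta u_\pm\geq -M$ across the curved free interface; this is also the reason the classical Alt--Caffarelli--Friedman monotonicity formula used by Amick--Turner \cite{amick1986global} for homogeneous harmonic $\psi$ must be replaced here by the CJK almost monotonicity formula, which precisely tolerates the bounded but non-zero forcing coming from stratification in Yih's equation. A secondary issue is securing the $L^\infty$ control of $u_\pm$ needed to make the CJK constant uniform along $\mathscr{C}$, which I will handle by iterating Lemma~\ref{Local velocity bound} with Moser/De~Giorgi-type local estimates on the bulk of each layer.
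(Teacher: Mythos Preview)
Your approach is essentially identical to the paper's: both apply the Caffarelli--Jerison--Kenig almost monotonicity formula to the positive and negative parts of $\psi+\hat p$ on a ball centered at an interfacial point (the paper additionally rescales spatially so the ball has unit radius and normalizes by constants $\mathcal{M}_\pm$ so that $\Delta\tilde\psi_\pm\geq -1$), extract the product bound $|\nabla\psi_+|\,|\nabla\psi_-|\leq C$ from the limit $r\to 0$, and close with the jump relation \eqref{jump in grad psi}. One simplification: the CJK right-hand side is controlled directly by the local $L^2$ gradient bound of Lemma~\ref{Local velocity bound} (or even by the trivial pointwise bound $|u_\pm|\leq |\psi+\hat p|\leq 1+|\hat p|$), so your proposed Moser/De~Giorgi iteration is unnecessary.
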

\begin{proof}
Again, we will use $C$ to denote a generic positive constant depending only on the quantities listed in the statement.  

Fix $(x_0,y_0)\in \mathscr{I}$, where recall $\mathscr{I}$ denotes the internal interface in the original coordinate system. Let $a:=\mathrm{dist}((x_0,y_0),\partial \Omega\setminus\mathscr{I})$. Observe that, in view of Corollary~\ref{separation corollary}, $a$ is uniformly positive.  

We will work with the rescaled coordinates $(\tilde x, \tilde y) = ((x-x_0)/a, (y-y_0)/a)$.  Likewise, we introduce the modified and rescaled  stream functions $\tilde\psi_-$ and $\tilde \psi_+$ defined by 
\[ \tilde \psi_-(\tilde x, \tilde y) := \max\left\{0, \, \frac{\psi(x,y) + \hat{p}}{\mathcal{M}_-}\right\}, \qquad \tilde \psi_+(\tilde x, \tilde y) := \max\left\{0, \,- \frac{\psi(x,y) + \hat{p}}{\mathcal{M}_+}\right\} \]
for $(\tilde x, \tilde y) \in B_1$, the unit ball centered at $(0,0)$ in the $(\tilde x, \tilde y)$-plane.  Here,
\[
\mathcal{M}_-:= 2a^2 \max\left\{\norm{\beta_+}_{L^\infty},\,  \dfrac{1}{F^2} K \norm{ \rho_p }_{L^{\infty}}\right\},
\quad
\mathcal{M}_+:= 2a^2 \max\left\{ \| \beta_- \|_{L^\infty}, \, \frac{1}{F^2} \| \rho_p \|_{L^\infty} \right\}.
\]
Notice that $\tilde \psi_\pm$ is non-negative and $C^{0+\alpha}(B_1)$ but the product $\tilde \psi_- \tilde \psi_+$ vanishes identically.  Moreover, from the definitions of $\mathcal{M}_\pm$ and Yih's equation \eqref{YIH}, we find that
\[
    \Delta_{(\tilde x, \tilde y)} \tilde \psi_\pm \geq -1 \qquad \textrm{on } B_1,
\]
with the inequality holding in the sense of distributions.  

Consider the function
\[\phi(r):=\left(\dfrac{1}{r^2}\iint_{B_r}|\nabla \tilde\psi_+|^2 \,dx\,dy\right)\left(\dfrac{1}{r^2}\iint_{B_r}|\nabla \tilde\psi_-|^2 \,dx\,dy\right) \qquad \textrm{for } 0 < r < 1,\]
where $B_r$ denotes the ball of radius $r$ centered at the origin in the $(\tilde x, \tilde y)$ variables.  
Applying \cite[Theorem 1.3]{Caffarelli2002monotonicity}, we can infer that for any $0<r<1$,
\begin{equation}\label{bound for phi(r)}
    \phi(r)\leq C_0 \left(1+ \iint_{B_1} |\nabla\tilde{\psi}_+|^2 \,dx\,dy+ \iint_{B_1} |\nabla\tilde{\psi}_-|^2 \,dx\,dy \right)^2,
\end{equation}
for a universal constant $C_0 >0$.
Because of the H\"older regularity of $\tilde \psi_\pm$, from \cite[Theorem 1.6]{Caffarelli2002monotonicity}, we know that $\phi(r)$ has a limiting value as $r \to 0$, which must then coincide with
\[
\phi(0) :=\dfrac{\pi^2}{4}|\nabla\tilde\psi_+(0,0)|^2 |\nabla\tilde\psi_-(0,0)|^2.
\]
Combining this with \eqref{bound for phi(r)} and undoing the scaling, we can say that on internal interface
\begin{equation}\label{product of nabla psi}
   |\nabla {\psi}_+(x_0,y_0)|^2|\nabla {\psi}_-(x_0,y_0)|^2 < C. 
\end{equation}
Finally, from equation \eqref{product of nabla psi} and the Bernoulli condition \eqref{jump in grad psi}, we arrive at the desired bound \eqref{interface velocity bound}. 
\end{proof}
Having derived the velocity bounds on the interface, we are now prepared to prove Theorem~\ref{thm:velocity}.  We state this result in the Dubreil-Jacotin variables as this is most convenient for applying it to the global bifurcation theory.
\begin{theorem}[Global velocity bounds] \label{thm:relative velocity bound}
Let $(h,F) \in X \times \mathbb{R}$ be a  solution to the height equation \eqref{quasi} with $\|h_p\|_{L^\infty} < K$ and $F \geq F_0 >0$.  Then
\[\norm{ \dfrac{1}{h_p} }_{L^{\infty}(R)} + \norm{ \dfrac{h_q}{h_p} }_{L^{\infty}(R)}\leq C,\]
where the constant $C = C(F_0, K) >0$.
\end{theorem}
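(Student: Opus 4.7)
The plan is to reduce the theorem to a uniform lower bound on the dimensionless pressure $P$ in each fluid layer and to establish that bound via a maximum principle argument applied separately in each smooth layer. In physical variables, $1/h_p = \sqrt{\rho}\,(c-u)$ and $h_q/h_p = -\sqrt{\rho}\,v$, so the claim is equivalent to a uniform upper bound on
\[
|\nabla\psi|^2 \;=\; \frac{1+h_q^2}{h_p^2}.
\]
By Bernoulli's law \eqref{bernoulliequation},
\[
\tfrac{1}{2}|\nabla\psi|^2 + P + \tfrac{1}{F^2}\,\rho(-\psi)\,y \;=\; E(\psi),
\]
with $E(\psi)$ conserved along streamlines and determined entirely by the background data, hence a priori bounded. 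Since $h\le K$ forces $|y|\le K$, and $F\ge F_0$, bounding $|\nabla\psi|^2$ from above is equivalent to bounding $P$ from below, uniformly in terms of $F_0$, $K$, and the background.

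Boundary control of $P$ is the starting point. On the free surface $T$, the Bernoulli condition in \eqref{boundaryconditioninstreamfunction} gives the explicit identity $|\nabla\psi|^2 = Q^\eta - (2/F^2)\rho(y+1)$, which is bounded in terms of $F_0,K$ since $\eta\le K-1$ and $Q^\eta$ is determined by the background. On the internal interface $\mathscr{I}$, Lemma~\ref{interface velocity bounds lemma}---built on the almost monotonicity formula of Caffarelli--Kenig--Jerison together with the separation estimate of Corollary~\ref{separation corollary}---furnishes $|\nabla\psi_\pm|^2 \le C(F_0,K)$ from each side. Consequently, $P$ is uniformly bounded above and below on $T\cup \mathscr{I}$.

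To propagate the lower bound on $P$ into the interior of each layer $\Omega_\pm$, where $\rho$ and $\beta$ are smooth, differentiating Bernoulli and substituting from Yih's equation \eqref{YIH} yields the pointwise identity
\[
\Delta P \;=\; 2\det(D^2\psi) + \tfrac{1}{F^2}\,\rho'(-\psi)\,\psi_y \qquad \text{in } \Omega_\pm.
\]
I will then compare $P$ to a one-variable barrier $\Phi(y)\in C^2([-1,0])$ chosen so that $\Delta(P+\Phi)\le 0$ and $(P+\Phi)_y < 0$ on the ocean floor $B$. The algebraic bound $\det(D^2\psi)\le (\Delta\psi)^2/4$ combined with the a priori estimate $|\Delta\psi|\le C(F_0,K)$ from Yih's equation controls the curvature contribution, while stable stratification ($\rho'\le 0$) together with no-stagnation ($\psi_y<0$) makes the coupling term non-negative and compatible with the supersolution setup. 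The weak maximum principle then places the infimum of $P+\Phi$ on $T\cup \mathscr{I}$---with the Hopf boundary point lemma ruling out the floor $B$ (where $P_y = -\rho/F^2 < 0$ by the kinematic condition $v=0$ and the vertical momentum equation) and the uniform convergence \eqref{asymptoticconditioninpsi} as $|x|\to\infty$ preventing escape to infinity. The boundary estimates of the previous step then close the argument.

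The principal obstacle is the barrier construction: $\Phi$ must be convex enough that its second derivative dominates the worst-case $2\det(D^2\psi)$, yet must have $\Phi'(-1)$ large enough to secure the strict inequality $(P+\Phi)_y<0$ on $B$, and all while depending only on $F_0$, $K$, and the fixed background. The coupling term $\rho'(-\psi)\psi_y/F^2 = -\rho'/(F^2 h_p)$ in the pressure equation involves the very derivative we aim to control, and only the favorable sign provided by stable stratification lets the maximum principle proceed without losing quantitative information; this is the precise place where the assumption $\rho'\le 0$ is used. The present argument is the stratified-rotational counterpart of the pressure estimates of Varvaruca~\cite{Varvaruca2009extremewaves} and Chen--Walsh--Wheeler~\cite{chen2018existence}, but the internal interface obstructs a direct application of those techniques, which is exactly why the almost monotonicity formula is brought in to handle $\mathscr{I}$ separately.
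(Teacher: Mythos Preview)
Your overall architecture---bound $|\nabla\psi|$ on $T$ directly from the Bernoulli condition, on $\mathscr{I}$ via Lemma~\ref{interface velocity bounds lemma}, then push a lower bound on $P$ into the interior of each layer by a maximum principle---matches the paper exactly. The gap is in the interior step.

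You propose to compare $P$ with a one-variable barrier $\Phi(y)$ and use the inequality $2\det(D^2\psi)\le(\Delta\psi)^2/2$ together with the Yih bound on $\Delta\psi$ to force $\Delta(P+\Phi)\le 0$. But the coupling term $F^{-2}\rho'(-\psi)\psi_y$ is \emph{non-negative} under stable stratification and no-stagnation, and it enters $\Delta P$ with the \emph{wrong} sign for a supersolution: to place the minimum of $P+\Phi$ on the boundary you need $\Delta(P+\Phi)\le 0$, i.e.\ an \emph{upper} bound on $\Delta P$. Since $F^{-2}\rho'(-\psi)\psi_y=-\rho'/(F^2 h_p)\ge 0$ involves precisely the unbounded quantity $1/h_p$, no choice of $\Phi''(y)$ depending only on $F_0,K$ can absorb it. Your sentence ``the favorable sign \ldots\ lets the maximum principle proceed without losing quantitative information'' has the direction reversed.

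The paper resolves this by taking $f=P+M\psi$ with a \emph{constant} $M$ rather than a barrier in $y$. The point is that $f$ then satisfies a drift equation $\Delta f - b\cdot\nabla f = (\text{RHS})$ with no zeroth-order term, and the drift--$M\psi$ interaction produces a quadratic $-M^2$ contribution on the right-hand side that can dominate the bad term provided $M\gtrsim F_0^{-1}\|\rho_p\|_{L^\infty}^{1/2}\|\psi_y\|_{L^\infty}^{1/2}$. This makes $M$ depend on the unknown $\|\psi_y\|_{L^\infty}$, but only through the square root: the resulting bound $\|\psi_y\|_{L^\infty}^2\lesssim M\lesssim \|\psi_y\|_{L^\infty}^{1/2}$ is then closed by Young's inequality to give $\|\psi_y\|_{L^\infty}\le C(F_0,K)$. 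A purely $y$-dependent barrier cannot generate this self-improving structure, which is why the paper's choice of $M\psi$ (following Varvaruca and Chen--Walsh--Wheeler) is essential.
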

\begin{proof}
Throughout the proof, we use $C$ to denote a generic positive constant depending on the quantities in the statement of the theorem. Observe that, when converted to Eulerian variables, these correspond to the same quantities appearing in the statements of Lemma~\ref{Local velocity bound}, Corollary~\ref{separation corollary}, and Lemma~\ref{interface velocity bound}. In particular, the previous lemma shows that $|\nabla \psi| < C$ on the internal interface. 

It remains, to control $|\nabla \psi|$ away from $\mathscr{I}$. For this, we use a maximum principle argument based on \cite[Proposition 4.1]{chen2018existence}. Define 
\[f:=P+M\psi,\]
for $M > 0$ to be determined. Using the boundedness of $|\nabla \psi|$ and Bernoulli's law, one can then show that $|P| < C$ on $\mathscr{I}$. From Yih's equation \eqref{YIH} and an elementary calculation, we see that $f$ satisfies the elliptic PDE
\begin{equation}\label{ellip}
    \Delta f -b_1 f_x -b_2 f_y=\dfrac{2F^{-2} \rho(2M+\Delta \psi)\psi_y-2F^{-4}\rho^2}{|\nabla \psi|^2}- (2M+\Delta \psi)M+\dfrac{1}{F^2}\rho_p\psi_y,
\end{equation} where $b_1$ and $b_2$ are given as follows:
\begin{equation*}
    b_1:=2\dfrac{\psi_x(2M+ \Delta \psi)}{|\nabla \psi|^2}, \qquad
    b_2:=2\dfrac{\psi_y(2M+\Delta \psi)-2F^{-s} \rho}{|\nabla \psi|^2}.
\end{equation*}
The point here is that there are no zeroth-order terms on the left-hand side of \eqref{ellip}. By proving the right-hand side is non-negative, we will be able to apply the maximum principle to $f$.

With that in mind, observe that from Bernoulli's law we have
\begin{equation*}
    \dfrac{\eta}{F^2} < \dfrac{(\mathring{u}(0)-c)^2}{2}=\dfrac{u^{*}(0)^2}{2gd}.
\end{equation*}
Via Yih's equation, we know that
\begin{equation*}
    \Delta \psi = -\beta +\dfrac{1}{F^2}y \rho_p \geq - \norm{ \beta_{+} }_{L^{\infty}}-\dfrac{u^{*}(0)^2}{2gd} \norm{ \rho_p }_{L^{\infty}}.
\end{equation*}
Hence, choosing

\begin{equation*}
    M>M_1:=\norm{ \beta_{+} }_{L^{\infty}}+\dfrac{u^{*}(0)^2}{2gd} \norm{ \rho_p }_{L^{\infty}},
\end{equation*}
yields $\Delta \psi +M \geq 0.$ In conjunction with \eqref{ellip}, this leads to the inequality
\begin{equation*}
  \Delta f -b_1 f_x -b_2 f_y \leq - \bigg[M^2-\dfrac{1}{F^2}\rho_p \psi_y \bigg].  
\end{equation*} In view of the last inequality, we define
\begin{equation*}
    M_2:=\dfrac{1}{F_{0}}\norm{ \psi_y }^{1/2}_{L^{\infty}}\norm{ \rho_p }^{1/2}_{L^{\infty}}.
\end{equation*}
Provided $M \geq M_1, M_2$, we then have
\begin{equation*}
\Delta f -b_1 f_x -b_2 f_y \leq 0.
\end{equation*}Furthermore, as $x\to \pm \infty$, the pressure approaches hydrostatic, which implies that $f$ is non-negative in the upstream and downstream limits. 

First, we shall show that $\inf f\geq 0$ in $\overline{\Omega_+}$. On the upper surface this holds by definition. Furthermore, we have already established that $|P| < C$ on $\mathscr{I}$. Thus, there exists some $M_3$ depending on the same constants, so that $f = P+M_3|\hat{p}| \geq 0$ on $\mathscr{I}$. 

Finally, setting $M:= \max \{M_1,M_2,M_3\}$, we have that $f \geq 0$ in $\overline{\Omega_+}$ via the maximum principle. 

Similarly, we can show that $f \geq 0$ in $\overline{\Omega_-}$. On the bed,
\begin{equation*}
    f_y=P_y+M \psi_y=-\dfrac{1}{F^2}\rho+M \psi_y <0.
\end{equation*}
By the Hopf lemma, this implies that $f$ cannot attain a minimum there. The claim follows easily via the maximum principle. 

Thus, $f \geq 0$ throughout the domain. Recalling its definition, this gives a lower bound on the pressure. Using Bernoulli's law, we can then control the velocity:
\begin{equation*}
      -\dfrac{1}{2}|\nabla \psi|^2-\dfrac{1}{F^2}\rho y+E=P\geq -M\psi \geq -M.
\end{equation*}
After rearrangement and using the fact that $y >-1$, $F > F_{0}$, this becomes
\begin{equation}\label{rearrangedbernoulli}
    |\nabla \psi|^2 \leq 2M-\dfrac{2}{F^2}\rho y +2E \leq 2M +\dfrac{2}{F_{0}^2}+2E. 
\end{equation}
Suppose that $M_2 \geq M_1,M_3$, that is  $M=M_2$. Taking the supremum of the left hand side of \eqref{rearrangedbernoulli} and dropping the $\psi_x$ term followed by applying Young's inequality yields
\begin{equation*}
     \norm{ \psi_y }_{L^{\infty}}^2 \leq  \dfrac{3}{F_{0}^{4/3}} \norm{ \rho_p }_{L^{\infty}}^{2/3}+\dfrac{4}{F_{0}}+4\norm{ E }_{L^{\infty}}.
\end{equation*} Using the definition of $M_2$ and the $L^{\infty}$ bound of $\psi_y$ above, we can infer
\begin{equation*}
    M_2 \leq \dfrac{1}{F_{0}} \norm{ \rho_p }_{L^{\infty}}^{1/2}\left(\dfrac{3}{F_{0}^{4/3}} \norm{ \rho_p }_{L^{\infty}}^{2/3}+\dfrac{4}{F_{0}}+4\norm{ E }_{L^{\infty}}\right)^{1/4}.
\end{equation*}
Plugging $M_2$ back into \eqref{rearrangedbernoulli} gives
\[
    \norm{ \nabla \psi }_{L^{\infty}}^2 \leq \dfrac{2}{F_{0}} \norm{ \rho_p }_{L^{\infty}}^{1/2}\left(\dfrac{3}{F_{0}^{4/3}} \norm{ \rho_p }_{L^{\infty}}^{2/3}+\dfrac{4}{F_{0}}+4\norm{ E }_{L^{\infty}}\right)^{1/4}+\dfrac{2}{F_{0}}+2\norm{ E }_{L^{\infty}}.
\]

Now, let us look into the case when $M_1 \geq M_2,M_3$, that is $M=M_1$. Plugging $M_1$ into \eqref{rearrangedbernoulli}, this directly gives us
\[
    \norm{ \nabla \psi }_{L^{\infty}}^2 \leq 2M_1 +\dfrac{2}{F_{0}}+2\norm{ E }_{L^{\infty}} \leq C.
\]
Lastly, if $M_3 \geq M_1,M_2$, then setting $M=M_3$ leads to 
\[
    \norm{ \nabla \psi }_{L^{\infty}}^2 \leq 2M_3 +\dfrac{2}{F_{0}}+2\norm{ E }_{L^{\infty}}\leq C.
\]
Rewriting the above inequality in terms of semi-Lagrangian variables, we arrive at the desired inequality stated in Theorem~\ref{thm:relative velocity bound}.
\end{proof}
\begin{remark}\label{wandwp}
As a result of the previous theorem, we can infer that $\norm{ h_q }_{L^{\infty}}$ is controlled by $\norm{ h_p }_{L^{\infty}}$.  Moreover, because
\[w(q,p) = \int_{-1}^p w_p(q,p^\prime)\, dp^\prime ,\]
we have
\begin{equation}\label{wbound}
    \norm{ w }_{L^{\infty}(R)} \lesssim \norm{ w_p }_{L^{\infty}(R)}.
\end{equation}
\end{remark}

\begin{corollary}[Bounds on $w$ and  $\nabla w$]\label{boundonw}
There exist constants $C=C(K)$ and $\delta=\delta(K)$ such that every supercritical solitary wave with $\| h_p \|_{C^0} < K$ satisfies
\begin{equation*}
     \inf_{R}(w_p+H_p) \geq \delta,
\end{equation*} and 
\begin{equation*}
     \norm{ w }_{C^{1}(R)} \leq C.
\end{equation*}
\begin{proof}
As before, we will use $C$ to denote a generic positive depending on $K$. Taking $F_{0}=F_{\textup{cr}}$ and applying Theorem~\ref{thm:relative velocity bound} gives the bound 
\begin{equation}\label{absolute value of grad Psi}
    \dfrac{1}{h_q^2}+\dfrac{h_q^2}{h_p^2} < C.
\end{equation}
Dropping the second term on left hand side of \eqref{absolute value of grad Psi} gives
\begin{equation}\label{infimum of h_p}
    \inf_{R} h_p \geq \dfrac{1}{C}:=\delta. 
\end{equation}
Similarly, dropping the first term on the left hand side of \eqref{absolute value of grad Psi} gives
\[
\sup_R{|h_q|}\leq \sqrt{C} \sup_{R}{h_p} \leq C \left(1+\|w_p\|_{C^0(R)} \right).
\]
Combining this with \eqref{wbound}, we obtain the desired $C^{1}$ bound for $w$.
\end{proof}  
\end{corollary}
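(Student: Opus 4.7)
The plan is to deduce Corollary~\ref{boundonw} directly from Theorem~\ref{thm:relative velocity bound} together with Remark~\ref{wandwp}; no genuinely new ideas should be needed, as the hard analytical work has already been carried out in the preceding global velocity estimate. The first step is to notice that the hypothesis ``supercritical'' means $F > F_{\mathrm{cr}}$, so I may take $F_0 = F_{\mathrm{cr}}$ in Theorem~\ref{thm:relative velocity bound}. Together with the hypothesis $\|h_p\|_{L^\infty} < K$, this produces a constant $C = C(K) > 0$ such that
\[
\left\| \tfrac{1}{h_p}\right\|_{L^\infty(R)} + \left\| \tfrac{h_q}{h_p}\right\|_{L^\infty(R)} \leq C.
\]

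For the first estimate, the bound on $1/h_p$ immediately gives $h_p \geq 1/C$ pointwise on $R$. Since $h_p = H_p + w_p$ by the very definition $w = h - H$, setting $\delta := 1/C$ yields the claim $\inf_R (w_p + H_p) \geq \delta$. This is the only role played by the first of the two norms in the velocity bound.

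For the $C^1$ estimate on $w$, I would treat the three pieces $\|w\|_{L^\infty}$, $\|w_p\|_{L^\infty}$, and $\|w_q\|_{L^\infty}$ separately. The derivative $w_p = h_p - H_p$ is bounded by the hypothesis $\|h_p\|_{L^\infty} < K$ and the a priori boundedness of the fixed function $H_p$. For $w_q = h_q$, combining the bound on $|h_q|/h_p$ with $h_p \leq K$ gives $|w_q| \leq C K$. Finally, the $L^\infty$ bound on $w$ itself is exactly the content of the inequality \eqref{wbound} in Remark~\ref{wandwp}, which controls $\|w\|_{L^\infty}$ in terms of $\|w_p\|_{L^\infty}$ via the fundamental theorem of calculus and the boundary condition $w|_{B} = 0$.

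There is no serious obstacle here: the delicate work (velocity bound on the interface via the Caffarelli--Jerison--Kenig almost monotonicity formula and the maximum principle applied to $P + M\psi$) was absorbed into Theorem~\ref{thm:relative velocity bound}, and the corollary amounts to repackaging its output in the $(w, F)$ formulation used for the global bifurcation analysis.
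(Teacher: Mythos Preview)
Your proposal is correct and follows essentially the same approach as the paper: apply Theorem~\ref{thm:relative velocity bound} with $F_0 = F_{\mathrm{cr}}$, use the bound on $1/h_p$ to get the lower bound $\delta$, use the bound on $h_q/h_p$ together with $\|h_p\|_{L^\infty} < K$ to control $w_q = h_q$, and invoke \eqref{wbound} for $\|w\|_{L^\infty}$. The organization is virtually identical to the paper's own proof.
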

\subsection{Uniform regularity} \label{uniform regularity section}
The purpose of this subsection is to establish that the full $X$ norm of $w$ can be controlled in terms of $\| w_p \|_{L^\infty}$. This will be used later to prove that, following the global bifurcation curve $\mathscr{C}$, blow-up in norm corresponds to the onset of horizontal stagnation.  Specifically, the main result is as follows.

\begin{theorem}[Uniform Regularity]\label{thm:uniformregularity} For all $\delta > 0$, there exists $C:=C(\delta)>0$ such that, if $(w,F) \in X \times \mathbb{R}$ is a solution of the height equation \eqref{quasiw} satisfying 
 \begin{equation}\label{restriction on the solutions}
     \inf_{R}(H_p+w_p)\geq \delta, \qquad \norm{w_p}_{C^0}+\norm{H_p}_{C^0}<\frac{1}{\delta},
 \end{equation}
 then it obeys the bound:
 \[\norm{ w }_{C^{3+\alpha}(R)} \leq C.\] 
\end{theorem}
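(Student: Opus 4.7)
The plan is to prove Theorem~\ref{thm:uniformregularity} by a standard elliptic bootstrap, treating each layer $R^\pm$ separately and gluing via the transmission condition on $I$. The key point is that under the hypothesis \eqref{restriction on the solutions}, the height equation \eqref{quasiw} is uniformly elliptic on each $R^\pm$ with coefficients bounded in $L^\infty$ by constants depending only on $\delta$; similarly, the Bernoulli condition on $T$ is uniformly oblique and the transmission condition on $I$ is non-degenerate. Because the problem is translation invariant in $q$, it suffices to establish local estimates on balls of fixed radius and then take the supremum over translates. Since the boundaries $T,\,I,\,B$ are all flat in Dubreil--Jacotin coordinates, the usual boundary regularity theory applies directly without the need to flatten.

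\textbf{Step 1 ($C^{0,\alpha}$ estimate on $\nabla w$).} Exploiting translation invariance in $q$, the function $v := w_q$ satisfies a linear divergence-form equation on each $R^\pm$ whose coefficients are in $L^\infty$ with bounds depending only on $\delta$, together with linearized oblique and transmission conditions on $T$, $I$ and a homogeneous Dirichlet condition on $B$. Applying De Giorgi--Nash--Moser theory (together with its counterparts for oblique and transmission boundary problems, cf.\ Ladyzhenskaya--Ural'tseva and Lieberman) yields local $C^{0,\alpha}$ bounds on $w_q$ on $\overline{R^\pm}$, depending only on $\delta$. The elliptic equation itself then lets us solve for $w_{pp}$ in terms of $w_p,\,w_q,\,w_{qq},\,w_{qp}$ and zeroth-order terms, and in particular gives $w_p \in C^{0,\alpha}$ by the same kind of argument applied to $w_p$, or by integrating the $C^{0,\alpha}$ bound on $w_{qp}$ along vertical slices and using the bound on $\|w_p\|_{L^\infty}$.

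\textbf{Step 2 (Schauder bootstrap).} With $\nabla w$ uniformly H\"older continuous, the coefficients of \eqref{quasiw}, viewed as a linear equation in each layer, lie in $C^{0,\alpha}$. Interior Schauder estimates, together with boundary Schauder estimates for the oblique condition on $T$ and the flat Dirichlet condition on $B$, give a uniform $C^{2,\alpha}$ bound for $w$ on each $R^\pm$ away from the interface. At the interface $I$, the pair $(w^+,w^-)$ satisfies a $C^{0,\alpha}$ transmission problem to which the two-phase Schauder theory (as in \cite{ChenWalsh2016Continuous} or \cite[Appendix A]{wheeler2013large}, adapted from Ladyzhenskaya--Solonnikov--Ural'tseva) applies and yields a matching $C^{2,\alpha}$ bound up to $I$. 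Differentiating \eqref{quasiw} once in $q$ and repeating the Schauder step then upgrades this to a $C^{3,\alpha}$ bound.

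\textbf{Main obstacle.} The technically most involved point is the transmission condition on $I$: it prevents a naive application of one-phase Schauder theory, and the nonlinearity couples the jumps of the derivatives in a way that must be linearized carefully before the two-phase Schauder machinery can be invoked. However, this is precisely the setting treated in the classical two-phase literature, so once the uniform ellipticity and obliqueness in Step 1 are established the bootstrap in Step 2 is routine. The unboundedness of $R$ plays no role beyond requiring that the local estimates be uniform in the center of the ball, which follows automatically from translation invariance in $q$.
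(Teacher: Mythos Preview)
Your overall framework is sound and in fact coincides with the paper's: exploit translation invariance by differentiating repeatedly in $q$, gain regularity, and finish with Schauder estimates in each layer. The difficulty, and the gap in your argument, is entirely at the interface $I$ in Step~1.

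De~Giorgi--Nash--Moser applied to $v=w_q$ across the interface (where the equation holds in the weak form \eqref{weak formulation of the height eq}) gives only $w_q\in C^{0,\alpha}(\overline{R})$, not piecewise H\"older continuity of $\nabla w_q$ or of $w_p$ up to $I$. Your two proposed routes to $w_p\in C^{0,\alpha}(\overline{R^\pm})$ both fail: you cannot differentiate in $p$ across $I$ (the coefficients $H_p,\rho$ jump there and $w_p$ itself is discontinuous), and you do not have any bound on $w_{qp}$ at this stage. Nor can you iterate the $q$-differentiation: the equation for $w_{qq}$ has a right-hand side in divergence form involving $|\nabla w_q|^2$, which from Caccioppoli is only in $L^1_{\mathrm{loc}}$, insufficient for De~Giorgi--Nash--Moser. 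Consequently the coefficients of the quasilinear problem are not known to be piecewise $C^{0,\alpha}$ up to $I$, and the two-phase Schauder theory you invoke in Step~2 (Li--Nirenberg, Ladyzhenskaya--Ural'tseva, etc.) cannot be applied --- those results all require piecewise H\"older coefficients on each side, which is exactly what you are trying to prove.

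The paper closes this gap by replacing De~Giorgi--Nash--Moser with Meyers' higher-integrability estimate (Theorem~\ref{thm:meyers}): the weak formulation \eqref{weak formulation of the height eq} holds distributionally on $R\cup I$, and Meyers yields $\nabla w\in L^r_{\mathrm{loc}}$ for some $r>2$ depending only on the ellipticity constants (hence on $\delta$). This extra integrability is what makes the iteration in $q$ work: after $k$ differentiations one has $\nabla(\partial_q^k w)\in L^r_{\mathrm{loc}}$, and for $k$ large enough Morrey's inequality gives $\partial_q^k w\in C^{0,\alpha}$, hence the trace $w|_I$ is $C^{k,\alpha}$ in $q$. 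One then solves a Dirichlet problem in each layer with this smooth interfacial data and applies one-phase Schauder estimates, avoiding transmission Schauder theory altogether. If you substitute Meyers for De~Giorgi--Nash--Moser in your Step~1 and feed the resulting trace regularity into a Dirichlet (rather than transmission) Schauder argument in Step~2, your proof goes through and becomes essentially the paper's.
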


 For steady waves in constant density or continuously stratified density water, this type of result is very well-known (see e.g., \cite{Constantin2004Exactsteady,Walsh2009Stratified, wheeler2013large,Walsh2014stratified}). It is a direct consequence of the ellipticity of the height equation and  obliqueness of the Bernoulli condition on the upper boundary  --- which are consequences of \eqref{restriction on the solutions} --- along with the translation invariance of the system.   However, the two-fluid problem considered in the present paper requires a significantly different approach.  To derive estimates near the internal interface, one can adopt the idea of \cite{amick1986global} and work with a weak formulation of the height equation \eqref{quasiw}. The proof of Theorem~\ref{thm:uniformregularity} is then straightforward to obtain following the general argument in \cite[Section 5.6]{chen2020globalmonotone}. For that reason, we will only sketch the details.
 
 Specifically, the height equation 
 \eqref{quasiw} is recast as the distributional equation
\begin{equation}\label{weak formulation of the height eq}
  \begin{cases}
    \nabla \cdot \left(G\left(\nabla w,H_p\right)-\dfrac{1}{F^2}\begin{pmatrix}
    0\\\rho w
    \end{pmatrix}\right)+\dfrac{1}{F^2}\rho w_p &=0 \qquad  \text{in}\; R\cup I,\\
 G_2(\nabla w,H_p)-\dfrac{1}{F^2}\rho w&=0 \qquad  \text{on}\; T,\\
w&=0 \qquad  \text{on}\; B,\\
\end{cases}  
\end{equation}\\
where $G=(G_1,G_2)=(\partial_{\xi_1} f, \partial_{\xi_2} f)$,
for $f:\mathbb{R}^3 \rightarrow \mathbb{R}$ defined by,
\begin{equation}\label{definiton of f}
    f(\xi_1,\xi_2,a):=\dfrac{a^2\xi_1^2+\xi_2^2}{2(a+\xi_2)a^2}.
\end{equation}
Notice that the transmission condition on the internal interface is enforced by the fact that the first equation above holds on $R \cup I$.  

The main tool for proving regularity near the internal interface is the following theorem of Meyers, stated here in a simplified form appropriate to our setting.

\begin{theorem}[Meyers, \cite{Meyers1963Estimate}] \label{thm:meyers}
 Let $\mathscr{D} \subset \mathbb{R}^2$ be a smooth domain. Consider 
 \begin{equation}\label{meyerseq}
     \nabla \cdot (A \nabla u)=\nabla \cdot G +g \quad \text{in} \quad \mathscr{D},
 \end{equation}
 and 
 \[u=0 \quad \text{on} \quad \partial\mathscr{D},\] where $A=A(x)$ is a matrix  with measurable coefficients and enjoys $c_1I \leq A \leq c_1^{-1} I$ for some $c_1>0$, where I is the $2 \times 2$ identity matrix. Then there exists $r=r(c_1) \geq 2$ such that for all $G \in L^r(\mathscr{D})$ and $g \in L^2(\mathscr{D})$, \eqref{meyerseq} admits a unique solution $u \in W^{1,r}_{0}(\mathscr{D}),$ and satisfies the following inequality
 \[\norm{ \nabla u }_{L^{r}(\mathscr{D})} \leq C\left(\norm{ G }_{L^{r}(\mathscr{D})} +\norm{ g }_{L^2(\mathscr{D})}\right).\]
\end{theorem}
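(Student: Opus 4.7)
The plan is to establish this classical higher-integrability result of Meyers via a perturbation argument anchored on the Calder\'on--Zygmund theory for the Dirichlet Laplacian, exploiting the fact that the Calder\'on--Zygmund constant approaches $1$ as $p \to 2^+$.

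First I would set up the basic framework. Existence and uniqueness in $W^{1,2}_0(\mathscr{D})$ is immediate from the Lax--Milgram theorem applied to the bilinear form $(u,v)\mapsto \int_\mathscr{D} (A\nabla u)\cdot \nabla v$, which is bounded and coercive on $W^{1,2}_0$ by the uniform ellipticity hypothesis. The resulting $u$ satisfies the standard energy estimate $\|\nabla u\|_{L^2} \lesssim c_1^{-1}(\|G\|_{L^2}+\|g\|_{L^2(\mathscr{D})})$, using Poincar\'e on the $g$ term. So the $p=2$ case is free, and the real content is the gain of integrability for $p>2$ close to $2$.

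The central tool is the solution operator $S : L^p(\mathscr{D};\mathbb{R}^2)\to L^p(\mathscr{D};\mathbb{R}^2)$ defined by $SF := \nabla v$, where $v\in W^{1,p}_0(\mathscr{D})$ solves $-\Delta v = \nabla\cdot F$. The boundedness of $S$ with operator norm $C_p$ on $L^p$ for all $1<p<\infty$ is a standard Calder\'on--Zygmund estimate for the Dirichlet Green's function on a smooth bounded domain (see, e.g., Gilbarg--Trudinger). Two properties are crucial:
\begin{enumerate}[label=(\roman*)]
    \item $C_2 = 1$, which follows from the $L^2$ identity $\int |\nabla v|^2 = \int F\cdot \nabla v \leq \|F\|_{L^2}\|\nabla v\|_{L^2}$;
    \item $p\mapsto C_p$ is continuous, and in particular $C_p \searrow 1$ as $p\searrow 2$. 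This follows from Riesz--Thorin interpolation between the trivial $L^2$ bound and any fixed $L^{p_1}$ bound with $p_1>2$.
\end{enumerate}
With this in hand, I would choose the ``midpoint'' constant $\lambda := \tfrac12(c_1+c_1^{-1})$, which is the optimal scalar approximation to $A$ in the sense that $\|A-\lambda I\|_{L^\infty}\leq \tfrac12(c_1^{-1}-c_1)$. Rewriting the equation as
\begin{equation*}
    -\lambda \Delta u = -\nabla\cdot\big(G - (A-\lambda I)\nabla u\big) - g \qquad \textrm{in } \mathscr{D},
\end{equation*}
and letting $v_0\in W^{1,p}_0(\mathscr{D})$ solve $-\lambda \Delta v_0 = g$, one obtains the identity
\begin{equation*}
    \nabla u = \lambda^{-1} S\big(G - (A-\lambda I)\nabla u\big) + \nabla v_0.
\end{equation*}
Taking $L^p$ norms yields
\begin{equation*}
    \|\nabla u\|_{L^p} \leq \lambda^{-1}C_p\|G\|_{L^p} + \lambda^{-1}C_p\tfrac12(c_1^{-1}-c_1)\|\nabla u\|_{L^p} + \|\nabla v_0\|_{L^p}.
\end{equation*}
The absorption coefficient equals $C_p \cdot \tfrac{1-c_1^2}{1+c_1^2}$, which at $p=2$ is strictly less than $1$; by continuity of $C_p$, there exists $r=r(c_1)>2$ such that the coefficient remains $<1$ for all $p\in[2,r]$. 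At this $r$ the term can be absorbed on the left. The $g$-contribution is controlled by the $W^{2,2}$ estimate for $v_0$: $\|v_0\|_{W^{2,2}}\lesssim \|g\|_{L^2}$, followed by the Sobolev embedding $W^{2,2}(\mathscr{D})\hookrightarrow W^{1,r}(\mathscr{D})$ valid in two dimensions for every $r<\infty$, giving $\|\nabla v_0\|_{L^r}\lesssim \|g\|_{L^2}$. Combining these yields the claimed a priori bound.

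Finally, for existence and uniqueness in $W^{1,r}_0$, I would use a contraction mapping argument on the same identity: the map $w \mapsto \lambda^{-1}S(G-(A-\lambda I)\nabla w) + \nabla v_0$ is a contraction on $L^r(\mathscr{D};\mathbb{R}^2)$ with contraction constant $C_r\cdot\tfrac{1-c_1^2}{1+c_1^2}<1$, producing a unique fixed point $\nabla u$; recovering $u\in W^{1,r}_0$ is then immediate since $r\geq 2$ and uniqueness in $W^{1,2}_0$ applies. The main obstacle is purely the verification of items (i)--(ii) above, since everything else is bookkeeping; once the norm continuity $C_p\to 1$ as $p\to 2$ is in hand, the perturbation absorption goes through verbatim.
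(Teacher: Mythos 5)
The paper does not prove this statement; it cites Meyers' 1963 article and uses the result as a black box (it is one of the ``quoted results''). Your task therefore amounts to reconstructing the classical argument, and your proposal is, in essence, exactly the Meyers perturbation scheme: freeze a scalar multiple of the Laplacian, exploit that the Calder\'on--Zygmund constant $C_p$ of the constant-coefficient Dirichlet solution operator tends to $1$ as $p\to 2^+$, and absorb the remainder. The overall structure---$L^2$ well-posedness by Lax--Milgram, $C_2=1$ from the energy identity, $C_p\to 1$ by interpolation against a fixed $L^{p_1}$ bound, choice of the midpoint $\lambda$, and the contraction/fixed-point argument to upgrade the $W^{1,2}_0$ solution to $W^{1,r}_0$---is correct and is the standard route to the theorem.

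Two minor points worth fixing. First, there are sign slips in the fixed-point identity. Starting from $\nabla\cdot(A\nabla u)=\nabla\cdot G+g$ one finds $\lambda\Delta u=\nabla\cdot\bigl(G-(A-\lambda I)\nabla u\bigr)+g$, hence $-\Delta(u+v_0)=\nabla\cdot\bigl(-\lambda^{-1}(G-(A-\lambda I)\nabla u)\bigr)$ with your $v_0$ (satisfying $-\lambda\Delta v_0=g$), so the correct identity is
\[
\nabla u=-\lambda^{-1}S\bigl(G-(A-\lambda I)\nabla u\bigr)-\nabla v_0,
\]
not the one you wrote. Since only norms enter afterwards the estimate is unaffected, but the identity as displayed does not hold (if you chase it through, your version would produce a solution of $\nabla\cdot((A-2\lambda I)\nabla u)=\nabla\cdot G+g$). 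Second, the bound $\|A-\lambda I\|_{L^\infty}\le\tfrac12(c_1^{-1}-c_1)$ for the midpoint $\lambda=\tfrac12(c_1+c_1^{-1})$ uses that $A$ is symmetric; the Loewner-order hypothesis $c_1 I\le A\le c_1^{-1}I$ by itself constrains only the symmetric part of $A$, and would not even guarantee $A\in L^\infty$ for a non-symmetric matrix. In the paper's application $A$ is the Hessian-type matrix $D_\xi G$ of the density $f$ in \eqref{definiton of f} and is symmetric, so the implicit assumption is consistent, but you should make it explicit in your argument. With these adjustments the proof is complete.
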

 To use this result, we differentiate the equation \eqref{weak formulation of the height eq} in the $q$-variable $k$ times, say, regroup terms as in \eqref{meyerseq} treating $u := \partial_q^k w$ as the unknown.  Iterating this process furnishes $W^{1,r}$ estimates for successively higher-order $q$ derivative of $w$. Eventually, by Morrey's inequality, this will lead to a sufficient H\"older regularity of the trace of $w$ on the boundary. Applying a simple Schauder estimates for the Dirichlet problem yields the desired regularity. 
 
\subsection{Proof of the main result}
Now we are at last prepared to give the proof of Theorem~\ref{thm:main}. Recall from Section~\ref{Formulation section} that the height equation is expressed in terms of the nonlinear operator $\mathcal{F} : \mathscr{U} \subset X \times \mathbb{R} \to Y$, where the open set $\mathscr{U}$ was defined in \eqref{solutions set}.  Theorem \ref{smallamplitudethoery} states that there exists a continuous local curve of solutions to the height equation \eqref{quasiw} denoted by  \begin{equation*}
        \mathscr{C}_{\textup{loc}}:=\{(w^{\epsilon},F^{\epsilon}):0 < \epsilon < \epsilon_{*} \},
\end{equation*} which contains nontrivial waves of elevation that are symmetric, monotone and slightly supercritical. First, we show in the next theorem that $\mathscr{C}_{\textup{loc}}$ can be extended to a global curve of solutions.

\begin{theorem}[Global continuation]\label{thm:global} 
The continuous local curve of solutions $\mathscr{C}_\loc$ of the nonlinear operator $\mathcal{F}(w,F)=0$ is contained in a global $C^0$ curve $\mathscr{C}$ parameterized as 
\begin{equation*}
        \mathscr{C}:=\{(w(s),F(s)):0 < s < \infty \}\subset \mathscr{U},
\end{equation*} and exhibiting the following properties
\begin{enumerate}[label=\normalfont{(\alph*)}]
    \item \label{global alternatives} One of the following alternatives must hold:
    \begin{enumerate}[label=\normalfont{(\roman*)}]
        \item \label{blowup} \textup{(Blowup)} as $s\rightarrow \infty,$
        \begin{equation*}
        \label{N definition}
            N(s):= \norm{ w(s) }_X +\dfrac{1}{\inf_R \left(w_p(s)+H_p\right)}+F(s)+\dfrac{1}{F(s)-F_{\textup{cr}}} \rightarrow{\infty}.
        \end{equation*}
        \item \label{loss of compactness} \textup{(Loss of compactness)} There exists a sequence of $s_n\rightarrow{\infty}$ as $n\rightarrow \infty$ such that  $\sup_n N(s_n)<\infty$ but $\{w(s_n)\}$ does not have subsequences converging in $X$.
    \end{enumerate}
    \item Fix parameter $s^\star \in (0,\infty)$, around the neighborhood $(w(s^\star),F(s^\star))\in \mathscr{C}$, we can reparametrize $\mathscr{C}$ so that $s\mapsto(w(s),F(s))$ is real-analytic. 
    \item\label{solutions not in local curve} For all $s \gg 1$, $(w(s),F(s)) \notin \mathscr{C}_\loc$.
\end{enumerate}
\end{theorem}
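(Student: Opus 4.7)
The plan is to apply an abstract analytic global bifurcation theorem of the type reproduced in Appendix~\ref{quoted results appendix}, which extends a local real-analytic curve of solutions to a real-analytic equation $\mathcal{F}(w,F) = 0$ on an open set $\mathscr{U}$ whenever $\mathcal{F}_w$ is Fredholm of index zero throughout $\mathscr{U}$, is invertible at the starting point, and $\mathcal{F}$ is proper on closed bounded subsets of $\mathscr{U}$. Real-analyticity of $\mathcal{F}$ was noted after \eqref{F}, the Fredholm property is Lemma~\ref{Fedholm of index zero of the linearized opt}, and invertibility along $\mathscr{C}_{\textup{loc}}$ is Theorem~\ref{smallamplitudethoery}\ref{Invertability}. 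The output is a continuous curve $\mathscr{C}$ containing $\mathscr{C}_{\textup{loc}}$ that admits a local real-analytic reparametrization, giving part~(b), and satisfies one of three standard alternatives: blowup, loss of compactness, or closing into a loop.

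To obtain the two-alternative formulation of part~(a), I would first rule out the loop scenario using local uniqueness: along $\mathscr{C}_{\textup{loc}}$ the linearization is invertible, and by analyticity the same holds on a dense open subset of $\mathscr{C}$, so the implicit function theorem forbids the curve from passing through itself. The quantity $N(s)$ is engineered so that boundedness of $N(s_n)$ is equivalent to $(w(s_n), F(s_n))$ remaining in a closed bounded subset of $\mathscr{U}$: the four summands control, respectively, the $X$-norm, the distance to the stagnation boundary $\{\inf(w_p + H_p) = 0\}$, the upper bound on $F$, and the distance to the critical value $\Fcr$, the last of which preserves the supercritical condition in \eqref{solutions set}. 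If alternative (i) fails, the resulting bounded sequence then forces alternative (ii). Part~(c) follows because the implicit function theorem at each $(w^\epsilon, F^\epsilon) \in \mathscr{C}_{\textup{loc}}$ makes $\mathscr{C}_{\textup{loc}}$ a relatively open sub-arc of $\mathscr{C}$; since $\mathscr{C}_{\textup{loc}}$ is bounded in $X \times \mathbb{R}$ and parametrized by the finite interval $\epsilon \in (0, \epsilon_*)$, while $\mathscr{C}$ is parametrized by $s \in (0, \infty)$, no tail of $\mathscr{C}$ can remain inside $\mathscr{C}_{\textup{loc}}$.

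The main obstacle I anticipate is verifying the properness hypothesis used in the abstract theorem. For a sequence $(w_n, F_n)$ with $\mathcal{F}(w_n, F_n) \to 0$ and the associated $N$-values bounded, Corollary~\ref{boundonw} combined with the uniform regularity result Theorem~\ref{thm:uniformregularity} yield subsequential compactness in $C^{3+\alpha}_{\loc}(\overline{R})$; what fails in general is the uniform decay as $|q| \to \infty$ needed for convergence in the space $X$. The ``broadening'' phenomenon described in Section~\ref{planofthepaper} is precisely such a scenario: a bounded family of solitary waves can flatten into a long plateau and lose localization. Alternative (ii) is the price paid for permitting this possibility; much of the subsequent analysis is devoted to excluding it using the qualitative theory of Section~\ref{Qualitative}, ultimately forcing (i) and identifying the blowup with the stagnation limit claimed in Theorem~\ref{thm:main}\ref{stagnation}.
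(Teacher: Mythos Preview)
Your overall strategy is right and matches the paper's: verify real analyticity of $\mathcal{F}$, the Fredholm index $0$ property from Lemma~\ref{Fedholm of index zero of the linearized opt}, and invertibility of $\mathcal{F}_w$ along $\mathscr{C}_\loc$ from Theorem~\ref{smallamplitudethoery}\ref{Invertability}, then invoke the abstract result in Appendix~\ref{quoted results appendix}. The paper's proof does exactly this in three sentences.

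Where your write-up diverges is in the handling of properness and loops. You list properness on closed bounded subsets of $\mathscr{U}$ as a hypothesis of the abstract theorem and call its verification ``the main obstacle,'' and you separately argue that a loop alternative must be excluded. Neither step is needed here. Theorem~\ref{cww global} is precisely the Chen--Walsh--Wheeler variant designed for problems on unbounded domains where properness fails: it has \emph{no} properness hypothesis, it has \emph{no} loop alternative, and it delivers both the blowup/loss-of-compactness dichotomy and part~(c) directly as conclusions. So your third paragraph, while correctly diagnosing why compactness fails (broadening), is addressing a hypothesis that isn't there; and your argument for ruling out loops via the implicit function theorem is redundant, since that work is already absorbed into the abstract machinery. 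Once you recognize this, the proof collapses to the hypothesis check the paper gives, and all three parts~(a)--(c) of the statement are read off verbatim from the abstract theorem with $N(s)$ simply rewriting $\|w\|_X + 1/\mathrm{dist}((w,F),\partial\mathscr{U}) + F$ in terms of the explicit description \eqref{solutions set} of $\mathscr{U}$.
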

\begin{proof}
Clearly, $\mathcal{F}$ is a real analytic as a mapping $\mathscr{U} \subset X \times \mathbb{R} \to Y$.  Moreover, from Lemma~\ref{Fedholm of index zero of the linearized opt} the linearized operator $\mathcal{F}_w(w,F)$ is Fredholm of index zero for all $(w,F)\in \mathscr{U}$.  In Theorem~\ref{smallamplitudethoery}, we proved that $\mathcal{F}_w(w^\epsilon,F^\epsilon)$ has a trivial kernel for all  $(w^\epsilon, F^\epsilon) \in \mathscr{C}_\loc$. Together these facts import invertibility of $\mathcal{F}_w$ along the local curve $\mathscr{C}_\loc$. The statements of the theorem now follows directly from an application of the abstract global bifurcation result Theorem~\ref{cww global}
\end{proof}
 
The above theorem establishes the existence of a global curve, but we have yet to show that the solutions along it limit to stagnation as claimed in Theorem~\ref{thm:main}\ref{stagnation}.  For that, we prove a series of lemmas bounding the various quantities occurring in the definition of $N(s)$.
\begin{lemma}\label{Nodal on Global Curve}
The nodal properties \eqref{Nodal} hold along the global curve $\mathscr{C}$.
\end{lemma}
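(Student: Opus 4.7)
Set
\[
S := \{ s \in (0,\infty) : (w(s), F(s)) \text{ satisfies all of the nodal properties } \eqref{Nodal} \},
\]
and argue that $S$ is non-empty, open, and closed in $(0,\infty)$, hence $S = (0,\infty)$.

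\textbf{Non-emptiness.} For sufficiently small $s$, the solution $(w(s), F(s))$ lies on the small-amplitude curve $\mathscr{C}_{\textup{loc}}$, and Theorem~\ref{smallamplitudethoery}\ref{Waves of Elevation} guarantees $w(s) > 0$ on $R \cup I \cup T$. Combined with the symmetry and asymptotic decay of $w(s)$, Theorem~\ref{thm:symmetry} implies strict monotonicity in the form $w_q(s) < 0$ throughout $R^+_> \cup R^-_> \cup I_> \cup T_>$, which is precisely \eqref{nodal:wq}. Invoking Lemma~\ref{nodalproperty} then gives the full set of nodal inequalities \eqref{Nodal}, so $(0, s_0) \subset S$ for some $s_0 > 0$.

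\textbf{Openness.} If $s_0 \in S$, then by the continuity of the parametrization $s \mapsto (w(s), F(s))$ into $X \times \mathbb{R}$ and the fact that the $X$-norm dominates the norm appearing in Lemma~\ref{Open property}, for $s$ close to $s_0$ the perturbation $\|w(s) - w(s_0)\|_{C^3(R)} + |F(s) - F(s_0)|$ can be made arbitrarily small. Lemma~\ref{Open property} therefore transfers the nodal properties from $(w(s_0), F(s_0))$ to all nearby $(w(s), F(s))$, showing $S$ is open.

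\textbf{Closedness.} Suppose $s_n \to s_* \in (0,\infty)$ with $s_n \in S$. By continuity of the curve, $(w(s_n), F(s_n)) \to (w(s_*), F(s_*))$ in $X \times \mathbb{R}$, hence in the topology required by Lemma~\ref{Closed property}. That lemma leaves us with two possibilities: either $(w(s_*), F(s_*))$ satisfies the nodal properties, or $w(s_*) \equiv 0$. The main obstacle is to exclude the second alternative. Since $(w(s_*), F(s_*)) \in \mathscr{U}$ forces $F(s_*) > F_{\textup{cr}}$, Lemma~\ref{Invertibility of $F_w(0,F)$} ensures that $\mathcal{F}_w(0, F(s_*)) : X \to Y$ is invertible. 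The implicit function theorem then produces a neighborhood of $(0, F(s_*))$ in $X \times \mathbb{R}$ in which the only zeros of $\mathcal{F}$ coincide with the trivial branch $\{(0, F)\}$. For $n$ large, $(w(s_n), F(s_n))$ lies in this neighborhood yet is a nontrivial solution, which is a contradiction. Hence $w(s_*) \not\equiv 0$, and $s_* \in S$.

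Connectedness of $(0,\infty)$ now gives $S = (0,\infty)$, completing the proof.
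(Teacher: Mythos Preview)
Your argument is correct and follows the same open/closed/connectedness strategy as the paper, using the identical ingredients (Theorem~\ref{smallamplitudethoery}\ref{Waves of Elevation} and Theorem~\ref{thm:symmetry} for non-emptiness, Lemma~\ref{Open property} for openness, Lemma~\ref{Closed property} for closedness). The one place you go beyond the paper is the closedness step: the paper simply invokes Lemma~\ref{Closed property} without addressing the ``unless $w\equiv 0$'' caveat, whereas you explicitly rule out $w(s_*)\equiv 0$ via invertibility of $\mathcal{F}_w(0,F(s_*))$ and the implicit function theorem --- this is a genuine improvement in rigor, since $\mathscr{U}$ contains the trivial branch and nothing stated earlier directly forbids the global curve from touching it.
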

\begin{proof}
We start by showing that the nodal properties hold along the local curve $\mathscr{C}_{\textup{loc}}$. Let $(w,F)\in \mathscr{C}_{\textup{loc}}$. By Theorem~\ref{smallamplitudethoery}\ref{Waves of Elevation}, $(w,F)$ is a wave of elevation which means that $w>0$ in $\overline{R}\setminus B$. Then, Theorem~\ref{thm:symmetry} tells us that $w_q<0$ in $\overline{R_{>}}\setminus \left(B_> \cup \overline{L_{0}}\right)$. Hence, by Lemma~\ref{nodalproperty}, we know that the nodal properties hold along the local solutions curve $\mathscr{C}_{\textup{loc}}$. It remains to show that these properties get passed on to solutions along the global curve.

Let $\mathcal{S}\subset\mathscr{C}$ contains the solutions $(w,F)\in \mathscr{C}$ that satisfy the nodal properties \eqref{Nodal}. From Theorem~\ref{thm:global}, the curve $\mathscr{C}$ is continuous, therefore connected as a subset of $X \times \mathbb{R}$. Further, via Lemmas~\ref{Open property} and \ref{Closed property}, we know that $\mathcal{S}$ is a relatively open and closed subset of $\mathscr{C}$. Also, the argument in the previous paragraph guarantees that  $\mathscr{C}_{\textup{loc}} \subset \mathcal{S}$, hence $\mathcal{S}\neq \varnothing$. Thus, we conclude that $\mathcal{S}=\mathscr{C}$ which completes the proof of the lemma.
\end{proof}
\begin{lemma}\label{inf+F} For every $K > 0$, there exists a constant $C = C(K)>0$ such that every
 $\left(w,F\right) \in \mathscr{U} \cap \mathcal{F}^{-1}(0)$ with $\norm{ w}_{C^{1}(R)} \leq K$ obeys the bound 
\[ \dfrac{1}{\inf_R \left(w_p+H_p\right)}+F < C.\]
\end{lemma}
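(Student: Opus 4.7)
The plan is to combine the Froude upper bound Theorem~\ref{thm:upperboundonF} with the global velocity bound Theorem~\ref{thm:relative velocity bound}; together these directly yield both halves of the claim. Writing $h := w + H$, so that $h$ satisfies the original height equation \eqref{quasi} and $h_p = w_p + H_p$, the hypothesis $\|w\|_{C^1(R)} \le K$ combined with the fixed far-field profile $H$ produces an a priori bound
\[
\|h_p\|_{L^\infty(R)} \le K + \|H_p\|_{L^\infty} =: K'
\]
depending only on $K$ and the background data. Since every element of $X$ is even in $q$ and has far more smoothness than the regularity required in \eqref{qual w regularity}, Theorem~\ref{thm:upperboundonF} is immediately applicable to $h$ and gives an upper bound of the form $F^2 \le C(\|\rho\|_{L^\infty},\|H_p\|_{L^\infty})\, K'$, i.e.\ $F \le C_1(K)$.

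To control $1/\inf_R(w_p + H_p)$, I would exploit the lower Froude threshold built into $\mathscr{U}$: membership there forces $F > \Fcr > 0$, so one may take $F_0 := \Fcr$ in Theorem~\ref{thm:relative velocity bound}. Together with the already-established upper bound $\|h_p\|_{L^\infty} \le K'$, that theorem delivers
\[
\left\|\frac{1}{h_p}\right\|_{L^\infty(R)} \le C_2(\Fcr, K'),
\]
which is exactly the assertion that $1/\inf_R(w_p + H_p) \le C_2$. Because $\Fcr$ depends only on the fixed asymptotic data, adding this to the Froude bound from the first paragraph produces the required uniform control by a constant depending only on $K$. The argument is essentially a repackaging of the two earlier theorems; the only items to verify — evenness, regularity, and strict positivity of the lower Froude threshold — are automatic from $(w,F) \in \mathscr{U}$, so no genuine analytic obstacle is encountered.
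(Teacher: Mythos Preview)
Your proof is correct and essentially identical to the paper's, which simply cites Theorem~\ref{thm:upperboundonF} and Corollary~\ref{boundonw}. The only difference is cosmetic: you invoke Theorem~\ref{thm:relative velocity bound} directly (with $F_0=\Fcr$), whereas the paper routes through Corollary~\ref{boundonw}, whose proof is exactly that application.
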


\begin{proof}
The bounds are a direct consequence of  Theorem~\ref{thm:upperboundonF} and Corollary~\ref{boundonw}.
\end{proof}

Next, we will rule out the loss of compactness alternative in Theorem~\ref{thm:global}\ref{global alternatives}.  A key component of the argument is the nonexistence of (nontrivial) monotone front-type solutions of the height equation.  This fact is proved in \cite[Corollary 4.12]{chen2018existence} and recalled below.
\begin{theorem}[Nonexistence of monotone fronts] \label{non existence of monotone fronts}
Suppose that $h \in C^2_{\textup{b}}(\overline{R^+}) \cap  C^2_{\textup{b}}(\overline{R^-})\cap C^0(\overline{R})$ is a front solution to the height equation \eqref{quasi} in the sense that 
\[h(q,p) \to H_{\pm}(p) \quad \textrm{as } q\to \pm \infty.\]
 If $\inf_{R} h_p >0$ and 
 \[ H_+ \geq H_- = H \textrm{ on } [-1,0] \quad \textrm{or} \quad H_+ \leq H_- = H \textrm{ on } [-1,0], \]
 then $H_+ = H_- = H$.
 \end{theorem}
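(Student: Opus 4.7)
My approach would be to exploit the conservation of the flow force $\mathscr{S}$ from \eqref{flow force functional} together with the laminar structure of the asymptotic states. Because $\mathscr{S}(h(q,\cdot))$ is independent of $q$ along any solution of the Euler system, passing to $q \to \pm\infty$ (where $h_q \to 0$ and $h \to H_{\pm}$) and using the hypothesis $H_- = H$ yields the balance $\mathscr{S}(H_+) = \mathscr{S}(H)$. Setting $w := H_+ - H \ge 0$, and noting that $w(-1) = 0$ since both $H_+$ and $H$ satisfy the bed condition, the equality $\mathscr{S}(H_+) - \mathscr{S}(H) = 0$ unwinds via \eqref{flow force functional}, after one integration by parts in the $\int_{0}^{p}\rho H_p\,dp'$ term, into an integral identity of the form
\begin{equation*}
\int_{-1}^{0} \frac{w_p^{2}}{H_p^{2}\,H_{+,p}}\,dp \;=\; \frac{1}{F^2}\left(\rho(0)\, w(0)^{2} - \jump{\rho}\, w(\hat p)^{2} + \int_{-1}^{0} |\rho_p|\, w^{2}\, dp\right),
\end{equation*}
in which every term on the right is non-negative since the stratification is stable (so $\rho_p \le 0$ and $\jump{\rho} < 0$).

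Next I would derive a companion identity by multiplying the $q$-independent (laminar) height equation satisfied by $H_+$ by $w$ and integrating by parts over $(-1,\hat p) \cup (\hat p,0)$. The transmission condition at $p=\hat p$ supplies an interface contribution proportional to $\jump{\rho}\, w(\hat p)^{2}$, the Bernoulli condition at $p=0$ contributes a term proportional to $\rho(0)\,w(0)^{2}$, and the bed condition kills the lower endpoint. Together with the bulk $|\rho_p|\,w^{2}$ piece, the right-hand side of this second identity exactly duplicates that of the flow-force identity, while on the left the quadratic form in $w_p$ becomes $\int_{-1}^{0} w_p^{2}(H_p + H_{+,p})/(2 H_p^{2} H_{+,p}^{2})\,dp$. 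Subtracting the two identities eliminates the right-hand sides and collapses to the remarkable cubic relation
\begin{equation*}
\int_{-1}^{0} \frac{w_p^{3}}{H_p^{2}\,H_{+,p}^{2}}\,dp \;=\; 0.
\end{equation*}

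The hard part will be extracting $w \equiv 0$ from this cubic identity, since $w$ need not be monotone in $p$ and we have no Dirichlet data at the internal interface or the free surface. My plan is to return to the laminar transmission ODE for $w$ and test it against the positive eigenfunction $\Psi$ furnished by Lemma~\ref{zerotheigenfunction}; using the strict positivity $\Psi_p > 0$ established there, together with the sign condition $w \ge 0$ and the boundary value $w(-1) = 0$, the resulting weighted integral identity should force an inequality that is incompatible with the cubic relation above unless $w \equiv 0$, whence $H_+ = H$. The symmetric case $H_+ \le H$ then follows by interchanging the roles of $H$ and $H_+$.
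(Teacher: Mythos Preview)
The paper does not prove this statement; it is quoted from \cite[Corollary~4.12]{chen2018existence}. So there is no ``paper's own proof'' to compare against, and I evaluate your argument on its own merits.

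Your identities are correct. Writing $w = H_+ - H$ and $G := -\tfrac{1}{2H_{+,p}^2} + \tfrac{1}{2H_p^2}$, the flow-force balance $\mathscr{S}(H_+)=\mathscr{S}(H)$ indeed yields
\[
\int_{-1}^0 \frac{w_p^{2}}{H_p^{2} H_{+,p}}\,dp = R,
\]
and testing the laminar height equation for $H_+$ against $w$ gives
\[
\int_{-1}^0 \frac{w_p^{2}(H_p+H_{+,p})}{2H_p^{2}H_{+,p}^{2}}\,dp = R
\]
with the same right-hand side $R\ge 0$. Subtracting produces the cubic relation $\int_{-1}^0 w_p^{3}/(H_p^{2} H_{+,p}^{2})\,dp = 0$.

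The gap is in your final step. Testing the laminar equation against $\Psi$ from Lemma~\ref{zerotheigenfunction} yields, after the boundary terms are processed,
\[
w(0)\,A\!\left(\tfrac{1}{F^2}\right) \;=\; -\int_{-1}^0 \frac{w_p^{2}(H_p+2H_{+,p})}{2H_p^{3}H_{+,p}^{2}}\,\Psi_p\,dp.
\]
For $F\ge\Fcr$ both sides are $\le 0$, so this is consistent rather than contradictory; and for general $F$ (the theorem imposes no supercriticality hypothesis) the sign of $\Psi_p$ from Lemma~\ref{zerotheigenfunction} is not available. I do not see how to extract $w\equiv 0$ along the route you sketch.

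There is, however, a much simpler closure using only what you already have. The laminar interior equation reads $G_p = F^{-2}\rho_p\,w$, so stable stratification ($\rho_p\le 0$) together with $w\ge 0$ makes $G$ nonincreasing on each layer; the transmission condition gives $\jump{G} = F^{-2}\jump{\rho}\,w(\hat p)\le 0$, so $G$ is globally nonincreasing on $[-1,0]$; and the top condition gives $G(0) = F^{-2}\rho(0)\,w(0)\ge 0$. Hence $G\ge 0$ everywhere, which is exactly $w_p\ge 0$. Feeding this sign into your cubic identity forces $w_p\equiv 0$, and then $w(-1)=0$ gives $w\equiv 0$. The case $H_+\le H$ is symmetric. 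So your program succeeds once you replace the $\Psi$-testing step with this monotonicity observation.
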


\begin{lemma}[Local compactness] \label{compactness lemma}
Suppose that $\{(w_n,F_n)\} \subset \mathscr{U}$ is a sequence of monotone solutions to the height equation \eqref{quasiw} that is uniformly bounded in $X \times \mathbb{R}$.  Then we can extract a subsequence converging in $X \times \mathbb{R}$ to some $(w,F) \in X \times [\Fcr,\infty)$.  
\end{lemma}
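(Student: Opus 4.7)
\emph{Proof plan.} The strategy is to obtain the limit in three stages: an interior convergence via Arzel\`a--Ascoli, verification that the limit decays at infinity (hence lies in $X$), and finally an upgrade to norm convergence in $X \times \mathbb{R}$.

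Since $\{F_n\} \subset [\Fcr,\infty)$ is bounded, extract a subsequence with $F_n \to F \in [\Fcr,\infty)$. Uniform boundedness of $\{w_n\}$ in $X$ yields a uniform $C^{9+\alpha}_{\textup{b}}$ bound in each of $\overline{R^\pm}$, whereupon Arzel\`a--Ascoli together with a diagonal exhaustion produces a further subsequence $w_n \to w$ in $C^9_{\loc}(\overline{R^+}) \cap C^9_{\loc}(\overline{R^-})$. Passing to the limit in \eqref{quasiw}, $w$ solves the height equation at Froude number $F$. The evenness in $q$, non-negativity, and nodal monotonicity $(w_n)_q \leq 0$ on $R_>$ pass to $w$ (cf.\ Lemma~\ref{Closed property}), and by Corollary~\ref{boundonw} we have $\inf_R(H_p + w_p) > 0$.

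The principal obstacle is to establish $w \in X$, i.e.\ that $w(q,p) \to 0$ uniformly as $|q| \to \infty$. By monotonicity and evenness, the pointwise limit $w^\infty(p) := \lim_{q \to \infty} w(q,p) = \lim_{q \to -\infty} w(q,p)$ exists, and it suffices to show $w^\infty \equiv 0$. Arguing by contradiction, suppose $w^\infty(p_0) > 0$ and set $\tau := \tfrac{1}{2} w^\infty(p_0)$.  Applying the intermediate value theorem to each $w_n(\cdot, p_0)$ (which is continuous, monotone decreasing, runs from $w_n(0,p_0) \to w(0,p_0) \geq w^\infty(p_0) > \tau$ down to $0$), select $q_n > 0$ with $w_n(q_n, p_0) = \tau$; monotonicity together with $C^9_{\loc}$-convergence forces $q_n \to \infty$. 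The translates $\tilde w_n := w_n(\cdot + q_n, \cdot)$ remain uniformly bounded in $X$ and solve the height equation, so a further Arzel\`a--Ascoli extraction yields a limit $\tilde w$ that solves the height equation, is monotone in $q$, and satisfies $\tilde w(0, p_0) = \tau > 0$. With a careful (diagonal) choice of translation sequence—parametrising $q_n$ so that $w_n(q_n + M_n, p_0) \to 0$ along a suitable $M_n \to \infty$—one arranges that $\tilde w \to 0$ as $q \to \infty$. Then $\tilde h := H + \tilde w$ is a non-trivial monotone front with $\inf \tilde h_p > 0$ whose one-sided asymptotic state equals the background $H$, contradicting the nonexistence of monotone fronts (Theorem~\ref{non existence of monotone fronts}). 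Hence $w^\infty \equiv 0$, so $w \in X$.

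With $w \in X$, the upgrade to convergence in the $X$-norm is standard. For any $\epsilon > 0$, use $w \in C^0_0$ to choose $Q > 0$ with $|w(\pm Q, \cdot)| < \epsilon$; by $C^0_{\loc}$-convergence, $|w_n(\pm Q, \cdot)| < 2\epsilon$ for large $n$, and monotonicity propagates this to $|w_n| < 2\epsilon$ on $\{|q| \geq Q\}$ uniformly in $n$. Combined with the uniform $C^{9+\alpha}_{\textup{b}}$ bound in each layer and H\"older interpolation, this furnishes $w_n \to w$ in $X$. The hardest step is the diagonal construction in the front-limit argument: in order for Theorem~\ref{non existence of monotone fronts} to close the loop one must realise $\tilde w$ as a genuine front with one end exactly the background $H$, and engineering this requires a delicate use of monotonicity alongside the translation invariance of the equation.
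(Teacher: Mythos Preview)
Your approach is essentially the same as the paper's, just organized differently. The paper packages the entire argument into a black-box dichotomy (an adaptation of \cite[Lemma~6.3]{chen2018existence} to transmission problems): either the sequence is pre-compact in $X \times \mathbb{R}$, or some translated subsequence $\tilde w_n := w_n(\cdot + q_n, \cdot)$ with $q_n \to \infty$ converges in $C^9_{\loc}$ to a nontrivial $\tilde w \in X_{\textup{b}}$ with $\partial_q \tilde w \leq 0$. The second alternative is then ruled out exactly as you describe, by recognising $\tilde h := H + \tilde w$ as a monotone front with downstream state $H$ and invoking Theorem~\ref{non existence of monotone fronts}. Your three-stage decomposition (local limit $\to$ decay of the limit $\to$ norm upgrade via monotonicity) unpacks what that cited lemma does internally, and the front contradiction plays the identical role.

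The one place where your write-up is thin is precisely the step you flag as hardest: arranging that the translated limit $\tilde w$ satisfies $\tilde w \to 0$ as $q \to +\infty$, so that Theorem~\ref{non existence of monotone fronts} applies. Your ``careful diagonal choice'' gestures at the right construction but does not spell it out; in the paper this is absorbed into the cited dichotomy lemma, where the translations $q_n$ are chosen so that $\tilde w$ is small to the right of the origin and the downstream limit is forced to be the background $H$ (using, implicitly, that supercritical $q$-independent solutions near zero are unique by Lemma~\ref{Invertibility of $F_w(0,F)$}). So there is no genuine gap, but if you want a self-contained argument you should make that choice of $q_n$ explicit rather than leaving it as a diagonal hand-wave.
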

\begin{proof}  Let a sequence $\{(w_n,F_n)\}$ be given as above.  By Lemma~\ref{inf+F} and boundedness, it follows that these solutions lie in a subset of $X \times \mathbb{R}$ on which the height equation \eqref{quasiw} is uniformly elliptic with a uniformly oblique boundary condition on the top and a co-normal transmission condition on the interior.  By a straightforward adaptation of \cite[Lemma 6.3]{chen2018existence} to transmission problems, we may then conclude that either
\begin{enumerate}[label=(\roman*)]
\item $\{ (w_n, F_n) \}$ is pre-compact in $X\times\mathbb{R}$; or 
\item we can extract a subsequence and find $q_n \to \infty$ so that the translated sequence $\{ \tilde w_n\}$ defined by $\tilde w_n := w_n(\cdot+q_n, \cdot)$ converges in $C^9_{\mathrm{loc}}$ to some $\tilde w \in X_{\mathrm{b}}$ which solves \eqref{quasiw} and has $\tilde w \not\equiv 0$ and $\partial_q \tilde w \leq 0$.
\end{enumerate}
However, the second of these alternatives is impossible in light of Theorem~\ref{non existence of monotone fronts}.  To see this, observe that were it to occur, then $\tilde h := \tilde w+H$ would be a front-type solution of the height equation \eqref{quasi} in the sense that 
\[ \tilde h \to \tilde H_- \textrm{ as }q \to -\infty \qquad \tilde h \to H \textrm{ as } q \to \infty,\]
for some $q$-independent solution $\tilde H_-$ to \eqref{quasi}.  Since $\tilde w_q \leq 0$, it must be that $\tilde H_- \geq H$.  Theorem~\ref{non existence of monotone fronts} then ensures that $\tilde H_- = H$, meaning $\tilde h \equiv H$ or equivalently $\tilde w \equiv 0$, a contradiction. The proof is therefore complete.
\end{proof} 
 
The next lemma applies the above result to conclude that the extreme of $\mathscr{C}$ does not limit to a critical flow.

\begin{lemma}[Asymptotic supercriticality]\label{lemma:asymptoticsupercriticality} 
If $\norm{ w(s) }_{X}$ is bounded uniformly along the bifurcation curve $\mathscr{C},$ then
\[\liminf\limits_{s\rightarrow \infty}F(s) > \Fcr.\]
\end{lemma}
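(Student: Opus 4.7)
The plan is to argue by contradiction. Suppose instead that there is a sequence $s_n \to \infty$ along $\mathscr{C}$ with $F(s_n) \to F_{\mathrm{cr}}$. Since $\norm{w(s)}_X$ is assumed to be uniformly bounded, Lemma~\ref{Nodal on Global Curve} tells us each $w(s_n)$ is monotone and satisfies the full nodal inequalities \eqref{Nodal}. I would then invoke the local compactness Lemma~\ref{compactness lemma} to pass to a subsequence (not relabeled) for which $(w(s_n), F(s_n)) \to (w_*, F_{\mathrm{cr}})$ in $X \times \mathbb{R}$. Continuity of $\mathcal{F}$ gives $\mathcal{F}(w_*, F_{\mathrm{cr}}) = 0$, and $w_*$ inherits the monotonicity/nonnegativity from the $w(s_n)$.

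Next, I would apply Theorem~\ref{thm:trivialsol}\ref{nonontrivialsolution} to the limit: at the critical Froude number the only admissible solution in the relevant regularity class is $w_* \equiv 0$. Hence $(w(s_n), F(s_n)) \to (0, F_{\mathrm{cr}})$ in $X \times \mathbb{R}$.

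The final step is to convert this convergence into a contradiction with Theorem~\ref{thm:global}\ref{solutions not in local curve}. For all sufficiently large $n$, set $\epsilon_n^2 := \mu_{\mathrm{cr}} - 1/F(s_n)^2 > 0$, which is well defined because $(w(s_n), F(s_n)) \in \mathscr{U}$ (so $F(s_n) > F_{\mathrm{cr}}$), and satisfies $\epsilon_n \in (0, \epsilon_*)$ for $n$ large. Using the nodal property $w_q(s_n) < 0$ on $T_>$ together with evenness and the decay $w(s_n) \to 0$ as $|q| \to \infty$, one checks that $w(s_n) > 0$ on $T$. Because $\norm{w(s_n)}_X \to 0$, the uniqueness assertion Theorem~\ref{smallamplitudethoery}\ref{Uniqueness} forces $w(s_n) = w^{\epsilon_n}$ for all large $n$, so $(w(s_n), F(s_n)) \in \mathscr{C}_{\mathrm{loc}}$. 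This directly contradicts Theorem~\ref{thm:global}\ref{solutions not in local curve}, which asserts that the global curve eventually leaves $\mathscr{C}_{\mathrm{loc}}$, and the proof is complete.

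The most delicate step is the application of Theorem~\ref{thm:trivialsol}\ref{nonontrivialsolution}: that theorem needs $w_*$ to have enough regularity to serve as an admissible test function for the flow-force identity of Lemma~\ref{convergence}. Uniform boundedness in $X$ combined with Corollary~\ref{boundonw} provides the required $C^1$ and pointwise decay bounds, and strong $X$-convergence transfers these to $w_*$; verifying the required decay hypothesis in \eqref{qual w regularity} is the one place where the argument is not automatic but follows from the definition of $X$ (which embeds into $C^0_0 \cap C^1_0$ in each half-strip).
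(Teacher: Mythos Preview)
Your argument is correct and follows essentially the same route as the paper's proof: contradiction, extract a convergent subsequence via the local compactness lemma, identify the limit as trivial by Theorem~\ref{thm:trivialsol}\ref{nonontrivialsolution}, and then use the small-amplitude uniqueness to force the tail of the sequence back into $\mathscr{C}_{\mathrm{loc}}$, contradicting Theorem~\ref{thm:global}\ref{solutions not in local curve}. Your final paragraph on regularity is more cautious than necessary---membership in $X$ already gives far more than the $C^2_{\mathrm{b}} \cap C^1_0 \cap C^0_0$ regularity required by \eqref{qual w regularity}, so no appeal to Corollary~\ref{boundonw} is needed there.
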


\begin{proof}
We follow closely the argument in \cite[Lemma 6.9]{chen2018existence}.
By way of contradiction, suppose that there exists a sequence $s_n \rightarrow{\infty}$ such that 
\[\limsup_{n\rightarrow\infty}\norm{ w(s_n) }_{X} < \infty \qquad \textrm{and} \qquad \lim_{n \to \infty} F(s_n) = \Fcr.\] 
We have already proved that each $w(s_n)$ is a monotone and even solution to the quasilinear elliptic PDE \eqref{quasiw}.  Lemma~\ref{compactness lemma} therefore tells us that the sequence is  pre-compact and so passing to a subsequence, we may assume that $\{ (w(s_n),F(s_n)\}$ converges in $X \times \mathbb{R}$ to some  $(w^{\star}, F^{\star})$ with  $\mathcal{F}(w^{\star}, F^{\star})=0$ where $F^{\star}=F_{\textup{cr}}.$ However, by Theorem~\ref{thm:trivialsol}\ref{nonontrivialsolution}, this implies that $w^{\star} \equiv 0$ which is equivalent to saying ${\norm{w(s_n) }_{X} \rightarrow{0}}$ as $n \rightarrow{\infty}$. Lemma~\ref{Nodal on Global Curve} ensures that that each $w(s_n)$ is a wave of elevation, therefore by the uniqueness of small-amplitude solutions, $(w(s_n),F(s_n)) \in \mathscr{C}_{\textrm{loc}}$ for  $n\gg 1$. But, this contradicts the statement in Theorem~\ref{thm:global}\ref{solutions not in local curve} that the curve does not reconnect to the trivial solution.  The proof of the lemma is therefore complete.
\end{proof}
Finally, we are ready to complete the proof of Theorem~\ref{thm:main}. It only remains to assemble all the information obtained earlier. 
\begin{proof}[Proof of Theorem~\ref{thm:main}]
Let $\mathscr{C}$ be the global curve given by Theorem~\ref{thm:global}. The statement in part~\ref{critical laminar} follows by construction of the local curve $\mathscr{C}_\loc$, specifically  Theorem~\ref{smallamplitudethoery}\ref{continuity}. To prove part~\ref{symmetry and monotonicity}, recall from Theorem~\ref{thm:global} that $\mathscr{C} \subset \mathscr{U}$ which is defined in \eqref{solutions set}. This shows that all the solutions contained in $\mathscr{C}$ are symmetric and supercritical. Moreover, they are monotonic as a consequence of the nodal properties established in Lemma~\ref{Nodal on Global Curve}.

Finally, we consider the stagnation limit claimed in part~\ref{stagnation}. It was already shown in Lemma~\ref{compactness lemma} that the loss of compactness alternative in Theorem~\ref{thm:global}\ref{global alternatives}\ref{loss of compactness} does not occur. Thus, the blowup alternative \ref{blowup} must happen:
\[
 N(s):= \norm{ w(s) }_X +\dfrac{1}{\inf_R \left(w_p(s)+H_p\right)}+F(s)+\dfrac{1}{F(s)-F_{\textup{cr}}} \rightarrow{\infty}, \quad \textrm{as }s \rightarrow{\infty}
.\] 
From the bounds in Lemmas~\ref{inf+F} and \ref{lemma:asymptoticsupercriticality}, this can be further refined to
\[
\norm{ w(s) }_X \rightarrow{\infty},\qquad \textrm{as } s\to \infty.
\]
By definition of $X$ in \eqref{BanachSpaces} and Theorem~\ref{thm:uniformregularity}, the above limit  simplifies to  $\norm{ w_p(s) }_{C^{0}}\rightarrow{\infty}$.

We now translate this back to the physical variables.  In Eulerian dimensionless form, it reads
\begin{equation}\label{infeulerian}
    \inf_{\tilde{\Omega}(s)}(\tilde{c}-\tilde{u}(s))=\dfrac{1}{\sup_{R}|\sqrt{\rho}\partial_p h(s)|} \rightarrow{0} \quad \textrm{as } s\to \infty.
\end{equation} 
Recall, that the dimensional and dimensionless Eulerian horizontal velocities are related by
\begin{equation}\label{dimensionalless}
    u-c=\dfrac{m}{\sqrt{\rho|_\eta}d}(\tilde{u}-\tilde{c})=F \sqrt{gd}(\tilde{u}-\tilde{c}).
\end{equation}
Combining \eqref{infeulerian} with the bounds on the Froude number given in Theorem~\ref{thm:upperboundonF}, we obtain
\[
F(s)^2\leq \dfrac{C}{\inf_{\tilde{\Omega}(s)}(\tilde{c}-\tilde{u}(s))}.
\] Taking the of infimum both sides of the equation \eqref{dimensionalless} and combining with the inequality above results in the following
\[
 \inf_{\Omega(s)}(c-u(s))=F(s) \sqrt{gd}\inf_{\tilde{\Omega}(s)}(\tilde{c}-\tilde{u}(s))\leq C \sqrt{\inf_{\tilde{\Omega}(s)}(\tilde{c}-\tilde{u}(s))}\rightarrow{0}\] as $s\rightarrow{\infty}.$ Thus a point of horizontal stagnation develops in the limit.
 \end{proof}

\section*{Acknowledgments}

The author's work on this project was partially supported by the National Science Foundation through the award NSF DMS-1812436.

\appendix

\section{Quoted results} \label{quoted results appendix}

To keep the presentation reasonably self-contained, this appendix collects two important results from the literature that are used in the present work. We begin with a theorem that contains the maximum principle, Hopf boundary lemma and Serrin edge point lemma. Notably, this includes versions that allow for the ``bad sign'' of the zeroth order term in the operator provided the sign of the solution is known (see, for instance, \cite{Fraenkel2000Maximumprinciples}, \cite{Nirenberg1979Symmetry} and \cite{Serrin1971symmetry}).
\begin{theorem}\label{Maximum Principle}
Let $\Omega \subset \mathbb{R}^2$ be a connected, open set (possibly unbounded), consider the second-order operator
\begin{equation*}
    L:=\sum_{i,j=1}^n a_{ij}(x)\partial_i\partial_j+\sum_{i}^{n}b_i(x)\partial_i+c(x)
\end{equation*}
where $\partial_i$ denotes the spatial derivative in $x_i$ coordinate and the coefficients $a_{ij}, b_i, c$ are of class $C^0(\overline{\Omega})$. We also assume that $L$ is uniformly elliptic; that is there exists $\lambda>0$ with
\begin{equation*}
    \sum_{ij} a_{ij}(x) \xi_i \xi_j \geq \lambda |\xi|^2 \qquad \textrm{ for all } \xi \in \mathbb{R}^n, \quad x\in \overline{\Omega}, 
\end{equation*}
and $a_{ij}$ being symmetric. Let $u\in C^2(\Omega) \cap C^0(\overline{\Omega})$ be a classical solution of $Lu=0$ in $\Omega$.
\begin{enumerate}[label=\normalfont{(\alph*)}]
    \item \textup{(Strong maximum principle)} Suppose u attains its maximum value on $\overline{\Omega}$ at a point in the interior of $\Omega$. If $c \leq 0$ in $\Omega$, or if $\sup_{\Omega}u=0$, then u is a constant function.
     \item\label{HopfBL} \textup{(Hopf boundary lemma)} Suppose that $u$ attains a maximum on $\overline{\Omega}$ at a point $x^*\in \partial \Omega$ for which there exists an open ball $B\subset \Omega$ such that $\overline{B}\cap\partial\Omega=\{x^*\}$. Assume eithere $C\leq0$ in $\Omega$ or else $\sup_Bu=0$. Then $u$ is a constant function or
     \[ \nu \cdot \nabla u(x^*)>0,
     \]where $\nu$ is the outward unit normal to $\Omega$ at $x^*$.
     \item\label{Serrin} \textup{(Serrin edge point lemma)} Let $x^* \in \partial \Omega$ be an ``Edge point" in the sense that near $x^*$ consists of two trasnversally intersecting $C^2$`hypersurfaces $\{\gamma(x)=0\}$ and $\{\sigma=0\}$. Suppose that $\gamma, \sigma <0$ in $\Omega$. If $u \in C^2(\overline{\Omega})$, $u>0$, $u(x^*)=0$. Assume further that $a_{ij}\in C^2$ around the neighborhood of $x^*$,
     \[
     B(x^*)=0, \qquad \textrm{and} \qquad \partial_\tau B(x^*)=0 
     \] for every differential $\partial_{\tau}$ tangential to $\{\gamma=0\} \cap \{\sigma=0\}$ at $x^*$. Then for any unit vector $s$ outward from $\Omega$ at $x^*$, either
     \[
     \partial_s u(x^*)<0 \textrm{ or } \partial^2_s u(x^*)<0.
     \]
\end{enumerate}
\end{theorem}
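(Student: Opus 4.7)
The plan is to address the three parts sequentially, as each builds on the preceding one, and to handle the sign issue with $c$ in the same way throughout: namely by observing that at any point where a non-negative maximum (or non-positive minimum) of $u$ is attained, the contribution of the zeroth order term $cu$ has the favorable sign regardless of the sign of $c$ itself, so the usual arguments go through.

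I would begin with the weak maximum principle as a preliminary: if $Lu \ge 0$ in $\Omega$ and $u$ achieves an interior maximum $M$ at $x_0$, then $\nabla u(x_0) = 0$ and $D^2u(x_0)$ is negative semi-definite, so uniform ellipticity gives $\sum a_{ij}\partial_i\partial_j u(x_0) \le 0$, and hence $c(x_0) M \ge 0$; this contradicts the hypotheses (either $c \le 0$ with $M > 0$, or $M = 0$ with $c$ of any sign since then $cu(x_0) = 0$) unless $u$ is constant near $x_0$. To upgrade this to the strong maximum principle in part (a), I would argue by contradiction: if $u \not\equiv \sup_\Omega u$ but achieves its supremum somewhere in the interior, then standard geometric reasoning produces an open ball $B \subset \{u < M\}$ whose boundary touches $\{u = M\}$ at a single point $x^\star$, and the Hopf lemma (proved next) applied there yields a nonzero outward normal derivative at an interior maximum, which is impossible.

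For the Hopf boundary lemma (b), the classical barrier construction suffices. Given the tangent ball $B = B_R(x_0) \subset \Omega$ with $\overline{B} \cap \partial\Omega = \{x^\star\}$, define on the annulus $A := B_R(x_0) \setminus \overline{B_{R/2}(x_0)}$ the auxiliary function
\begin{equation*}
v(x) := e^{-\alpha|x-x_0|^2} - e^{-\alpha R^2}.
\end{equation*}
A direct computation, using the uniform ellipticity bound $\sum a_{ij}\xi_i\xi_j \ge \lambda|\xi|^2$, shows that for $\alpha$ sufficiently large one has $Lv - cv > 0$ on $A$, and hence $Lv > 0$ on $A$ whenever the zeroth-order contribution of $cv$ can be absorbed (again using $c \le 0$ or the $\sup_B u = 0$ alternative to rule out the wrong sign). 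The comparison function $w := u - u(x^\star) + \varepsilon v$ then satisfies $Lw \ge 0$ on $A$ and $w \le 0$ on the inner boundary $\partial B_{R/2}$ for $\varepsilon$ small, whereas $w(x^\star) = 0$; the weak maximum principle forces $w \le 0$ on $A$, and differentiating at $x^\star$ yields $\nu \cdot \nabla u(x^\star) \ge -\varepsilon\nu\cdot\nabla v(x^\star) = 2\varepsilon\alpha R e^{-\alpha R^2} > 0$.

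The Serrin edge point lemma (c) is the main obstacle. The difficulty is that no ball inside $\Omega$ is tangent to $\partial\Omega$ at an edge, so the Hopf barrier $e^{-\alpha|x-x_0|^2}$ is inadequate; moreover the conclusion is disjunctive, involving possibly only the \emph{second} outward derivative. The plan is to follow Serrin's original construction: working in local coordinates adapted to the two transverse $C^2$ hypersurfaces $\{\gamma = 0\}$ and $\{\sigma = 0\}$, build a barrier of the schematic form $v = \gamma\sigma + K(\gamma^2 + \sigma^2)$ multiplied by an exponential cut-off, with constants chosen so that $Lv$ has a favorable sign in a wedge-shaped neighborhood of $x^\star$. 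The hypotheses that the coefficients are $C^2$ and that the designated quantity $B$ and its tangential derivative vanish at $x^\star$ are precisely what is needed to match Taylor expansions of $Lv$ and $L(\varepsilon v)$ at $x^\star$ to second order. One then compares $u$ with $\varepsilon v$ via the strong maximum principle inside the wedge; at $x^\star$ both vanish, and expanding the inequality $u \ge \varepsilon v$ along an outward ray $s$ up to second order produces the dichotomy $\partial_s u(x^\star) < 0$ or $\partial_s^2 u(x^\star) < 0$, as the vanishing of $\partial_s u$ at first order forces the strict inequality to appear at second order. Making the barrier construction rigorous in arbitrary local geometry and verifying the compatibility conditions are the delicate technical points, but the argument is purely local and reduces to careful bookkeeping of Taylor coefficients.
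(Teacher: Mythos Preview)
The paper does not prove this theorem: it appears in Appendix~\ref{quoted results appendix} as a quoted result from the literature (with references to Fraenkel, Gidas--Ni--Nirenberg, and Serrin), stated without proof ``to keep the presentation reasonably self-contained.'' So there is no paper proof to compare your attempt against.

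Your sketch is essentially the standard textbook argument (Gilbarg--Trudinger for (a) and (b), Serrin's original paper for (c)) and is correct in outline. One small wrinkle: your ``weak maximum principle as a preliminary'' paragraph is not quite right as written---at an interior maximum $x_0$ with $c(x_0)\le 0$ and $M>0$, the computation gives $c(x_0)M=0$ and $\sum a_{ij}\partial_i\partial_j u(x_0)=0$, which is not a contradiction and does not by itself force local constancy. The strong maximum principle genuinely requires the Hopf barrier argument you give in (b), so the logical order should be: barrier $\Rightarrow$ Hopf $\Rightarrow$ strong maximum principle, rather than the other way around. You acknowledge this (``proved next''), but the preliminary paragraph should then be dropped or replaced by the observation that pointwise analysis at a maximum is insufficient. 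For part (c), your description of Serrin's barrier is accurate at the level of a sketch; the actual construction (see Serrin \cite{Serrin1971symmetry} or Fraenkel \cite{Fraenkel2000Maximumprinciples}) is somewhat more delicate than ``$\gamma\sigma + K(\gamma^2+\sigma^2)$ times a cutoff,'' but the idea of building a second-order barrier adapted to the wedge geometry is exactly right.
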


Secondly, we record here the abstract analytic global bifurcation result from \cite[Theorem 6.1]{chen2018existence}.

\begin{theorem}[Chen, Walsh, Wheeler \cite{chen2018existence}] \label{cww global}
Let $\mathcal{X}$ and $\mathcal{Y}$ be Banach spaces, with $\mathcal{U} \subset \mathcal{X} \times \mathbb{R}$ an open set.  Suppose that $\mathcal{F} = \mathcal{F}(x,\lambda) : \mathcal{U} \to \mathcal{Y}$ is real analytic.  

Assume that there exists a continuous local curve $\mathscr{C}_{\textup{loc}}$ of solutions to $\mathcal{F}(x,\lambda)=0$ parametrized as 
\begin{equation*}
        \mathscr{C}_{\textup{loc}}:=\{(\tilde{x}(\lambda),\lambda):0 < \lambda < \lambda_{*} \},
\end{equation*}
where $\lambda_*>0$ and the map $\tilde{x}:(0,\lambda_*)\to \mathcal{U}$ is continuous.  If
\[\tilde{x}(\lambda)\to 0 \in \partial \mathcal{U} \textrm{ as } \lambda \to 0^+
\textrm{ and } \mathcal{F}_x(\tilde{x}(\lambda),\lambda):\mathcal{X}\to\mathcal{Y} \textrm{ is
invertible for all }\lambda,\] then 
the local curve $\mathscr{C}_\loc$ of the nonlinear operator $\mathcal{F}(x,\lambda)=0$ is contained in a global $C^0$ curve $\mathscr{C}$ parameterized as 
\begin{equation*}
        \mathscr{C}:=\{(x(s),s):0 < s < \infty \}\subset \mathcal{F}^{-1}(0),
\end{equation*} for some continuous $(0,\infty)\ni s \mapsto \left(x(s),s\right)\in \mathcal{U}\times \mathcal{I}$ and exhibiting the following properties
\begin{enumerate}[label=\normalfont{(\alph*)}]
    \item One of the following alternatives must hold:
    \begin{enumerate}[label=\normalfont{(\roman*)}]
        \item \textup{(Blowup)} as $s\rightarrow \infty,$
        \begin{equation*}
            N(s):= \norm{ x(s) }_{\mathcal{X}} +\dfrac{1}{\textup{dist}(x(s),\partial \mathcal{U})}+\lambda(s)+\dfrac{1}{\textup{dist}(x(s),\partial \mathcal{I})}\rightarrow{\infty}.
        \end{equation*}
        \item \textup{(Loss of compactness)} There exists a sequence of $s_n\rightarrow{\infty}$ such that  $\sup_n N(s_n)<\infty$ but $\{x(s_n)\}$ does not have subsequences converging in $\mathcal{X}$.
    \end{enumerate}
    \item Fix parameter $s^\star \in (0,\infty)$, around the neighborhood $(x(s^\star),\lambda(s^\star))\in \mathscr{C}$, we can reparametrize $\mathscr{C}$ so that $s\mapsto(x(s),\lambda(s))$ is real-analytic. 
    \item For all $s \gg 1$, $(x(s),\lambda(s)) \notin \mathscr{C}_\loc$.
\end{enumerate}
\end{theorem}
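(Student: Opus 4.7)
My plan is to adapt the analytic global bifurcation framework of Dancer and Buffoni--Toland to the setting of this theorem, substituting the loss-of-compactness alternative for a properness hypothesis.  The first step is to apply the analytic implicit function theorem at every point of $\mathscr{C}_\loc$, which is legitimate because of the assumed invertibility of $\mathcal{F}_x$ there and the real-analyticity of $\mathcal{F}$.  This produces a real-analytic one-parameter extension of the curve.  I then concatenate these local analytic extensions by a Zorn's-lemma argument to obtain a maximal connected continuous extension $\mathscr{C}=\{(x(s),\lambda(s)) : s\in (0,s_{\max})\}$, parametrized by an arclength-type parameter so that $s$ remains meaningful even when $\lambda$ fails to be monotone along the curve.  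At every point of $\mathscr{C}$ where $\mathcal{F}_x$ is invertible, the curve is locally real-analytic in $s$, which will yield assertion (b) at all regular points.

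The bulk of the work is the dichotomy (a), which I would prove by contrapositive.  Given any sequence $s_n\to s_{\max}\leq \infty$, suppose $\sup_n N(s_n) < \infty$.  Boundedness of $N$ prevents $x(s_n)$ from approaching $\partial \mathcal{U}$ in $\mathcal{X}$ and keeps $\lambda(s_n)$ in a compact subset of the interior of $\mathcal{I}$.  If in addition a subsequence $\{x(s_{n_k})\}$ converges in $\mathcal{X}$ to some $x^*$, passing to a further subsequence gives $\lambda(s_{n_k})\to \lambda^*$ with $(x^*,\lambda^*)\in \mathcal{U}$, and continuity of $\mathcal{F}$ gives $\mathcal{F}(x^*,\lambda^*)=0$.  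Provided $\mathcal{F}_x(x^*,\lambda^*)$ is invertible, the IFT at $(x^*,\lambda^*)$ furnishes a local real-analytic solution curve through the limit; concatenating with the end of $\mathscr{C}$ produces an extension past $s_{\max}$, contradicting maximality when $s_{\max}<\infty$ and giving an analogous contradiction to $s_n\to \infty$ in the other case.  Hence failure of alternative (i) forces failure of convergence along every subsequence of $\{x(s_n)\}$, which is precisely alternative (ii).

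The principal obstacle is how to treat limit points $(x^*,\lambda^*)$ at which $\mathcal{F}_x(x^*,\lambda^*)$ fails to be invertible, since the theorem as stated assumes no Fredholmness of $\mathcal{F}_x$.  My strategy is to exploit the real-analytic structure: when restricted to any one-parameter analytic arc, the set of singular parameters is either empty or discrete, coinciding with the zero set of a suitable non-identically-vanishing analytic auxiliary function (in the Fredholm case, a Lyapunov--Schmidt determinant).  The curve can therefore be continued across isolated singular parameters by a Weierstrass-preparation argument in Banach spaces, in the spirit of Buffoni--Toland, which moreover yields the real-analytic reparametrization around any $s^\star$ at which $\mathcal{F}_x$ happens to be non-invertible and so completes the proof of part (b).  In the genuinely non-Fredholm case no such local analytic branching structure need exist, but this failure is exactly what manifests as loss of compactness, so no contradiction arises and alternative (ii) simply holds.

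Finally, property (c) should follow from uniqueness of analytic continuation along $\mathscr{C}_\loc$.  The hypothesis $\tilde x(\lambda)\to 0\in \partial \mathcal{U}$ places the initial endpoint outside $\mathcal{U}$; if $\mathscr{C}$ were to reenter $\mathscr{C}_\loc$ for arbitrarily large $s$, connectedness together with the uniqueness of the IFT-generated parametrization of $\mathscr{C}_\loc$ would force a tail of $\mathscr{C}$ to coincide with a portion of $\mathscr{C}_\loc$ and thus ultimately exit $\mathcal{U}$ at $0$, contradicting $\mathscr{C}\subset \mathcal{U}$.
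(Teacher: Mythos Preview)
The paper does not prove this theorem; it is recorded in Appendix~\ref{quoted results appendix} purely as a quoted result from \cite[Theorem 6.1]{chen2018existence}, so there is no proof in the paper to compare your proposal against.

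That said, a brief comment on your sketch relative to the actual argument in \cite{chen2018existence}. Your overall plan (analytic implicit function theorem, maximal continuation by concatenating distinguished arcs, Weierstrass preparation at singular points) is indeed the Dancer/Buffoni--Toland framework on which that result is built. However, the statement as transcribed here suppresses a hypothesis that is present in the original and indispensable to the proof: $\mathcal{F}_x(x,\lambda)$ is required to be Fredholm of index~$0$ for every $(x,\lambda)\in\mathcal{U}$. (The present paper verifies exactly this in Lemma~\ref{Fedholm of index zero of the linearized opt} before invoking the theorem.) Without Fredholmness there is no finite-dimensional Lyapunov--Schmidt reduction, and hence no Weierstrass-preparation argument to continue the curve across singular parameters; your claim that ``genuinely non-Fredholm'' failure ``is exactly what manifests as loss of compactness'' conflates two unrelated phenomena --- alternative~(ii) concerns global failure of precompactness of bounded solution sequences, not local failure of the branching structure. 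Your argument for part~(c) is also incomplete: the actual mechanism is that $\mathscr{C}$ is built as a sequence of maximal distinguished analytic arcs, and re-entry into $\mathscr{C}_\loc$ for arbitrarily large $s$ would force the parametrization to retrace itself, making $s\mapsto(x(s),\lambda(s))$ eventually periodic and contradicting the unbounded continuation.
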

\begin{center}
\bibliographystyle{alpha}
\bibliography{Biblio.bib}

\begin{thebibliography}{CWW20}

\bibitem[ACF84]{alt1984variational}
Hans~Wilhelm Alt, Luis~A. Caffarelli, and Avner Friedman.
\newblock Variational problems with two phases and their free boundaries.
\newblock {\em Trans. Amer. Math. Soc.}, 282(2):431--461, 1984.

\bibitem[Ale62]{Alexandrov1962characteristic}
A.~D. Alexandrov.
\newblock A characteristic property of spheres.
\newblock {\em Ann. Mat. Pura Appl. (4)}, 58:303--315, 1962.

\bibitem[Ami84]{amick1984semilinear}
Charles~J. Amick.
\newblock Semilinear elliptic eigenvalue problems on an infinite strip with an
  application to stratified fluids.
\newblock {\em Ann. Scuola Norm. Sup. Pisa Cl. Sci. (4)}, 11(3):441--499, 1984.

\bibitem[AT81a]{amicktoland1981periodic}
C.~J. Amick and J.~F. Toland.
\newblock On periodic water-waves and their convergence to solitary waves in
  the long-wave limit.
\newblock {\em Philos. Trans. Roy. Soc. London Ser. A}, 303(1481):633--669,
  1981.

\bibitem[AT81b]{amick1981solitary}
C.~J. Amick and J.~F. Toland.
\newblock On solitary water-waves of finite amplitude.
\newblock {\em Arch. Rational Mech. Anal.}, 76(1):9--95, 1981.

\bibitem[AT86]{amick1986global}
Charles~J Amick and REL Turner.
\newblock A global theory of internal solitary waves in two-fluid systems.
\newblock {\em Transactions of the American Mathematical Society},
  298(2):431--484, 1986.

\bibitem[AT94]{Amick1994Center}
Charles~J. Amick and Robert E.~L. Turner.
\newblock Center manifolds in equations from hydrodynamics.
\newblock {\em NoDEA Nonlinear Differential Equations Appl.}, 1(1):47--90,
  1994.

\bibitem[AW19]{akers2019solitary}
Adelaide Akers and Samuel Walsh.
\newblock Solitary water waves with discontinuous vorticity.
\newblock {\em J. Math. Pures Appl. (9)}, 124:220--272, 2019.

\bibitem[Bea77]{beale1977existence}
J.~Thomas Beale.
\newblock The existence of solitary water waves.
\newblock {\em Comm. Pure Appl. Math.}, 30(4):373--389, 1977.

\bibitem[Ben71]{benjamin1971unified}
T.~Brooke Benjamin.
\newblock A unified theory of conjugate flows.
\newblock {\em Philos. Trans. Roy. Soc. London Ser. A}, 269:587--643, 1971.

\bibitem[BGN16]{Groves2016lectures}
Thomas~J Bridges, Mark~D Groves, and David~P Nicholls.
\newblock {\em Lectures on the theory of water waves}, volume 426.
\newblock Cambridge University Press, 2016.

\bibitem[BN88]{nirenberg1988monotonicity}
H.~Berestycki and L.~Nirenberg.
\newblock Monotonicity, symmetry and antisymmetry of solutions of semilinear
  elliptic equations.
\newblock {\em J. Geom. Phys.}, 5(2):237--275, 1988.

\bibitem[BN91]{nirenberg1991movingplane}
H.~Berestycki and L.~Nirenberg.
\newblock On the method of moving planes and the sliding method.
\newblock {\em Bol. Soc. Brasil. Mat. (N.S.)}, 22(1):1--37, 1991.

\bibitem[BT03]{buffoni2003analytic}
Boris Buffoni and John Toland.
\newblock {\em Analytic theory of global bifurcation}.
\newblock Princeton Series in Applied Mathematics. Princeton University Press,
  Princeton, NJ, 2003.
\newblock An introduction.

\bibitem[CJK02]{Caffarelli2002monotonicity}
Luis~A. Caffarelli, David Jerison, and Carlos~E. Kenig.
\newblock Some new monotonicity theorems with applications to free boundary
  problems.
\newblock {\em Ann. of Math. (2)}, 155(2):369--404, 2002.

\bibitem[Con11]{constantin2011nonlinear}
Adrian Constantin.
\newblock {\em Nonlinear water waves with applications to wave-current
  interactions and tsunamis}.
\newblock SIAM, 2011.

\bibitem[CS04]{Constantin2004Exactsteady}
Adrian Constantin and Walter Strauss.
\newblock Exact steady periodic water waves with vorticity.
\newblock {\em Comm. Pure Appl. Math.}, 57(4):481--527, 2004.

\bibitem[CW16]{ChenWalsh2016Continuous}
Robin~Ming Chen and Samuel Walsh.
\newblock Continuous dependence on the density for stratified steady water
  waves.
\newblock {\em Arch. Ration. Mech. Anal.}, 219(2):741--792, 2016.

\bibitem[CWW18]{chen2018existence}
Robin~Ming Chen, Samuel Walsh, and Miles~H. Wheeler.
\newblock Existence and qualitative theory for stratified solitary water waves.
\newblock {\em Ann. Inst. H. Poincar\'{e} Anal. Non Lin\'{e}aire},
  35(2):517--576, 2018.

\bibitem[CWW19]{chen2019center}
Robin~Ming Chen, Samuel Walsh, and Miles~H Wheeler.
\newblock Center manifolds without a phase space for quasilinear problems in
  elasticity, biology, and hydrodynamics.
\newblock {\em arXiv preprint arXiv:1907.04370}, 2019.

\bibitem[CWW20]{chen2020globalmonotone}
Robin~Ming Chen, Samuel Walsh, and Miles~H. Wheeler.
\newblock Global bifurcation for monotone fronts of elliptic equations.
\newblock {\em arXiv preprint arXiv:2005.00651}, 2020.

\bibitem[Dan73a]{dancer1973bifurcation}
E.~N. Dancer.
\newblock Bifurcation theory for analytic operators.
\newblock {\em Proc. London Math. Soc. (3)}, 26:359--384, 1973.

\bibitem[Dan73b]{dancer1973global}
E.~N. Dancer.
\newblock Global structure of the solutions of non-linear real analytic
  eigenvalue problems.
\newblock {\em Proc. London Math. Soc. (3)}, 27:747--765, 1973.

\bibitem[DJ34]{Jacotin1934Surla}
M.-L. Dubreil-Jacotin.
\newblock {\em Sur la d\'{e}termination rigoureuse des ondes permanentes
  p\'{e}riodiques d'ampleur finie}.
\newblock NUMDAM, [place of publication not identified], 1934.

\bibitem[Fra00]{Fraenkel2000Maximumprinciples}
L.~E. Fraenkel.
\newblock {\em An introduction to maximum principles and symmetry in elliptic
  problems}, volume 128 of {\em Cambridge Tracts in Mathematics}.
\newblock Cambridge University Press, Cambridge, 2000.

\bibitem[GNN79]{Nirenberg1979Symmetry}
B.~Gidas, Wei~Ming Ni, and L.~Nirenberg.
\newblock Symmetry and related properties via the maximum principle.
\newblock {\em Comm. Math. Phys.}, 68(3):209--243, 1979.

\bibitem[GW08]{Wahlen2008small-amplitude}
M.~D. Groves and E.~Wahl\'{e}n.
\newblock Small-amplitude {S}tokes and solitary gravity water waves with an
  arbitrary distribution of vorticity.
\newblock {\em Phys. D}, 237(10-12):1530--1538, 2008.

\bibitem[HM06]{helfrich2006review}
Karl~R. Helfrich and W.~Kendall Melville.
\newblock Long nonlinear internal waves.
\newblock In {\em Annual review of fluid mechanics. {V}ol. 38}, volume~38 of
  {\em Annu. Rev. Fluid Mech.}, pages 395--425. Annual Reviews, Palo Alto, CA,
  2006.

\bibitem[Hur08]{hur2008exact}
Vera~Mikyoung Hur.
\newblock Exact solitary water waves with vorticity.
\newblock {\em Arch. Ration. Mech. Anal.}, 188(2):213--244, 2008.

\bibitem[Kir82]{Kirchgassner1982Wavesolutions}
Klaus Kirchg\"{a}ssner.
\newblock Wave-solutions of reversible systems and applications.
\newblock {\em J. Differential Equations}, 45(1):113--127, 1982.

\bibitem[KLW20]{kozlov2020nonexistence}
Vladimir Kozlov, Evgeniy Lokharu, and Miles~H. Wheeler.
\newblock Nonexistence of subcritical solitary waves, 2020.

\bibitem[KP74]{pritchard1974bounds}
G.~Keady and W.~G. Pritchard.
\newblock Bounds for surface solitary waves.
\newblock {\em Proc. Cambridge Philos. Soc.}, 76:345--358, 1974.

\bibitem[LHF74]{Long1974massmomentum}
M.~S. Longuet-Higgins and J.~D. Fenton.
\newblock On the mass, momentum, energy and circulation of a solitary wave.
  {II}.
\newblock {\em Proc. Roy. Soc. London Ser. A}, 340:471--493, 1974.

\bibitem[Li91]{Li1991monotonicity}
Congming Li.
\newblock Monotonicity and symmetry of solutions of fully nonlinear elliptic
  equations on unbounded domains.
\newblock {\em Comm. Partial Differential Equations}, 16(4-5):585--615, 1991.

\bibitem[Mai97]{Maia1997symmetryofinternalwaves}
Liliane~A. Maia.
\newblock Symmetry of internal waves.
\newblock {\em Nonlinear Anal.}, 28(1):87--102, 1997.

\bibitem[Mey63]{Meyers1963Estimate}
Norman~G. Meyers.
\newblock An {$L^{p}$}e-estimate for the gradient of solutions of second order
  elliptic divergence equations.
\newblock {\em Ann. Scuola Norm. Sup. Pisa Cl. Sci. (3)}, 17:189--206, 1963.

\bibitem[Mie86]{Mielke1986Reduction}
Alexander Mielke.
\newblock A reduction principle for nonautonomous systems in
  infinite-dimensional spaces.
\newblock {\em J. Differential Equations}, 65(1):68--88, 1986.

\bibitem[Mie88]{mielke1988reduction}
Alexander Mielke.
\newblock Reduction of quasilinear elliptic equations in cylindrical domains
  with applications.
\newblock {\em Math. Methods Appl. Sci.}, 10(1):51--66, 1988.

\bibitem[Rus44]{Russell1884}
A~J~Scott Russell.
\newblock Report on waves, report of the 14th meeting of the {B}ritish
  {A}ssociation for the {A}dvancement of {S}cience, 311-390.
\newblock 1844.

\bibitem[Ser71]{Serrin1971symmetry}
James Serrin.
\newblock A symmetry problem in potential theory.
\newblock {\em Arch. Rational Mech. Anal.}, 43:304--318, 1971.

\bibitem[SMZ05]{susanto2005ocean}
RDWI Susanto, Leonid Mitnik, and Quanan Zheng.
\newblock Ocean internal waves observed.
\newblock {\em Oceanography}, 18(4):80, 2005.

\bibitem[Sta47]{starr1947momentum}
Victor~P. Starr.
\newblock Momentum and energy integrals for gravity waves of finite height.
\newblock {\em J. Marine Res.}, 6:175--193, 1947.

\bibitem[TK60]{ter1960existence}
AM~Ter-Krikorov.
\newblock The existence of periodic waves which degenerate into a solitary
  wave.
\newblock {\em Journal of Applied Mathematics and Mechanics}, 24(4):930--949,
  1960.

\bibitem[TVB88]{turner1988broadening}
R.~E.~L. Turner and J-M Vanden-Broeck.
\newblock Broadening of interfacial solitary waves.
\newblock {\em Phys. Fluids}, 31(9):2486--2490, 1988.

\bibitem[Var09]{Varvaruca2009extremewaves}
Eugen Varvaruca.
\newblock On the existence of extreme waves and the {S}tokes conjecture with
  vorticity.
\newblock {\em J. Differential Equations}, 246(10):4043--4076, 2009.

\bibitem[Wal09a]{Walsh2009Somecriteria}
Samuel Walsh.
\newblock Some criteria for the symmetry of stratified water waves.
\newblock {\em Wave Motion}, 46(6):350--362, 2009.

\bibitem[Wal09b]{Walsh2009Stratified}
Samuel Walsh.
\newblock Stratified steady periodic water waves.
\newblock {\em SIAM J. Math. Anal.}, 41(3):1054--1105, 2009.

\bibitem[Wal14]{Walsh2014stratified}
Samuel Walsh.
\newblock Steady stratified periodic gravity waves with surface tension {II}:
  global bifurcation.
\newblock {\em Discrete Contin. Dyn. Syst.}, 34(8):3287--3315, 2014.

\bibitem[Wan17]{wang2017solitary}
Ling-Jun Wang.
\newblock Small-amplitude solitary and generalized solitary traveling waves in
  a gravity two-layer fluid with vorticity.
\newblock {\em Nonlinear Anal.}, 150:159--193, 2017.

\bibitem[Whe13]{wheeler2013large}
Miles~H. Wheeler.
\newblock Large-amplitude solitary water waves with vorticity.
\newblock {\em SIAM J. Math. Anal.}, 45(5):2937--2994, 2013.

\bibitem[Whe15a]{wheeler2015froude}
Miles~H. Wheeler.
\newblock The {F}roude number for solitary water waves with vorticity.
\newblock {\em J. Fluid Mech.}, 768:91--112, 2015.

\bibitem[Whe15b]{wheeler2015solitary}
Miles~H. Wheeler.
\newblock Solitary water waves of large amplitude generated by surface
  pressure.
\newblock {\em Arch. Ration. Mech. Anal.}, 218(2):1131--1187, 2015.

\end{thebibliography}
\end{center}
\end{document}